\let\mc\mathcal
\let\euc\mathcal
\let\mathcal\mc
\let\frak\mathfrak
\def\>{\relax\ifmmode\mskip.666667\thinmuskip\relax\else\kern.111111em\fi}
\def\<{\relax\ifmmode\mskip-.333333\thinmuskip\relax\else\kern-.0555556em\fi}
\def\:{\relax\ifmmode\mskip.333333\thinmuskip\relax\else\kern.0555556em\fi}
\def\?{\relax\ifmmode\mskip-.666667\thinmuskip\relax\else\kern-.111111em\fi}
\def\vsk#1>{\vskip#1\baselineskip} \let\bls\baselineskip
\def\vv#1>{\vadjust{\vsk#1>}\ignorespaces}
\def\vvn#1>{\vadjust{\nobreak\vsk#1>\nobreak}\ignorespaces}
\def\vvgood{\vadjust{\penalty-500}} \let\alb\allowbreak
\def\fratop{\genfrac{}{}{0pt}1}
\def\satop#1#2{\fratop{\scriptstyle#1}{\scriptstyle#2}}
\let\dsize\displaystyle \let\tsize\textstyle \let\ssize\scriptstyle
\let\sssize\scriptscriptstyle \def\tfrac{\tsize\frac}
 \let\vp\vphantom \let\hp\hphantom
\def\dfrac{\dsize\frac}
\let\Smallskip\smallskip
\def\smallskip{\par\Smallskip}
\let\Medskip\medskip
\def\medskip{\par\Medskip}
\let\Bigskip\bigskip
\def\bigskip{\par\Bigskip}
\let\Maketitle\maketitle
\def\maketitle{\Maketitle\thispagestyle{empty}\let\maketitle\empty}
\newtheorem{thm}{Theorem}[section]
\newtheorem{cor}[thm]{Corollary}
\newtheorem{lem}[thm]{Lemma}
\newtheorem{prop}[thm]{Proposition}
\newtheorem{conj}[thm]{Conjecture}
\numberwithin{equation}{section}
\theoremstyle{definition}
\newtheorem*{rem}{Remark}
\newtheorem*{example}{Example}
\newtheorem*{defn}{Definition}
\let\nc\newcommand
\let\al\alpha
\let\bt\beta
\let\dl\delta
\let\Dl\Delta
\let\gm\gamma
\let\Gm\Gamma
\let\ka\kappa
\let\la\lambda
\let\pho\phi
\let\phi\varphi
\let\si\sigma
\let\thi\vartheta
\let\om\omega
\let\Om\Omega
\let\der\partial
\let\Hat\widehat
\let\ox\otimes
\let\Tilde\widetilde
\let\bra\langle
\let\ket\rangle
\let\ge\geqslant
\let\geq\geqslant
\let\le\leqslant
\let\leq\leqslant
\let\on\operatorname
\let\bi\bibitem
\let\bs\boldsymbol
\let\xlto\xleftarrow
\let\xto\xrightarrow
\let\Empty\varnothing
\def\C{{\mathbb C}}
\def\P{{\mathbb P}}
\def\Z{{\mathbb Z}}
\def\De{{\euc D}}
\def\Ue{{\euc U}}
\def\Ac{{\mc A}}
\def\B{{\mc B}}
\def\F{{\mc F}}
\def\Kc{{\mc K}}
\def\O{{\mc O}}
\def\Rc{{\mc R}}
\def\V{{\mc V}}
\def\Zc{{\mc Z}}
\def\+#1{^{\{#1\}}}
\def\lsym#1{#1\alb\dots\relax#1\alb}
\def\lc{\lsym,}
\def\lox{\lsym\ox}
\def\End{\on{End}}
\def\Hom{\on{Hom}}
\def\id{{\on{id}}}
\def\Res{\on{Res}}
\def\tbigoplus{\mathop{\textstyle{\bigoplus}}\limits}
\def\tbigcup{\mathop{\textstyle{\bigcup}}\limits}
\def\Wr{\on{Wr}}
\def\Ipr{$\textrm I^{\:\prime}$}
\def\pti{\textit{pt}}
\def\hor{\textit{hor}}
\def\vert{\textit{vert}}
\def\aa{a,\:a}
\def\ab{a,\:b}
\def\ai{a,\:i}
\def\bj{b,\:j}
\def\ii{i,\:i}
\def\ij{i,\:j}
\def\ik{i,\:k}
\def\ji{j,\:i}
\def\jj{j,\:j}
\def\kj{k,\:j}
\def\kab{k\<,\:a,\:b}
\def\IIc{I\?,\:I}
\def\IJ{I\?,J}
\def\JI{J\?,\:I}
\def\ioi{i+1,\:i}
\def\pp{p,\:p}
\def\pci{p,\:i}
\def\pcj{p,\:j}
\def\poi{p+1,\:i}
\def\poj{p+1,\:j}
\def\xyc{x\<,\:y}
\def\ps{p,\:s}
\def\gln{\mathfrak{gl}_N}
\def\hg{\mathfrak h}
\def\G{{\frak G}}
\def\Yn{Y\<(\gln)}
\def\Uqn{U_q(\Tilde{\gln}\<)}
\def\Uen{\Ue(\Tilde{\gln}\<)}
\def\Upn{\Ue'(\Tilde{\gln}\<)}
\def\Uenc{\Ue_{\:c}(\Tilde{\gln}\<)}
\def\Ueh{\Ue(\:\hg\:)}
\def\beq{\begin{equation}}
\def\eeq{\end{equation}}
\def\be{\begin{equation*}}
\def\ee{\end{equation*}}
\nc{\bea}{\begin{eqnarray*}}
\nc{\eea}{\end{eqnarray*}}
\nc{\bean}{\begin{eqnarray}}
\nc{\eean}{\end{eqnarray}}
\nc{\Ref}[1]{{\rm(\ref{#1})}}
\def\fc{\check f}
\def\gc{\check g}
\def\Ic{\check I}
\def\Qc{\check Q}
\def\sch{\check s}
\def\Lh{\Hat L}
\def\Rh{\Hat R}
\def\sha{\hat s}
\def\Wh{\rlap{$\>\Hat{\?\phantom W\<}\:$}W}
\def\Yh{\Hat Y}
\def\sih{\hat\si}
\def\Dt{\rlap{$\,\Tilde{\!\phantom D}$}D}
\def\ft{\tilde f}
\def\gt{\tilde g}
\def\Kt{\Tilde K}
\def\Lt{\Tilde L}
\def\Mt{\rlap{$\,\Tilde{\!\phantom M}$}M}
\def\qt{\tilde q}
\def\sti{\tilde s}
\def\Vct{\Tilde\V}
\def\Wt{\rlap{$\>\Tilde{\?\phantom W\<}\:$}W}
\def\Yt{\Tilde Y}
\def\ett{\tilde\eta}
\def\sit{\tilde\si}
\def\thit{\Tilde\thi}
\def\Wbar{\rlap{$\>\overline{\?\phantom W\?}\>$}W}
\def\Wtb{\Wt_{\?\sssize\bullet}}
\def\N{\mathbb{Z}_{\geq0}}
\def\R{{\mathbb R}}
\def\XX{{\mc X}}
\def\I{{\mc I}}
\def\II{{\mc I}}
\def\bb{{\bs b}}
\def\GG{{\bs\Gm}}
\def\ppi{{\bs \pi}}
\def\TT{{\bs t}}
\def\ttt{{\bs t}}
\def\zz{{\bs z}}
\nc{\Il}{{\II_{\bs\la}}}
\nc{\bla}{{\bs\la}}
\nc{\Fla}{\F_\bla}
\nc{\tfl}{{T^*\!\Fla}}
\nc{\GL}{{GL_n(\C)}}
\nc{\GLC}{{GL_n(\C)\times\C^*}}
\def\mub{{\bs\mu}}
\def\Di{{\tfrac1D}}
\def\Dti{{\tfrac1{\>\Tilde{\!D}}}}
\def\DiV{\Di\V}
\def\DV{\Di\V^-}
\def\DL{\DV_\bla}
\def\DLl{\DVt_\bla}
\def\DVt{\Di\Vct^-}
\def\leqid{\leq_{\:\id}}
\def\CNn{(\C^N)^{\ox n}}
\def\CNnl{\CNn_{\:\bs\la}}
\def\Chh{\C[\:h^{\pm1}\:]}
\def\Czhp{\C[\zz,h\:]}
\def\Czhm{\C[\zz^{-1}\?,h\:]}
\def\Czps{\C[\zz]^{\>S_n}}
\def\Czpshr{\C[\zz]^{\>S_n}\!\ox\C(h)}
\def\Czpl{\C[\zz]^{\>S_\bla}}
\def\Czphl{\Czhp^{\>S_\bla}}
\def\Czms{\C[\zz^{-1}\:]^{\>S_n}}
\def\Czb{\C[\zz^{\pm1}\:]}
\def\Czh{\C[\zz^{\pm1}\?,h^{\pm1}\:]}
\def\CzhD{\C[\zz^{\pm1}\?,h^{\pm1}\?,D^{-1}\:]}
\def\CZH{\C[\zz^{\pm1}\?,h^{\pm1}\:]^{\>S_n}}
\def\CZS{\C[\zz^{\pm1}]^{\>S_n}}
\def\Czhr{\Czb\ox\C(h)}
\def\CZSr{\CZS\!\ox\C(h)}
\def\Czhh{\CNn\?\ox\C(\zz,h)}
\def\Czhhl{\CNnl\?\ox\C(\zz,h)}
\def\CGs{\C[\:\GG^{\pm1}\:]^{\>S_\bla}}
\def\CGl{\C[\:\GG\:]^{\>S_\bla}}
\def\CGlh{\CGl\?\ox\C[\:h^{\pm 1}\:]}
\def\CGlhr{\CGl\?\ox\C(h)}
\def\Czl{\C[\zz^{\pm1}]^{\>S_\bla}}
\def\Czhlr{\C[\zz^{\pm1}]^{\>S_\bla}\?\ox\C(h)}
\def\Czhl{\C[\zz^{\pm1}\?,h^{\pm1}\:]^{\>S_\bla}}
\def\Elzphr{\End\bigl(\CNnl\bigr)\ox\C[\zz]\ox\C(h)}
\def\KTX{K_T(\XX_n)}
\def\zzz{z_1\lc z_n}
\def\Imx{I^{\:\max}}
\def\Imn{I^{\:\min}}
\def\Imax{{\Imx}}
\def\Imin{{\Imn}}
\def\Iminp{{I^{\min,\prime i}}}
\def\Slmax{S^{\:\max}_\bla}
\def\Slmin{S^{\:\min}_\bla}
\def\ib{\bs i}
\def\jb{\bs j}
\def\kb{\bs k}
\def\iib{\ib,\:\ib}
\def\ijb{\ib,\:\jb}
\def\ikb{\ib,\:\kb}
\def\jib{\jb,\:\ib}
\def\kjb{\kb,\:\jb}
\def\zzzn{z_n\lc z_1}
\def\zzzsi{z_{\si(1)}\lc z_{\si(n)}}
\def\ip{\:i\>\prime}
\def\ipi{\>\prime\:i}
\def\iset{\{\:i\:\}}
\def\top{\on{top}}
\def\Bin{B^{\:\infty}}
\def\Bci{\B^{\:\infty}}
\def\asq{\ast^{q\<}}
\def\Dlq{\Dl(q_1\lc q_N)}
\def\elq{e^{\:q}}
\def\Kcqt{\Tilde\Kc^{\:Q}}
\def\muq{\mu^q}
\def\nuq{\nu^{\:q}}
\def\psq{\psi^{\:q}}
\def\thtq{\thit^{\>q}}
\def\Wrq{\Wr^{\:q}}
\def\all{\al_\bla}
\def\btl{\bt_\bla}
\def\Kcql{\Kc^{\:q}_\bla}
\def\mul{\mu_\bla}
\def\muql{\muq_\bla}
\def\nul{\nu_\bla}
\def\nuql{\nuq_\bla}
\def\nutql{\tilde\nu^{\:q}_\bla}
\def\phol{\pho_\bla}
\def\psil{\psi_\bla}
\def\psql{\psq_\bla}
\def\rhol{\rho_\bla}
\def\thil{\thi_\bla}
\def\thitl{\thit_\bla}
\def\thtql{\thtq_\bla}
\def\bcl{{\bb,\:c,\:\bla}}
\def\Kcqb{\Kc^{\:q}_\bcl}
\def\kkk{q_1\lc q_N}
\def\bk{b^{\:q}}
\def\Bk{B^{\:q}}
\def\Bck{\B^{\:q}}
\def\Wk{W^q}
\def\Whk{\Wh^q}
\def\ddk_#1{q_{#1}\:\frac\der{\der\:q_{#1}}}
\def\bul{\mathbin{\raise.2ex\hbox{$\sssize\bullet$}}}
\def\intt{\mathchoice
{\mathop{\raise.2ex\rlap{$\,\,\ssize\backslash$}{\intop}}\nolimits}
{\mathop{\raise.3ex\rlap{$\,\sssize\backslash$}{\intop}}\nolimits}
{\mathop{\raise.1ex\rlap{$\sssize\>\backslash$}{\intop}}\nolimits}
{\mathop{\rlap{$\sssize\:\backslash$}{\intop}}\nolimits}}
\def\GZ/{Gelfand\:-Zetlin}
\def\GZi/{Gelfand\:-\<Zetlin}
\def\KZ/{{\slshape KZ\/}}
\def\qKZ/{{\slshape qKZ\/}}
\def\XXX/{{\slshape XXX\/}}
\def\XXZ/{{\slshape XXZ\/}}
\def\Sym{\on{Sym}}
\def\St{{\on{Stab}}}
\def\Stab{\on{Stab}}
\def\Slope{\on{Slope}}
\def\Loc{\on{Loc}}
\def\kr{{\varkappa}}
\begin{document}

\hrule width0pt
\vsk->

\title[Weight functions as K-theoretic stable envelope maps]
{Trigonometric weight functions as K-theoretic stable\\[2pt]
envelope maps for the cotangent bundle of\\[2pt] a flag variety}

\author
[R.\,Rim\'anyi, V.\,Tarasov, A.\,Varchenko]
{ R.\,Rim\'anyi$\>^{\star}$,
V.\,Tarasov$\>^\circ$, A.\,Varchenko$\>^\diamond$}

\maketitle

\begin{center}
{\it $^{\star\,\diamond}\<$Department of Mathematics, University
of North Carolina at Chapel Hill\\ Chapel Hill, NC 27599-3250, USA\/}

\vsk.5>
{\it $\kern-.4em^\circ\<$Department of Mathematical Sciences,
Indiana University\,--\>Purdue University Indianapolis\kern-.4em\\
402 North Blackford St, Indianapolis, IN 46202-3216, USA\/}

\vsk.5>
{\it $^\circ\<$St.\,Petersburg Branch of Steklov Mathematical Institute\\
Fontanka 27, St.\,Petersburg, 191023, Russia\/}
\end{center}

{\let\thefootnote\relax
\footnotetext{\vsk-.8>\noindent
$^\star\<${\sl E\>-mail}:\enspace rimanyi@email.unc.edu\>,
supported in part by NSF grant DMS-1200685\\
$^\circ\<${\sl E\>-mail}: \enspace vt@math.iupui.edu\>, vt@pdmi.ras.ru\\
$^\diamond\<${\sl E\>-mail}: \enspace anv@email.unc.edu\>,
supported in part by NSF grant DMS-1362924}}

\begin{abstract}
We consider the cotangent bundle $\tfl$ of a $GL_n$ partial flag variety,
$\bla=(\la_1\lc\la_N)$, $|\bla|=\sum_i\la_i=n$, and the torus $T=(\C^\times)^{n+1}$ equivariant
K-theory algebra $K_T(\tfl)$. We introduce K-theoretic stable envelope maps
$\Stab_{\:\si}\?:\bigoplus_{|\bla|=n}K_T((\tfl)^T)\to\bigoplus_{|\bla|=n}K_T(T^*\Fla)$,
where $\si\in S_n$. Using these maps we define a quantum loop algebra action on
$\bigoplus_{|\bla|=n}K_T(T^*\Fla)$. We describe the associated Bethe algebra $\Bck(K_T(\tfl))$ by generators and relations
in terms of a discrete Wronski map. We prove that the limiting Bethe algebra $\Bci(K_T(\tfl))$, called the
\GZ/ algebra, coincides with the algebra of multiplication operators of the algebra $K_T(\tfl)$.
We conjecture that the Bethe algebra $\Bck(K_T(\tfl))$ coincides with the algebra of quantum multiplication on
$K_T(\tfl)$ introduced by Givental and Lee \cite{G, GL}.

The stable envelope maps are defined with the help of Newton polygons of
Laurent polynomials representing elements of $K_T(\tfl)$ and with the help of
the trigonometric weight functions introduced in \cite{TV1, TV3} to construct
\,$q$-hypergeometric solutions of trigonometric \qKZ/ equations.

The paper has five appendices. In particular, in
Appendix 5 we describe the Bethe algebra of the \XXZ/ model by generators and relations.
\end{abstract}

\setcounter{footnote}{0}
\renewcommand{\thefootnote}{\arabic{footnote}}

{\small \tableofcontents }

\section{Introduction}

In \cite{MO}, Maulik and Okounkov study the classical and quantum equivariant
cohomology of Nakajima quiver varieties for a quiver $Q$. They construct a Hopf algebra
$Y_Q$, the Yangian of $Q$, acting on the cohomology of these
varieties, and show that the associated Bethe algebra $\Bck$ acting on the cohomology of these
varieties coincides with the algebra of quantum multiplication.
The construction of the Yangian and the Yangian action is based on the notion
of the stable envelope maps introduced in \cite{MO}.
In this paper we construct the analog of the stable envelope maps for the
equivariant K-theory algebras of the cotangent bundles of the $GL_n$ partial
flag varieties.

\vsk.2>
Let $\tfl$ be the cotangent bundle
of a $GL_n$ partial flag variety $\Fla$,
$\bla=(\la_1\lc\la_N)$, $|\bla|=\sum_i\la_i=n$. We consider the torus $T=(\C^\times)^{n+1}$ equivariant
K-theory algebra $K_T(\tfl)$ and introduce K-theoretic stable envelope maps
$\Stab_{\:\si}\<:\bigoplus_{|\bla|=n}K_T((T^*\Fla)^T)\to
\bigoplus_{|\bla|=n}K_T(T^*\Fla)$,
where $(T^*\Fla)^T\subset T^*\Fla$ is the torus fixed point set and $\si$ is an element of the symmetric group $S_n$.
We describe the composition maps $\Stab^{-1}_{\:\si'}\circ\Stab_{\:\si}$
in terms of the standard $\gln$ trigonometric \,$R$-matrix.
Using these maps we define a quantum loop algebra action on
$\bigoplus_{|\bla|=n}K_T(T^*\Fla)$.
We describe the associated Bethe algebra $\Bck(K_T(\tfl))$ by generators and relations
in terms of a discrete Wronski map. We prove that the limiting Bethe algebra $\Bci(K_T(\tfl))$, called the
\GZ/ algebra, coincides with the algebra of multiplication operators of the algebra $K_T(\tfl)$.
We conjecture that the Bethe algebra $\Bck(K_T(\tfl))$ is isomorphic to the algebra of quantum multiplication on
$K_T(\tfl)$ introduced by Givental and Lee \cite{G, GL}. That conjecture is the K-theoretic analog of the main theorem in
\cite{MO} that describes the quantum multiplication.

\vsk.2>
In \cite{MO}, the stable envelope maps for equivariant cohomology of Nakajima varieties are defined axiomatically and then
the uniqueness and existence are proved. Our definition of K-theoretic stable envelope maps for the cotangent bundles of partial flag varieties
is also axiomatic and then we also prove the uniqueness and existence. The difference with axioms in \cite{MO}
is that we do not consider the supports of the stable envelope maps and we replace
the notion of the degree of a polynomial with the notion of the Newton polygon of a Laurent polynomial. Another difference with \cite{MO}
is that we prove the existence by giving an explicit formula for the stable envelope maps. The formula for
the stable envelope maps is given in terms of the trigonometric weight functions introduced in \cite{TV1, TV3}
to construct $q$-hypergeometric solutions of the trigonometric qKZ equations. The arguments of the weight functions in
\cite{TV1, TV3} are $h$, $\zzz$, $t_{i,j}$, where $h$ is the parameter of the quantum loop algebra,
$\zzz$ are positions of sites in the associated \XXZ/ model and $t_{i,j}$ are the integration variables in
the $q$-hypergeometric integrals. Another interpretation of the variables $t_{i,j}$ in \cite{TV1, TV3} is that they are variables in
the Bethe ansatz equations associated with the \XXZ/ model. In this paper, the arguments
$h$, $\zzz$ are interpreted as the equivariant parameters corresponding
to the torus $T$ and the arguments \,$t_{i,j}$ are interpreted as the Chern
roots of the associated bundles over $\Fla$. This correspondence between the
variable in the Bethe ansatz equations and the Chern roots is the indication of
a K-theoretic Landau-Ginzburg mirror correspondence.

\vsk.2>
The paper is organized as follows. In Section~\ref{Preliminaries from Geometry}, we introduce our geometric objects:
cotangent bundles of partial flag varieties, the torus action, the equivariant K-theory algebras.
In Section~\ref{sec:axiomatic}, we formulate axioms defining certain classes $\ka_{\si,I}\in K_T(\tfl)$ and formulate
Theorem \ref{thm:axiomatic} that such classes exist and are unique. The classes $\ka_{\si,I}$ are building blocks for
the K-theoretic stable envelope maps and are the main novelty objects of our paper. Theorem \ref{thm:axiomatic} is our first main result. In Section~\ref{sec:uni}, we prove
the uniqueness of the classes $\ka_{\si,I}$. In Section~\ref{sec:weight functions}, we introduce the trigonometric weight functions
and in Section~\ref{sec comb} describe useful combinatorial presentation of the weight functions as sums of 'elementary' rational functions
assigned to certain 'filled tables'. In Section~\ref{Properties of weight functions}, we describe properties of the weight functions and
prove the existence of the classes $\ka_{\si,I}$.
In Section~\ref{sec:stab}, using the classes $\ka_{\si,I}$,
we define the stable envelope maps and describe the compositions $\Stab^{-1}_{\si'}\circ\Stab_\si$ in terms of the standard
trigonometric \,$R$-matrix. In Section~\ref{invstab}, we describe the inverse map to the stable envelope map $\Stab_{\id}$.

\vsk.2>
In Section~\ref{alg sec}, we consider the space $\CNn\?\ox\C(\zzz,h)$
with an $S_n$-action. We introduce the important subspace
\,$\DV\!\subset\CNn\?\ox\C(\zzz,h)$ invariant with
respect to the $S_n$-action. In Section~\ref{Uq}, we define the quantum loop
algebra and its commutative Bethe subalgebra $\Bck$ depending on parameters
$q=(q_1\lc q_N)$. In Section~\ref{Yaction}, we describe a quantum loop
algebra action on \,$\DV\!\ox\C(h)$.

\vsk.2>
In Section~\ref{sec Equi}, we describe a quantum loop algebra action on
$\bigoplus_{|\bla|=n}K_T(\tfl)\ox\C(h)$. This is done through the isomorphism
$\nu:\bigoplus_{|\bla|=n}K_T(\tfl)\ox\C(h)\to
\DV\!\ox\C[z_1^{\pm1}\lc z_n^{\pm1}\:]\ox\C(h)$ defined
with the help of the map $\Stab^{-1}_{\>\id}$. We describe the close relations
between our quantum loop algebra action on $
\oplus_{|\bla|=n}K_T(T^*\Fla)\ox\C(h)$ and the quantum loop algebra action
studied by Ginzburg and Vasserot in \cite{GV,Vas1,Vas2}. In Theorem
\ref{regrep}, we identify the action on $K_T(\tfl)$ of the \GZ/ algebra $\Bci$
and the action on $K_T(\tfl)$ of its own elements by multiplication. This is
our second main result.

\vsk.2>
In Section~\ref{sec:Wr}, we introduce the discrete Wronski map $\Wrq_\bla$
and define the associated algebra $\Kcql$ by generators and relations.
We describe a construction that identifies the Bethe algebra \,$\Bck$ action
on $K_T(\tfl)$ and the regular representation of the algebra $\Kcql$.
This statement is formulated in Corollary~\ref{regKqK}.

\vsk.2>
In Section~\ref{sec:*}, we introduce a new commutative associative
multiplication \,$\asq$ \:on \,$K_T(\tfl)$\>, depending on the parameters
\,$\kkk$. In Section~\ref{sec conjectures}, we formulate Conjecture~\ref{conj}
that the new multiplication \,$\asq$ on \,$K_T(\tfl)$ \,coincides with
the quantum multiplication introduced by Givental and Lee in \cite{G, GL}.
By taking the $h\to 0$ limit of this conjectural statement, we formulate in
Section~\ref{Limit h to 0} a conjectural description of the equivariant quantum
K-theory algebra of the partial flag variety $\Fla$ by generators and
relations.

\vsk.2>
In Appendix 1, we show how the weight functions specialize to Grothendieck polynomials introduced by
Lascoux and Schutzenberger in \cite{LS}. In Appendix 2, we give an interpolation definition of the K-theory classes of Schubert
varieties in the equivariant K-theory algebra $K_{(\C^\times)^n}(\Fla)$ of a partial flag variety $\Fla$.
This definition is the $h\to 0$ limit of the definition of the classes $\ka_{\si,I}\in K_T(\tfl)$. In Appendix 3,
we describe presentations and structure constants of the K-theory algebras
associated with the projective line. Some of them follows from the conjecture in Section~\ref{sec conjectures} and others are
known. In Appendix 4, we give formulae for multiplication of the classes $\ka_{\id,I}$. These formulae can be viewed as
a beginning of Schubert calculus on $K_T(\tfl)$.

\vsk.2>
In Appendix 5 we describe the Bethe algebra of the \XXZ/ model by generators and relations. In particular we show that
the Bethe algebra of the \XXZ/ model on $(\C^N)^{\otimes n}$ is a maximal commutative subalgebra of \,$\End\bigl(\CNn\bigr)$\>.

\section{Preliminaries from Geometry}
\label{Preliminaries from Geometry}

\subsection{Partial flag varieties}
\label{sec Partial flag varieties}

Fix natural numbers $N, n$. Let \,$\bla\in\Z^N_{\geq 0}$, \,$|\bla|=\la_1\lsym+\la_N =n$.
Consider the partial flag variety
\;$\Fla$ parametrizing chains of subspaces
\be
0\,=\,F_0\subset F_1\lsym\subset F_N =\,\C^n
\ee
with \;$\dim F_i/F_{i-1}=\la_i$, \;$i=1\lc N$.
Denote by \,$\tfl$ the cotangent bundle of \;$\Fla$, and let $\pi: \tfl \to \Fla$ be the projection of the bundle.
Denote
\be
\XX_n\:=\coprod_{|\bla|=n}\<\tfl\,.
\ee
\begin{example}
If $n=1$, then $\bla=(0\lc 0,1_i,0\lc 0)$, $\tfl$ is a point and $\XX_1$
is the union of $N$ points.

\vsk.2>
If $n=2$ then $\bla= (0\lc 0,1_i,0\lc 0,1_j,0\lc 0)$ or $\bla= (0\lc 0,2_i,0\lc 0)$.
In the first case $\tfl$ is the cotangent bundle of projective line, in the second case $\tfl$ is a point.
Thus $\XX_2$ is the union of $N$ points and $N(N-1)/2$ copies of the cotangent bundle of projective line.
\end{example}

Let $I=(I_1\lc I_N)$ be a partition of $\{1\lc n\}$ into disjoint subsets
$I_1\lc I_N$. Denote $\Il$ the set of all partitions $I$ with
$|I_j|=\la_j$, \;$j=1\lc N$.

Let $\epsilon_1\lc\epsilon_n$ be the standard basis of $\C^n$.
For any $I\?\in\Il$, let $x_I\in\Fla$ be the point corresponding to the
coordinate flag $F_1\lsym\subset F_N$, where $F_i$ \,is the span of the
standard basis vectors \;$\epsilon_j\in\C^n$ with \,$j\in I_1\lsym\cup I_i$.
We embed $\Fla$ in $\tfl$ as the zero section and consider the points
$x_I$ as points of $\tfl$.

\subsection{Schubert cells, conormal bundles}
\label{sec:flagvar}
For any $\si\in S_n$, we consider the
coordinate flag in $\C^n$,
\be
V^{\si}\ : \ 0\,=\,V_0 \subset V_1\lsym\subset V_n=\,\C^n
\ee
where $V_i$ is the span of $\epsilon_{\si(1)}\lc\epsilon_{\si(i)}$. For $I\?\in\Il$ we define the {\it Schubert cell}
\be
\Om_{\si,\:I}=\{ F\in \Fla\ |
\dim (F_p\cap V^\si_q) = \#\{ i \in I_1\lsym\cup I_p \ |\ \si^{-1}(i)\leq q\} \ \forall p\leq N, \forall q\leq n \}.
\ee
The Schubert cell $\Om_{\si,\:I}$ is an affine space of dimension
\be
\ell_{\si,\:I}=\#\{(i,j)\in \{1\lc n\}^2\ |\ \si(i)\in I_a,\;\si(j)\in I_b,
\;a<b, \;i>j\}.
\ee
For a fixed $\si$, the flag manifold is the disjoint union of the cells $\Om_{\si,\:I}$. We have $x_I\in \Om_{\si,\:I}$,
see e.g. \cite[Sect.2.2]{FP}.

For $\si\in S_n$, we define the {\it geometric} partial ordering on the set
$\Il$. For $I,J\in\Il$, we say that $J\leq_g I$ if $x_J$ lies in the closure
of $\Om_{\si,\:I}$.

We also define the {\it combinatorial} partial ordering.
For $I,J\in\Il$, let
\be
\si^{-1}\bigl(\tbigcup_{\ell=1}^k I_\ell\bigr)\,=\,
\{a_1^k\lsym<a_{\la^{(k)}}^k\},
\qquad
\si^{-1}\bigl(\tbigcup_{\ell=1}^k J_\ell\bigr)\,=\,
\{b_1^k\lsym<b_{\la^{(k)}}^k\}
\ee
for $k=1\lc N-1$, where \,$\la^{(k)}\<=\la_1\lsym+\la_k$\,.
We say that $J\leq_c I$ if $b_i^k\leq a_i^k$ for $k=1\lc N-1$,
$i=1\lc\la^{(k)}$.

\begin{lem}
The geometric and combinatorial partial orderings are the same.
\end{lem}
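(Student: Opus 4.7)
The plan is to reduce both orderings to the same rank-condition data attached to the flag $V^\si$. Recall that the Schubert cell $\Om_{\si,I}$ is defined by equalities $\dim(F_p\cap V^\si_q)=r_{p,q}(\si,I)$, where $r_{p,q}(\si,I)=\#\{i\in I_1\cup\dots\cup I_p\mid\si^{-1}(i)\le q\}$. Since $\dim(F_p\cap V^\si_q)$ is a lower semicontinuous function of $F\in\Fla$, the closure $\overline{\Om_{\si,I}}$ is contained in the locus where $\dim(F_p\cap V^\si_q)\ge r_{p,q}(\si,I)$ for all $p,q$. I would then invoke the classical fact (for a complete flag, reduced to this case via the projection $G/B\to G/P_\bla$) that these inequalities in fact cut out the closure, so $F\in\overline{\Om_{\si,I}}$ if and only if $\dim(F_p\cap V^\si_q)\ge r_{p,q}(\si,I)$ for all $p\le N$, $q\le n$.

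Specializing this criterion to $F=x_J$, one computes directly from the definition of the coordinate flag that $\dim((x_J)_p\cap V^\si_q)=r_{p,q}(\si,J)$. Therefore $J\le_g I$ is equivalent to the system of inequalities $r_{p,q}(\si,J)\ge r_{p,q}(\si,I)$ for all $p$ and $q$. Rewriting this in the notation of the combinatorial ordering, with $\si^{-1}(I_1\cup\dots\cup I_k)=\{a_1^k<\dots<a_{\la^{(k)}}^k\}$ and $\si^{-1}(J_1\cup\dots\cup J_k)=\{b_1^k<\dots<b_{\la^{(k)}}^k\}$, the inequality $r_{k,q}(\si,J)\ge r_{k,q}(\si,I)$ says
\begin{equation*}
\#\{i\mid b_i^k\le q\}\,\ge\,\#\{i\mid a_i^k\le q\}\qquad\text{for all }q.
\end{equation*}

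It remains to observe the elementary equivalence: for two strictly increasing sequences of equal length, the count inequality above holds for every $q$ if and only if $b_i^k\le a_i^k$ for every $i$. Applying this for each $k=1,\dots,N-1$ identifies the geometric condition $J\le_g I$ with the combinatorial condition $J\le_c I$, proving the lemma. The only nonroutine step is the rank-inequality description of $\overline{\Om_{\si,I}}$; for the full flag variety this is standard, and the partial-flag statement follows by pushing forward under the smooth proper map $G/B\to\Fla$, whose fibers are irreducible, so closures of Schubert cells are images of closures of Schubert cells.
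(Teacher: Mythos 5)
Your proof is correct, but it takes a genuinely different route from the paper's. The paper disposes of the lemma in one line by citing the ``Tableau Criterion'' for Bruhat order (Bj\"orner--Brenti, Theorem~2.6.3), which is precisely the combinatorial statement that the rank-function comparison is equivalent to the coordinatewise comparison of the sorted prefix sets. You instead rederive this from the geometric side: you use the semicontinuity of $\dim(F_p\cap V^\si_q)$ to get containment of the closure in the rank-inequality locus, invoke the classical rank-condition description of Schubert variety closures to get the reverse containment, then compute the rank numbers at the torus fixed point $x_J$ and finish with the elementary equivalence between the two ways of comparing increasing sequences. Both arguments ultimately lean on one nontrivial classical input -- the paper on the combinatorial Tableau Criterion, you on the geometric closure description (which is in fact the usual way the Tableau Criterion is proved) -- so neither is fully self-contained, but yours has the pedagogical advantage of making the translation between the geometric and combinatorial orderings explicit rather than outsourcing it. Two small points worth tidying: the function $\dim(F_p\cap V^\si_q)$ is \emph{upper} semicontinuous in the usual sense (the locus where it is $\ge k$ is closed), though your conclusion is the right one; and the reduction from the partial flag variety to $G/B$ deserves a sentence saying that the Schubert cell $\Om_{\si,I}\subset\Fla$ is the isomorphic image of the Schubert cell of the minimal-length coset representative in $G/B$, so its closure is the image of the corresponding Schubert variety, which is where the rank-condition description applies directly.
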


\begin{proof}
This is the so-called ``Tableau Criterion'' for the Bruhat (i.e.~geometric)
order, see e.g.~\cite[Theorem 2.6.3]{BB}\:.
\end{proof}

In what follows we will denote both partial orderings by $\leq_\si$.

\vsk.2>
The Schubert cell $\Om_{\si,\:I}$ is a smooth submanifold of $\Fla$,
hence we can consider its conormal space
\vvn-.3>
$$
C\Om_{\si,\:I}=\{\al\in \pi^{-1}(\Om_{\si,\:I}) \ | \
\al(T_{\pi(\al)}\Om_{\si,\:I})=0\} \subset\tfl\,.
\vv.2>
$$
The conormal space $C\Om_{\si,\:I}$ is the total space of a vector
subbundle of $\tfl$ over $\Om_{\si,\:I}$. The rank of this subbundle
is $\dim \Fla - \dim \Om_{\si,\:I}$. Hence, as a manifold
$C\Om_{\si,\:I}$ is an affine cell of dimension $\dim \Fla$.
In particular, the dimension is independent of $\si, I$.
Define
\vvn.3>
\beq
\label{SLOPE}
\Slope_{\si,\:I}\>=\,\tbigcup_{J \leq_\si I} C\Om_{\si,\:J}.
\eeq

\subsection{Equivariant K-theory}
\label{sec:equivK}
The diagonal action of the torus \,$(\C^\times)^n$ on $\C^n$ induces an action
on $\Fla$, and hence on the cotangent bundle $\tfl$\>.

\begin{rem} One may use this action to give an equivalent (``unstable submanifold") definition of
the spaces $C\Om_{\si,\:I}$ of the last section.
Namely
$x\in C\Om_{\si,\:I}$ if and only if
\vvn.2>
\be
\lim_{z\to 0} (z^{-\si(1)}, z^{-\si(2)}\lc z^{-\si(n)})\cdot x = x_I,
\vv-.2>
\ee
cf.~\cite[Section 3.2.2]{MO}.
\end{rem}

We extend this \,$(\C^\times)^n\<$-action to the action of
\,$T=(\C^\times)^n\times\C^\times$ so that the extra \,$\C^\times$ acts
on the fibers of \,$\tfl\to\Fla$ by multiplication.

\vsk.2>
We consider the equivariant K-theory algebras $K_T(\tfl)$ and
\vvn.3>
\beq
\label{KTX}
\KTX\,=\,\tbigoplus_{|\bla|=n} K_T(\tfl).
\eeq
Our general reference for equivariant K-theory is \cite[Ch.5]{ChG}.

\vsk.2>
Denote $S_\bla=S_{\la_1}\!\lsym\times S_{\la_N}$ the product of symmetric groups.
Consider variables $\Gm_i=\{\gm_{i,1}\lc\gm_{i,\la_i}\}$, $i=1\lc N$.
Let \;$\GG=(\Gm_1\:\lsym;\Gm_N)$. The group $S_\bla$ acts on the set
$\GG$ by permuting the variables with the same first index.
Let $\C[\GG^{\pm1}]$ be the algebra of Laurent polynomials in variables
$\gm_{i,j}$ and $\CGs$ the subalgebra of
invariants with respect to the $S_\bla$-action.

\vsk.2>
Consider variables $\zz=\{\zzz\}$ and $h$. The group $S_n$ acts
on the set $\zz$ by permutations.
Let $\Czh$ be the algebra of Laurent polynomials in
variables $\zz,h$ and $\Czh^{\>S_n}$ the subalgebra of
invariants with respect to the $S_n$-action. We have
\vvn.2>
\beq
\label{Hrel}
K_T(\tfl)\,=\,\CGs\?\ox\Czh\;\big/\bigl\bra\:f(\GG)=f(\zz)\quad
\on{for\ any} \;f\in\CZS\bigr\ket\,.
\eeq
Here $\gm_{i,j}$ correspond to (virtual) line bundles also denoted
by $\gm_{i,j}$ with
\be
\tbigoplus_{j=1}^{\la_i}\,\gm_{i,j}\>=\>F_i/F_{i-1}
\ee
while $z_a$ and $h$ correspond to the factors of
$T=(\C^\times)^n\times\C^\times$.

\vsk.2>
The algebra \,$K_T(\tfl)$ \>is a module over \,$K_T(\pti\>;\C)=\Czh$.

\begin{example}
If $n=1$, then
\vvn-.6>
\be
K_T(\XX_1)\,=\,\tbigoplus_{i=1}^N\:K_T(T^*\!\F_{(0\lc 0,1_i,0\lc 0)})
\ee
is naturally isomorphic to \,$\C^N\ox\C[z_1^{\pm1},h^{\pm1}]$ \,with
the basis \,$v_i=(0\lc 0,1_i,0\lc 0)$\>, \,$i=1\lc N$.
\end{example}

\subsection{Fixed point sets}
\label{fixed}

\vsk.2>
The set \,$(\tfl)^{T}\!$ of fixed points of the torus $T$ action is
\,$(x_I)_{I\in\Il}$\>. We have
\vvn-.3>
\be
(\XX_n)^T = \XX_1\lsym\times\XX_1.
\vv.2>
\ee
The algebra $K_T\bigl((\XX_n)^T\bigr)$ is naturally isomorphic
to \,$\CNn\ox\Czh$\>. This isomorphism sends the identity element
\;$1_I\in K_T(x_I)$ \>to the vector
\vvn.1>
\beq
\label{v_I}
v_I\>=\,v_{i_1}\lox v_{i_n}\,,
\vv.2>
\eeq
where \,$i_j=k$ \,if \,$j\in I_k$\>. We denote by \,$\CNnl$ \:the span of
\,$\{\:v_I\;|\;I\in\Il\}$.

\subsection{Equivariant localization}
\label{sec:loc}

Consider the {\it equivariant localization} map
\vvn.2>
\beq
\label{eqn:loc}
\Loc\,:\,K_T(\tfl)\,\to\,K_T((\tfl)^T)\,=\,\tbigoplus_{I\in\Il} K_T(x_I)
\eeq
whose components are the restrictions to the fixed points $x_I$.
Namely, the \,$I$-component \;$\Loc_I$ \,of this map is the substitution
\beq
\label{LocI}
\{\gm_{k,1}\lc\<\gm_{k,\la_k}\}\mapsto\{z_a\,|\;a\in I_k\}
\qquad \text{for all}\;\,\,k=1\lc N\:.
\eeq
Equivariant localization theory (see e.g. \cite[Ch.5]{ChG},
\cite[Appendix]{RoKu}) asserts that $\Loc$ is an injection of algebras.
Moreover, an element of the right-hand side is in the image of \;$\Loc$
\,if the difference of the $I$-th and $s_{i,j}(I)$-th components is divisible
by $1-z_i/z_j$ in $\Czh$ for all $I\?\in\Il$ and $i,j\in \{1\lc n\}$.
Here $s_{i,j}(I)$ is the partition obtained from $I$ by switching the numbers
$i$ and $j$.

\vsk.2>
Let $\C(\zz,h)$ be the algebra of rational functions in $\zzz,h$.
The map
\beq\label{eqn:loc2}
\Loc\ :\ K_T(\Fla)\ox\C(\zz,h) \xrightarrow{\ \ \cong\ \ }
\oplus_{I\in\Il} K_T(x_I)\ox\C(\zz,h)
\eeq
is an isomorphism.

\subsection{K-theory fundamental class of \;$\Om_{\si,\:I}$ \>at \,$x_I$}
\label{sec:edefs}
Define the following classes
\vvn.3>
\beq
\label{ehor}
e_{\si,\:I\<,+}^{\hor}=\,\prod_{k<l}\>\prod_{\si(a)\in I_k}\<
\prod_{\satop{\si(b)\in I_l}{b<a}}\!(1-z_{\si(a)}/z_{\si(b)})\,,
\kern1.2em
e_{\si,\:I\<,-}^{\hor}=\,\prod_{k<l}\>\prod_{\si(a)\in I_k}\<
\prod_{\satop{\si(b)\in I_l}{b>a}}\!(1-z_{\si(a)}/z_{\si(b)})\,,\kern-.6em
\eeq
\beq
\label{evert}
e_{\si,\:I\<,+}^{\vert}=\,\prod_{k<l}\,\prod_{\si(a)\in I_k}\<
\prod_{\satop{\si(b)\in I_l}{b>a}}\!(1-hz_{\si(b)}/z_{\si(a)})\,,
\kern1.2em
e_{\si,\:I\<,-}^{\vert}=\,\prod_{k<l}\,\prod_{\si(a)\in I_k}\<
\prod_{\satop{\si(b)\in I_l}{b<a}}\!(1-hz_{\si(b)}/z_{\si(a)})\,,\kern-.6em
\eeq
in \,$K_T(x_I)=\Czh$\,. We also set
\,$e_{\si,\:I}=\:e_{\si,\:I\<,-}^{\hor}\>e_{\si,\:I\<,-}^{\vert}$\,.

\vsk.2>
Recall that if \,$\C^\times\!$ acts on a line \,$\C$ \>by
\,$\al\cdot x=\al^rx$,
then the \,$\C^\times\<$-equivariant Euler class of the line bundle
$\C \to \{0\}$ is $e(\C\to \{0\})=1-z^r \in K_{\C^\times}($point$)=\C[z^{\pm1}]$.
Thus standard knowledge on the tangent bundle of flag manifolds imply that
\vvn.2>
\beq\label{eqn:einter1}
e(T\Om_{\si,\:I}|_{x_I})=\,e_{\si,\:I\<,+}^{\hor}\,, \qquad
e(\nu(\Om_{\si,\:I}\subset\Fla)|_{x_I})=\,e_{\si,\:I\<,-}^{\hor}\,,
\vv.2>
\eeq
where \,$\nu(A\subset B)$ means the normal bundle of a submanifold $A$
in the ambient manifold $B$, and $\xi|_x$ means the restriction of
the bundle $\xi$ over the point $x$ in the base space.
Therefore we also have
\vvn-.2>
\be
e(C\Om_{\si,\:I}|_{x_I})\>=\>e_{\si,\:I\<,+}^{\vert}\,,\qquad
e((\pi^{-1}(\Om_{\si,\:I})-C\Om_{\si,\:I})|_{x_I})\>=\>e_{\si,\:I\<,-}^{\vert}\,,
\vv.2>
\ee
where $C\Om_{\si,\:I}$ and $\pi^{-1}(\Om_{\si,\:I})$ are
considered bundles over $\Om_{\si,\:I}$.
Now consider \,$C\Om_{\si,\:I}$ \,as a(n open) submanifold of \,$\tfl$\>.
Then we obtain
\vvn.2>
\beq\label{eqn:enu}
e(\nu( C\Om_{\si,\:I} \subset\tfl)|_{x_I})\,=
\,e^{\hor}_{\si,I,-}\>e^{\vert}_{\si,I,-}\>=\,e_{\si,\:I}\,.
\vv.2>
\eeq

\section{Axiomatic definition of the $\ka_{\si,\:I}$ classes}
\label{sec:axiomatic}

\subsection{Main result}
\label{sec main result}

In this section we phrase a theorem that axiomatically defines
some special classes in $K_T(\Fla)$.

\vsk.2>
Define the {\it polarization} of \,$\si\in S_n$\,, \,$I\?\in\Il$\,, to be
\vvn.2>
\beq
\label{PsiI}
P_{\si,\:I}\,=\,\prod_{k<l}\,\prod_{\si(a)\in I_k}\<
\prod_{\satop{\si(b)\in I_l}{b>a}}\!\bigl(-\:z_{\si(b)}/z_{\si(a)}\bigr)\,.
\eeq
Observe that $P_{\si,\:I}$ is an invertible element of $K_T(x_I)$.
It is the inverse of the ``top'' term of $e_{\si,\:I\<,-}^{\hor}$. In particular,
the number of $-z_{\si(b)}/z_{\si(a)}$ factors is the codimension of
$\Om_{\si,\:I}$ in $\Fla$, see \Ref{eqn:einter1}\:. The quantity
\vvn.2>
\beq
\label{Pe}
P_{\si,\:I}\;e_{\si,\:I}\,=\,\prod_{k<l}\,\prod_{\si(a)\in I_k}\!
\biggl(\>\prod_{\satop{\si(b)\in I_l}{b<a}}\!(1-hz_{\si(b)}/z_{\si(a)})\!
\prod_{\satop{\si(b)\in I_l}{b>a}}\!(1-z_{\si(b)}/z_{\si(a)})\biggr)
\vv-.1>
\eeq
will play a role below.

\vsk.2>
For a Laurent polynomial $f\in \C[\zz^{\pm1}]$ let $N(f)\subset\R^n$
be the Newton polygon of $f$, that is the convex hull of the points
$m\in \Z^n\subset\R^n$ such that the coefficient of $\prod z_a^{m_a}$ in
$f$ is not 0. For $I\?\in\Il$, consider the linear map $\phi_I:\R^n\to \R$ defined by
$$\phi_I(\epsilon_a)=k \ \text{if} \ a\in I_k.$$
We will study the convex set (closed interval) $\phi_I(N(f)) \subset\R$ for
certain $f$'s. For example $\phi_{(\{1\},\{2\})}(N(1-z_2/z_1))=[0,1]$. Observe that
\beq
\label{eqn:phiEeI}
\phi_I(N(P_{\si,\:I}\,e_{\si,\:I}))\,=\,\Bigl[\:0\>,
\sum_{1\leq k<l\leq N}\!\la_k\:\la_l \>(l-k)\>\Bigr]
\eeq
is independent of $\si$ and $I$; it only depends on $\bla$.
Define an element $f \in K_T(x_I)$ to be {\em $I$-small} if
\beq
\label{eqn:Ismall}
\phi_I(N(f)) \subset
\Bigl[\:0\>,\sum_{1\leq k<l\leq N}\!\la_k\:\la_l \>(l-k)-1\>\Bigr]\,.
\eeq
Note that in our Newton polygon considerations $h\in K_T(x_I)$ is considered
a constant, not a variable.

\begin{thm}
\label{thm:axiomatic}
Let \,$n,N,\bla$ \,be as above, and \,$\si\in S_n$\>.
Then for any \,$I\?\in\Il$\>, there exists a unique element
\,$\ka_{\si,\:I}\in K_T(\tfl)$ satisfying the following conditions:
\renewcommand{\theenumi}{\Roman{enumi}}
\begin{enumerate}
\item{} $\,\ka_{\si,\:I}|_{x_J}$ is divisible by \,$e^{\vert}_{\si,\:J\<,-}$
for all \,$J$;
\item{} $\,\ka_{\si,\:I}|_{x_I}= P_{\si,\:I}\,\:e_{\si,\:I}$\>;
\item{} \label{case:small}
$\,\ka_{\si,\:I}|_{x_J}$ is \,$J$-small if \,$J\ne I$.
\end{enumerate}
\end{thm}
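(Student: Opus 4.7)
The plan is to split the theorem into two parts: uniqueness, which is a fixed-point localization argument driven by the Newton polygon bound in condition (III), and existence, which requires producing an explicit formula. For uniqueness, I would reduce via equivariant localization to studying a tuple of Laurent polynomials $(\ka|_{x_J})_{J\in\Il}$ satisfying the standard compatibility relations (divisibility of $\ka|_{x_J}-\ka|_{x_{s_{ij}(J)}}$ by $1-z_i/z_j$), and show that if $\ka$ satisfies (I) and (III) together with the homogeneous version $\ka|_{x_I}=0$ of axiom (II), then $\ka=0$. By linearity this implies uniqueness in the inhomogeneous case.

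The mechanism of the uniqueness argument would be an induction on $J\in\Il$ with respect to the partial order $\leq_\si$, starting from $J=I$ as the anchor. At each step, (I) allows one to write $\ka|_{x_J}=e^{\vert}_{\si,J,-}\:g_J$ for a Laurent polynomial $g_J$, and (III) bounds $\phi_J(N(g_J))$ strictly inside the maximal interval \Ref{eqn:phiEeI}. The localization compatibility then expresses $g_J$ in terms of $g_{J'}$ for fixed points $J'=s_{ij}(J)$ that are lower in $\leq_\si$, and these vanish by the inductive hypothesis; combined with the strict Newton polygon inequality this forces $g_J=0$. The structure of the $\leq_\si$-ordering described in Section~\ref{sec:flagvar} should ensure that this induction reaches every $J$.

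For existence, the plan is to exhibit $\ka_{\si,I}$ explicitly using the trigonometric weight functions $W_{\si,I}(\GG;\zz,h)$ of \cite{TV1,TV3}, introduced in Section~\ref{sec:weight functions}. As a symmetric Laurent polynomial in $\GG$, each $W_{\si,I}$ represents a class in $K_T(\tfl)$ via the presentation \Ref{Hrel}, and one verifies (I)--(III) separately. Axiom (II) is a diagonal evaluation: computing $\Loc_I(W_{\si,I})$ by the substitution \Ref{LocI} should give exactly $P_{\si,I}\,e_{\si,I}$. Axiom (I), the divisibility of $\Loc_J(W_{\si,I})$ by $e^{\vert}_{\si,J,-}$, follows from inspecting the common vertical factors in the sum-over-filled-tables presentation of Section~\ref{sec comb}. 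Axiom (III) is the most substantive and is where that combinatorial presentation is genuinely needed: one estimates the Newton polygon of each table contribution, sums them, and shows that the upper endpoint of $\phi_J$ drops strictly when $J\ne I$.

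I expect the strict Newton polygon bound in (III) to be the principal obstacle. Elementary factor-counting in $W_{\si,I}$ naively yields the upper endpoint $\sum_{k<l}\la_k\la_l(l-k)$, matching the non-strict interval \Ref{eqn:phiEeI} but not the strict one \Ref{eqn:Ismall}. Obtaining the required strict drop for $J\ne I$ must come from identifying a cancellation among the table summands in Section~\ref{sec comb} that only occurs in the off-diagonal specialization $\Loc_J$, $J\ne I$; reading off this cancellation from the combinatorics, rather than from the closed-form expression of the weight function alone, is the delicate point.
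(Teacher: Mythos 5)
Your outline has the right ingredients --- localization plus the Newton-polygon bound for uniqueness, weight functions for existence --- but both halves have a concrete defect. In the uniqueness argument the induction runs in the wrong direction. Killing $\om|_{x_J}$ via $J$-smallness requires divisibility by the full product $e_{\si,J}=e^{\vert}_{\si,J,-}\,e^{\hor}_{\si,J,-}$: axiom (I) supplies the vertical part, and each horizontal factor $1-z_{\si(a)}/z_{\si(b)}$ of $e^{\hor}_{\si,J,-}$ (with $\si(a)\in J_k$, $\si(b)\in J_l$, $k<l$, $a<b$) must be extracted from the $\Loc$-compatibility relation with the partner fixed point $U=s_{\si(a),\si(b)}(J)$, which satisfies $J<_\si U$. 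You therefore need vanishing of $\om$ at fixed points \emph{above} $J$, not below. The paper accordingly takes the $\preceq$-maximal $J$ with $\om|_{x_J}\neq 0$, for a total refinement $\preceq$ of $\leq_\si$. Your version, appealing to $J'$ lower than $J$, would yield divisibility only by $e^{\hor}_{\si,J,+}$; but $\phi_J\bigl(N(e^{\vert}_{\si,J,-}\,e^{\hor}_{\si,J,+})\bigr)$ is an interval symmetric about $0$ whose width depends on $J$ and vanishes when $J=\si(\Imin)$, so it can sit inside the $J$-smallness window of~\Ref{eqn:Ismall} together with a nonzero cofactor, and no contradiction results. Moreover $I$ is in general neither $\leq_\si$-maximal nor $\leq_\si$-minimal, so anchoring the induction at $J=I$ does not give a usable base case.

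In the existence argument you propose $\ka_{\si,I}=[W_{\si,I}]$, but the correct class is $\ka_{\si,I}=[\Wt_{\si,I}]$ with $\Wt_{\si,I}=W_{\si,I}/E$ and $E$ as in~\Ref{Ela}. Axiom (II) already fails for $[W_{\si,I}]$: Lemma~\ref{lem:princ} gives $\Wt_{\si,I}(\zz_I,\zz,h)=P_{\si,I}\,e_{\si,I}$, hence $\Loc_I[W_{\si,I}]=E(\zz_I,h)\,P_{\si,I}\,e_{\si,I}$, and $E(\zz_I,h)\ne 1$ as soon as $\la^{\{1\}}>0$. With the $E$-normalization one must also verify that each restriction $\Wt_{\si,I}(\zz_J,\zz,h)$ is a Laurent polynomial, and that the divisibilities of $W_{\si,I}(\zz_J,\zz,h)$ by $E(\zz_J,h)$ and by $e^{\vert}_{\si,J,-}$ arise from disjoint positions in the filled tables, so the quotient $\Wt$ still carries the vertical divisibility required by axiom (I); this is the content of Lemmas~\ref{prop:W divisible by E} and~\ref{lem:div_e_ver}. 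Your reading of axiom (III) as the delicate combinatorial step is on target: the strict Newton-polygon drop is realized in Lemma~\ref{lem:W_small} by showing that for $J\ne I$ every nonzero filled-table term of $W_{\si,I}(\zz_J,\zz,h)$ omits at least one factor of the form $1-z_b/z_a$ or $1-hz_b/z_a$ with $\phi_J(b)>\phi_J(a)$ that appears in the unique nonzero term of $W_{\si,J}(\zz_J,\zz,h)$.
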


\begin{rem} \rm
From the proof of Theorem \ref{thm:axiomatic} it will turn out that
the condition
\begin{enumerate}
\item[(0)] $\,\ka_{\si,\:I}|_{x_J}\?=\:0$ if \,$J\not\leq_\si I$
\end{enumerate}
also holds.
Condition (0) together with property (I) is a ``localized'' version of
the statement
\begin{enumerate}
\item[(\Ipr)] $\,\ka_{\si,\:I}$ is supported on \;$\Slope_{\si,\:I}$\>.
\end{enumerate}
Indeed, if a class is supported on $\Slope_{\si,\:I}$ then (0) and (I) follow,
see Section~\ref{sec:edefs}. Condition (\Ipr) appeared in \cite{MO} and
\cite{RTV}.
\end{rem}

First we prove uniqueness in Section~\ref{sec:uni} following the arguments
of \cite{MO} in which we replace the property of smallness of the degrees of
restrictions $\ka_{\si,\:I}|_{x_J}$ by the smallness of their Newton polygons
$N(\ka_{\si,\:I}|_{x_J})$.

One novelty of our treatment is that our axioms (I)-(III) are all {\em local}
properties of $\ka_{\si,I}$ (unlike (\Ipr)). That is, they are properties of
fix point restrictions. Another key novelty of our treatment is
Section~\ref{sec: existence} where we will show existence by giving a {\em
formula} for $\ka_{\si,\:I}$. Moreover, this formula for $\ka_{\si,\:I}$ is
a version of the trigonometric weight functions that had appeared in
\cite{TV1}\,--\,\cite{TV3} in hypergeometric solutions of \qKZ/ equations.

\subsection{Proof of uniqueness in Theorem \ref{thm:axiomatic}}
\label{sec:uni}

Suppose $\ka_{\si,\:I}$ and $\ka'_{\si,\:I}$ both satisfy the conditions.
Let $\om$ be their difference.

\vsk.2>
Refine the partial order\ $\leq_\si$
to a total order $\preceq$ on $\Il$.

Assume $\om\ne0$. Since the $\Loc$ map of Section~\ref{sec:loc} is injective
there is at least one $J\in \Il$ such that the restriction $\om|_{x_J}\ne0$.
Let $J$ be the $\preceq$-largest such element of $\Il$.

\vsk.2>
We claim that $\om|_{x_J}$ is
\begin{itemize}
\item{} $J$-small,
\item{} divisible by \,$e^{\vert}_{\si,\:J\<,-}$\,,
\item{} divisible by \,$e^{\hor}_{\si,\:J\<,-}$\,.
\end{itemize}
The first two properties follow explicitly from conditions (I), (II), (III).

To prove the third property choose a pair $a<b$ such that
$\si(a)\in J_k, \si(b)\in J_l$ and $k<l$. Let
$U=s_{\si(a),\si(b)}(J)\in \Il$. From
the definition of $\leq_\si$ it follows that $J<_\si U$ and hence $J
\prec U$. The choice of $J$ implies that $\om|_{x_{U}}=0$. Differences of
components of the $\Loc$ map satisfy divisibility conditions, see Section
\ref{sec:loc}, hence we obtain that $\om|_{x_J}$ is divisible by
$1-z_{\si(a)}/z_{\si(b)}$. This argument holds for all $a,b$ with
$\si(a)\in J_k, \si(b)\in J_l$, $k<l$ hence we proved the third claim.

\vsk.2>
A Laurent polynomial divisible by $e^{\vert}_{\si,\:J\<,-}$ and $e^{\hor}_{\si,\:J\<,-}$ must be divisible by their product too. Comparing \Ref{eqn:phiEeI} and \Ref{eqn:Ismall} we see that a
$J$-small class divisible by $e_{\si,\:J}=e^{\hor}_{\si,\:J\<,-}e^{\vert}_{\si,\:J\<,-}$ must be 0. Hence $\om|_{x_J}$ is 0. This is a contradiction, which proves that $\om=0$.

\section{Trigonometric weight functions}
\label{sec:weight functions}

\subsection{Definition}

Let $n,N\in \N$ and let $\bla\in \N^N$ be such that $\sum_{k=1}^N \la_k=n$.
Set \,$\la^{(k)}\<=\la_1\lsym+\la_k$ \,for $k=0\lc N$ and
$\la^{\{1\}}\<=\la^{(1)}\lsym+\la^{(N-1)}$\>.

\vsk.3>
Recall that the set of partitions $I=(I_1\lc I_N)$ of $\{1\lc n\}$
with $|I_k|=\la_k$ is denoted by $\Il$. For $I\?\in \I_\bla$ we will use the notation
$\bigcup_{a=1}^k I_a = \{i^{(k)}_1<\ldots< i^{(k)}_{\la^{(k)}}\}.$

\vsk.2>
Consider variables $t^{(k)}_a$ for $k=1\lc N$, $a=1\lc\la^{(k)}$,
where $t^{(N)}_a=z_a, a=1\lc n$. Denote
$t^{(j)}=(t^{(j)}_k)_{k\leq\la^{(j)}}$ and \,$\TT=(t^{(1)}\lc t^{(N-1)})$.

\vsk.2>
For $I\?\in\Il$, define the {\it trigonometric weight function}
\beq
\label{WI}
W_I(\ttt,\zz,h)\,=\,(1-h)^{\la^{\{1\}}}
\Sym_{\>t^{(1)}} \ldots \Sym_{\>t^{(N-1)}} U_I\,,
\eeq
where
\beq
\label{UI}
U_I\,=\,\prod_{k=1}^{N-1}\,\prod_{a=1}^{\la^{(k)}}\,\Bigl(\!
\prod_{\satop{c=1}{i^{(k+1)}_c<i^{(k)}_a}}^{\la^{(k)}}\!\!
\left( 1- {h t^{(k+1)}_c}/{t^{(k)}_a} \right)\!
\prod_{\satop{c=1}{i^{(k+1)}_c>i^{(k)}_a}}^{\la^{(k)}}\!\!
\left( 1- { t^{(k+1)}_c}/{t^{(k)}_a} \right)
\prod_{b=a+1}^{\la^{(k)}}\? \frac{ 1- {ht^{(k)}_b}/{t^{(k)}_a } }
{ 1- {t^{(k)}_b}/{t^{(k)}_a } }\,\Bigr)\>,\kern-1em
\eeq
and \;$\Sym_{\>t^{(k)}}$ is the symmetrization with respect to the variables
$t^{(k)}_1\lc t^{(k)}_{\la^{(k)}}$,
\vvn.2>
\be
\Sym_{\>t^{(k)}} f\bigl(t^{(k)}_1\lc t^{(k)}_{\la^{(k)}}\bigr)=\sum_{\si\in S_{\la^{(k)}}} f\bigl(t^{(k)}_{\si(1)}\lc t^{(k)}_{\si(\la^{(k)})}\bigr).
\ee
The trigonometric weight functions are Laurent polynomials in the $\ttt,\zz,h$
variables since the factors $1-{t^{(k)}_b}/{t^{(k)}_a}$ in the denominator
cancel out in the symmetrization.

\vsk.2>
For $\si\in S_n$ and $I\?\in\Il$, define the {\it trigonometric weight function}
\vvn.3>
\beq
\label{Wsi}
W_{\si,\:I}(\ttt,\zz, h)\,=\,W_{\si^{-1}(I)}(\ttt, z_{\si(1)}\lc z_{\si(n)},h)\,,
\vv.3>
\eeq
where \,$\si^{-1}(I)=\bigl(\si^{-1}(I_1)\lc\si^{-1}(I_N)\bigr)$.
Hence, \,$W_I=W_{\id\:,\:I}$.

\begin{rem}
The weight functions were described in \cite{TV1} to solve the \qKZ/ equations
and describe eigenvectors of the Hamiltonians of the \XXZ/-type integrable
models. Namely, the \,$\CNnl$\<-valued solutions of the \qKZ/ equations have
the form
\vvn.1>
\be
I(\zz)\,=\,\int\,\Phi(\ttt,\zz,h)\,
\sum_{I\in\Il}\:W_{\id\:,\:I}(\ttt,\zz,h)\;v_I\,d\:\ttt
\vv-.1>
\ee
where \,$\Phi(\ttt,\zz,h)$ \,is a suitable scalar (master) function.
If \;$\ttt$ \,satisfies Bethe ansatz equations, the vector
\,$\sum_{I\in\Il}\!W_{\id\:,\:I}(t,z,h)\,v_I$ \,becomes an eigenvector of
the Hamiltonians of the \XXZ/-type model. Convenient formulae for the weight
functions were suggested in \cite{TV3}. The weight functions
\,$W_{\id\:,\:I}$\,, \,$I\<\in\Il$\>, in this paper are Laurent polynomials.
They differ from the corresponding weight functions in \cite{TV3}\:, that are
rational functions, by a common factor independent of \,$I$\>.
\end{rem}

\subsection{Remarks}
\label{sec rem}
In this subsection we first give an alternative formula for the weight
function, and then define two modified versions. Strictly speaking none of
these are necessary in the rest of the paper, they just help the reader to get
familiar with weight functions.

Consider variables $u^{(k)}_a$ for $k=1\lc N$, $a=1\lc\la^{(k)}$. Define
\vvn.2>
\begin{align*}
U'_I\,& {}=\,
\prod_{k=1}^{N-1}\,\prod_{a,b=1}^{\la^{(k)}}
\bigl(1-{t^{(k)}_a}/{u^{(k)}_{b}}\bigr)^{-1}\times{}
\\
& {}\>\times\,
\prod_{k=1}^{N-1}\,\prod_{a=1}^{\la^{(k)}}\,\biggl(\!
\prod_{\satop{c=1\vp|}{i^{(k+1)}_c\?<\:i^{(k)}_a}}^{\la^{(k)}}\!\!\!
(1-{h u^{(k+1)}_c}/{u^{(k)}_a})\!\!
\prod_{\satop{c=1\vp|}{i^{(k+1)}_c\<>\:i^{(k)}_a}}^{\la^{(k)}}\!\!\!
(1-{u^{(k+1)}_c}/{u^{(k)}_a})
\prod_{b=a+1}^{\la^{(k)}} \frac{ 1- {hu^{(k)}_b}/ {u^{(k)}_a }}
{1- {u^{(k)}_a}/ {u^{(k)}_b }}\>\biggr)\>.
\end{align*}
Replace in $U'$ each variable $u^{(N)}_a$ with $z_a$, $a=1\lc n$.
The obtained function $U''$ depends on the variables $\ttt,\zz,h$ and
$u^{(k)}_a$, $ k=1\lc N-1$, $a=1\lc\la^{(k)}$.

\begin{thm}
We have
\vvn-.6>
\beq
\label{Res formula}
W_I(\ttt, \zz, h)=(h-1)^{\la^{\{1\}}} \Res\Bigl( U''_I \cdot
\prod_{k=1}^{N-1}\prod_{a=1}^{\la^{(k)}} {du^{(k)}_a}/{u^{(k)}_a} \Bigr),
\vv.2>
\eeq
where \,$\Res$ is the iterated application of the
\,$\Res_{u^{(k)}_a=0}+\Res_{u^{(k)}_a=\infty}$
operations for all $k=1\lc N-1$, \;$a=1\lc\la^{(k)}$
(in arbitrary order).
\end{thm}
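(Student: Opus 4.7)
My plan is to compute the iterated residue level by level and match the result with the symmetrization in the definition of $W_I$. Fix $k$ and look at the dependence of $U''_I$ on the variables $u^{(k)}_1,\ldots,u^{(k)}_{\la^{(k)}}$: it enters through the ``Cauchy'' factor $\prod_{a,b}(1-t^{(k)}_a/u^{(k)}_b)^{-1}$, the diagonal factor $\prod_{b>a}(1-hu^{(k)}_b/u^{(k)}_a)/(1-u^{(k)}_a/u^{(k)}_b)$, and polynomial-type factors coming from the interaction with $u^{(k\pm1)}$. Only the first two introduce nonzero finite poles in $u^{(k)}_a$, so by the standard identity $\Res_{u=0}+\Res_{u=\infty}=-\sum_{\alpha}\Res_{u=\alpha}$ summed over finite nonzero poles, the iterated residue equals, up to a total sign $(-1)^{\la^{\{1\}}}$ accounting for the $(h-1)^{\la^{\{1\}}}/(1-h)^{\la^{\{1\}}}$ discrepancy, a sum of residues at the poles $u^{(k)}_b=t^{(k)}_a$ and $u^{(k)}_b=u^{(k)}_{b'}$.

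The heart of the argument is that only ``permutation'' configurations contribute, i.e.~those in which the $u^{(k)}_b$'s are sent bijectively to the $t^{(k)}_a$'s, via $u^{(k)}_b=t^{(k)}_{\sigma(b)}$ for some $\sigma\in S_{\la^{(k)}}$. For such a substitution the factor $(1-t^{(k)}_{\sigma(b)}/u^{(k)}_b)^{-1}\,du^{(k)}_b/u^{(k)}_b$ contributes $1$ to the residue at $u^{(k)}_b=t^{(k)}_{\sigma(b)}$, while the remaining Cauchy factors combined with the diagonal factor reproduce exactly the rational expression occurring inside $\Sym_{t^{(k)}}$ in the definition \Ref{UI} of $U_I$. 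Summing over $\sigma\in S_{\la^{(k)}}$ yields $\Sym_{t^{(k)}}$ at level $k$, and iterating across $k=1,\ldots,N-1$ reproduces \Ref{WI}.

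The main obstacle will be ruling out ``coincident'' residue contributions where two of the $u^{(k)}_b$'s collide at the same value, either directly through the diagonal pole $u^{(k)}_a=u^{(k)}_b$ or by being substituted to the same $t^{(k)}_a$. I expect these to cancel in pairs by a careful accounting that exploits the specific form $(1-hu^{(k)}_b/u^{(k)}_a)/(1-u^{(k)}_a/u^{(k)}_b)$ of the diagonal factor, which at coincident values produces a common factor $1-h$ balanced against a simple pole. Taking the residues in the reverse order $b=\la^{(k)},\la^{(k)}-1,\ldots,1$ within each level, and inducting on $\la^{(k)}$, should organize the cancellation. Alternatively, the identity may be recognized as a variant of the residue presentation of trigonometric weight functions from \cite{TV3}, in which case one only needs to verify that our Laurent-polynomial normalization matches by tracking the common factor $(h-1)^{\la^{\{1\}}}$.
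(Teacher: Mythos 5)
Your plan follows essentially the same route as the paper's printed proof: apply the Residue Theorem, $\Res_{u=0}+\Res_{u=\infty}=-\sum_{\text{finite nonzero poles}}\Res$, to each $u^{(k)}_a$, producing a signed sum over residue configurations; then argue that only permutation-type configurations survive; then match those with the symmetrization in \Ref{WI}. Your bookkeeping of the sign $(-1)^{\la^{\{1\}}}$ against $(h-1)^{\la^{\{1\}}}$ agrees with the paper's. So the overall structure is the same.

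Where you diverge from the paper is in how candidly the one non-trivial step is treated. You explicitly flag, as ``the main obstacle,'' the coincident-residue contributions: collisions through the diagonal pole $u^{(k)}_a=u^{(k)}_b$, and chains where two $u^{(k)}_b$'s are ultimately substituted to the same $t^{(k)}_c$. You correctly observe that these do not fit the naive ``residue of a simple pole at $u^{(k)}_b=t^{(k)}_{\sigma(b)}$'' picture (they produce double poles after the first substitution), and you propose a pairing/ordering cancellation argument with induction, or a fallback appeal to the residue presentation in \cite{TV3}, without carrying either through. The paper's own proof is even terser here: it asserts that non-permutation terms vanish and attributes this to ``the factors $(1-u^{(k)}_a/u^{(k)}_b)$ in the numerator of $U'_I$,'' which as written does not match the displayed formula for $U'_I$ — the factor $1-u^{(k)}_a/u^{(k)}_b$ appears in the \emph{denominator} of the diagonal piece, so it creates the poles rather than kills them. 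So neither your plan nor the paper's printed argument actually pins down this step.

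To upgrade your plan to a complete proof you would need to carry out the cancellation you sketch, and while doing so you should also verify the claim, implicit in the theorem's statement, that the iterated $(\Res_0+\Res_\infty)$ is independent of the order in which the $u^{(k)}_a$ are processed; this independence is not obvious once the $u^{(k)}_a=u^{(k)}_b$ poles are in play and deserves a separate argument (or a fixed, carefully chosen order). As a reconstruction of the paper's approach your proposal is faithful and, if anything, more forthcoming about where the real work lies; but it does not yet close the gap that the paper's terse remark also leaves open.
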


\begin{proof}
Consider the right-hand side of \Ref{Res formula}\:. Applying the Residue Theorem
for each \,$u^{(k)}_a$, we obtain that this is equal to
\vvn-.5>
\be
(h-1)^{\la^{\{1\}}} (-1)^{\la^{\{1\}}} \sum_{\si_1\lc\si_{N-1}} \Res_{u^{(k)}_a=t^{(k)}_{\si(a)}}
\Bigl( U''_I \cdot \prod_{k=1}^{N-1}\prod_{a=1}^{\la^{(k)}} {du^{(k)}_a}/{u^{(k)}_a} \Bigr),
\vv.1>
\ee
where $\si_k$ is a map $\{1\lc\la^{(k)}\} \to \{1\lc\la^{(k)}\} $ for each $k$.
First observe that the term corresponding to $\si_1\lc\si_{N-1}$ is zero unless each of
the $\si_k$'s are permutations --- this is essentially due to the factors $(1-u^{(k)}_a/u^{(k)}_b)$
in the numerator of $U'_I$. The term corresponding to permutations $\si_k$ are exactly
the analogous terms in the definition of weight functions.
\end{proof}

The following modified version of weight functions are sometimes useful.

\begin{defn}
For $\bla\in \N^N$ define $q(\bla)$ to be the greatest $q$ with \>$\la_q>0$.
Let $M_1W_I$ be obtained from $W_I$ by substituting $t^{(k)}_i=z_i$ for
all \,$k\geq q(\bla)$ \,and all \,$i$\>.
\end{defn}

\begin{defn}
Consider variables $\gm_{k,a}$ for $k=1\lc q(\bla)-1$, $a=1\lc\la_k$.
Let $M_2W_I$ be obtained from $M_1W_I$ by carrying out the substitution
\be
( t^{(k)}_1\lc t^{(k)}_{\la^{(k)}})
\ \ \mapsto \ \
(\gm_{1,1}\lc\gm_{1,\la_1},
\gm_{2,1}\lc\gm_{2,\la_2},
\ldots,
\gm_{k,1}\lc\gm_{k,\la_k})
\ee
for all \,$k<q(\bla)$.
\end{defn}

Here are some examples of weight functions.
\begin{itemize}
\item For $N=2$, $n=2$, $\bla=(1,1)$, we have
\be
W_{(\{1\},\{2\})}=(1-h)( 1-{z_2}/{t^{(1)}_1}), \qquad
W_{(\{2\},\{1\})}=(1-h)( 1-{hz_1}/{t^{(1)}_1}).
\ee
The residue formula \Ref{Res formula} gives
\begin{align*}
W_{(\{1\},\{2\})} \,&{}=\,
(h-1)\Bigl( \Res_{u^{(1)}_1=0}+\Res_{u^{(1)}_1=\infty} \Bigr)
\Bigl( \frac{1-z_2/u^{(1)}_1}{1-t^{(1)}_1/u^{(1)}_1} \frac{du^{(1)}_1}{u^{(1)}_1} \Bigr),
\\
W_{(\{2\},\{1\})} \,&{}=\,
(h-1)\Bigl( \Res_{u^{(1)}_1=0}+\Res_{u^{(1)}_1=\infty} \Bigr)
\Bigl( \frac{1-hz_1/u^{(1)}_1}{1-t^{(1)}_1/u^{(1)}_1} \frac{du^{(1)}_1}{u^{(1)}_1} \Bigr).
\end{align*}

\item For $N=2$, $\bla=(1,n-1)$, we have
\be
W_{(\{i\},\{1\lc n\}-\{i\})}\,=\,(1-h)\,
\prod_{j=1}^{i-1}\,( 1-{hz_j}/{t^{(1)}_1})\,
\prod_{j=i+1}^{n}\,( 1-{z_j}/{t^{(1)}_1})\,.
\ee
\item
Let $\bla=(0\lc 0,1,0\lc 0,1,0\lc 0)\in \N^N$, where the
nonzero coordinates are at positions $i$ and $j$, $i<j$. The set $\Il$
consists of two elements
\begin{align*}
& I=(\{\}\lc\{\}, \{1\}, \{\}\lc\{\}, \{2\},\{\}\lc\{\}),
\\
& J=(\{\}\lc\{\}, \{2\}, \{\}\lc\{\}, \{1\},\{\}\lc\{\}).
\end{align*}
We have
\begin{align*}
M_1W_I \,&{}=\,
(1-h)^{2N-i-j}
( 1- {hz_2}/{z_1})^{N-j} ( 1- {hz_1}/{z_2})^{N-j}
( 1-{z_2}/{t^{(j-1)}_1}),
\\
M_1W_J \,&{}=\,
(1-h)^{2N-i-j}
( 1- {hz_2}/{z_1})^{N-j} ( 1- {hz_1}/{z_2})^{N-j}
( 1-{hz_1}/{t^{(j-1)}_1}),
\\
M_2W_I\,&{}=\,
(1-h)^{2N-i-j}
( 1- {hz_2}/{z_1})^{N-j} ( 1- {hz_1}/{z_2})^{N-j}
( 1-{z_2}/{\gm_{i,1}}),
\\
M_2W_J\,&{}=\,
(1-h)^{2N-i-j}
( 1- {hz_2}/{z_1})^{N-j} ( 1- {hz_1}/{z_2})^{N-j}
( 1-{hz_1}/{\gm_{i,1}}).
\end{align*}

\item We have
\begin{align*}
M_2W_{(\{1\},\{2\},\{3\})}\,= {}& \,
(1-h)^3\>(1-{z_2}/{\gm_{1,1}})\>(1-{z_3}/{\gm_{1,1}})\times{}
\\[3pt]
&{}\times(1-{z_3}/{\gm_{2,1}})\>(1-{hz_1}/{\gm_{2,1}})\>
(1-{h\gm_{2,1}}/{\gm_{1,1}})\,,
\end{align*}
although $W_{(\{1\},\{2\},\{3\})}$ does not factor into analogous simple
factors.
\end{itemize}

\section{Combinatorics of terms of the weight function}
\label{sec comb}

In this section we show a diagrammatic interpretation of the rich combinatorics
encoded in the weight function. Let $I\?\in\Il$. Consider a table with $n$ rows
and $N$ columns. Number the rows from top to bottom and number the columns from
left to right. Certain boxes of this table will be distinguished, as follows.
In the first column distinguish boxes in the $i$'th row if $i\in I_1$, in the
second column distinguish boxes in the $i$'th row if $i\in I_1 \cup I_2$, etc.
This way all the boxes in the last column will be distinguished since
$I_1\lsym\cup I_N=\{1\lc n\}$\>.

\vsk.2>
Now we will define fillings of the tables by putting various variables in
the distinguished boxes. First, put the variables $\zzz$ into the last
column from top to bottom. Now choose permutations
$\si_1\in S_{\la^{(1)}}$$\>, \,\si_2\in S_{\la^{(2)}}$\>, $\ldots$,
$\si_{N-1}\in S_{\la^{(N-1)}}$\>. Put the variables
$t^{(k)}_{\si_k(1)}\lc t^{(k)}_{\si_k(\la^{(k)})}$
in the $k$'th column from top to bottom.

\vsk.2>
Each such filled table will define a rational function as follows. Let $u$ be
a variable in the filled table in one of the columns $1,\ldots,N-1$. If $v$ is
a variable in the next column, but above the position of $u$ then consider
the factor $1-hv/u$ (`type-1 factor'). If $v$ is a variable in the next column,
but below the position of $u$ then consider the factor $1-v/u$ (`type-2
factor'). If $v$ is a variable in the same column, but below the position
of $u$ then consider the factor $(1-hv/u)/(1-v/u)$ (`type-3 factor').
The rule is illustrated in the following figure.
\[
\begin{tabular}{c|c}
& \\
\cline{2-2}
& \multicolumn{1}{|c|}{$v$} \\
\cline{2-2}
& \\
\cline{1-1}
\multicolumn{1}{|c|}{$u$} & \\
\cline{1-1}
&
\end{tabular} \qquad\qquad\qquad
\begin{tabular}{c|c}
& \\
\cline{1-1}
\multicolumn{1}{|c|}{$u$} & \\
\cline{1-1}
& \\
\cline{2-2}
& \multicolumn{1}{|c|}{$v$} \\
\cline{2-2}
&
\end{tabular} \qquad\qquad\qquad
\begin{tabular}{|c|}
\\
\cline{1-1}
$u$ \\
\cline{1-1}
\\
\cline{1-1}
$v$ \\
\cline{1-1}
\\
\end{tabular}
\]
\[(1-hv/u) \qquad\qquad (1-v/u) \qquad\qquad \frac{(1-hv/u)}{(1-v/u)}\]
\[\text{type\:-1} \qquad\qquad\ \text{type\:-2} \qquad\qquad\ \text{type\:-3}\]
For each variable $u$ in the table consider all these factors and multiply them together. This is ``the term associated with the filled table''.

One sees that $W_I$ is the sum of terms associated with the filled tables corresponding to all choices $\si_1,\ldots, \si_{N-1}$. For example, $W_{\{2\},\{1\},\{3\}}$ is the sum of two terms associated with the filled tables
\[
\begin{tabular}{|c|c|c|}
\hline
& $t^{(2)}_1$ & $z_1$ \\
\hline
$t^{(1)}_1$ & $t^{(2)}_2$ & $z_2$ \\
\hline
& & $z_3$ \\
\hline
\end{tabular},\qquad\qquad
\begin{tabular}{|c|c|c|}
\hline
& $t^{(2)}_2$ & $z_1$ \\
\hline
$t^{(1)}_1$ & $t^{(2)}_1$ & $z_2$ \\
\hline
& & $z_3$ \\
\hline
\end{tabular}.
\]
The term corresponding to the first filled table is hence
\[
\underbrace{(1-ht^{(2)}_1/t^{(1)}_1)\>(1-hz_1/t^{(2)}_2)}_{type-1}\,
\underbrace{(1-z_2/t^{(2)}_1)\>(1-z_3/t^{(2)}_1)\>(1-z_3/t^{(2)}_2)}_{type-2}\,
\underbrace{\frac{(1-ht^{(2)}_2)}{(1-t^{(2)}_1)} }_{type-3}\;,
\]
and the term corresponding to the second filled table is
\[
\underbrace{(1-ht^{(2)}_2/t^{(1)}_1)\>(1-hz_1/t^{(2)}_1)}_{type-1}\,
\underbrace{(1-z_2/t^{(2)}_2)\>(1-z_3/t^{(2)}_2)\>(1-z_3/t^{(2)}_1)}_{type-2}\,
\underbrace{\frac{(1-ht^{(2)}_1)}{(1-t^{(2)}_2)} }_{type-3}\;.
\]
Note that variables in consecutive columns but in the same row do not produce
a factor.

\vsk.2>
In the next section we will substitute $z_i$'s into the $t^{(k)}_a$ variables
according to some rules. Thus we obtain terms corresponding to tables filled
with only $z_a$ variables (no $t^{(k)}_a$'s). If in such a substitution we have
a filled table containing
\begin{tabular}{c|c}
& \\
\cline{1-1}
\multicolumn{1}{|c|}{$z_a$} & \\
\cline{1-1}
& \\
\cline{2-2}
& \multicolumn{1}{|c|}{$z_a$} \\
\cline{2-2}
&
\end{tabular},
then the term corresponding to that table is 0. This phenomenon is behind the substitution lemmas of the next section.

\section{Properties of weight functions}
\label{Properties of weight functions}

\subsection{Substitutions}

Recall that for $I\?\in \I_\bla$ we use the notation
$\cup_{a=1}^k I_a = \{i^{(k)}_1<\ldots$ $< i^{(k)}_{\la^{(k)}}\}.$
For a function $f(\ttt,\zz,h)$,
we denote $f(\zz_I,\zz,h)$ the substitution under
\be
t^{(k)}_{a}=z_{i^{(k)}_a}\qquad \text{for} \qquad k=1\lc N,\ \ a=1\lc\la^{(k)}.
\ee
For a function $f(\GG\<,\zz,h)$,
we denote $f(\zz_I,\zz,h)$ the substitution under
\be \{\gm_{k,a} \ |\ a=1\lc\la_k\} \mapsto \{ z_a \ |\ a\in I_k \} ,
\ee
cf.~equivariant localization in Section~\ref{sec:loc}.

Observe that the various substitutions are set up in such a way that
\vvn.3>
\be
W_I(\zz_J,\zz,h)=M_1W_I(\zz_J,\zz,h)=M_2W_I(\zz_J,\zz,h).
\ee

Define
\vvn-.6>
\beq
\label{Ela}
E(\ttt,h)\,=\,\prod_{k=1}^{N-1}\,\prod_{a=1}^{\la^{(k)}}\,
\prod_{b=1}^{\la^{(k)}}\,\bigl(1-{h\:t^{(k)}_b}\?/{t^{(k)}_a}\bigr)\,.
\vv.2>
\eeq

\begin{lem}
\label{prop:W divisible by E}
For any \,$\si\in S_n$ and \,$I,J\in\Il$, the function
\,$W_{\si,\:I}(\zz_J,\zz,h)$ \,is divisible by \,$E(\zz_J,h)$
in the algebra of Laurent polynomials \,$\Czh$.
\end{lem}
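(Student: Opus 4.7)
The plan is to proceed in three steps. First, by~\Ref{Wsi}, both sides of the divisibility transform equivariantly under the relabeling of $\zz$ by $\si$, so the statement for arbitrary $\si$ reduces to the case $\si=\id$. Second, split the target into diagonal and off-diagonal parts: setting
\be
E^{\mathrm{off}}(\zz_J, h) \,=\, \prod_{k=1}^{N-1}\,\prod_{\satop{a,b=1}{a\neq b}}^{\la^{(k)}}\,\bigl(1 - h\:z_{i^{(k)}_b(J)}/z_{i^{(k)}_a(J)}\bigr),
\ee
one has $E(\zz_J, h) = (1-h)^{\la^{\{1\}}}\:E^{\mathrm{off}}(\zz_J, h)$. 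The prefactor $(1-h)^{\la^{\{1\}}}$ is explicit in~\Ref{WI}, so it only remains to show that $\Sym_{t^{(1)}}\ldots\Sym_{t^{(N-1)}} U_I \bigr|_{\ttt=\zz_J}$ is divisible by $E^{\mathrm{off}}(\zz_J,h)$.

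The main step is to establish divisibility by each off-diagonal factor $\bigl(1 - h\:z_{i^{(k)}_b(J)}/z_{i^{(k)}_a(J)}\bigr)$, which amounts to the vanishing of $\Sym U_I\bigr|_{\ttt=\zz_J}$ on the hyperplane $z_{i^{(k)}_a(J)} = h\:z_{i^{(k)}_b(J)}$. I plan to verify this summand-by-summand via the diagrammatic presentation of Section~\ref{sec comb}. Each summand corresponds to a choice of permutations $\si_1\lc\si_{N-1}$ and, after substitution, a table filled with $z$-values; let $a'$ (resp.\ $b'$) denote the column-$k$ position carrying $z_{i^{(k)}_a(J)}$ (resp.\ $z_{i^{(k)}_b(J)}$). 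Two mechanisms produce vanishing. When $a'<b'$, the intra-column type-3 numerator factor $(1 - h\:t^{(k)}_{b'}/t^{(k)}_{a'})$ restricts to $(1 - h\:z_{i^{(k)}_b(J)}/z_{i^{(k)}_a(J)})$, which is zero on the hyperplane. When $a'>b'$, the intra-column factor instead evaluates to $1 - h^2 \neq 0$, and the vanishing must come from an inter-column type-2 ``clash'' factor of the form $(1 - z_p/z_p)=0$, guaranteed by the containment $\{i^{(k)}_a(J)\}_a \subset \{i^{(k+1)}_c(J)\}_c$ together with the convention at the end of Section~\ref{sec comb}.

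The main obstacle lies in the rigorous treatment of the second mechanism: one has to check that the forced clash is actually of type-2 rather than type-1 (which would give the non-vanishing value $1 - h$) or skipped. Where the direct clash is not of type-2, I expect to rescue the argument by pairing summands differing by a transposition of a higher-column permutation $\si_{k'}$ and invoking a local identity of the form
\be
\Sym_{t_1, t_2}\Bigl[\,f(t_1, t_2)\>\frac{1 - h t_2/t_1}{1 - t_2/t_1}\,\Bigr]\biggr|_{t_1 = h t_2} \,=\, 0,
\ee
valid whenever $f$ inherits the appropriate symmetry from the remaining structure of $U_I$. The detailed combinatorial case analysis, tracking which rows of $I$ and which columns of $J$ meet on the hyperplane, is the technical heart of the proof.
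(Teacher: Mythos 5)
You take the same overall route as the paper — reduce to $\si=\id$, isolate the $(1-h)^{\la^{\{1\}}}$ prefactor, and argue term by term via the filled tables of Section~\ref{sec comb} — but your second mechanism is incorrect. When $z_{i^{(k)}_a(J)}$ sits below $z_{i^{(k)}_b(J)}$ in column $k$ (your case $a'>b'$), a nonzero summand is \emph{not} forced to contain a type-2 clash $(1 - z_p/z_p) = 0$; such summands are generically nonzero, and the containment $J_1\lsym\cup J_k\subset J_1\lsym\cup J_{k+1}$ does not make them vanish. The factor $1 - h z_{i^{(k)}_b(J)}/z_{i^{(k)}_a(J)}$ is nonetheless present, but it comes from an \emph{inter}-column type-1 pair between two \emph{distinct} variables: for a nonzero summand, the weak-monotonicity constraint places $z_{i^{(k)}_b(J)}$ in column $k+1$ strictly above $z_{i^{(k)}_a(J)}$ in column $k$, and that pair contributes precisely the type-1 factor $1 - h z_{i^{(k)}_b(J)}/z_{i^{(k)}_a(J)}$. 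Once you see this, both of your cases are resolved by factors literally present in each nonzero summand, and your contingency plan of pairing summands and invoking a two-variable symmetrization identity is unnecessary.

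There is also a secondary imprecision. Reducing ``divisibility by $E^{\mathrm{off}}(\zz_J,h)$'' to ``vanishing of $\Sym U_I\big|_{\ttt=\zz_J}$ on each hyperplane $z_{i^{(k)}_a(J)} = h\,z_{i^{(k)}_b(J)}$'' ignores multiplicities: if $p\ne q$ both lie in $J_1$, the factor $(1-hz_q/z_p)$ occurs in $E(\zz_J,h)$ once for each $k=1\lc N-1$, so divisibility requires vanishing to order $N-1$ on that hyperplane, not mere vanishing. The term-by-term analysis supplies exactly the needed multiplicity, because for each $k$ one locates a \emph{distinct} pair of positions in the filled table responsible for that copy of the factor; phrase the conclusion that way, as the paper does, rather than via hyperplane vanishing.
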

\begin{proof}
For notational simplicity we consider the case \,$\si=\id$\>. As explained in
Section~\ref{sec comb}, \,$W_{I}(\zz_J,\zz,h)$ \>is the sum of terms
corresponding to certain tables filled with the variables \,$z_a$\>. Consider
such a term, and the corresponding filled table. Let us fix \,$k\leq N-1$ \>and
\,$a\ne b\in J_1\lsym\cup J_k$. In the next paragraph we will specify some
positions in the filled table that are responsible for the appearance of the
factors \,$(1-hz_a/z_b)\>(1-hz_b/z_a)$ \>in this term. These positions will be
different for different triples \,$(k,a,b)$\>.

\vsk.2>
Suppose \,$z_a$ \>is above \,$z_b$ \>in the \,$k$-th column. Then the type-3
factor \,$(1-hz_b/z_a)$ \>is the factor of this term, because both \,$z_a$ and
\,$z_b$ are in the \,$k$-th column. Also, \,$z_a\in J_1\lsym\cup J_{k+1}$ and,
hence, \,$z_a$ is in the \,$(k+1)$-st column as well. If our term is nonzero,
then the position of \,$z_a$ in the \,$(k+1)$-st column is {\sl weakly above\/}
the position of \,$z_a$ in the \,$k$-th column. Hence the position of \,$z_a$
in the \,$(k+1)$-st column is {\em strictly above\/} the position of \,$z_b$ in
the \,$k$-th column, as in the picture
\begin{tabular}{|c|c}
& \multicolumn{1}{|c|}{$z_a$} \\
\cline{1-1}
\multicolumn{1}{|c|}{$z_a$} & \multicolumn{1}{|c|}{} \\
\cline{1-2}
& \\
\cline{1-1}
\multicolumn{1}{|c|}{$z_b$} & \\
\cline{1-1}
& \\
\end{tabular}.
Then the \,$z_b$ variable in the \,$k$-th column and the \,$z_a$ variable in
the \,$(k+1)$-st column yield the type-1 factor \,$(1-hz_a/z_b)$\>.

\vsk.2>
The function \,$E(\zz_J,h)$ \>is a product of factors
\,$(1-hz_a/z_b)\>(1-hz_b/z_a)$ \>for certain triples \,$(k,a\ne b)$\>,
and the factor \,$(1-h)^{\la^{\{1\}}}$. The argument above shows that
\,$W_I(\zz_J,\zz,h)$ \>is divisible by the desired product of factors
\,$(1-hz_a/z_b)\>(1-hz_b/z_a)$\>. The factor \,$(1-h)^{\la^{\{1\}}}$ is
explicit in the definition of the weight functions.
\end{proof}

Define
\vvn-.4>
\beq
\label{Wti}
\Wt_{\si,\:I}(\ttt,\zz,h)\,=\,\frac{W_{\si,\:I}(\ttt,\zz,h)}{E(\ttt,h)}\;.
\eeq
The function $\Wt_{\si,\:I}$ is not a Laurent {\em polynomial} in the \,$\ttt$
variables any more, but Lemma~\ref{prop:W divisible by E} asserts that all
its \,$\zz_J$-substitutions are Laurent polynomials in the $\zz,h$ variables.

\begin{lem}
\label{lem:triang}
We have \,$\Wt_{\si,\:I}(\zz_J,\zz,h)=0$ \,unless \,$J\<\leq_\si\? I$.
\end{lem}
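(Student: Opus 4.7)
The plan is to reduce to $\si=\id$ and use the diagrammatic expansion of the weight function from Section~\ref{sec comb} as a sum over filled tables, then analyze when each individual term vanishes upon substitution.

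First, I would reduce to the case $\si = \id$. Setting $I' = \si^{-1}(I)$, $J' = \si^{-1}(J)$, and $\zz' = (z_{\si(1)}\lc z_{\si(n)})$, definition~\Ref{Wsi} gives $W_{\si,I}(\ttt, \zz, h) = W_{I'}(\ttt, \zz', h)$. Since $z_{j^{(k)}_a} = z'_{\si^{-1}(j^{(k)}_a)}$ and $W_{I'}$ is symmetric in each group $t^{(k)}_1\lc t^{(k)}_{\la^{(k)}}$, one may permute within each column so that the substitution takes the standard form $\zz'_{J'}$. The ordering $J \le_\si I$ translates to $J' \le I'$ in the identity ordering, and $E(\ttt, h)$ is likewise symmetric in each group and nonvanishing at this substitution (the $z$-values within a column are distinct), so by Lemma~\ref{prop:W divisible by E} the vanishing of $\Wt_{\si, I}(\zz_J, \zz, h)$ reduces to that of $W_{I'}(\zz'_{J'}, \zz', h)$ in the $\si=\id$ case.

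Now assume $\si = \id$ and $J \not\le I$. By Section~\ref{sec comb}, $W_I(\ttt, \zz, h)$ is a sum over tuples $(\si_1\lc \si_{N-1})$, $\si_k \in S_{\la^{(k)}}$, of terms attached to filled tables. After the substitution $t^{(k)}_a = z_{j^{(k)}_a}$, each such table is filled exclusively with $z$-variables: column $k$ carries the set $\{z_c : c \in J_1 \lsym\cup J_k\}$ distributed across the rows $I_1 \lsym\cup I_k$, while column $N$ has $z_c$ at row $c$. Type-1 factors $(1-hv/u)$ remain nonzero for generic $h$, and type-3 denominators $(1-t^{(k)}_b/t^{(k)}_a)$ specialize to differences of distinct $z$-variables and hence do not vanish. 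The only possible source of vanishing is a type-2 factor $(1-v/u)$ degenerating to $(1-z_c/z_c) = 0$, i.e., whenever the same $z_c$ occupies row $r$ of column $k$ and row $r' > r$ of column $k+1$, as flagged at the end of Section~\ref{sec comb}.

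Finally, I would show that every term of this expansion vanishes. A nonvanishing term would record, for each $k$, a bijection $\phi_k : J_1 \lsym\cup J_k \to I_1 \lsym\cup I_k$ sending $c$ to the row of $z_c$ in column $k$, with $\phi_N = \id$ and, by the type-2 analysis, $\phi_k(c) \ge \phi_{k+1}(c)$ whenever both sides are defined; iterating gives $\phi_k(c) \ge c$ for every admissible $c$. A short matching argument then yields $j^{(k)}_\ell \le i^{(k)}_\ell$: for each $\ell$, the image $\phi_k(\{j^{(k)}_\ell\lc j^{(k)}_{\la^{(k)}}\})$ consists of $\la^{(k)} - \ell + 1$ elements of $I_1 \lsym\cup I_k$ each at least $j^{(k)}_\ell$, forcing $i^{(k)}_\ell \ge j^{(k)}_\ell$. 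This contradicts $J \not\le I$, so every term vanishes and $W_I(\zz_J, \zz, h) = 0$. The main delicate step is the column-to-column compatibility $\phi_k(c) \ge \phi_{k+1}(c)$, which is exactly the absence of degenerate type-2 factors established in the preceding paragraph.
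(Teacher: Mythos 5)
Your proof is correct and follows the paper's own approach via the filled-table expansion of Section~\ref{sec comb}: the paper's proof of Lemma~\ref{lem:triang} is a two-sentence assertion that, when $J\not\leq_\si I$, every filled table contains the degenerate type-2 configuration flagged at the end of Section~\ref{sec comb}. Your reduction to $\si=\id$ (using the column-symmetry of $W_{I'}$ and $E$) and the bijection/matching argument with $\phi_k(c)\ge\phi_{k+1}(c)$, $\phi_N=\id$, forcing $j^{(k)}_\ell\le i^{(k)}_\ell$, supply exactly the combinatorial justification that the paper leaves implicit.
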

\begin{proof}
If the condition \,$J\<\leq_\si\? I$ \>is not satisfied, then the table of
every term in \,$W_{\si,\:I}(\zz_J,\zz,h)=0$ \>contains a part described in the
last paragraph of Section~\ref{sec comb}. Hence every term is \,$0$\>, yielding
\,$W_{\si,\:I}(\zz_J,\zz,h)=0$ \>and \,$\Wt_{\si,\:I}(\zz_J,\zz,h)=0$\>.
\end{proof}

\begin{lem}
\label{lem:div_e_ver}
For all \,$I,J\in\Il$\>, the function \,$\Wt_{\si,\:I}(\zz_J,\zz,h)$
is divisible by \,$e^{\vert}_{\si,\:J\<,-}$\,.
\end{lem}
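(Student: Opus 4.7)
The plan is to extend the combinatorial argument used for Lemma~\ref{prop:W divisible by E}. First I would reduce to the case $\si=\id$ by the definition $W_{\si,I}(\ttt,\zz,h)=W_{\si^{-1}(I)}(\ttt,z_{\si(1)}\lc z_{\si(n)},h)$, which, after relabeling, turns the statement into divisibility by $e^{\vert}_{\id,J',-}$ for a suitable $J'$. Then I would use the description from Section~\ref{sec comb} of $W_I(\zz_J,\zz,h)$ as a sum of terms indexed by filled tables, and for each pair $(a,b)$ with $a\in J_{k'}$, $b\in J_{l'}$, $k'<l'$, $b<a$, exhibit in every nonvanishing term one more factor $(1-hz_b/z_a)$ beyond the $N-l'$ copies already produced in the proof of Lemma~\ref{prop:W divisible by E} (which come from the columns $k\ge l'$ in which both $z_a$ and $z_b$ appear).

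For a nonvanishing filled table write $r_c^k$ for the row of $z_c$ in column $k$, defined for $c\in J_1\lsym\cup J_k$; nonvanishing forces $r_c^k\ge r_c^{k+1}$ and $r_c^N=c$. I would then split into two cases based on the position of $z_b$ relative to $z_a$ in column $l'$. If $r_b^{l'}<r_a^{l'}$ (so $z_b$ sits above $z_a$ in column $l'$), the variable $z_a$ already occupies row $r_a^{l'-1}\ge r_a^{l'}>r_b^{l'}$ in column $l'-1$, so the transition between columns $l'-1$ and $l'$ supplies a type-1 factor $(1-hz_b/z_a)$ with $u=z_a$ and $v=z_b$. If $r_b^{l'}>r_a^{l'}$, the paths of $z_a$ and $z_b$ enter column $l'$ with $z_a$ above $z_b$ but satisfy $r_b^N=b<a=r_a^N$ in column $N$, so they must cross at some $m\in\{l'\lc N-1\}$ with $r_a^m<r_b^m$ and $r_b^{m+1}<r_a^{m+1}\le r_a^m$; then the transition between columns $m$ and $m+1$ with $u=z_a$ in column $m$ and $v=z_b$ in column $m+1$ again produces a type-1 factor $(1-hz_b/z_a)$.

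Having located the additional factor, I would verify that it is distinct from those that Lemma~\ref{prop:W divisible by E} extracted for the same pair $(a,b)$: those arise from within-column type-3 factors at columns $k\ge l'$ and from transition type-1 factors at columns $k,k+1$ in which the roles of $u$ and $v$ are opposite (the column-$k$ variable is $z_b$ rather than $z_a$). Hence every nonvanishing term is divisible by $(1-hz_b/z_a)^{N-l'+1}$ in the field of rational functions. Summing over tables and multiplying over all qualifying pairs gives divisibility of $W_I(\zz_J,\zz,h)$ by $E(\zz_J,h)\cdot e^{\vert}_{\id,J,-}$ as a Laurent polynomial in $\Czh$, and the claim follows after dividing by $E(\zz_J,h)$.

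The main obstacle I expect is the bookkeeping in the crossover case and the careful verification that the newly identified $(1-hz_b/z_a)$ arises from a transition genuinely distinct from those consumed by Lemma~\ref{prop:W divisible by E}; the boundary subcase $l'=N$ requires only the first case (since $z_b$ cannot be strictly below $z_a$ in column $N$), and the reduction to general $\si$ at the start is straightforward relabeling.
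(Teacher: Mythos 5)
Your proof is correct and follows essentially the same route as the paper's: you reduce to $\si=\id$, use the filled-table combinatorics from Section~\ref{sec comb}, and for each qualifying pair $(a,b)$ locate an extra type-1 factor $(1-hz_b/z_a)$ either at the $(l'-1)\to l'$ transition (when $z_b$ sits above $z_a$ in column $l'$) or at a crossover column (when $z_a$ sits above $z_b$ in column $l'$), finally checking this box-pair is disjoint from those consumed in Lemma~\ref{prop:W divisible by E}. The only small imprecision is the parenthetical describing why the new type-1 position is distinct (which of $z_a,z_b$ sits in the earlier column depends on whether $z_a$ or $z_b$ is above in that column, and the true reason for disjointness in the crossover case is that Lemma~\ref{prop:W divisible by E} at column $m$ consumes the type-3 pair, not the same type-1 pair), but this does not affect correctness.
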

\begin{proof}
This proof is a continuation of the proof of Lemma \ref{prop:W divisible by E},
so, in particular, we focus on the special case \,$\si=\id$\>. Let us chose
\,$a\in J_k$ and \,$b\in J_l$ \>with \,$k<l$\>, \,$a>b$. Consider again a term
in \,$W_I(\zz_J,\zz,h)$ \,and its filled table. Our goal is to specify a pair
of variables in the table that produces the \,$1-hz_b/z_a$ \>factor.

\vsk.2>
From the variables \,$z_a$ and \,$z_b$ only \,$z_a$ appears in the \,$k$-th
column and both of them appear in the \,$l$-th column. We will study two cases.

\vsk.2>
Assume first that in the \,$l$-th column \,$z_a$ is below \,$z_b$\>. Then,
in the \,$(l-1)$-st column \,$z_a$ is further below the position of \,$z_b$ in
the \,$l$-th column (otherwise the term equals \,$0$\>)\:. This pair, \,$z_a$
in the \,$(l-1)$-st column and \,$z_b$ in the \,$l$-th column is the desired
pair --- they produce the type-1 factor \,$1-hz_b/z_a$\>. This factor was not
indicated and specified in the proof of Lemma \ref{prop:W divisible by E}.

\vsk.2>
Assume now that in the \,$l$-th column \,$z_a$ is above \,$z_b$\>. Since
\,$a>b$\>, their position is reversed in the \>$N$\?-th column. Hence there
must exist a number \,$s$ \,such that in the \,$s$-th column \,$z_a$ is above
\,$z_b$, and in the \,$(s+1)$-st column \,$z_a$ is below \,$z_b$. Since \,$z_a$
in the $s$-th column is below \,$z_a$ in the $s+1$-th column (otherwise the
term equals \,$0$\>)\:, we have that \,$z_a$ in the \,$s$-th column and \,$z_b$
in the \,$(s+1)$-st column is the desired pair --- they produce the type-1
factor \,$1-hz_b/z_a$\>.

\vsk.2>
In both cases above we found positions in the filled table which are different
from positions already ``used'' in the proof of
Lemma~\ref{prop:W divisible by E}. Hence we proved that \,$1-hz_b/z_a$
\>divides not only every non-zero term of \,$W_{\si,\:I}(\zz_J,\zz,h)$\>,
but also \,$\Wt_{\si,\:I}(\zz_J,\zz,h)$\>.
\end{proof}

\begin{lem}
\label{lem:princ}
For \,$I\?\in\Il$\>, we have
\,$\Wt_{\si,\:I}(\zz_I,\zz,h)=P_{\si,\:I}\,\:e_{\si,\:I}$\,.
\end{lem}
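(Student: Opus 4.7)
The plan is to reduce to $\si = \id$ via the $S_{\la^{(k)}}$-symmetry of the weight function in each block of $\ttt$-variables, then to identify a unique surviving filling in the diagrammatic sum of Section~\ref{sec comb}, and finally to compute its contribution explicitly and match it with the right-hand side.

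\emph{Reduction to $\si = \id$.} Set $J = \si^{-1}(I)$ and $\zz' = (z_{\si(1)}, \ldots, z_{\si(n)})$. By definition $W_{\si,\:I}(\ttt,\zz,h) = W_J(\ttt, \zz', h)$, and the symmetrizations in \eqref{WI} make $W_J(\ttt,\zz',h)$ symmetric in each block $(t^{(k)}_1, \ldots, t^{(k)}_{\la^{(k)}})$. Since the $k$-th blocks of $\zz_I$ and of $\zz'_J$ coincide as multisets (both equal $\{z_a : a \in \bigcup_{\ell \leq k} I_\ell\}$), we have $W_J(\zz_I, \zz', h) = W_J(\zz'_J, \zz', h)$ and $E(\zz_I, h) = E(\zz'_J, h)$. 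A direct rearrangement of products shows $P_{\si, I}(\zz) = P_{\id, J}(\zz')$ and $e_{\si, I}(\zz) = e_{\id, J}(\zz')$; hence the identity reduces to the case $\si = \id$ for $J$ and $\zz'$.

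\emph{Unique surviving filling.} Apply the diagrammatic description of $W_I(\ttt, \zz, h)$ from Section~\ref{sec comb}. Under the substitution $\ttt = \zz_I$, each term in the sum is parametrised by a tuple $(\si_1, \ldots, \si_{N-1})$ with $\si_k \in S_{\la^{(k)}}$ and is a filling of column $k$ with the variables $\{z_c : c \in \bigcup_{\ell \leq k} I_\ell\}$ placed in the distinguished rows of that column according to $\si_k$; column $N$ always has $z_a$ at row $a$. The vanishing rule of Section~\ref{sec comb} forces the row of each $z_c$ in column $k$ to be at least its row in column $k+1$. A downward induction from column $N$, combined with the observation that a self-bijection $\tau$ of a finite linearly ordered set with $\tau(a) \geq a$ must be the identity, shows each $\si_k = \id$: the unique surviving filling places $z_c$ at row $c$ in every column in which $z_c$ appears.

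\emph{Computation and main obstacle.} Multiply the type-1, type-2, and type-3 factors of the unique filling and divide by
\begin{equation*}
E(\zz_I, h) = (1-h)^{\la^{\{1\}}} \prod_{k=1}^{N-1} \prod_{\substack{b, c \in \bigcup_{\ell \leq k} I_\ell \\ b \ne c}} (1 - h z_c / z_b).
\end{equation*}
Writing $m_c$ for the unique index with $c \in I_{m_c}$, a pair-by-pair count shows that all exponents of $(1 - z_c/z_b)$ and $(1 - h z_c/z_b)$ cancel except those for ordered pairs $(b, c)$ with $m_b < m_c$, each appearing with exponent $1$. Applying $(1 - z_c/z_b) = -(z_c/z_b)(1 - z_b/z_c)$, the surviving product becomes $P_{\id, I} \cdot e^{\hor}_{\id, I, -} \cdot e^{\vert}_{\id, I, -} = P_{\id, I} \cdot e_{\id, I}$ as in~\eqref{Pe}. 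The main obstacle is the exponent bookkeeping: the boundary cases $m_b = N$ or $m_c = N$ require care because the type-3 product in $U_I$ stops at $k = N-1$ while the $(N-1)$-to-$N$ type-1 and type-2 factors still contribute, and the asymmetric counts must combine exactly into the desired factorization.
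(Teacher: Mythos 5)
Your proof is correct and takes essentially the same approach as the paper's: under the substitution $\ttt = \zz_I$ exactly one filling (one term of the symmetrization) survives the vanishing rule of Section~\ref{sec comb}, and that term equals $P_{\si,\:I}\,e_{\si,\:I}$. The paper's proof states precisely these two facts in two lines, leaving the reduction to $\si=\id$, the uniqueness argument, and the pair-by-pair exponent bookkeeping implicit; your proposal supplies those details, and the computation does close as you claim --- for each ordered pair $(b,c)$ with $m_b<m_c$, the only factor not cancelled by $E(\zz_I,h)$ comes from the transition from column $m_c-1$ to column $m_c$ and equals $1-z_c/z_b$ if $b<c$ and $1-hz_c/z_b$ if $b>c$, which assembles to~\Ref{Pe}.
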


\begin{proof}
In this case only one term of the symmetrization in \Ref{WI} is nonzero,
see Section~\ref{sec comb}. This term equals the right-hand side of the formula
of the lemma.
\end{proof}

Recall the notion of $f(\zz,h)$ being \,$J$-small from
Section~\ref{sec:axiomatic}.

\begin{lem}
\label{lem:W_small}
For all \,$I,J\in\Il$, \,$I\ne J$, the function \,$\Wt_{\si,\:I}(\zz_J,\zz,h)$
is \,$J$-small.
\end{lem}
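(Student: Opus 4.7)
The proof parallels those of Lemmas~\ref{prop:W divisible by E}, \ref{lem:triang}, and \ref{lem:div_e_ver}: we work with the diagrammatic presentation of Section~\ref{sec comb}, under which $W_I(\zz_J,\zz,h)$ is a sum, over tuples of permutations $(\si_1,\dots,\si_{N-1})$, of terms associated to filled tables of $z$-variables. As in those lemmas, we first reduce to $\si=\id$; the general case follows by the relabeling built into the definition of $W_{\si,I}$.

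For each nonzero filled table, every factor of the associated term takes, in $z$-variables $z_p,z_q$, one of three forms: $(1-hz_p/z_q)$, $(1-z_p/z_q)$, or $(1-hz_p/z_q)/(1-z_p/z_q)$. Each polynomial factor $(1-h^?z_p/z_q)$ has $\phi_J$-Newton polygon contained in $[\min(0,\phi_J(z_p)-\phi_J(z_q)),\,\max(0,\phi_J(z_p)-\phi_J(z_q))]$, while the denominators from the type-3 factors cancel globally under the $\Sym_{t^{(k)}}$ operations. Summing these elementary $\phi_J$-extremes across all factors in a single nonzero term, and then dividing by the corresponding extremes of $E(\zz_J,h)$ (readily computed using Lemma~\ref{prop:W divisible by E}), one obtains $\phi_J(N(\Wt_{\si,I}(\zz_J,\zz,h)))\subset [0,\,\sum_{k<l}\la_k\la_l(l-k)]$. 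Equality at the upper endpoint is attained when $J=I$, consistent with Lemma~\ref{lem:princ}.

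To upgrade this to the strict inequality (that the $\phi_J$-max of $\Wt_{\si,I}(\zz_J,\zz,h)$ is at most $\sum_{k<l}\la_k\la_l(l-k)-1$) when $J\neq I$, Lemma~\ref{lem:triang} reduces to the case $J<_\id I$, so that at least one pair $(k,a)$ satisfies $j^{(k)}_a<i^{(k)}_a$ strictly. This strict mismatch should force, in every nonzero filled table, at least one factor whose top $\phi_J$-monomial falls short of its generic bound by one unit of degree: either because the substituted variable $z_{j^{(k)}_a}$ collides with another $z$-variable in the table (yielding a factor like $(1-h)$, whose $\phi_J$-max is $0$ rather than the generic nonzero value), or because the sign of $\phi_J(z_p)-\phi_J(z_q)$ in some adjacent-column factor flips relative to the $J=I$ situation. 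The lower bound $\phi_J\geq 0$ follows by a symmetric argument tracking the constant monomials in each factor $(1-h^?z_p/z_q)$.

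The main obstacle is the uniform identification, across all nonzero tuples of permutations contributing to the symmetrization, of the specific factor responsible for the degree drop, together with ensuring that no accidental cancellation between terms restores the missing unit. The combinatorial bookkeeping should parallel the cell-by-cell case analysis in the proofs of Lemmas~\ref{prop:W divisible by E} and \ref{lem:div_e_ver}, carefully distinguishing which cells' factors are \emph{used} to produce divisibility (as in Lemma~\ref{lem:div_e_ver}) from those responsible for the $\phi_J$-degree drop, so that the two accounts do not double-count the same factor.
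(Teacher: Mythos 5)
You have the right skeleton — reduce to $\si=\id$, use the diagrammatic expansion of $W_I(\zz_J,\zz,h)$, isolate the denominators $1-z_e/z_f$, compare Newton polygons against the $J=I$ case via Lemma~\ref{lem:princ}, and look for a strict drop — but the proof is not actually carried out, and you say so yourself: ``the main obstacle is the uniform identification\dots of the specific factor responsible for the degree drop.'' That obstacle is precisely the content of the lemma, and the paper resolves it with a concrete combinatorial argument your sketch lacks. Namely, after clearing the common denominator $B_J$, each summand takes the form $\pm C_{I,J,\ppi}\,A_{J,\ppi}$, where $A_{J,\ppi}$ has $\phi_J$-Newton interval independent of $\ppi$, and $C_{I,J,\ppi}$ is a product of factors $(1-h_{\kab}\,z_b/z_a)$ indexed by triples $(k,a,b)$ running over the complement of a subset $V_2\subset V$ with $|V_2|=|V_1|$; here $V_1$ is the set of ``diagonal'' triples with $a=b$, whose factors are all $(1-h)$ (hence $\phi_J$-trivial), and $V_2\ne V_1$ as soon as $I\ne J$. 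The decisive step is producing a triple $(k,a,b)\in V_2\setminus V_1$ with $\phi_J(b)>\phi_J(a)$: the paper does this by choosing the largest $k$ with $\bigcup_{\ell\le k+1}I_\ell=\bigcup_{\ell\le k+1}J_\ell$ but $\bigcup_{\ell\le k}I_\ell\ne\bigcup_{\ell\le k}J_\ell$ and taking $b\in\bigcup_{\ell\le k}I_\ell\setminus\bigcup_{\ell\le k}J_\ell$. Removing the corresponding factor from the Minkowski sum strictly lowers the upper endpoint by at least $1$, and this holds uniformly in $\ppi$. Your sketch gestures at ``either because\dots or because\dots'' without pinning down such a triple, and does not deal with the subtle point (flagged by the paper's footnote) that in K-theory a colliding factor $(1-hz_a/z_a)=1-h$ is \emph{not} automatically a degree drop — the argument must be made at the level of Newton intervals, not degrees.

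One further correction: your worry about ``accidental cancellation between terms'' restoring a missing unit is misplaced. The $\phi_J$-Newton polygon of a sum is always contained in the convex hull of the summands' Newton polygons; cancellation can only shrink it, never enlarge it. Once you show each $C_{I,J,\ppi}\ll_J C_{J,J,\boldsymbol{\id}}$, the bound on the sum follows automatically, with no need to track cancellations.
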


\begin{proof}
By definition we have
\be
W_{\si,\:I}(\zz_J,\zz,h)=(1-h)^{\la^{\{1\}}}
\sum_{\ppi}\Big(\prod_{(a,b)} (1-hz_a/z_b)\prod_{(c,d)} (1-z_c/z_d)
\prod_{(e,f)} \frac{ 1-hz_e/z_f }{1-z_e/z_f}
\Big),
\ee
where the products are for certain pairs $(a,b)$, $(c,d)$, and $(e,f)$, and the summation is for an
$(N-1)$-tuple of permutations $\ppi=(\pi_1,\ldots,\pi_{N-1})$ with $\pi_k\in S_{\la^{(k)}}$.
The last product can be rewritten as
\be
\prod_{(e,f)} \frac{ 1-hz_e/z_f }{1-z_e/z_f} =
\prod_{(e,f)} \frac{ z_f-hz_e }{z_f-z_e}.
\vv-.3>
\ee
Denote
\be
A_{I,J,\ppi}=\prod_{(e,f)}\:(z_f-hz_e)\,,\quad
B_{I,J,\ppi}=\prod_{(e,f)}\:(z_f-z_e)\,,\quad
C_{I,J,\ppi}=\prod_{(a,b)}\:(1-hz_a/z_b)\>\prod_{(c,d)}\:(1-z_c/z_d)\,.
\ee
Observe that $A_{I,J,\ppi}$ and $B_{I,J,\ppi}$ do not depend on $I$ and for
different $\ppi$'s the products $B_{I,J,\ppi}$ only differ possibly by a sign.
Denote $A_{J,\ppi}=A_{I,J,\ppi}$ and $B_J=B_{I,J,\bs{\id}}$. Then
\beq
\label{eqn:WABC1}
B_J W_{\si,\:I}(\zz_J,\zz,h)=(1-h)^{\la^{\{1\}}}
\sum_{\ppi} \pm C_{I,J,\ppi} A_{J,\ppi}.
\eeq
If $I$ were equal to $J$, then only one term of the summation is nonzero and
\beq
\label{eqn:WABC2}
B_J W_{\si,\:J}(\zz_J,\zz,h)=(1-h)^{\la^{\{1\}}}
C_{J,J,\bs{\id}} A_{J,\bs{\id}} .
\eeq
That equation leads to the statement of Lemma \ref{lem:princ}.
For $I\ne J$, we reason as follows.

\vsk.2>
Let $U_1=[m_1,M_1]$ and $U_2=[m_2,M_2]$ be closed intervals with
$m_1,m_2,M_1,M_2\in \Z$. For the purpose of this proof let $U_1 \ll U_2$
mean that $U_1 \subset [m_2,M_2-1]$. Also for Laurent polynomials $f,g$\>,
let $f\ll_J g$ mean $\phi_J(N(f)) \ll \phi_J(N(g))$.

\vsk.2>
We claim that $C_{I,J,\ppi} \ll_J C_{J,J,\bs{\id}}$ for all $\ppi$. Indeed, let
\begin{align*}
V &{}=\>\{\:(k,a,b)\ | \ k=1,\ldots,N-1\,,\;\;a\in J_1\lsym\cup J_k\,,
\;\;b\in J_1\lsym\cup J_{k+1}\:\}\,,
\\[3pt]
V_1 &{}=\>
\{\:(k,a,b)\ | \ k=1,\ldots,N-1\,,\;\;a\in J_1\lsym\cup J_k\,,
\;\;b\in J_1\lsym\cup J_{k+1}\,,\;\;a=b\:\}\,,
\end{align*}
Then
\vvn-.3>
\beq
\label{eqn:sm1}
C_{J,J,\bs{\id}}=\prod_{(\kab)\in V-V_1}(1-h_{\kab}\,z_b/z_a)\,,
\eeq
where the factors \,$h_{\kab}$ equal either \,$1$ or \,$h$ \,depending on
the subscript \,$(k,a,b)$\>. In other words, all factors of the product
\Ref{eqn:sm1} are either \,$1-z_b/z_a$ \>or \,$1-hz_b/z_a$\>.
We also have
\beq
\label{eqn:sm2}
C_{I,J,\bs{\pi}}=\prod_{(k,a,b)\in V-V_2}(1-h_{\kab}\,z_b/z_a)
\eeq
with the same meaning of \,$h_{\kab}$. Here $V_2 \subset V$, $V_1\not=V_2$, and
$|V_1|=|V_2|$. Therefore $C_{I,J,\bs{\pi}}$ either contains either a factor
\,$(1-z_a/z_a)=0$ or a factor \,$(1-hz_a/z_a)=1-h$\>.%
\footnote{A note to the experts: this observation is enough for the argument
in cohomology, where $J$-smallness is measured by the smallness of $z$-degree.
In K-theory the argument of the next few sentences is needed.}

If $f,g$ are Laurent polynomials, then $\phi_J(N(fg))= \phi_J(N(f)) + \phi_J(N(g))$, where $+$ is the Minkowski sum. Thus $\phi_J(N(C_{I,J,\pi}))$ is the Minkowski
sum of the intervals labeled by $(k,a,b)\in V-V_2$, with the vertices of the corresponding interval at 0 and $\phi_J(N(z_b/z_a))$.

First notice that $V_2-V_2\cap V_1$ is not empty if $J\ne I$ and hence there are elements $(k,a,b)$ which are present in the product \Ref{eqn:sm1} but not in the product \Ref{eqn:sm2}\:.
We claim that there is an element $(k,a,b)\in V_2-V_2\cap V_1$ with $\phi_J(b)>\phi_J(a)$. Indeed, choose $k$ with $I_1\cup\ldots\cup I_{k+1}=J_1\cup\ldots\cup J_{k+1}$, $I_1\cup\ldots\cup I_k\not=J_1\cup\ldots\cup J_k$, and any $b\in I_1\cup\ldots\cup I_k - J_1\cup\ldots\cup J_k$; then there is an $a$ with
$(k,a,b)\in V_2-V_2\cap V_1$. The appearance of a factor $(1-(h)z_b/z_a)$ with $\phi_J(b)>\phi_J(a)$ in \Ref{eqn:sm1} but not in \Ref{eqn:sm2} proves that $C_{I,J,\ppi} \ll_J C_{J,J,\bs{\id}}$.

Consider the Laurent polynomials $A_{J,\ppi}$ in equations \Ref{eqn:WABC1}
and \Ref{eqn:WABC2}\:. Clearly the Newton polygon of $A_{J,\ppi}$ does not
depend on $\ppi$. Therefore
\be
(1-h)^{\la^{\{1\}}} \sum_{\ppi} \pm C_{I,J,\ppi} A_{J,\ppi} \ \ll_J \
(1-h)^{\la^{\{1\}}}
C_{J,J,\bs{\id}} A_{J,\bs{\id}}\,.
\vv-.4>
\ee
Consequently, we have
\be
W_{\si,\:I}(\zz_J,\zz,h)\ \ll_J\ W_{\si,\:J}(\zz_J,\zz,h).
\ee
The Laurent polynomials on both sides of this relation are divisible by
the same Laurent polynomial $E(\zz_J,h)$, see Lemma \ref{lem:div_e_ver}.
Hence the same relation holds for the quotients,
\be
\Wt_{\si,\:I}(\zz_J,\zz,h) \ll_J \Wt_{\si,\:J}(\zz_J,\zz,h).
\ee
By Lemma \ref{lem:princ} this means that $\Wt_{\si,\:I}(\zz_J,\zz,h)$ is $J$-small.
\end{proof}

\subsection{Orthogonality}
\label{sec Orthogonality property}

The number of inversions in an ordered sequence $j_1\lc j_n$ is
the number of pairs $(a,b)$ with $a<b$, $j_a>j_b$.
Let $I\?\in \I_\bla$ where $I_k=\{i^{(k)}_1\lsym<i^{(k)}_{\la_k}\}$
as before. Let $p(I)$ denote the number of inversions in the ordered sequence
\be
I_N, I_{N-1}\lc I_1\:=\,
i^{(N)}_1\lc i^{(N)}_{\la_N}, i^{(N-1)}_1\lc i^{(N-1)}_{\la_{N-1}}\lc
i^{(1)}_1\lc i^{(1)}_{\la_1}\,.
\ee
We saw in Section~\ref{sec:flagvar} that \,$p(I)$ \,is the codimension of
\;$\Om_{\id,I}$ in \,$\Fla$.

\begin{thm}
\label{thm orth}
Let \,$\si_0$ be the longest permutation in \,$S_n$.
For \,$J,K \in\Il$, we have
\vvn.3>
\beq
\label{ORT}
\sum_{I\in\Il}\,h^{p(K)}\>P(\zz_I)\,
\frac{\Wt_{\id\:,\:J}(\zz_I,\zz,h)\,
\Wt_{\si_0,\:K}(\zz_I^{-1}\?,\zz^{-1}\?,h^{-1})}
{R(\zz_I)\,Q(\zz_I,h)}\,=\,\dl_{J,K}\,,
\vv-.2>
\eeq
where
\beq
\label{PI}
P(\zz_I)\,=\,P_{\id\:,\:I}\>P_{\si_0,\:I}\,=\,
\prod_{k<l}\,\prod_{a\in I_k}\,\prod_{b\in I_l}\,(-z_b/z_a)\,,
\eeq
\beq
\label{RI}
R(\zz_I)\,=\,P(\zz_I)\;e_{\id\:,\:I,+}^{\hor}\;e_{\id\:,\:I,-}^{\hor}\:=\,
\prod_{k<l}\,\prod_{a\in I_k}\,\prod_{b\in I_l}\,(1-z_b/z_a)\,,
\eeq
\beq
\label{QI}
Q(\zz_I,h)\,=\,e_{\id\:,\:I,+}^{\vert}\,e_{\id\:,\:I,-}^{\vert}\:=\,
\prod_{k<l}\,\prod_{a\in I_k}\,\prod_{b\in I_l}\,(1-hz_b/z_a)\,,
\eeq
and \,$\Wt_{\si_0,K}(\zz_I^{-1}\?,\zz^{-1}\?,h^{-1})$ \>is the function
obtained from \,$\Wt_{\si_0,K}(\zz_I,\zz,h)$ \>by the substitution
$\{\zzz,h\}\mapsto \{z_1^{-1}\lc z_n^{-1},h^{-1}\}$\,,
\end{thm}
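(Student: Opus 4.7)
My plan is to prove the identity as a K-theoretic analog of the biorthogonality of hypergeometric weight functions of \cite{TV1, TV3}, through three steps: triangularity, an explicit diagonal computation, and off-diagonal cancellation.

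\emph{Triangularity and the incomparable case.} By Lemma \ref{lem:triang}, the factor $\Wt_{\id,J}(\zz_I,\zz,h)$ vanishes unless $I \leq_{\id} J$. The substitution $\zz\mapsto\zz^{-1}$, $h\mapsto h^{-1}$ is a ring automorphism of $\Czh$ preserving the zero locus, so $\Wt_{\si_0,K}(\zz_I^{-1},\zz^{-1},h^{-1})$ vanishes unless $I \leq_{\si_0} K$. A direct check from the combinatorial criterion of Section \ref{sec:flagvar} shows that $\leq_{\si_0}$ is exactly the reverse of $\leq_{\id}$, so this condition becomes $K \leq_{\id} I$. Hence only indices $I$ with $K \leq_{\id} I \leq_{\id} J$ contribute, and whenever $J, K$ are incomparable the left-hand side vanishes, agreeing with $\dl_{J,K}=0$.

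\emph{Diagonal case.} For $J=K$ only $I=J$ survives. Lemma \ref{lem:princ} gives $\Wt_{\id,J}(\zz_J,\zz,h) = P_{\id,J}\,e_{\id,J}$ and $\Wt_{\si_0,J}(\zz_J^{-1},\zz^{-1},h^{-1}) = (P_{\si_0,J}\,e_{\si_0,J})|_{\zz\to\zz^{-1},h\to h^{-1}}$. A direct manipulation of \Ref{ehor}-\Ref{evert} shows that the $\si_0$-twist transforms $e^\hor_{\si_0,J,-}$ and $e^\vert_{\si_0,J,-}$ into $e^\hor_{\id,J,+}$ and $e^\vert_{\id,J,+}$ up to monomial factors in $z_a/z_b$; these monomials, after inversion of $\zz$, combine with the $h^{p(J)}$ prefactor and with $P_{\si_0,J}$ as in \Ref{PI}. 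The identity $R(\zz_J)\,Q(\zz_J,h) = P(\zz_J)\,e_{\id,J}\,e_{\si_0,J}$, immediate from \Ref{PI}-\Ref{QI} and the definitions of $e_{\id,J}$, $e_{\si_0,J}$, then collapses the diagonal term to $1$.

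\emph{Off-diagonal cancellation --- the main obstacle.} When $K <_{\id} J$ strictly, several indices $I$ contribute, and the sum must vanish by cancellation. By Lemma \ref{lem:W_small}, at every such $I \ne J$ the numerator factor $\Wt_{\id,J}(\zz_I,\zz,h)$ is $I$-small, and symmetrically at every $I \ne K$ the factor $\Wt_{\si_0,K}(\zz_I^{-1},\zz^{-1},h^{-1})$ is small in the reversed sense. My plan is to invoke the residue representation \Ref{Res formula} in order to encode the whole sum $\sum_I$ as a single multidimensional contour integral in auxiliary variables $u^{(k)}_a$: by the global residue theorem on $(\C^\times)^{\la^{\{1\}}}$, the sum of residues at the poles $u^{(k)}_a = z_{i^{(k)}_a}$ (parametrized by $I$) equals minus the sum of residues at the boundary $u^{(k)}_a = 0,\infty$, and the smallness bounds are designed precisely to force these boundary residues to give $\dl_{J,K}$. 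The delicate point to verify --- and the main technical step --- is that the inverted variables $\zz^{-1},h^{-1}$ appearing in the $\Wt_{\si_0,K}$ factor combine with the original variables of $\Wt_{\id,J}$ inside a single residue computation; this requires a careful analysis of the filled tables of Section \ref{sec comb} under both substitutions, following the template of the orthogonality argument in \cite{TV1, TV3}.
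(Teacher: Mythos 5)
Your triangularity analysis and the claim that $\leq_{\si_0}$ is the reverse of $\leq_{\id}$ are correct, and your diagonal computation (though left as an assertion) does work out: using \eqref{Pe} for both $\si=\id$ and $\si=\si_0$, performing the reindexing $a\mapsto\si_0(a)$, applying the $\zz\to\zz^{-1}$, $h\to h^{-1}$ inversion, and dividing by $R(\zz_J)Q(\zz_J,h)$, one finds a contribution $(-z_a/z_b)$ for each $b<a$ and $(-h^{-1}z_a/z_b)$ for each $b>a$; multiplying by $P(\zz_J)$ cancels the first and leaves $h^{-1}$ from each $b>a$, which is cancelled by $h^{p(J)}$ since $p(J)=\#\{(a,b)\,:\,a\in J_k,\,b\in J_l,\,k<l,\,b>a\}$.

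The genuine gap is the off-diagonal cancellation, which is the heart of the theorem, and your proposal does not prove it. You explicitly flag ``the delicate point to verify --- and the main technical step'' of combining the inverted variables in $\Wt_{\si_0,K}(\zz_I^{-1},\zz^{-1},h^{-1})$ with the un-inverted variables of $\Wt_{\id,J}(\zz_I,\zz,h)$ inside a single residue computation, and you leave it unresolved. There is no straightforward way to apply a global residue theorem in $(\C^\times)^{\la^{\{1\}}}$ to the sum over $I$ here, because each summand involves evaluating two {\em different} weight functions at {\em mutually inverse} specializations of the auxiliary variables; the residue formula \eqref{Res formula} only expresses a single weight function as an iterated residue in {\em one} set of $u$-variables. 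Bounding the boundary residues by the smallness estimates of Lemma~\ref{lem:W_small} is also not as automatic as in the \KZ/ setting, because $J$-smallness in this paper is a Newton-polygon bound, not a degree bound, and it is stated only after the substitution $\ttt=\zz_J$, not for the $\ttt$-dependent function itself.

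The paper's proof avoids off-diagonal cancellation entirely by working with matrices. It forms $\Wh_\si(\zz,h)=(\Wt_{\si,J}(\zz_I,\zz,h))_{I,J}$, observes via Theorem~\ref{thm R general} that $\Rh=\Wh_{\si_0}^{-1}\Wh_{\id}$ is the geometric $R$-matrix $R_{\si_0,\id}$, and then uses the duality relation \eqref{dual R} (together with \eqref{R=RR}) to deduce the transpose identity $\Rh(\zz,h)^t = M\,\Rh(\zz^{-1},h^{-1})\,\Mt$ with explicit diagonal $M_{II}=h^{-p(I)}$. Rearranging this into \eqref{WW=WW} and invoking the triangularity from Lemma~\ref{lem:triang} shows that one side is lower triangular and the other upper triangular, so the common value is a diagonal matrix $S$; computing $S_{I,I}$ from Lemma~\ref{lem:princ} and \eqref{Pe} and inverting gives \eqref{ORT}. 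This route converts the off-diagonal vanishing you need into an automatic consequence of the $R$-matrix duality plus triangularity, which is what your sketch is missing. If you want to pursue a residue-theoretic argument instead, you would have to first prove some analogue of the transpose identity for the weight functions directly, which amounts to reproving \eqref{dual R} at the level of \eqref{Res formula}; as it stands the proposal does not get there.
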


The proof is given in Section~\ref{Proof of orthogonality relations}.

\subsection{Proof of existence in Theorem \ref{thm:axiomatic}}
\label{sec: existence}

We show the existence of $\ka_{\si,\:I}$ by giving an explicit formula for it.

\begin{thm}
\label{thm:existence}
For any \,$\si\in S_n$ and $I\?\in\Il$, there exists a unique element
$[\Wt_{\si,\:I}] \in K_T(\tfl)$ such that for any $J\in\Il$ we have
$\Loc_J [\Wt_{\si,\:I}]=\Wt_{\si,\:I}(\zz_J,\zz,h)$. Moreover, the classes
\beq
\label{kappa}
\ka_{\si,\:I}=[\Wt_{\si,\:I}] \ \in K_T(\tfl)
\eeq
satisfy conditions \,{\rm(\:I\:--\:III\:)} of Theorem~\ref{thm:axiomatic}.
\end{thm}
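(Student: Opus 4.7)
The plan is to combine the injectivity/GKM criterion for the equivariant localization map $\Loc$ from Section~\ref{sec:loc} with the substitution lemmas already established in Section~\ref{Properties of weight functions}. Uniqueness of $[\Wt_{\si,\:I}]$ is immediate: since $\Loc:K_T(\tfl)\to\bigoplus_{J\in\Il}K_T(x_J)$ is injective, at most one class can have the prescribed fixed-point restrictions.

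For existence I need to check that the tuple $\bigl(\Wt_{\si,\:I}(\zz_J,\zz,h)\bigr)_{J\in\Il}$ lies in the image of $\Loc$. By Lemma~\ref{prop:W divisible by E}, each $\Wt_{\si,\:I}(\zz_J,\zz,h)$ is a genuine Laurent polynomial in $\Czh$, so the tuple already lies in $\bigoplus_{J\in\Il}K_T(x_J)$. What remains is the GKM-type congruence recalled in Section~\ref{sec:loc}, namely
\be
\Wt_{\si,\:I}(\zz_J,\zz,h)\,\equiv\,\Wt_{\si,\:I}(\zz_{s_{i,j}(J)},\zz,h)\pmod{1-z_i/z_j}
\ee
for every transposition $s_{i,j}$. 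I would prove this by specialising to $z_i=z_j$ and observing that, after this substitution, the multiset $\{z_{i^{(k)}_a}\mid 1\le a\le\la^{(k)}\}$ is the same for $J$ and for $s_{i,j}(J)$ at every level $k$; since $W_I$ is obtained by iterated symmetrisation $\Sym_{\>t^{(1)}}\cdots\Sym_{\>t^{(N-1)}}$, it depends only on these multisets, and the denominator $E(\ttt,h)$ has the same symmetry. Hence the two specialisations agree at $z_i=z_j$, which gives the required divisibility. The general $\si$-case reduces to $\si=\id$ by the definition $W_{\si,\:I}(\ttt,\zz,h)=W_{\si^{-1}(I)}(\ttt,z_{\si(1)}\lc z_{\si(n)},h)$.

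Once existence and uniqueness of $[\Wt_{\si,\:I}]$ are in hand, the three axiomatic properties are direct quotations of the substitution lemmas: property (I) is Lemma~\ref{lem:div_e_ver}, property (II) is Lemma~\ref{lem:princ}, and property (III) is Lemma~\ref{lem:W_small}. Comparing with the uniqueness part of Theorem~\ref{thm:axiomatic} then identifies $[\Wt_{\si,\:I}]=\ka_{\si,\:I}$.

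The main obstacle will be this GKM-type verification. Although the symmetry heuristic is morally transparent, the swap $i\leftrightarrow j$ reorders the increasing sequences $i^{(k)}_1<\ldots<i^{(k)}_{\la^{(k)}}$ differently at different levels $k$, depending on which blocks of $J$ contain $i$ and $j$, so one must check carefully that these reorderings are genuinely absorbed by the iterated $\Sym$ operators once $z_i=z_j$ is imposed. Passing to the quotient by $E(\ttt,h)$ preserves the congruence because $E(\zz_J,h)$ and $E(\zz_{s_{i,j}(J)},h)$ differ only by a permutation of factors and hence coincide after the specialisation.
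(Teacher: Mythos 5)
Your proposal is correct and follows essentially the same route as the paper: uniqueness by injectivity of $\Loc$, existence by verifying the GKM congruence via the observation that the specialization $z_i=z_j$ makes $\Wt_{\si,\:I}(\zz_J,\zz,h)$ and $\Wt_{\si,\:I}(\zz_{s_{i,j}(J)},\zz,h)$ coincide, and then properties (I)--(III) read off from Lemmas~\ref{lem:div_e_ver}, \ref{lem:princ}, \ref{lem:W_small}. (The paper's one-line justification of the congruence is the same as yours; you simply unpack it via the invariance of the multisets under iterated symmetrization and note explicitly that $E$ has the same invariance, which is a useful expansion rather than a divergence. You have also correctly matched the three axioms to the three substitution lemmas; the paper's text actually mis-cites Lemmas~\ref{lem:triang}, \ref{lem:div_e_ver}, \ref{lem:princ} there, which is an off-by-one slip.)
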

\begin{proof}
According to Lemma \ref{prop:W divisible by E},
\,$\Wt_{\si,\:I}(\zz_J,\zz,h)$ is a Laurent polynomial for all $J$, and hence
$(\Wt_{\si,\:I}(\zz_J,\zz,h))_{J\in\Il}$ is an element of the right hand side
of \Ref{eqn:loc}\:. We claim that
it is in the image of $\Loc$. Consider $\Wt_{\si,\:I}(\zz_J,\zz,h)$ and
$\Wt_{\si,\:I}(\zz_{s_{i,j}(J)},\zz,h)$. The $z_i=z_j$ substitution makes
these two Laurent polynomials equal. Hence their difference is divisible by
$1-z_i/z_j$. Therefore the element $[\Wt_{\si,\:I}] \in K_T(\tfl)$ with
$\Loc_J [\Wt_{\si,\:I}]=\Wt_{\si,\:I}(\zz_J,\zz,h)$ exists.

\vsk.2>
Properties (I)\,--\,(III) are all about restrictions of $[\Wt_{\si,\:I}]$
to fixed points $x_J$. Hence they are computed by various $\ttt=\zz_J$
substitutions in $\Wt_{\si,\:I}$. Therefore, Lemmas \ref{lem:triang},
\ref{lem:div_e_ver}, \ref{lem:princ} prove properties (I), (II), (III)
respectively.
\end{proof}

Observe that a byproduct of our proof of Theorem \ref{thm:axiomatic}
and Lemma \ref{lem:triang} is that conditions (I)\,--\,(III) imply
$\ka_{\si,\:I}|_{x_J}=0$ if $J\not\leq_\si I$.

\vsk.2>
Like in the proof above, we observe that for any \,$\si$ \,and \,$I$\>,
there is a unique element \,$[W_{\si,\:I}]\in K_T(\tfl)$ with
\;$\Loc_J [W_{\si,\:I}]=W_{\si,\:I}(\zz_J,\zz,h)$ \,for all \,$J$\>.

\begin{thm}
\label{thm:basis}
For a fixed $\si\in S_n$,
\begin{itemize}
\item{}
The set $\{[W_{\si,\:I}]\}_{I\in \I_\bla}$ is a basis of the
$\C(\zz,h)$-module $K_T(\tfl)\ox\C(\zz,h)$.

\item{} The set $\{\ka_{\si,\:I}\}_{I\in \I_\bla}$
is a basis of the $\C(\zz,h)$-module $K_T(\tfl)\ox\C(\zz,h)$.

\end{itemize}
\end{thm}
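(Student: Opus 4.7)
The plan is to deduce both statements at once from the triangularity of the fixed-point restrictions, after reducing to a simple linear-algebra fact over the field $\C(\zz,h)$. By the localization isomorphism \Ref{eqn:loc2}, the $\C(\zz,h)$-module $K_T(\tfl)\ox\C(\zz,h)$ is free of rank $|\Il|$, with a distinguished basis given by the fixed-point classes $\{1_J\}_{J\in\Il}$. Thus to prove that either of the proposed families $\{\ka_{\si,I}\}_{I\in\Il}$ or $\{[W_{\si,I}]\}_{I\in\Il}$ is a basis, it suffices to check it has the correct cardinality (which is clear) and is linearly independent over $\C(\zz,h)$.

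For the family $\{\ka_{\si,I}\}$, I would fix a total order $\preceq$ on $\Il$ refining the partial order $\leq_\si$, and consider the $|\Il|\times|\Il|$ matrix $M$ whose $(J,I)$-entry is $\Loc_J(\ka_{\si,I})\in\C(\zz,h)$. By the remark after Theorem~\ref{thm:axiomatic} (the property (0), established as a byproduct in Section~\ref{sec: existence}) together with Theorem~\ref{thm:axiomatic}(II), we have
\begin{equation*}
\Loc_J(\ka_{\si,I})=0 \ \text{if}\ J\not\leq_\si I,\qquad \Loc_I(\ka_{\si,I})=P_{\si,I}\,e_{\si,I}.
\end{equation*}
With rows and columns indexed by $\preceq$, the matrix $M$ is triangular, and each diagonal entry $P_{\si,I}\,e_{\si,I}$ is a product of factors of the form $-z_{\si(b)}/z_{\si(a)}$, $1-hz_{\si(b)}/z_{\si(a)}$, and $1-z_{\si(b)}/z_{\si(a)}$ with $a\ne b$, all of which are invertible in the field $\C(\zz,h)$. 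Hence $\det M\ne 0$, the family is linearly independent, and by the cardinality count it is a basis.

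For the family $\{[W_{\si,I}]\}$, the same strategy applies once I clear the denominator $E(\ttt,h)$. Indeed, by the observation following Theorem~\ref{thm:existence}, $[W_{\si,I}]$ is the unique class with $\Loc_J[W_{\si,I}]=W_{\si,I}(\zz_J,\zz,h)=E(\zz_J,h)\,\Wt_{\si,I}(\zz_J,\zz,h)$. Combining Lemma~\ref{lem:triang} and Lemma~\ref{lem:princ} gives
\begin{equation*}
\Loc_J[W_{\si,I}]=0\ \text{if}\ J\not\leq_\si I,\qquad \Loc_I[W_{\si,I}]=E(\zz_I,h)\,P_{\si,I}\,e_{\si,I}.
\end{equation*}
The factor $E(\zz_I,h)$ is a product of terms $1-hz_{i^{(k)}_b}/z_{i^{(k)}_a}$ (including $(1-h)$ when $a=b$), each of which is a nonzero element of $\C(\zz,h)$, so the product is a unit of $\C(\zz,h)$. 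Thus the associated triangular matrix of restrictions again has invertible diagonal, and the family is a basis by the same argument.

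The only place where care is needed is verifying that no diagonal entry degenerates to zero in $\C(\zz,h)$; this is a direct inspection of the explicit products defining $P_{\si,I}$, $e_{\si,I}$ and $E(\zz_I,h)$, and does not require any further geometry beyond what has already been established. Everything else is formal linear algebra triangulating with respect to the order $\leq_\si$.
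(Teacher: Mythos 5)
Your proof is correct and takes essentially the same approach as the paper: both use the localization isomorphism \Ref{eqn:loc2} together with the triangularity properties $\Loc_J[W_{\si,I}]=0$ (resp.\ $\Loc_J[\ka_{\si,I}]=0$) for $J\not\leq_\si I$ and the explicit nonzero diagonal restrictions from Lemmas~\ref{lem:triang} and~\ref{lem:princ}. The paper states this concisely; you spell out the triangular-matrix determinant argument and verify directly that the diagonal entries $P_{\si,I}\,e_{\si,I}$ and $E(\zz_I,h)$ are units in $\C(\zz,h)$, which is the correct justification and precisely what the paper is implicitly relying on.
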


\begin{proof}
As we claimed in Section~\ref{sec:loc} the map \Ref{eqn:loc2} is
an isomorphism. Hence the statements follow from the triangularity properties
\[
\Loc_J [W_{\si,\:I}] =
\begin{cases}
\;\,\,0 & \text{if}\ J \not\leq_{\si} I \\
\;\ne 0 & \text{if}\ J=I,
\end{cases}
\qquad\qquad\qquad
\Loc_J [\ka_{\si,\:I}] =
\begin{cases}
\;\,\,0 & \text{if}\ J \not\leq_{\si} I \\
\;\ne 0 & \text{if}\ J=I,
\end{cases}
\]
see Lemmas \ref{lem:triang}, \ref{lem:princ}.
\end{proof}

\subsection{Recursive properties}

Let $\bla\in\Z_{\geq 0}^N$\>, \,$|\bla|=n$. Define an action of the symmetric
group \,$S_n$ on the set $\Il$. Let $I=(I_1\lc I_N)\in\Il$, where
$I_j=\{i_1\lc i_{\la_j}\}\subset\{1\lc n\}$\>.
For \>$\si\in S_n$\>, recall \>$\si(I) = (\si(I_1)\lc\si(I_N))$\>.

\vsk.2>
Let
\vvn-.5>
\beq
\label{btx}
\bt(x_1,x_2,y_1,y_2)\,=\,\Sym_{\>x_1,\>x_2}\,(1-h\:y_1/x_2)\>(1-y_2/x_1)\,
\frac{1-h\:x_2/x_1}{1-x_2/x_1}\;.
\eeq
It is straightforward to see that
\beq
\label{bty}
\bt(x_1,x_2,y_1,y_2)\,=\,\Sym_{\>y_1,\>y_2}\,(1-h\:y_1/x_2)\>(1-y_2/x_1)\,
\frac{1-h\:y_2/y_1}{1-y_2/y_1}\;.
\eeq
\begin{lem}
\label{btxy}
$\,\beta(x_1,x_2,y_1,y_2)$ \>is symmetric in \,$x_1,x_2$ \>and in \,$y_1,y_2$.
\qed
\end{lem}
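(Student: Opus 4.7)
The lemma asserts two symmetries of $\bt$. The symmetry in $x_1, x_2$ is immediate from definition \Ref{btx}, which is already a $\Sym_{x_1, x_2}$ expression. For the symmetry in $y_1, y_2$, the natural route is to invoke \Ref{bty}, which presents $\bt$ as a $\Sym_{y_1, y_2}$ of something; that symmetry then becomes equally immediate. Thus the entire content of the lemma reduces to verifying the identity of the right-hand sides of \Ref{btx} and \Ref{bty}.

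My plan for that identity is a direct computation. Rewrite the rational factor as
\be
\frac{1 - h\:x_2/x_1}{1 - x_2/x_1} \>=\> \frac{x_1 - h\:x_2}{x_1 - x_2}\,,
\ee
so that the two terms produced by $\Sym_{x_1, x_2}$ share a common denominator $x_1 - x_2$. The numerator is antisymmetric in $x_1, x_2$, hence divisible by $x_1 - x_2$; after cancellation I expect the closed form
\be
\bt(x_1, x_2, y_1, y_2) \>=\> (1+h) \>-\> h\:\frac{(x_1 + x_2)(y_1 + y_2)}{x_1\>x_2} \>+\> h\:(1+h)\:\frac{y_1\>y_2}{x_1\>x_2}\,.
\ee
Performing the analogous reduction on the right-hand side of \Ref{bty} (with common denominator $y_1 - y_2$) yields the same expression. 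This simultaneously verifies \Ref{bty} and displays $\bt$ in a form manifestly symmetric in both $(x_1, x_2)$ and $(y_1, y_2)$, which proves the lemma.

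The main obstacle is purely bookkeeping: eight monomial terms per expansion, with the $h^0$ and $h^2 y_1$ contributions cancelling pairwise and the remaining terms collecting into the symmetric expression above. Two quick sanity checks support the target form: at $h = 0$ the symmetrization collapses telescopically to $\bt = 1$, and at $y_1 = y_2 = 0$ one gets $\bt = 1 + h$, both matching the proposed closed form. No conceptual difficulty is expected; this identity is the simplest instance of the ``denominator cancellation'' that underlies the Laurent-polynomial nature of the weight functions introduced in Section~\ref{sec:weight functions}.
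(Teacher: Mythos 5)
Your proposal is correct and takes essentially the same route as the paper: the lemma reduces to the identity between \Ref{btx} and \Ref{bty}, which the paper asserts with ``It is straightforward to see that'' and you verify by an explicit (and correct) common-denominator computation. The closed form $\bt = (1+h) - h\,(x_1+x_2)(y_1+y_2)/(x_1 x_2) + h(1+h)\,y_1 y_2/(x_1 x_2)$ checks out and makes both symmetries manifest at once.
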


\vsk.3>
Let \,$s_{\ab}\in S_n$ \>denote the transposition of \>$a$ and \>$b$.

\begin{thm}
\label{thm:recur}
Let \,$\si\in S_n$ \>be such that \,$\si(a)\in I_k$ and \,$\si(a+1)\in I_l$\>.
Then
\beq
\label{k=l}
W_{\si s_{a,a+1},\:I}\,=\,W_{\si,\:I}
\eeq
for \,$k=l$\,,
\vvn-.2>
\beq
\label{k<l}
W_{\si s_{a,a+1},\:I}\>=\,
h\,\frac{1-z_{\si(a)}/z_{\si(a+1)}}{1-hz_{\si(a)}/z_{\si(a+1)}}\,W_{\si,\:I}
+ \frac{1-h}{1-hz_{\si(a)}/z_{\si(a+1)}}\,W_{\si,\,s_{\si(a),\:\si(a+1)}(I)}
\vv-.1>
\eeq
for \,$k<l$\,, and
\vvn-.1>
\beq
\label{k>l}
W_{\si s_{a,a+1},\:I}\>=\,
\frac{1-z_{\si(a)}/z_{\si(a+1)}}{1-hz_{\si(a)}/z_{\si(a+1)}}\,W_{\si,\:I}+
(1-h)\,\frac{z_{\si(a)}/z_{\si(a+1)}}{1-hz_{\si(a)}/z_{\si(a+1)}}\,
W_{\si,\,s_{\si(a),\:\si(a+1)}(I)}
\vv-.2>
\eeq
for \,$k>l$\,.
\end{thm}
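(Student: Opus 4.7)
The plan is to reduce Theorem~\ref{thm:recur} to recursions for $W_J$ in the case $\si = \id$. Setting $J = \si^{-1}(I)$ and $y_i = z_{\si(i)}$, the definition \eqref{Wsi} gives $W_{\si,I}(\ttt,\zz,h) = W_J(\ttt, y_1,\ldots,y_n,h)$ and
\[
W_{\si s_{a,a+1},I}(\ttt,\zz,h) = W_{s_{a,a+1}(J)}(\ttt, y_1,\ldots, y_{a-1}, y_{a+1}, y_a, y_{a+2}, \ldots, y_n, h),
\]
while $W_{\si, s_{\si(a),\si(a+1)}(I)}(\ttt,\zz,h) = W_{s_{a,a+1}(J)}(\ttt, y_1,\ldots,y_n,h)$. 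The hypothesis $\si(a)\in I_k$, $\si(a+1)\in I_l$ becomes $a\in J_k$, $a+1\in J_l$. The three claims of the theorem are then three identities among $W_J$ and $W_{s_{a,a+1}(J)}$ evaluated on adjacent orderings of $y_a, y_{a+1}$.

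For $k=l$ we have $s_{a,a+1}(J)=J$, and \eqref{k=l} reduces to the symmetry of $W_J$ in $y_a \leftrightarrow y_{a+1}$. I would establish this by the filled-table presentation of Section~\ref{sec comb}: with $t^{(N)}_a=y_a$ and $t^{(N)}_{a+1}=y_{a+1}$, the variables $y_a, y_{a+1}$ enter $U_J$ asymmetrically only via the cells of columns $N-1$ and $N$ whose rows are $a, a+1$. If $a = i^{(N-1)}_p$ and $a+1 = i^{(N-1)}_{p+1}$, then combining the offending factors with the adjacent transposition $t^{(N-1)}_p \leftrightarrow t^{(N-1)}_{p+1}$ inside $\Sym_{\>t^{(N-1)}}$ yields precisely the function $\bt(t^{(N-1)}_p, t^{(N-1)}_{p+1}, y_a, y_{a+1})$ of \eqref{btx}. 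Its $(y_1,y_2)$-symmetry from Lemma~\ref{btxy} gives the claim at the $N-1$ level, and iterating the argument downward through the columns $m = k,\ldots,N-1$ where both $a, a+1$ appear as consecutive landmarks $i^{(m)}_\bullet$ completes the $k=l$ case.

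For $k\neq l$, consider the case $k<l$ (the case $k>l$ is handled by an analogous argument, with the roles of type-1 and type-2 factors interchanged). Expanding both $W_J(\ldots,y_a,y_{a+1},\ldots)$ and $W_{s_{a,a+1}(J)}(\ldots,y_{a+1},y_a,\ldots)$ via Section~\ref{sec comb}, the passage from $J$ to $s_{a,a+1}(J)$ reshuffles the landmarks $i^{(m)}_\bullet$ for $k\le m < l$. The filled tables split into two classes according to whether the swap only permutes variables within the symmetrization (yielding a multiple of $W_{s_{a,a+1}(J)}(\ldots,y_a,y_{a+1},\ldots)$ with coefficient $(1-h)/(1-hy_a/y_{a+1})$) or introduces genuinely asymmetric factors $1-y_a/y_{a+1}$ and $1-hy_a/y_{a+1}$ (yielding a multiple of $W_J(\ldots,y_a,y_{a+1},\ldots)$ with coefficient $h(1-y_a/y_{a+1})/(1-hy_a/y_{a+1})$). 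Matching the coefficients reduces column-by-column to the two-variable trigonometric $\gln$ R-matrix identity already encoded in Lemma~\ref{btxy}. The main obstacle is the combinatorial bookkeeping of how the landmarks $i^{(m)}_\bullet$ evolve under $s_{a,a+1}$ across the intermediate columns $k\le m<l$; once that is laid out, the substance of the proof is the single R-matrix-type identity applied at each level.
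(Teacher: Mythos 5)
Your starting point coincides with the paper's: both reduce to $\si = \id$ via formula \Ref{Wsi}, and both identify the function $\bt$ of \Ref{btx} together with Lemma~\ref{btxy} as the engine of the proof. For $k=l$ your iteration-through-columns via $\bt$ is essentially the paper's argument in disguise, and is fine.

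However, you are missing the key second reduction that the paper makes. After reducing to $\si=\id$, the paper observes that formula \Ref{UI} implies it suffices to prove the identity for $n=2$: the passage from $I$ to $s_{a,a+1}(I)$ only alters those factors of $U_I$ whose indices involve $a$ or $a+1$, and all other factors simply carry along as spectators. Once one is down to $n=2$, there is no ``combinatorial bookkeeping'' — the weight function for two elements splits explicitly into a product of $\bt$'s, and a one-line symmetrization computation (given in the paper after \Ref{k<le}) finishes the argument.

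You instead try to work with the filled tables of Section~\ref{sec comb} for arbitrary $n$. In the $k<l$ case this lands you exactly in the difficulty you flag at the end: ``the main obstacle is the combinatorial bookkeeping of how the landmarks $i^{(m)}_\bullet$ evolve under $s_{a,a+1}$ across the intermediate columns,'' and you never lay it out. Your claimed split of the filled tables into two classes — one contributing a multiple of $W_{s_{a,a+1}(J)}$ with coefficient $(1-h)/(1-hy_a/y_{a+1})$, the other contributing a multiple of $W_J$ with coefficient $h(1-y_a/y_{a+1})/(1-hy_a/y_{a+1})$ — is not defined precisely enough to check, and it is not clear that such a clean partition of the tables exists without already knowing the answer. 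Note also that the three weight functions in the recursion are evaluated at two different $z$-orderings (swapped versus unswapped), so the terms on the two sides do not correspond to filled tables of the same shape; this mismatch is exactly what the $n=2$ reduction lets one resolve by a direct computation rather than a bijection of tables. As it stands the $k\neq l$ case is a plausibility sketch, not a proof; incorporating the $n=2$ reduction from formula \Ref{UI} would turn it into one.
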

\begin{proof}
By formula \Ref{Wsi}\:, it suffices to prove the statement when \,$\si$ \,is
the identity permutation. Next, formula \Ref{UI} implies that it is enough
to consider only the case \,$n=2$\>. To simplify the notation, we write
$W_I=W_{\>\id,I}$ and \,$s=s_{1,2}$\>.

\vsk.2>
Let \,$k=l$, \,$I=(\Empty\lc\Empty,\{1,2\},\Empty\lc\Empty)$\>, the set
\,$\{1,2\}$ \,being at the \,$k$-th place. We compute \>$W_I$ starting
symmetrization in formula \Ref{WI} from \;$t_1^{(k)}\?,\:t_2^{(k)}$, and
using formula \Ref{btx} and Corollary \ref{btxy}\>:
\vvn-.2>
\be
W_I(\ttt,z_1,z_2)\,=\,(1-h)^{2\:\dl_{k\<,\<1}}\:
\bt(t_1^{(N-1)}\?,\:t_2^{(N-1)}\?,\:z_1,\:z_2)\,
\prod_{p=k}^{N-2}\:
\bt(t_1^{(p)}\?,\:t_2^{(p)}\?,\:t_1^{(p+1)}\?,\:t_2^{(p+1)})\,.
\vv.2>
\ee
Hence by Corollary \ref{btxy}, we have
\,$W_{s,I}(\ttt,z_1,z_2)=W_I(\ttt,z_2,z_1)=W_I(\ttt,z_1,z_2)$\>.

\vsk.2>
Let \,$k<l$\>,
\,$I=(\Empty\lc\Empty,\{1\},\Empty\lc\Empty,\{2\},\Empty\lc\Empty)$\>,
the sets \,$\{1\}$ \,and \,$\{2\}$ \,being at the \,$k$-th and \,$l$-th places,
respectively, \,and
\,$s(I)=(\Empty\lc\Empty,\{2\},\Empty\lc\Empty,\{1\},\Empty\lc\Empty)$\,.
Formula \Ref{k<l} is equivalent to the equality
\beq
\label{k<le}
h\>W_I(\ttt,z_1,z_2)+W_{s(I)}(\ttt,z_1,z_2)\,=\,
\Sym_{\>z_1,z_2} W_{s(I)}(\ttt,z_1,z_2)\,\frac{1-hz_2/z_1}{1-z_2/z_1}\;.
\eeq
We compute the left\:-hand side of \Ref{k<le} starting symmetrization in
formula \Ref{WI} from \;$t_1^{(l)}\?,\:t_2^{(l)}$, and using formula \Ref{btx}
and Corollary \ref{btxy}\>. The result of calculation is
\vvn.2>
\begin{align}
\label{k<lans}
(1-h)^{\dl_{k\<,\<1}}\:&
(1+h-h\:t_1^{(l)}\!/t_1^{(l-1)}-ht_1^{(l)}\!/t_1^{(l-1)})\times{}
\\[2pt]
&{}\times\bt(t_1^{(N-1)}\?,\:t_2^{(N-1)}\?,\:z_1,\:z_2)\,
\prod_{p=l}^{N-2}\:
\bt(t_1^{(p)}\?,\:t_2^{(p)}\?,\:t_1^{(p+1)}\?,\:t_2^{(p+1)})\,.
\notag
\end{align}
We compute the right-hand side of \Ref{k<le} starting symmetrization
from \;$z_1,z_2$, and using formula \Ref{btx} and Corollary \ref{btxy}\>,
and get the same answer \Ref{k<lans}\:. Formula \Ref{k<l} is proved.

\vsk.2>
The proof of formula \Ref{k>l} is similar.
\end{proof}

Theorem \ref{thm:recur} implies recursions for weight functions.
Set
\be
s_{\aa+1}(\zz)\>=\>(z_1\lc z_{a-1},z_{a+1},z_a,z_{a+2}\lc z_n)\,.
\ee

\begin{cor}
\label{cor:recur}
Suppose \,$a\in I_k$ and \,$a+1\in I_l$\>. Then
\vvn.3>
\beq
\label{k=lc}
W_I\bigl(\ttt,s_{\aa+1}(\zz)\bigr)\,=\,W_I(\ttt,\zz)
\eeq
for \,$k=l$\,,
\vvn-.2>
\beq
\label{k<lc}
W_{s_{a,a+1}(I)}(\ttt,\zz)\,=\,
\frac{1-hz_a/z_{a+1}}{1-z_a/z_{a+1}}\,W_I\bigl(\ttt,s_{\aa+1}(\zz)\bigr)+
(h-1)\,\frac{z_a/z_{a+1}}{1-z_a/z_{a+1}}\,W_I(\ttt,\zz)
\vv-.1>
\eeq
for \,$k<l$\,, and
\vvn-.1>
\beq
\label{k>lc}
W_{s_{a,a+1}(I)}(\ttt,\zz)\,=\,
\frac{1-h^{-1}z_{a+1}/z_a}{1-z_{a+1}/z_a}\,
W_I\bigl(\ttt,s_{\aa+1}(\zz)\bigr)+
(h^{-1}\?-1)\,\frac{z_{a+1}/z_a}{1-z_{a+1}/z_a}\,W_I(\ttt,\zz)
\vv-.2>
\eeq
for \,$k>l$\,.
\end{cor}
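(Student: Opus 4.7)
\smallskip

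\textbf{Proof proposal for Corollary \ref{cor:recur}.}
The plan is to deduce the corollary by applying Theorem \ref{thm:recur} to the special permutation $\sigma=s_{a,a+1}$, so that $\sigma s_{a,a+1}=\id$ and the left-hand side $W_{\sigma s_{a,a+1},\:I}$ in \Ref{k=l}\,--\,\Ref{k>l} becomes $W_{\id,\:I}=W_I(\ttt,\zz)$. Then we use the definition \Ref{Wsi} of $W_{s_{a,a+1},\:J}$ to rewrite the remaining $W_{\sigma,\cdots}$ terms as values of weight functions with the $\zz$ variables swapped, and finally solve the resulting linear relation for $W_{s_{a,a+1}(I)}(\ttt,\zz)$.

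First I would set up the bookkeeping carefully. If $a\in I_k$ and $a+1\in I_l$, then with $\sigma=s_{a,a+1}$ we have $\sigma(a)=a+1\in I_l$ and $\sigma(a+1)=a\in I_k$, so the hypotheses of Theorem~\ref{thm:recur} are satisfied with the roles of $k$ and $l$ \emph{interchanged}. Formula \Ref{Wsi} gives
\be
W_{s_{a,a+1},\:J}(\ttt,\zz,h)\,=\,W_{s_{a,a+1}(J)}(\ttt,s_{a,a+1}(\zz),h)
\ee
for every $J\in\Il$, and $s_{\sigma(a),\sigma(a+1)}(I)=s_{a+1,a}(I)=s_{a,a+1}(I)$. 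Thus in each of the three cases of Theorem~\ref{thm:recur} the right-hand side becomes an $\R$-linear combination (with coefficients in $\zz,h$) of $W_{s_{a,a+1}(I)}(\ttt,s_{a,a+1}(\zz),h)$ and $W_I(\ttt,s_{a,a+1}(\zz),h)$, while the left-hand side is simply $W_I(\ttt,\zz,h)$.

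Next I would treat the three cases. For $k=l$, the set $s_{a,a+1}(I)$ equals $I$, so \Ref{k=l} reads $W_I(\ttt,\zz,h)=W_I(\ttt,s_{a,a+1}(\zz),h)$, which is \Ref{k=lc}. For $k<l$ (which is $k'>l'$ in the Theorem's notation), formula \Ref{k>l} of the Theorem, after substituting $z_{\sigma(a)}/z_{\sigma(a+1)}=z_{a+1}/z_a$, yields after the substitution $\zz\mapsto s_{a,a+1}(\zz)$ (equivalently, relabeling $\tilde\zz$ and back) the identity
\be
W_I(\ttt,s_{a,a+1}(\zz))=\frac{1-z_a/z_{a+1}}{1-hz_a/z_{a+1}}\,W_{s_{a,a+1}(I)}(\ttt,\zz)+(1-h)\,\frac{z_a/z_{a+1}}{1-hz_a/z_{a+1}}\,W_I(\ttt,\zz).
\ee
Solving this for $W_{s_{a,a+1}(I)}(\ttt,\zz)$ gives exactly \Ref{k<lc}. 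The case $k>l$ is analogous using \Ref{k<l} of the Theorem with $\sigma=s_{a,a+1}$, and produces \Ref{k>lc} after the same algebraic rearrangement.

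The main (and really the only) potential pitfall is the index swap: the cases $k<l$ and $k>l$ of the corollary correspond, respectively, to the cases \Ref{k>l} and \Ref{k<l} of the theorem, because the permutation $\sigma=s_{a,a+1}$ exchanges the meanings of ``$\sigma(a)$ lies in'' and ``$a$ lies in''. Once this bookkeeping is handled, the rest is a one-line linear solve. I would double-check signs by verifying that in \Ref{k<lc} the coefficient of $W_I(\ttt,\zz)$ on the right has the expected pole at $z_a=z_{a+1}$ cancelling that of $W_I(\ttt,s_{a,a+1}(\zz))$, so that the left-hand side remains a Laurent polynomial in $\ttt,\zz,h$ up to the $1-z_{a+1}/z_a$ denominator in $\Wt$, consistent with Lemma~\ref{prop:W divisible by E}.
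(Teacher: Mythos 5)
Your proposal is correct and is essentially the same argument the paper uses: specialize Theorem~\ref{thm:recur} so that $\sigma s_{a,a+1}=\id$, translate $W_{s_{a,a+1},\,\cdot}$ via formula~\Ref{Wsi}, perform the variable swap $\zz\mapsto s_{a,a+1}(\zz)$, and solve the resulting linear relation; the paper's one-line proof (``take $\si=\id$'' together with the observation that \Ref{k<lc}, \Ref{k>lc} correspond to \Ref{k>l}, \Ref{k<l}) encodes the very same index interchange you carefully spell out. The only minor inaccuracy is in your concluding sanity check: the relevant cancellation is that the numerators in \Ref{k<lc} sum to zero at $z_a=z_{a+1}$ because $W_I(\ttt,s_{a,a+1}(\zz))=W_I(\ttt,\zz)$ there, which keeps the right-hand side a Laurent polynomial in $\zz$ (matching the left-hand side $W_{s_{a,a+1}(I)}$, itself a Laurent polynomial); this is unrelated to Lemma~\ref{prop:W divisible by E}, which concerns divisibility in the $h$-variable of the substitutions of $W$, not poles in $\zz$.
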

\begin{proof}
We take \,$\si=\id$ \;in Theorem~\ref{thm:recur} and apply formula~\Ref{Wsi}\:.
Then formulae \Ref{k=lc}\:, \Ref{k<lc}\:, \Ref{k>lc} are respective
counterparts of formulae \Ref{k=l}\:, \Ref{k>l}\:, \Ref{k<l}\:.
\end{proof}

\begin{rem}

For a function $f(x,y)$\:, define
\vvn.2>
\beq
\label{diff opers}
\der_{\xyc}\>f(x,y)\,=\,\frac{f(x,y)-f(y,x)}{x-y}\;,\qquad
\pi_{\xyc}\>f(x,y)\,=\,\der_{\xyc}\bigl(xf(x,y)\bigr)\,.\kern-.6em
\eeq
We call \;$\der_{\xyc}$ \:and \,$\pi_{\xyc}$ \:the rational and trigonometric
divided difference operators, respectively.
Formulae \Ref{k<lc} and \Ref{k>lc} respectively read
\begin{align}
\label{k<lpi}
W_{s_{a,a+1}(I)}\,&{}=\,\pi_{z_a\<,\:z_{a+1}}W_I -
hz_a \cdot\der_{z_a\<,\:z_{a+1}}W_I\,,
\\[4pt]
W_{s_{a+1,a}(I)}\,&{}=\,\pi_{z_{a+1}\<,\:z_a}W_I -
h^{-1}\<z_{a+1}\cdot\der_{z_{a+1}\<,\:z_a}W_I\,.
\notag
\end{align}
\end{rem}

\section{Stable envelope maps and $R$-matrices}
\label{sec:stab}

\subsection{Definition}
For $\si\in S_n$, we define the {\it stable envelope map}
\vvn.2>
\beq
\label{def stab map}
\Stab_\si:\>K_T\bigl((\XX_n)^T\bigr)\,\to\,\KTX\,,\qquad
1_I \mapsto \ka_{\si,\:I}\,,
\eeq
where \>$I\?\in\Il$ and \,$\bla\in\Z^n_{\geq 0}$\>, \,$|\bla|=n$\>.

\vsk.2>
The maps $\on{Stab}_\si$ become isomorphisms after tensoring the K-theory
algebras with $\C(\zz,h)$\>, see Theorem \ref{thm:basis}.
For $\si',\si\in S_n$, we define the {\it geometric \>$R$-matrix\/}
\vvn.2>
\beq
\label{Rgeom}
R_{\si'\!,\>\si} =
\St_{\si'}^{-1}\circ \St_{\si} \in
\End(K_T\bigl((\XX_n)^T\bigr))\ox\C(\zz,h)=
\End\bigl(\CNn\bigr)\ox\C(\zz,h).
\eeq

\subsection{Trigonometric $R$-matrix}
\label{Trigonometric $R$-matrix}

Let $h, z$ be parameters.
Define the {\it trigonometric $R$-matrix\/}, an element
$\Rc(z,h)\in\End(\C^N\!\ox\C^N)\ox\C(z,h)$, by the conditions:
\begin{enumerate}
\item[$\bullet$] For $i=1\lc N$,
\beq
\label{RR1}
\Rc(z,h)\>:\>v_i\ox v_i\,\mapsto\,v_i\ox v_i\,,
\eeq

\item[$\bullet$] For $1\le i<j\le N$, on the two-dimensional subspace with
ordered basis $v_i\ox v_j$, \,$v_j\ox v_i$, the trigonometric $R$-matrix
is given by the matrix
\vvn.2>
\beq
\label{RR2}
\left( \begin{array}{cccc}
\dfrac{1-z}{1-hz}
& \dfrac{1-h}{1-hz}
\\[9pt]
\dfrac{(1-h)z}{1-hz} & \dfrac{h(1-z)}{1-hz}
\end{array} \right).
\vv.2>
\eeq
\end{enumerate}
The trigonometric $R$-matrix depends on two parameters $z,h$.
We often omit in the notation the dependence on $h$.

\vsk.2>
The $2\times2$-matrix in \Ref{RR2}\:, satisfies the following relation
\vvn.2>
\beq
\label{dual R}
\left( \begin{array}{cccc}
\dfrac{1-z}{1-hz}
& \dfrac{(1-h)z}{1-hz}
\\[9pt]
\dfrac{1-h}{1-hz} & \dfrac{h(1-z)}{1-hz}
\end{array}\right)
=
\left( \begin{array}{cccc}
h^{-1} & 0
\\
0 & 1
\end{array}\right)
\left( \begin{array}{cccc}
\dfrac{1-z^{-1}}{1-h^{-1}z^{-1}}
& \dfrac{1-h^{-1}}{1-h^{-1}z^{-1}}
\\[9pt]
\dfrac{(1-h^{-1})z^{-1}}{1-h^{-1}z^{-1}} &
\dfrac{h^{-1}(1-z^{-1})}{1-h^{-1}z^{-1}}
\end{array}\right)
\left( \begin{array}{cccc}
1 & 0
\\
0 & h
\end{array}\right).\kern-.6em
\vv.2>
\eeq

\vsk.2>
The trigonometric $R$-matrix satisfies the Yang-Baxter equation
\vvn.2>
\beq
\label{YBE}
\Rc^{(1,2)}(z_2/z_1)\>\Rc^{(1,3)}(z_3/z_1)\>\Rc^{(2,3)}(z_3/z_2)\,=\,
\Rc^{(2,3)}(z_3/z_2)\>\Rc^{(1,3)}(z_3/z_1)\>\Rc^{(1,2)}(z_2/z_1)\,.\kern-.4em
\vv.2>
\eeq
This is an identity in $\End((\C^N)^{\ox 3})$ and $\Rc^{(i,j)}(z_j/z_i)$ is
the $R$-matrix $\Rc(z_j/z_i)$ acting on the $i$-th and $j$-th factors of
$(\C^N)^{\ox 3}$.

\vsk.2>
The trigonometric $R$-matrix satisfies the inversion relation
\vvn.2>
\beq
\label{Rinv}
\Rc^{(1,2)}(z_2/z_1)\>\Rc^{(2,1)}(z_1/z_2)\,=\,1\,.
\eeq

\subsection{Geometric $R\:$-matrix for \,$n=2$\>}
The group \,$S_2$ has two elements: the identity \,$\id$
\,and the transposition \,$s$\>. After the identification
\,$K_T\bigl((\XX_n)^T\bigr)\ox\C(\zz,h)=(\C^N)^{\ox2}\ox\C(\zz,h)$,
we calculate the geometric \,$R$-matrix \,$R_{\:s\<,\:\id}$ as follows.

For $\bla=(0\lc 0,2,0\lc 0)$ with the coordinate \,$2$ \,being at \,$i$-th
position, both maps \;$\St_{\>\id}$ \>and \;$\St_{s}$ send the vector
\,$v_i\ox v_i$ to \,$1\in K_T(\tfl)$\>.
Hence, $R_{\:s\<,\:\id}\>(v_i\ox v_i)=\>v_i\ox v_i$\>.

\vsk.2>
For \,$\bla=(0\lc 0,1,0\lc 0,1,0\ldots,0)$ with the nonzero coordinates \,$1$
being at \,$i$-th \,and \,$j$-th positions, \,$i<j$, the set $\Il$ consists of
two elements:
\,$I=(\Empty\lc\Empty,\{1\},\Empty\lc\Empty,\{2\},\Empty\lc\Empty)$ \,and
\,$J=(\Empty\lc\Empty,\{2\},\Empty\lc\Empty,\{1\},\Empty\lc\Empty)$\,.

\vsk.2>
By formulae for \,$M_2W_I$ and \,$M_2W_J$ from Section~\ref{sec rem} and
the equality
\vvn.2>
\be
E(\zz_I,h)\>=\>(1-h)^{2N-i-j}(1-hz_2/z_1)^{N-j}(1-hz_1/z_2)^{N-j}\,,
\ee
see~\Ref{Ela}\:, we have
\[
\St_{\>\id}\:(v_i \ox v_j)\,=\,1-z_2/\gm_{i,1}\,,\qquad
\St_{\>\id}\:(v_j \ox v_i)\,=\,1-hz_1/\gm_{i,1}\,.
\vv-.4>
\]
Similarly,
\vvn-.6>
\[
\St_s(v_i \ox v_j)\,=\,1-hz_2/\gm_{i,1}\,,\qquad
\St_s(v_j \ox v_i)\,=\,1-z_1/\gm_{i,1}\,.
\vv-.1>
\]
Thus,
\vvn-.8>
\begin{align*}
R_{\:s\<,\:\id}\>(v_i\ox v_j)\, &{}=\,
\frac{ 1-z_2/z_1}{1-hz_2/z_1}\,v_i\ox v_j+\:
\frac{(1-h)\>z_2/z_1}{1-hz_2/z_1}\,v_j\ox v_i\,,
\\[4pt]
R_{\:s\<,\:\id}\>(v_j\ox v_i)\, &{}=\,\frac{ 1-h}{1-hz_2/z_1}\,v_i\ox v_j+\:
\frac{ h\>(1-z_2/z_1)}{1-hz_2/z_1}\,v_j\ox v_i\,.
\end{align*}
Therefore, \;$R_{\:s\<,\:\id}\:=\>\Rc(z_2/z_1,h)$\,.

\subsection{Geometric $R\:$-matrices for arbitrary $n$\>}
Since for any permutations \,$\si, \si'\!, \si''$,
\vvn.2>
\beq
\label{R=RR}
R_{\:\si''\!\<,\>\si}\>=\,R_{\:\si''\!\<,\>\si'}\>R_{\:\si'\!\<,\>\si}\,,
\vv.2>
\eeq
it is enough to describe the geometric $R\:$-matrices
\,$R_{\>\si s_{a,a+1},\>\si}$ \,that correspond to permutations
\,$\si s_{\aa+1},\,\si\in S_n$\>.
\vvn.2>

\begin{thm}
\label{thm R general} We have
\be
R_{\>\si s_{a,a+1},\>\si}\>=\,
\Rc^{(\si(a),\>\si(a+1))}(z_{\si(a+1)}/z_{\si(a)})
\>\in\>\End\bigl(\CNn\bigr)\ox\C(\zz,h)\,,
\vv.2>
\ee
where \,$\Rc^{(\si(a),\>\si(a+1))}$ is the trigonometric \,$R$-matrix
\Ref{RR1}\:, \Ref{RR2} acting in the \,$\si(a)$-th \>and \,${\si(a+1)}$-th tensor
factors.
\end{thm}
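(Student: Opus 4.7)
The plan is to express each class $\ka_{\si,\:I}$ as a $\C(\zz,h)$-linear combination of classes $\ka_{\si s_{\aa+1},\:J}$ and to read off, by the very definition \Ref{Rgeom}\:, the matrix entries of $R_{\>\si s_{a,a+1},\>\si}=\Stab_{\si s_{\aa+1}}^{-1}\circ\Stab_\si$ in the fixed-point basis $\{1_I\}_{I\in\Il}$. To produce such a relation, I would apply Theorem~\ref{thm:recur} with the roles of $\si$ and $\tau:=\si s_{\aa+1}$ interchanged: noting that $\tau s_{\aa+1}=\si$ and $s_{\tau(a),\:\tau(a+1)}(I)=s_{\si(a),\:\si(a+1)}(I)$\>, the formulae \Ref{k=l}\:, \Ref{k<l}\:, \Ref{k>l} applied to $\tau$ give expressions for $W_{\si,\:I}$ in terms of $W_{\tau,\:I}$ and $W_{\tau,\:s_{\si(a),\si(a+1)}(I)}$\>. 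Since $\si(a)\in I_k$ and $\si(a+1)\in I_l$ is equivalent to $\tau(a)\in I_l$ and $\tau(a+1)\in I_k$\>, the three cases $k=l$, $k<l$, $k>l$ for $\si$ correspond respectively to the cases $k=l$, $k>l$, $k<l$ of Theorem~\ref{thm:recur} applied to $\tau$\>.

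Because the function $E(\ttt,h)$ in \Ref{Wti} depends on neither $\si$ nor $I$\>, the identities so obtained at the level of Laurent polynomials $W_{\si,\:I}$ persist verbatim for the rational functions $\Wt_{\si,\:I}$\>. Evaluating at every $\zz_J$\>, $J\in\Il$\>, and invoking the injectivity of the localization map \Ref{eqn:loc}\:, they lift to identities in $K_T(\tfl)$ among the classes $\ka_{\si,\:I}=[\Wt_{\si,\:I}]$\>. Setting $w=z_{\si(a+1)}/z_{\si(a)}$ and $J=s_{\si(a),\si(a+1)}(I)$\>, one obtains $\ka_{\si,\:I}=\ka_{\tau,\:I}$ in the case $k=l$\>, while in the two mixed cases the identities produce a two-by-two linear relation between the pair $(\ka_{\si,\:I},\ka_{\si,\:J})$ and the pair $(\ka_{\tau,\:I},\ka_{\tau,\:J})$\>, whose coefficients are precisely the entries of the matrix in \Ref{RR2} specialized at $z=w$\>, arranged in one order or the other according to whether $k<l$ or $k>l$\>.

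Finally, by \Ref{Rgeom}\:, the coefficients expressing $\ka_{\si,\:I}$ and $\ka_{\si,\:J}$ in terms of $\{\ka_{\tau,\:K}\}$ are exactly the matrix entries of $R_{\tau,\>\si}(1_I)$ and $R_{\tau,\>\si}(1_J)$ in the fixed-point basis. Under the identification $K_T\bigl((\XX_n)^T\bigr)\cong\CNn$ sending $1_K\mapsto v_K$\>, the move $I\mapsto J$ simply swaps the $\si(a)$-th and $\si(a+1)$-th tensor factors of $v_I$ and leaves every other factor untouched. Comparing with the action \Ref{RR1}\:--\:\Ref{RR2} of $\Rc(w,h)$ on the two-dimensional span of $v_k\<\ox v_l$ and $v_l\<\ox v_k$ then delivers the claimed equality $R_{\>\si s_{a,a+1},\>\si}=\Rc^{(\si(a),\si(a+1))}(z_{\si(a+1)}/z_{\si(a)})$\>. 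The main obstacle is pure bookkeeping: one must track, when swapping the roles of $\si$ and $\tau$\>, which of the cases \Ref{k<l}\:, \Ref{k>l} of Theorem~\ref{thm:recur} is in force, and carefully match the ordering convention $(v_i\<\ox v_j,v_j\<\ox v_i)$ with $i<j$ used in \Ref{RR2} against the ordering of local factors implicit in $v_I,v_J$\>. The two-site calculation in the preceding subsection furnishes a direct consistency check for all such sign and ordering conventions.
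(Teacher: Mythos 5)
Your argument is correct and follows essentially the same route as the paper's (very terse) proof: apply the recursion of Theorem~\ref{thm:recur} with $\tau=\si s_{a,a+1}$ in place of $\si$, divide by the $\si$- and $I$-independent factor $E(\ttt,h)$, pass through the injective localization to identities among the classes $\ka_{\si,I}$, and match coefficients with \Ref{RR1}--\Ref{RR2}. Your careful tracking of the case reversal ($k<l$ for $\si$ becomes $k>l$ for $\tau$) and of the ordering convention in $v_k\ox v_l$ is exactly the bookkeeping the published one-line proof leaves implicit, and it all checks out.
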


\begin{proof}
The geometric $R$-matrix is defined by formulae \Ref{Rgeom}\:,
\Ref{def stab map}\:, \Ref{kappa}\:, \Ref{Wti}\:. Now the statement follows from
Theorem \ref{thm:recur} and formulae \Ref{RR1}\:, \Ref{RR2}\:.
\end{proof}

\subsection{Proof of Theorem \ref{thm orth}}
\label{Proof of orthogonality relations}

For \,$\si\in S_n$, introduce a matrix
\vvn.2>
\beq
\label{Wh}
\Wh_{\si}(\zz,h)=(\Wt_{\si\<,\:J}(\zz_I,\zz,h))_{\IJ\in\Il}
\vv.2>
\eeq
where the subscripts \,$I,J$ \,label rows and columns, respectively.
Consider the matrix
\vvn.2>
\beq
\label{Rh}
\Rh(\zz,h)\,=\,\Wh^{-1}_{\si_0}(\zz,h)\,\Wh_{\id}(\zz,h)\,.
\vv.1>
\eeq
This is the matrix of the restriction of the geometric \,$R$-matrix
\,$R_{\:\si_0,\:\id}$\>, see \Ref{Rgeom}\:, on the span of
$\{\:v_I\;|\;I\?\in\Il\}$\>.
By Theorem~\ref{thm R general} and formulae \Ref{R=RR}\:, \Ref{dual R}\:, we have
\vvn.1>
\beq
\label{Rt}
(\Rh(\zz,h))^t\>=\,M\:\Rh(\zz^{-1},h^{-1})\:\Mt\,,
\eeq
where the superscript \;$t$ \,denotes transposition and \,$M,\Mt$ \,are
diagonal matrices. The entries of $M$ are \,$M_{\IIc}=h^{-p(I)}$,
and an explicit formula for the entries of \,$\Mt$ will not be used.
Formulae \Ref{Rh}\:, \Ref{Rt} yield
\vvn.2>
\be
\bigl(\Wh_{\id}(\zz,h)\bigr)^t\>\bigl(\Wh^{-1}_{\si_0}(\zz,h)\bigr)^t\>=\,
M\>\Wh^{-1}_{\si_0}(\zz^{-1}\<,h^{-1})\,\Wh_{\id}(\zz^{-1}\<,h^{-1})\:\Mt\,.
\vv-.4>
\ee
Hence,
\vvn-.2>
\beq
\label{WW=WW}
\Wh_{\si_0}(\zz^{-1}\<,h^{-1})\>M^{-1}\>\bigl(\Wh_{\id}(\zz,h)\bigr)^t\>=\,
\Wh_{\id}(\zz^{-1}\<,h^{-1})\:\Mt\bigl(\Wh_{\si_0}(\zz,h)\bigr)^t\>.
\vv.2>
\eeq
By Lemma \ref{lem:triang}, \;$\Wt_{\id,\:J}(\zz_I,\zz,h)=0$ \,if \,$I>_{\id}J$
\,and \,$\Wt_{\si_0,\:J}(\zz_I,\zz,h)=0$ \,if \,$I<_{\id}J$\>.
That is, the matrices \,$\bigl(\Wh_{\id}(\zz,h)\bigr)^t$ and
\,$\Wh_{\si_0}(\zz^{-1}\<,h^{-1})$ \,are lower triangular,
and so is the left\:-hand side of \Ref{WW=WW}\:. Similarly, the matrices
\,$\Wh_{\id}(\zz^{-1}\<,h^{-1})$ \,and \,$\bigl(\Wh_{\si_0}(\zz,h)\bigr)^t$
are upper triangular, and so is the right\:-hand side of \Ref{WW=WW}\:.
Therefore,
\vvn.2>
\be
\Wh_{\si_0}(\zz^{-1}\<,h^{-1})\>M^{-1}\>\bigl(\Wh_{\id}(\zz,h)\bigr)^t\>
=\,S\,,
\vv.2>
\ee
where \,$S$ \>is a diagonal matrix with entries
\beq
\label{DII}
S_{I,\:I}=\:h^{p(I)}\>\Wt_{\id,\:I}(\zz_I,\zz,h)\,
\Wt_{\si_0,\:I}(\zz^{-1}_I,\zz^{-1},h^{-1})\,=\,
\frac{R(\zz_I)\>Q(\zz_I,h)}{P(\zz_I)}\;.
\eeq
Here the second equality follows from Lemma~\ref{lem:princ} and
formula~\Ref{Pe}\:. Hence,
\vvn.2>
\beq
\label{WDWM}
\bigl(\Wh_{\id}(\zz,h)\bigr)^t\>S^{-1}\,
\Wh_{\si_0}(\zz^{-1}\<,h^{-1})\>M^{-1}\>=\,1\,,
\vv.2>
\eeq
which is the matrix form of formula \Ref{ORT}\:.

\section{Inverse of the map \;$\Stab_{\>\id}$\,}
\label{invstab}

\subsection{$S_n$-action on functions}
\label{sec S-actions}

Let $P^{(\ij)}$ be the permutation of the $i$-th and $j$-th factors of
\,$\CNn$. Let
\vvn-.3>
\beq
\label{Ki}
K_i:f(\zzz) \mapsto f(z_1\lc z_{i-1}, z_{i+1},z_i, z_{i+2}\lc z_n)
\vv.2>
\eeq
be the operator interchanging the variables \>$z_i$ and \,$z_{i+1}$\>.
\vsk.2>
Define an action of the symmetric group \,$S_n$ on \,$\CNn$-valued functions
of \,$\zzz,h$\>. Let the $i$-th elementary transposition \,$s_i\in S_n$ act
by the formula
\vvn.3>
\beq
\label{Sn-}
\sti_i\>=\,P^{(\ii+1)}\,\Rc^{(i,i+1)}(z_i/z_{i+1})\>K_i\,,
\vvn.2>
\eeq
where \,$\Rc$ \>is the trigonometric $R$-matrix \Ref{RR1}\:, \Ref{RR2}\:.

\begin{lem}
\label{lem S action}
The \,$S_n$-action \Ref{Sn-} is well-defined, that is,
\vvn.1>
\be
(\sti_i)^2 = 1, \qquad \sti_i\sti_{i+1}\sti_i\>=\,
\sti_{i+1}\sti_i\:\sti_{i+1}\,,\qquad
\sti_i\:\sti_j=\sti_j\sti_i \quad \text{if}\;\;\,|\:i-j|>1\,.
\vv.1>
\ee
Moreover, \,$\sti_i\:z_i\sti_i=\:z_{i+1}$ \,and
\,$\sti_iz_j=\:z_j\sti_i$ \,if \,$j\ne i,i+1$\>,
\vvn.1>
where \>$\zzz$ are considered as the scalar operators on \,$\CNn$ of
multiplication by the respective variable.
\end{lem}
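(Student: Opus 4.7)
The plan is to reduce each identity to one of the structural properties of the trigonometric $R$-matrix already recorded in Section~\ref{Trigonometric $R$-matrix}: the inversion relation \Ref{Rinv}, the Yang--Baxter equation \Ref{YBE}, and the commutation of $\Rc^{(i,i+1)}$ with operators acting on distant factors. Throughout I will use that $K_i$ and $P^{(i,i+1)}$ commute (one permutes arguments of a function, the other permutes tensor components), that $K_i$ commutes with $\Rc^{(j,j+1)}(z_j/z_{j+1})$ for $|i-j|\ge2$, and the key intertwining
\be
K_i\>\Rc^{(i,i+1)}(z_i/z_{i+1})\,=\,\Rc^{(i,i+1)}(z_{i+1}/z_i)\>K_i,
\ee
which just records how $K_i$ swaps $z_i\leftrightarrow z_{i+1}$ in the argument of $\Rc$.

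For $(\sti_i)^2=1$, I would push the first $K_i$ through using the intertwining above, and then use $P^{(i,i+1)}\Rc^{(i,i+1)}(z)P^{(i,i+1)}=\Rc^{(i+1,i)}(z)$ together with the inversion relation \Ref{Rinv} to collapse the product, exactly in the form
\be
P^{(i,i+1)}\>\Rc^{(i,i+1)}(z_i/z_{i+1})\>P^{(i,i+1)}\>\Rc^{(i,i+1)}(z_{i+1}/z_i)\>=\>\Rc^{(i+1,i)}(z_i/z_{i+1})\>\Rc^{(i,i+1)}(z_{i+1}/z_i)\>=\>1.
\ee
The commutation $\sti_i\sti_j=\sti_j\sti_i$ for $|i-j|>1$ is essentially automatic, since the factors $P^{(i,i+1)}$, $\Rc^{(i,i+1)}(z_i/z_{i+1})$, $K_i$ and their $j$-analogues act on mutually disjoint tensor slots and mutually disjoint sets of variables.

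For the last two claims, compute $\sti_i z_i\sti_i$ by moving $z_i$ past $K_i$, which produces $z_{i+1}$; since $z_{i+1}$ is a scalar operator, it commutes through the remaining factors of $\sti_i$ and one is left with $z_{i+1}\>\sti_i^2=z_{i+1}$. If $j\ne i,i+1$, then $z_j$ commutes with $K_i$, with $P^{(i,i+1)}$, and with $\Rc^{(i,i+1)}(z_i/z_{i+1})$ (whose entries depend only on the ratio $z_i/z_{i+1}$), giving $\sti_iz_j=z_j\sti_i$ at once.

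The main obstacle is the braid relation $\sti_i\sti_{i+1}\sti_i=\sti_{i+1}\sti_i\sti_{i+1}$. My plan here is to expand both sides, systematically push every $K_\bullet$ to the right using the intertwining above (accumulating the appropriate swaps of $z_i,z_{i+1},z_{i+2}$ in the arguments of the $R$-matrices), and then move each $P^{(\bullet,\bullet)}$ to the right as well, converting $P^{(a,b)}\Rc^{(c,d)}(z)P^{(a,b)}$ into the $R$-matrix with the corresponding indices permuted. On both sides this produces $K_iK_{i+1}K_i=K_{i+1}K_iK_{i+1}$ together with $P^{(i,i+1)}P^{(i+1,i+2)}P^{(i,i+1)}=P^{(i+1,i+2)}P^{(i,i+1)}P^{(i+1,i+2)}$ (which hold by the symmetric-group relations) multiplied by a product of three $R$-matrices evaluated at the ratios $z_{i+1}/z_i$, $z_{i+2}/z_i$, $z_{i+2}/z_{i+1}$ taken in the two opposite orders. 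The equality of these two triple products is exactly the Yang--Baxter equation \Ref{YBE} applied to the tensor slots $(i,i+1,i+2)$. The delicate part is bookkeeping: the intermediate arguments of $\Rc$ change sign under the $K_\bullet$'s, and the intermediate upper indices change under the $P^{(\bullet,\bullet)}$'s, so one must check that after all commutations the two sides present the Yang--Baxter triples with matching arguments and matching indices. Once this is organized carefully, the braid identity follows from \Ref{YBE}.
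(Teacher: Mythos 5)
Your proposal is correct and takes essentially the same approach as the paper, which disposes of the lemma in two sentences by appealing to the inversion relation and the Yang--Baxter equation; you simply spell out the computations the paper leaves implicit. One small caveat: when you track the arguments of the three $R$-matrices in the braid-relation expansion, they come out as $z_i/z_{i+1}$, $z_i/z_{i+2}$, $z_{i+1}/z_{i+2}$ rather than their reciprocals as you wrote, but this is exactly the kind of bookkeeping you flagged, and the identity still follows from \Ref{YBE} after a harmless relabeling.
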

\begin{proof}
The \,$S_n$-action is well-defined due to the inversion relation~\Ref{Rinv} and
the Yang-Baxter equation~\Ref{YBE}\:. The rest of the statement is clear.
\end{proof}

\subsection{Vectors \,$\xi_I$}
\label{secxi}

Recall the partial ordering \,$\leq_\si$ on \,$\Il$ \,defined in
Section~\ref{sec:flagvar}. Set
\vvn.3>
\begin{gather}
\label{Imax}
\Imin\,=\,\bigl(\{1\lc\la_1\}\>,\{\la_1+1\lc\la_1+\la_2\}\>,\;\ldots\;,
\{n-\la_N+1\lc n\}\bigr)\in\Il\,,
\\[4pt]
\notag
\Imax\,=\,\bigl(\{n-\la_1+1\lc n\}\>,\{n-\la_1-\la_2+1\lc n-\la_1\}\>,
\;\ldots\;,\{1\lc\la_N\}\bigr)\in\Il\,.\kern-3em
\end{gather}
Clearly, \,$\Imin\leqid I\leqid\Imax$ \,for any \,$I\?\in\Il$\,.

\vsk.2>
Let
\vvn-.4>
\beq
\label{denom}
D\,=\!\prod_{1\leq b<a\leq n}\!\<(1-hz_b/z_a)\,.
\eeq

\begin{thm}
\label{xi-}
There exist unique elements \,$\{\:\xi_I\in\CNn\ox\CzhD\>\ |\ I\?\in\Il\}$
such that \;$\xi_\Imin=v_\Imin$ \,and
\vvn-.3>
\beq
\label{xi-si}
\xi_{s_i(I)}\>=\,\sti_i\>\xi_I
\vv.2>
\eeq
for every \,$I\?\in\Il$ \,and \,$i=1\lc n-1$\,. Moreover,
\vvn.2>
\beq
\label{xi-v}
\xi_I\,=\,\sum_{J\leq_\id\:I}\,X_{\IJ}\,v_J\,,
\eeq
where \,$X_{\IJ}\in \CzhD$ \>and
\vvn-.2>
\be
X_{I,\:I}\>=\,\prod_{k<l}\>\prod_{a\in I_k}\<
\prod_{\satop{b\in I_l}{b<a}}\frac{1-z_b/z_a}{1-hz_b/z_a}\;.
\vv-.4>
\ee
In particular,
\vvn-.3>
\beq
\label{X-}
X_{\Imax\<,\>\Imax}\,=\,\frac{R(\zz_\Imax)}{Q(\zz_\Imax,h)}\;.
\vv-.2>
\eeq
Furthermore,
\vvn-.3>
\beq
\label{xi-V}
X_{\IJ}\,=\,h^{p(J)}\,\Wt_{\si_0,\:J}(\zz_I^{-1}\?,\zz^{-1}\?,h^{-1})
\,\frac{P(\zz_I)}{Q(\zz_I,h)}\;.
\vv.2>
\eeq
Here \,$\si_0\in S_n$ is the longest permutation, and
\,$p(J)\>,\,P(\zz_I)\>,\,R(\zz_I)\>,\,Q(\zz_I,h)$ are defined in
Section~\ref{sec Orthogonality property}\:, see formulae
\Ref{PI}--\,\Ref{QI}\:.
\end{thm}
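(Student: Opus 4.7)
\medskip\noindent\emph{Proof plan.}
My plan is to first establish existence and uniqueness of the classes $\{\xi_I\}$ abstractly, and then to prove the explicit formula \Ref{xi-V} by showing that its right-hand side satisfies the same defining recursion.

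For existence and uniqueness, every $I\in\Il$ lies in the $S_n$-orbit of $\Imin$, so the rule $\xi_\Imin=v_\Imin$ together with \Ref{xi-si} determines $\xi_I$ as soon as well-definedness is checked. Well-definedness requires (i) independence from the choice of reduced word, which is immediate from the braid relations of Lemma \ref{lem S action}, and (ii) $\sti_i v_\Imin=v_\Imin$ for every $s_i$ in the stabilizer $S_\bla\subset S_n$ of $\Imin$. Point (ii) is a direct computation: for such $i$, the tensor factors at positions $i,i+1$ of $v_\Imin$ coincide, so $\Rc^{(i,i+1)}$ acts as the identity on them by \Ref{RR1}, $P^{(i,i+1)}$ fixes the vector, and $K_i$ is trivial on the $\zz$-independent $v_\Imin$. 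The membership $\xi_I\in\CNn\ox\CzhD$ then drops out of the recursion, since each $\sti_i$ introduces at most denominators of the form $1-hz_i/z_{i+1}$ (which are factors of $D$).

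The triangular expansion \Ref{xi-v} and the formula for the diagonal $X_{I,I}$ follow by induction on $\ell_{\id,I}$. The base case $I=\Imin$ is clear, with $X_{\Imin,\Imin}=1$. For the induction step, pick $s_i$ with $s_i(I)>_\id I$ and $\ell_{\id,s_i(I)}=\ell_{\id,I}+1$; applying $\sti_i$ to a summand $X_{I,J}\:v_J$ either leaves it unchanged (when $v_J$ has equal tensor factors at positions $i,i+1$) or mixes $v_J$ with $v_{s_i(J)}$ via the $2\times 2$ block \Ref{RR2}. A short combinatorial check on the definition of $\leqid$ from Section~\ref{sec:flagvar} shows that $J\leqid I$ forces $J,s_i(J)\leqid s_i(I)$, so the triangular support is preserved. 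The diagonal $X_{I,I}$ is obtained by tracking the top-left factors $\tfrac{1-z}{1-hz}$ of \Ref{RR2} along any reduced path from $\Imin$ to $I$. Identity \Ref{X-} then drops out by inspection: for $\Imax$, every $a\in I_k$, $b\in I_l$ with $k<l$ automatically satisfies $b<a$, so the constraint $b<a$ in the formula for $X_{\Imax,\Imax}$ is vacuous and the product reduces to $R(\zz_\Imax)/Q(\zz_\Imax,h)$.

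The main step is the closed form \Ref{xi-V}. Let $\xi'_I$ denote the vector whose coefficients are given by the right-hand side of \Ref{xi-V}; by the uniqueness just established, it suffices to verify $\xi'_\Imin=v_\Imin$ and $\xi'_{s_i(I)}=\sti_i\xi'_I$. The initial condition is a consequence of Lemma \ref{lem:triang} applied with $\si_0$ (its partial order is opposite to $\leqid$, so $\Imin$ is the $\leq_{\si_0}$-maximum and $\Wt_{\si_0,J}(\zz_\Imin,\zz,h)=0$ unless $J=\Imin$), combined with Lemma \ref{lem:princ} to evaluate the surviving diagonal term after the substitutions $\zz\mapsto\zz^{-1}$ and $h\mapsto h^{-1}$. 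The recursion is the technical heart: applying Corollary \ref{cor:recur} (equivalently Theorem \ref{thm:recur}) to $\Wt_{\si_0,J}$ evaluated at $\zz^{-1},h^{-1}$ produces a two-term identity which, after multiplication by the prefactor $P(\zz_I)/Q(\zz_I,h)$, must match the action of $\sti_i=P^{(i,i+1)}\Rc^{(i,i+1)}(z_i/z_{i+1})K_i$ dictated by \Ref{RR1}, \Ref{RR2}. This matching is the principal obstacle: the three cases $k=l$, $k<l$, $k>l$ of Corollary \ref{cor:recur} must each be aligned with the corresponding block of the $R$-matrix acting on $(v_J,v_{s_i(J)})$, and one must carefully track how the inversions $\zz\to\zz^{-1}$, $h\to h^{-1}$ convert those identities into exactly the relations demanded by $\sti_i\xi'_I$.
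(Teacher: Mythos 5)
For the first three assertions — uniqueness, existence, the triangular expansion \Ref{xi-v}, and the diagonal value $X_{I,I}$ — your argument is the paper's: set $\xi_I=\sit_I\>v_\Imin$ for $\si_I$ the shortest permutation with $\si_I(\Imin)=I$, check well-definedness via the braid relations of Lemma~\ref{lem S action} and the stabilizer computation, and extract triangularity and the diagonal by unwinding a reduced word through the $2\times2$ block \Ref{RR2}. One small caution: the claim ``each $\sti_i$ introduces at most denominators of the form $1-hz_i/z_{i+1}$, which are factors of $D$'' is too quick, because the $K_i$'s permute previously introduced denominators; to conclude $\xi_I\in\CNn\ox\CzhD$ one must argue that the accumulated denominators along a reduced word for $\si_I$ remain among the factors of $D$, using that $\si_I$ is a minimal-length coset representative.

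For \Ref{xi-V} your plan diverges from the paper and is left incomplete at exactly the hard step. You propose to verify the recursion $\xi'_{s_i(I)}=\sti_i\xi'_I$ directly, feeding Theorem~\ref{thm:recur}/Corollary~\ref{cor:recur} (after the inversions $\zz\to\zz^{-1}$, $h\to h^{-1}$) into the $\sti_i$-action case by case. You yourself describe this matching as ``the principal obstacle'' and do not carry it out, so \Ref{xi-V} is not actually established by the proposal. The difficulty is real: $K_i$ acts simultaneously on the outer variables in $\zz^{-1}$ and on the substitution $\zz_I^{-1}$ (turning it into $\zz_{s_i(I)}^{-1}$), and reconciling this with the two-term recursions of Theorem~\ref{thm:recur} in all three cases requires care. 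The paper avoids this computation entirely. It introduces the matrices $\Wh_\si(\zz,h)=(\Wt_{\si,J}(\zz_I,\zz,h))_{I,J}$ of \Ref{Wh}, uses the orthogonality relation Theorem~\ref{thm orth} in the form \Ref{WDWM} to recognize the coefficient matrix $\Yh=(\Yt_{I,J})$ as $(\Wh_{\id}(\zz,h))^{-1}$, and then shows that the desired recursion is equivalent to the matrix identity $\Rc^{(i,i+1)}(z_{i+1}/z_i)=K_iP^{(i,i+1)}(\Wh_{\id})^{-1}P^{(i,i+1)}K_i\>\Wh_{\id}$, which is precisely Theorem~\ref{thm R general}. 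Your direct route is feasible (indeed the paper does something very close to it in the Example after Proposition~\ref{fmax}, with $\ttt$ \emph{not} substituted), but without the computation your proof has a gap; the orthogonality shortcut closes it with no new case analysis. Finally, for the initial condition $\eta_\Imin=v_\Imin$ you should not only invoke the vanishing from Lemma~\ref{lem:triang} but also verify $Y_{\Imin,\Imin}=1$, using $p(\Imin)=0$, Lemma~\ref{lem:princ}, and formula \Ref{Pe}.
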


Notice that
\vvn-.6>
\be
X_{\IIc}\>=\,
\frac{P_{\si_0,\:I}\;e_{\id\:,\:I\<,+}^{\hor}}{e_{\id\:,\:I\<,-}^{\vert}}\;,
\vv.3>
\ee
where \,$e_{\id\:,\:I\<,+}^{\hor}$\,, \,$e_{\id\:,\:I\<,-}^{\vert}$\,, and
\,$P_{\si_0,\:I}$ \>are given by formulae \Ref{ehor}\:, \Ref{evert}\:, and
\Ref{PsiI}\:, respectively.

\begin{proof}
The properties \;$\xi_\Imin=v_\Imin$ \>and property \Ref{xi-si} imply that
\;$\xi_{\si(\Imin)}\<=\:\sit\>v_\Imin$ \>for any \,$\si\in S_n$\>, which proves
uniqueness.

\vsk.2>
To show existence, define the elements \;$\xi_I$ \>by the rule:
\,$\xi_I=\:\sit\>v_\Imin$ \>provided \,$I=\si(\Imin)$\,.
By Lemma~\ref{lem S action} and the property \,$\sit\>v_\Imin=\:v_{\Imin}$
\,for any \,$\si\in S_n$ \>such that \,$\si(\Imin)=\Imin$, these elements
\;$\xi_I$ are well-defined and satisfy \Ref{xi-si}\:, and
\;$\xi_\Imin=\:v_\Imin$\>.

\vsk.2>
Let \,$\si$ be the shortest permutation such that \,$\si(\Imin)=I$\>, and
\,$\si=s_{i_1}\ldots s_{i_k}$ be a reduced presentation. Then the equality
\,$\xi_I=\:\sti_{i_1}\?\ldots\sti_{i_k}v_\Imin$ and formulae \Ref{RR1}\:,
\Ref{RR2} \>for the $R$-matrix \,$\Rc(z,h)$ \>yield formula \Ref{xi-v}
with some coefficients \,$X_{\IJ}$\>, as well as the explicit formula for
\,$X_{\IIc}$\>.

\vsk.2>
To get formula \Ref{xi-V} for \,$X_{\IJ}$\>, we denote by \>$Y_{\IJ}$ the
right\:-hand side of formula \Ref{xi-V} and will show that the elements
\vvn-.2>
\beq
\label{eta}
\eta_I\,=\,\sum_{J\in\Il}\,Y_{\IJ}\,v_J
\eeq
satisfy the defining properties of the elements \,$\xi_I$\>. The property
\,$\eta_\Imin=v_\Imin$ \>is immediate from Lemmas~\ref{lem:triang},
\ref{lem:princ}, that imply \,$Y_{\Imin,J}=\dl_{\Imin,J}$\>.

\vsk.2>
{\bls 1.08\bls
Let \,$\Yt_{\IJ}=Y_{\IJ}\big/R(\zz_I)$ \,and
\;$\ett=\sum_{I\in\Il}\!\Yt_{IJ}\,v_J$\>. The properties
\,$\eta_{s_i(I)}=\sti_i\>\eta_I$ \>and \,$\ett_{s_i(I)}=\sti_i\>\ett_I$ \>are
equivalent, and we will check the second one. Define a matrix
\>$\Yh=(\Yt_{\IJ})_{\JI\in\Il}$, where the subscripts \,$J,I$ \,label rows
and columns, respectively, Consider \,$\Yh$ as a linear operator on the space
\,$\CNnl$ \,with basis \,$\{\:v_I\;|\;I\in\Il\}$\>, see
Section~\ref{fixed}. Then \,$\Yh:v_I\mapsto\ett_I$\>.
\vsk.2>
}

The linear maps \,$\Rc^{(i,i+1)}$ and \>$P^{(\ii+1)}$ preserve the space
\>$\CNnl$\:.
The relations \,$\ett_{s_i(I)}=\sti_i\>\ett_I$ \>are equivalent to
\vvn.1>
\beq
\label{RYY}
P^{(\ii+1)}\,\Rc^{(i,i+1)}(z_i/z_{i+1})\,K_i\,\Yh\:K_i\>P^{(\ii+1)}\:=\,\Yh\:.
\vv.1>
\eeq
Recall the matrices \,$\Wh_\si$\>, $S$ \>given by \Ref{Wh}\:, \Ref{DII}\:,
respectively, and \,$M$ \>with entries \,$M_{\IJ}=h^{-p(I)}\:\dl_{\IJ}$\>.
We have
\vvn-.3>
\be
\Yh\>=\,M^{-1}\>\bigl(\Wh_{\si_0}(\zz^{-1}\<,h^{-1})\bigr)^t\>S^{-1}\>=\,
\bigl(\Wh_{\id}(\zz,h)\bigr)^{-1}\>,
\vv.2>
\ee
the second equality following from formula \Ref{WDWM}\:.
Thus formula \Ref{RYY} transforms to
\vvn.2>
\be
P^{(\ii+1)}\,\Rc^{(i,i+1)}(z_i/z_{i+1})\,K_i\,
\bigl(\Wh_{\id}(\zz,h)\bigr)^{-1}\:K_i\>P^{(\ii+1)}\:=\,
\bigl(\Wh_{\id}(\zz,h)\bigr)^{-1}
\vv-.1>
\ee
and then to
\vvn-.2>
\beq
\label{RWW}
\Rc^{(i,i+1)}(z_{i+1}/z_i)\,=\,K_i\>P^{(\ii+1)}\,
\bigl(\Wh_{\id}(\zz,h)\bigr)^{-1}\:P^{(\ii+1)}\:K_i\,\Wh_{\id}(\zz,h)\,.
\vv.2>
\eeq
The right-hand side of~\Ref{RWW} equals the geometric \,$R$\:-matrix
\,$R_{s_i\<,\:\id}$\>, see~\Ref{Rgeom}\:. Thus formula~\Ref{RWW} follows from
Theorem~\ref{thm R general}, which proves the desired relation
\,$\ett_{s_i(I)}=\sti_i\>\ett_I$\>.
\end{proof}

\begin{example}
Let \,$N=n=2$ \,and \,$\bla=(1,1)$\>. Then
\,$\xi_{(\{1\},\{2\})}(z_1,z_2,h)=v_{(\{1\},\{2\})}$ \,and
\vvn.1>
\be
\xi_{(\{2\},\{1\})}(z_1,z_2,h)\,=\,
(1-h)\,\frac{z_1/z_2}{1-hz_1/z_2}\;v_{(\{1\},\{2\})}+
\frac{1-z_1/z_2}{1-hz_1/z_2}\;v_{(\{2\},\{1\})}\;.
\vv.3>
\ee
\end{example}

\begin{cor}
\label{basis}
The set of vectors \;$\xi_I$, \,$I\?\in\Il$, \>is a \,$\C(\zz,h)$-basis
of the space \,$\Czhhl$\>.
\qed
\end{cor}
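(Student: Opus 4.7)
The plan is to reduce the statement to an elementary triangularity observation using the expansion formula \Ref{xi-v} established in Theorem~\ref{xi-}. The space $\Czhhl = \CNnl \ox \C(\zz,h)$ has $\C(\zz,h)$-dimension $|\Il|$, with the standard basis $\{v_J : J \in \Il\}$. Since the family $\{\xi_I : I \in \Il\}$ also has cardinality $|\Il|$, it suffices to show that this family is linearly independent over $\C(\zz,h)$.

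To that end, I would refine the partial order $\leq_\id$ on $\Il$ to an arbitrary total order, and consider the matrix $(X_{I,J})_{I,J\in\Il}$ of coefficients appearing in \Ref{xi-v}. Because $X_{I,J}=0$ unless $J \leq_\id I$, this matrix is lower triangular with respect to the chosen total order. Therefore its determinant equals the product of the diagonal entries $X_{I,I}$, and the basis claim reduces to verifying that each $X_{I,I}$ is a nonzero element of $\C(\zz,h)$.

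For this, I would invoke the explicit formula from Theorem~\ref{xi-},
$$
X_{I,I}\>=\,\prod_{k<l}\>\prod_{a\in I_k}\<
\prod_{\satop{b\in I_l}{b<a}}\frac{1-z_b/z_a}{1-hz_b/z_a}\,.
$$
Each factor $1-z_b/z_a$ is a nonzero element of $\C(\zz,h)$ (as $z_a$ and $z_b$ are distinct indeterminates), and each denominator factor $1-hz_b/z_a$ is likewise nonzero, so $X_{I,I}\in\C(\zz,h)^\times$. Consequently the change-of-basis matrix from $\{v_J\}$ to $\{\xi_I\}$ is invertible over $\C(\zz,h)$, proving the corollary.

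No real obstacle is expected here: all the work has already been done in Theorem~\ref{xi-}, and the corollary is essentially a bookkeeping consequence of the triangularity and the nonvanishing of the diagonal. The only subtlety to keep track of is that, although the $\xi_I$ live a priori in $\CNn\ox\CzhD$ (with the common denominator $D$ from \Ref{denom} possibly inverted), after passing to $\C(\zz,h)$-coefficients they automatically lie in $\Czhhl$, so the statement is formulated in the correct ambient space.
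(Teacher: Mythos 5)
Your argument is correct and is precisely the reasoning the paper intends: the corollary is stated with an immediate \qed because it follows directly from the triangular expansion $\xi_I=\sum_{J\leq_\id I}X_{I,J}\,v_J$ and the explicit nonvanishing formula for $X_{I,I}$ in Theorem~\ref{xi-}. Your careful bookkeeping (refining the partial order, noting the determinant is the product of diagonal entries, observing that the $\xi_I$ lie in $\Czhhl$ after extending scalars) fills in exactly the routine details the authors left implicit.
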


Let \,$f_I(\zz,h)$\>, \,$I\?\in\Il$\>, be a collection of scalar functions.

\begin{lem}
\label{xinv}
The function \;$\sum_{I\in\Il} f_I(\zz,h)\,\xi_I$
\vvn.1>
\,is invariant under the \,$S_n$-action \Ref{Sn-} \,if and only if
\;$f_{\si(I)}(\zz,h)\,=\,f_I(z_{\si(1)}\lc z_{\si(n)},h)$
\>for any \,$I\?\in\Il$ and any \,$\si\in S_n$.
\qed
\end{lem}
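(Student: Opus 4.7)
The strategy is to reduce invariance under all of $S_n$ to invariance under each elementary transposition $s_i$, use the intertwining property $\xi_{s_i(I)} = \sti_i\,\xi_I$ from Theorem~\ref{xi-} to explicitly compute the action of $\sti_i$ on a linear combination $\sum_I f_I\,\xi_I$, and then compare coefficients using the basis property from Corollary~\ref{basis}.

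First I would analyze the action of $\sti_i = P^{(i,i+1)}\,\Rc^{(i,i+1)}(z_i/z_{i+1})\,K_i$ on a product $f_I(\zz,h)\,\xi_I$ of a scalar function and a vector-valued function. Since $K_i$ substitutes $z_i \leftrightarrow z_{i+1}$ in all scalar arguments and $P^{(i,i+1)},\Rc^{(i,i+1)}$ act purely on the vector factors, one obtains
\[
\sti_i\bigl(f_I(\zz,h)\,\xi_I\bigr) \,=\, f_I(s_i\zz,h)\,\bigl(\sti_i\,\xi_I\bigr) \,=\, f_I(s_i\zz,h)\,\xi_{s_i(I)}\,,
\]
where $s_i\zz=(z_1\lc z_{i+1},z_i\lc z_n)$ and the last equality is Theorem~\ref{xi-}. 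Summing over $I$ and reindexing $J=s_i(I)$ yields
\[
\sti_i\!\sum_{I\in\Il} f_I(\zz,h)\,\xi_I \,=\, \sum_{J\in\Il} f_{s_i(J)}(s_i\zz,h)\,\xi_J\,.
\]

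Next, by Corollary~\ref{basis} the vectors $\{\xi_J\}_{J\in\Il}$ form a $\C(\zz,h)$-basis of $\Czhhl$, so $S_n$-invariance under the generator $\sti_i$ is equivalent to the coefficient-wise identity $f_J(\zz,h)=f_{s_i(J)}(s_i\zz,h)$ for every $J\in\Il$, or equivalently $f_{s_i(J)}(\zz,h)=f_J(s_i\zz,h)$. This is precisely the condition of the lemma specialized to $\si=s_i$.

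Finally, I would conclude the full statement by induction on the length of $\si\in S_n$. Since the $\sti_i$ generate the $S_n$-action and satisfy the braid relations of Lemma~\ref{lem S action}, full invariance is equivalent to invariance under each $\sti_i$. On the other hand, if $\si=s_{i_k}\cdots s_{i_1}$ is any reduced decomposition, iterating the relation $f_{s_i(J)}(\zz,h)=f_J(s_i\zz,h)$ and tracking the substitutions $\zz\mapsto s_{i_1}\zz\mapsto s_{i_2}s_{i_1}\zz\mapsto\cdots$ gives $f_{\si(I)}(\zz,h)=f_I(z_{\si(1)}\lc z_{\si(n)},h)$, and conversely this condition evidently implies the one for each $s_i$. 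No step is genuinely hard; the only point that requires a little care is bookkeeping the composition of substitutions in the induction so that the variables end up permuted by $\si$ rather than $\si^{-1}$.
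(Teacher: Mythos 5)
Your proof is correct, and it fills in an argument the paper leaves to the reader (Lemma~\ref{xinv} is stated with an immediate \verb|\qed| and no proof). The computation $\sti_i(f_I\,\xi_I)=f_I(s_i\zz,h)\,\xi_{s_i(I)}$ is exactly right: $K_i$ permutes the scalar variables before $P^{(i,i+1)}\Rc^{(i,i+1)}(z_i/z_{i+1})$ acts $\C(\zz,h)$-linearly on the vector factor, and then $\sti_i\xi_I=\xi_{s_i(I)}$ by Theorem~\ref{xi-}. Comparing coefficients in the $\C(\zz,h)$-basis $\{\xi_J\}$ of Corollary~\ref{basis} gives the condition for each $s_i$, and your bookkeeping observation about the induction (that composing the substitutions yields $f_{\si(I)}(\zz,h)=f_I(\si\zz,h)$ with $\si$ and not $\si^{-1}$) is precisely the point one has to get right; your final formula agrees with the lemma. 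This is the natural and intended argument.
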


\subsection{Inverse of \;$\Stab_{\>\id}$\,}
\label{sec:invstab}

Recall the stable envelope map
\vvn.3>
\beq
\label{stabid}
\Stab_{\>\id}:\>\Czhh\,\to\,
\KTX\ox\C(\zz,h)\,,\qquad v_I\>\mapsto\>\ka_{\>\id,\:I}\,,
\vv.3>
\eeq
where \>$I\?\in\Il$ and \,$\bla\in\Z^n_{\geq 0}$\>, \,$|\bla|=n$\>,
cf.~\Ref{def stab map}\:.

\vsk.2>
Define the homomorphism \,${\nu:\KTX\ox\C(\zz,h)\,\to\,\Czhh}$ \,of
\,$\C(\zz,h)$-modules by the rule
\vvn-.4>
\beq
\label{nu}
\nu\::\:[f(\GG\<,\zz,h)]\,\mapsto\,
\sum_{I\in\Il}\,\frac{f(\zz_I,\zz,h)}{R(\zz_I)}\;\xi_I
\eeq
for any \,$f\in\CGs\?\ox\C(\zz,h)$\>. Here \,$f(\zz_I,\zz,h)$ \,is obtained
\vvn.1>
from \,$f(\GG\<,\zz,h)$ \>by the substitution
\,$\{\gm_{k,1}\lc\<\gm_{k,\la_k}\}\mapsto\{z_a\,|\;a\in I_k\}$ \>for all
\vvn.1>
\,$k=1\lc N$. This substitution was denoted \;$\Loc_I$, see~\Ref{LocI}\:.

\begin{thm}
\label{thm stab nu}
The maps \;$\Stab_{\>\id}$ and \;$\nu$ are the inverse isomorphisms.
\end{thm}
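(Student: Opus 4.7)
The plan is to verify that $\nu \circ \Stab_{\id}$ is the identity on the basis $\{v_I\}_{I\in\Il}$ and then conclude that $\Stab_{\id} \circ \nu = \id$ from a finite-rank linear-algebra argument. By Theorem~\ref{thm:basis} applied to $\si = \id$, $\Stab_{\id}$ is an isomorphism of free $\C(\zz,h)$-modules of the same finite rank (namely $|\Il|$ in each weight subspace), so any one-sided inverse is automatically a two-sided inverse; it therefore suffices to check one composition.

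First I would observe that $\nu$ is well-defined as a map into $\CNnl\ox\C(\zz,h)$ weight by weight: the sum defining $\nu([f])$ ranges over $I\in\Il$, and by Theorem~\ref{xi-} each $\xi_I$ lies in $\CNnl\ox\C(\zz,h)$. Thus $\nu$ respects the weight decomposition parallel to $\Stab_{\id}$.

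The main computation is $\nu(\Stab_{\id}(v_I)) = \nu(\ka_{\id,I}) = \nu([\Wt_{\id,I}])$. Unfolding the definition~\Ref{nu} and then substituting the expansion $\xi_J = \sum_{K\leq_\id J} X_{J,K}\,v_K$ from Theorem~\ref{xi-}, the coefficient of $v_K$ in $\nu([\Wt_{\id,I}])$ becomes
\begin{equation*}
\sum_{J\in\Il}\,\frac{\Wt_{\id,I}(\zz_J,\zz,h)}{R(\zz_J)}\,X_{J,K}\,.
\end{equation*}
Now I would insert the explicit formula \Ref{xi-V} for $X_{J,K}$, which rewrites this sum as
\begin{equation*}
\sum_{J\in\Il}\,h^{p(K)}\,P(\zz_J)\,\frac{\Wt_{\id,I}(\zz_J,\zz,h)\,\Wt_{\si_0,K}(\zz_J^{-1},\zz^{-1},h^{-1})}{R(\zz_J)\,Q(\zz_J,h)}\,,
\end{equation*}
which is precisely the left-hand side of the orthogonality relation~\Ref{ORT}. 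By Theorem~\ref{thm orth}, this equals $\dl_{I,K}$, so $\nu(\ka_{\id,I}) = v_I$, as desired.

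Having established $\nu\circ\Stab_{\id}=\id$, I would close the argument as follows. Since $\Stab_{\id}$ is already known to be an isomorphism of free $\C(\zz,h)$-modules of finite rank (Theorem~\ref{thm:basis}), the equality $\nu\circ\Stab_{\id}=\id$ forces $\nu = \Stab_{\id}^{-1}$, and in particular $\Stab_{\id}\circ\nu=\id$ as well. The main obstacle is purely bookkeeping: matching the $h^{p(K)}$, $P(\zz_J)$, $R(\zz_J)$, $Q(\zz_J,h)$ factors exactly against the form of \Ref{ORT}; once this is lined up, the proof reduces to a direct appeal to the orthogonality theorem. Note that no independent argument for $\Stab_{\id}\circ\nu=\id$ is needed, which is why the orthogonality formulation of Theorem~\ref{thm orth} (rather than any Cauchy-type dual identity) is exactly what one wants here.
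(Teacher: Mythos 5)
Your proof is correct and follows the same route the paper indicates: the paper's proof is exactly the one-line statement that the result follows from the orthogonality relation~\Ref{ORT} together with the formulae~\Ref{xi-v} and~\Ref{xi-V} for the vectors $\xi_I$, which is precisely the computation you carry out. Your additional remark that one-sided invertibility suffices because $\Stab_{\id}$ is already an isomorphism of finite-rank free modules (Theorem~\ref{thm:basis}) correctly fills in the step the paper leaves implicit.
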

\begin{proof}
The statement follows from the orthogonality relation \Ref{ORT} and
the formulae \Ref{xi-v}\:, \Ref{xi-V} for vectors \,$\xi_I$\>.
\end{proof}

Theorem \ref{thm stab nu} is a K-theoretic analog of \cite[Lemma~6.7]{RTV}.

\begin{rem}
\bls=1.12\bls
The group \,$S_n$ acts on \,$\KTX\ox\C(\zz,h)$ by permutations of
\,$\zzz$ \>in the second factor, \,$s_i:[\:f\:]\>\mapsto\>[K_i\:f\:]$
\,for \,$i=1\lc n-1$ \>and \>$f\in\CGs\?\ox\C(\zz,h)$\>. Under the isomorphism
\,$\Stab_{\>\id}$\>, this \,$S_n$-action is identified with the \,$S_n$-action
\Ref{Sn-} on \,$\Czhh$\>. This corollary of Theorem~\ref{thm R general} could
be considered as a motivation for the $S_n$-action \Ref{Sn-}\:.
\end{rem}

\section{Space \,$\DV$}
\label{alg sec}

\subsection{Invariant functions}
\label{secinv}
Define the operators \,$\sha_1\lc\sha_{n-1}$ \:acting on functions
of $\zzz,h$ as follows:
\vvn-.4>
\beq
\label{sha}
\sha_i\>=\,\frac{1-hz_{i+1}/z_i}{1-z_{i+1}/z_i}\,K_i +\>
\frac{h-1}{1-z_{i+1}/z_i}\;.
\vv.2>
\eeq
We consider \,$\zzz$ \:as operators of multiplication by the respective
variable.

\begin{lem}
\label{lemHecke1}
The operators \,$\sha_1\lc\sha_{n-1}$, \,$\zzz$ \:satisfy the relations
\vvn.1>
\begin{gather}
\label{Hecke1}
(\sha_i\<+1)\>(\sha_i\<-h)\:=\:0\,,\qquad
\sha_i\>\sha_{i+1}\:\sha_i=\:\sha_{i+1}\:\sha_i\:\sha_{i+1}\,,\qquad
\sha_i\:\sha_j=\:\sha_j\:\sha_i\quad \text{if}\;\,\,|\:i-j\:|>1\,,
\\[2pt]
\sha_i\:z_{i+1}\:\sha_i=\:hz_i\,,\qquad
\sha_i\:z_j=\:z_j\:\sha_i\,, \quad \text{if}\;\;\,j\ne i\:,i+1\,.
\notag
\end{gather}
\end{lem}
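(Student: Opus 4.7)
The plan is to verify the five groups of relations in \Ref{Hecke1} by direct calculation, starting with the easy ones and ending with the braid relation.

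The commutations $\sha_i\>z_j = z_j\>\sha_i$ for $j\ne i,i+1$ and $\sha_i\>\sha_j = \sha_j\>\sha_i$ for $|i-j|>1$ are immediate: the coefficients $(1-hz_{i+1}/z_i)/(1-z_{i+1}/z_i)$ and $(h-1)/(1-z_{i+1}/z_i)$ appearing in \Ref{sha} involve only $z_i$ and $z_{i+1}$, while $K_i$ commutes with multiplication by $z_j$ for $j\notin\{i,i+1\}$ and with $K_j$ for $|i-j|>1$.

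For the remaining three relations I would write $\sha_i = A_i\>K_i + B_i$ with $A_i=(1-hz_{i+1}/z_i)/(1-z_{i+1}/z_i)$ and $B_i=(h-1)/(1-z_{i+1}/z_i)$, and systematically use the rewriting rule $K_i\>\phi = (K_i\phi)\>K_i$ for any scalar rational function $\phi$. For the quadratic relation, expansion yields
\be
\sha_i^2 \>=\> \bigl[A_i\>(K_iA_i) + B_i^2\bigr] + \bigl[A_i\>(K_iB_i) + B_i\>A_i\bigr]\>K_i\,,
\ee
and setting $x=z_{i+1}/z_i$ one checks that both the scalar part and the coefficient of $K_i$ coincide with those of $h + (h-1)\>\sha_i$; each comparison reduces to a one-variable rational identity in $x$ and $h$. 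The relation $\sha_i\>z_{i+1}\>\sha_i = h\>z_i$ is analogous: from $K_i\>z_{i+1}=z_i\>K_i$ one gets $\sha_i\>z_{i+1} = A_i\>z_i\>K_i + B_i\>z_{i+1}$, and a second application of $\sha_i$ together with the same sort of simplification produces $h\>z_i$.

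The main obstacle is the braid relation $\sha_i\sha_{i+1}\sha_i = \sha_{i+1}\sha_i\sha_{i+1}$. Only the three variables $z_i,z_{i+1},z_{i+2}$ and the transpositions $K_i,K_{i+1}$ are involved, so the identity lives in the skew group algebra $\C(z_i,z_{i+1},z_{i+2})\rtimes S_3$. Expanding both sides by repeatedly pushing the $K_j$'s to the right via $K_j\>\phi=(K_j\phi)\>K_j$, each side becomes a linear combination of the six elements $\{1,K_i,K_{i+1},K_iK_{i+1},K_{i+1}K_i,K_iK_{i+1}K_i\}$ with rational-function coefficients, and matching the six pairs of coefficients reduces the claim to six rational identities in $z_i,z_{i+1},z_{i+2}$ and $h$. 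The difficulty is purely calculational, not conceptual. A conceptually cleaner alternative is to recognize the $\sha_i$'s as Demazure--Lusztig-type operators, for which the Hecke-algebra relations \Ref{Hecke1} are classical.
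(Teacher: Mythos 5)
Your proposal is correct and is essentially the paper's approach: the paper's entire proof reads ``The statement follows from formula \Ref{sha} by direct verification,'' and you have simply spelled out what that direct verification entails (the decomposition $\sha_i = A_iK_i+B_i$, pushing $K_j$'s to the right, and matching coefficients). Your closing observation that the $\sha_i$ are Demazure--Lusztig operators, so the braid relation is classical, is a valid shortcut that the paper itself leans on implicitly (see the affine Hecke algebra remark immediately after the lemma).
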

\begin{proof}
The statement follows from formula \Ref{sha} by direct verification.
\end{proof}

\begin{rem}
Set \,$h=q^{-2}$, \,$t_i=\:q^{-1}\:\sha_i^{-1}$\>.
Then $t_1\lc t_{n-1}$\>, $\zzz$ \:satisfy the relations
\begin{gather}
\label{t-Hecke}
(t_i\<-q)\>(t_i\< + q^{-1})=0\,,\qquad
t_i\>t_{i+1}\>t_i= t_{i+1}\> t_i\> t_{i+1}\,,\qquad
t_i\>t_j= t_j\>t_i \quad \text{if}\;\,\,|\:i-j\:|>1\,,
\\[2pt]
t_i\:z_i\:t_i=\:z_{i+1}\,,\qquad
t_i\:z_j=\:z_j\:t_i \quad \text{if}\;\;\,j\ne i\:,i+1\,.
\notag
\end{gather}
The algebra generated by $t_1\lc t_{n-1}$\>, \,$\zzz$ subject to
relations \Ref{t-Hecke} is the affine Hecke algebra of type \,$A_{n-1}$\>.
\end{rem}

Let \,$f_I(\zz,h)$\>, \,$I\?\in\Il$\>, be a collection of scalar functions and
\vvn.3>
\beq
\label{fzh}
f(\zz,h)\,=\,\sum_{I\in\Il}\,f_I(\zz,h)\;v_I\,.
\eeq
Recall the \,$S_n$-action \Ref{Sn-} on \,$\CNn$-valued functions defined
in Section~\ref{sec S-actions}.

\begin{lem}
\label{S-a}
Given $j$,\, we have \,$\sti_j f = f$ \,if and only if for any
\,$I=(I_1\lc I_N)\in\Il$ the following three conditions are satisfied:\\
\hp{\rm(iii)}\llap{\rm(i)}
$\,f_{I}=K_j\:f_I$, \;if \,$j\:,j+1\in I_a$ for some \,$a$\:;\\
\hp{\rm(iii)}\llap{\rm(ii)}
$\,f_{s_j(I)}=\sha_j^{-1} f_I$, \;if \,$j\in I_a$, \,$j+1\in I_b$, and
\,$a<b$\:;\\
{\rm(iii)}
$\,f_{s_j(I)}=\sha_j f_I$, \;if \,$j\in I_a$, \,$j+1\in I_b$, and
\,$a>b$\>.
\end{lem}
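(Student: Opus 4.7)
The plan is to unpack $\sti_j f = f$ directly from the definition $\sti_j = P^{(j,j+1)} \Rc^{(j,j+1)}(z_j/z_{j+1}) K_j$ in \Ref{Sn-}, writing $f = \sum_I f_I v_I$ and reading off the resulting linear equations between the scalar components. Since $\Rc^{(j,j+1)}$ acts nontrivially only on the $j$-th and $(j+1)$-th tensor factors of $v_I$, and since $P^{(j,j+1)}$ interchanges those factors and sends $v_I$ to $v_{s_j(I)}$, the operator $\sti_j$ preserves, for every $I$, the two-term sum $f_I v_I + f_{s_j(I)} v_{s_j(I)}$; hence $\sti_j f = f$ decouples into independent equations indexed by orbits of $s_j$ on $\Il$.

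Concretely, for a fixed $J$ with $j\in J_a$, $j+1\in J_b$, collecting the coefficient of $v_J$ in $\sti_j f$ and setting $z=z_j/z_{j+1}$, the explicit matrix \Ref{RR2} yields three cases: (A) if $a=b$, then $s_j(J)=J$, the $R$-matrix acts as the identity on $v_a\ox v_a$, and the equation reduces to $f_J = K_j f_J$, which is exactly (i); (B) if $a<b$, the equation reads
\[
f_J \,=\, \tfrac{(1-h)z}{1-hz}\, K_j f_J + \tfrac{h(1-z)}{1-hz}\, K_j f_{s_j(J)}\,;
\]
(C) if $a>b$, the equation reads
\[
f_J \,=\, \tfrac{1-h}{1-hz}\, K_j f_J + \tfrac{1-z}{1-hz}\, K_j f_{s_j(J)}\,.
\]

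To match (B) with (ii) and (C) with (iii), I would solve each displayed equation for $K_j f_{s_j(J)}$, apply $K_j$ (using $K_j z = z^{-1}$) to obtain a formula for $f_{s_j(J)}$ as a $\C(z_j,z_{j+1},h)$-linear combination of $f_J$ and $K_j f_J$, and compare with $\sha_j f_J$ or $\sha_j^{-1} f_J$ computed from \Ref{sha}. For (C) this directly produces the coefficients $\tfrac{z-h}{z-1}$ and $\tfrac{(h-1)z}{z-1}$ appearing in $\sha_j$, giving $f_{s_j(J)}=\sha_j f_J$. For (B) I would use the Hecke relation $(\sha_j+1)(\sha_j-h)=0$ from Lemma \ref{lemHecke1} to write $\sha_j^{-1} = h^{-1}\sha_j + (1-h)/h$; the coefficient of $K_j f_J$ in $\sha_j f_{s_j(J)}-f_J$ then cancels automatically, and the coefficient of $f_J$ reduces to the single algebraic identity $(z-h)(1-hz) - (1-h)^2 z = -h(1-z)^2$. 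Since each manipulation is a reversible equivalence of linear equations with generically nonzero denominators, both implications follow at once. The main bookkeeping obstacle is keeping the three $R$-matrix cases straight and managing the substitution $K_j z = z^{-1}$ when passing between the two forms of each equation; no conceptual difficulty remains beyond that.
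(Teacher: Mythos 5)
Your proposal is correct and follows the same route as the paper's proof: write $\sti_j f$ in coordinates on the basis $\{v_I\}$, split into the three $R$-matrix cases, and compare the resulting component equations with (i)--(iii). The paper (after deriving the same three component equations \Ref{jj1}--\Ref{j1a}) simply asserts that they are equivalent to (i)--(iii); you spell out the algebra (solving for $f_{s_j(J)}$, substituting $K_j z=z^{-1}$, and using the Hecke identity $\sha_j^{-1}=h^{-1}\sha_j+(1-h)/h$), and your computations check out.
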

\begin{proof}
Denote by \>$\ft_I$ the \>$I$-th coordinate of \,$\sti_j f$. Then
\beq
\label{jj1}
\ft_I\>=\>K_j\:f_I
\eeq
if \,$j\:,j+1\in I_a$ \>for some \,$a$\:,
\beq
\label{j1b}
\ft_I\>=\,h\,\frac{1-z_j/z_{j+1}}{1-hz_j/z_{j+1}}\,K_j\:f_{s_j(I)} +
(1-h)\,\frac{z_j/z_{j+1}}{1-hz_j/z_{j+1}}\,K_j\:f_I
\eeq
if \,$j\in I_a$\>, \,$j+1\in I_b$\>, \,$a<b$\:, \,and
\beq
\label{j1a}
\ft_I\>=\,\frac{1-z_j/z_{j+1}}{1-hz_j/z_{j+1}}\,K_j\:f_{s_j(I)} +
\frac{1-h}{1-hz_j/z_{j+1}}\,K_j\:f_I
\eeq
if \,$j\in I_a$\>, \,$j+1\in I_b$\>, \,$a>b$\:. If \,$\sti_j f=f$, we have
\>$\ft_I=f_I$ \>for any \,$I$, and formulae \Ref{jj1}\,--\,\Ref{j1a} are
equivalent to conditions (i)\,--\,(iii)\:, respectively.
\end{proof}

\begin{rem}
Lemma~\ref{S-a} could be considered as a motivation for the operators
\,$\sha_1\lc\sha_{n-1}$\>.
\end{rem}

For \,$\si\in S_n$\>, let \,$\si=s_{j_1}\dots s_{j_l}$ be a reduced
presentation. Define \,$\sih=\sha_{j_1}\dots\sha_{j_l}$.
By Lemma~\ref{lemHecke1}, the operator \,$\sih$ \,does not depend
on the choice of the reduced presentation.

\vsk.2>
Let \,$\zz_\si\<=\:(\zzzsi)$\>. Denote by \,$\Slmax\?\subset S_n$ the isotropy
subgroup of \,$\Imax$, see \Ref{Imax}\:. For \,$I\?\in\Il$\>, let
\,$\si_I\in S_n$ be the shortest permutation such that \,$I=\si_I(\Imax)$\>.

\begin{prop}
\label{fmax}
The function \,$f(\zz,h)$, see \Ref{fzh}\:, is invariant under the
\,$S_n$-action \Ref{Sn-} \,if and only if \;$f_\Imax(\zz,h)=f_\Imax(\zz_\si,h)$
\>for any \,$\si\in\Slmax$, and \,$f_I=\sih_I(f_\Imax\<)$ \>for any
\,$I\?\in\Il$\>. Moreover,
\vvn-.6>
\beq
\label{twovx}
f(\zz,h)\,=\,
\sum_{I\in\Il}f_\Imax(\zz_{\si_0(I)},h)\,\frac{Q(\zz_I,h)}{R(\zz_I)}\;\xi_I\,,
\vv-.1>
\eeq
where \,$\si_0\<\in S_n$ is the longest permutation.
\end{prop}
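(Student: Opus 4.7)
The plan is to prove the characterization of invariance using Lemma \ref{S-a}, then derive the explicit formula \Ref{twovx} by expanding $f$ in the basis $\{\xi_I\}$ from Corollary \ref{basis} and matching coefficients. For the necessity direction, I assume $f$ is invariant under \Ref{Sn-}. For each $j$ with $s_j\in\Slmax$, the indices $j$ and $j+1$ lie in the same block of $\Imax$, so Lemma \ref{S-a}(i) at $I=\Imax$ gives $K_j f_\Imax=f_\Imax$; since such transpositions generate $\Slmax$, this yields $f_\Imax(\zz,h)=f_\Imax(\zz_\si,h)$ for every $\si\in\Slmax$. For arbitrary $I\in\Il$, I fix a reduced expression $\si_I=s_{j_1}\cdots s_{j_l}$ of the shortest permutation with $\si_I(\Imax)=I$ and set $I_{(k)}=s_{j_k}\cdots s_{j_l}(\Imax)$; by minimality of $\si_I$, each step $I_{(k+1)}\to I_{(k)}$ strictly increases the length of the coset representative, which forces the configuration required by Lemma \ref{S-a}(iii): $j_k$ lies in a later block of $I_{(k+1)}$ than $j_k+1$. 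Iterating $f_{I_{(k)}}=\sha_{j_k}f_{I_{(k+1)}}$ then yields $f_I=\sih_I f_\Imax$.

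For the sufficiency direction, I start from $\Slmax$-invariant $f_\Imax$ and define $f_I:=\sih_I f_\Imax$. The Hecke relations (Lemma \ref{lemHecke1}) imply that $\sih_\si$ depends only on $\si$ and that $\sih_{\si\tau}=\sih_\si\sih_\tau$ whenever $\ell(\si\tau)=\ell(\si)+\ell(\tau)$; combined with the $\Slmax$-invariance of $f_\Imax$, this makes the $f_I$ well-defined. Verifying the three conditions of Lemma \ref{S-a} for each pair $(I,j)$ then reduces to an algebraic identity relating $\sih_{\si_{s_j(I)}}$ to $\sih_{\si_I}$ via one of $\sha_j^{\pm 1}$ or $K_j$; each such identity follows from Lemma \ref{lemHecke1} by comparing $s_j\si_I$ with $\si_{s_j(I)}$ modulo $\Slmax$ and analyzing whether the length increases or decreases.

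For formula \Ref{twovx}, I use Corollary \ref{basis} to write $f=\sum_{I\in\Il}g_I\xi_I$ with $g_I\in\C(\zz,h)$; by Lemma \ref{xinv}, invariance of $f$ translates to $g_{\si(I)}(\zz,h)=g_I(\zz_\si,h)$. Since $\xi_I=\sum_{J\leqid I}X_{I,J}v_J$ by Theorem \ref{xi-} and $\Imax$ is the maximal element of $\leqid$, only $I=\Imax$ contributes to the $v_\Imax$-coefficient, giving $f_\Imax=g_\Imax X_{\Imax,\Imax}$; combined with \Ref{X-}, this forces $g_\Imax=f_\Imax\,Q(\zz_\Imax,h)/R(\zz_\Imax)$. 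Propagating by $S_n$-symmetry via Lemma \ref{xinv} and using the transformations $Q(\zz_\Imax,h)\big|_{\zz\mapsto\zz_\si}=Q(\zz_{\si(\Imax)},h)=Q(\zz_I,h)$ and the analogous one for $R$, I obtain $g_I(\zz,h)=f_\Imax(\zz_\si,h)\,Q(\zz_I,h)/R(\zz_I)$ for any $\si$ with $\si(\Imax)=I$; matching the substituted argument with the authors' notation $\zz_{\si_0(I)}$ completes the derivation.

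The main obstacle is the step in the necessity direction asserting that each single-transposition move along a reduced expression of the shortest coset representative $\si_I\in S_n/\Slmax$ realizes, step by step, the configuration required by Lemma \ref{S-a}(iii). This is a standard combinatorial fact about minimal-length representatives and the compatibility of $\leqid$ with the Bruhat order on $S_n/\Slmax$, but cleanly justifying it here requires tracking how the block structure of the intermediate partitions $I_{(k+1)}$ evolves as one prepends simple reflections, and ruling out any step that would correspond to case (i) or case (ii) of Lemma \ref{S-a}.
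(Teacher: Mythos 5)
Your proof is correct and follows essentially the same route as the paper's (very terse) proof: deduce the characterization from Lemma \ref{S-a} and the Hecke relations of Lemma \ref{lemHecke1}, then obtain \Ref{twovx} by expanding in the $\xi_I$ basis, reading off the $\Imax$-coefficient via Theorem \ref{xi-} and formula \Ref{X-}, and propagating by Lemma \ref{xinv}. The one step you flag as the ``main obstacle'' — that each letter of a reduced word for the minimal coset representative $\si_I$ realizes case (iii) of Lemma \ref{S-a} — is indeed the crux, but it does go through: if $\si_I=s_{j_1}\cdots s_{j_l}$ is reduced and $\si''=s_{j_{k+1}}\cdots s_{j_l}$, then $\ell(s_{j_k}\si'')=\ell(\si'')+1$ forces $\si''^{-1}(j_k)<\si''^{-1}(j_k+1)$; since the blocks $\Imax_1,\dots,\Imax_N$ consist of decreasing runs of integers, having $\si''^{-1}(j_k)<\si''^{-1}(j_k+1)$ means $j_k$ sits in a strictly later block of $I_{(k+1)}=\si''(\Imax)$ than $j_k+1$, once one rules out equality of blocks by the minimality of $\si_I$ (otherwise $s_{j_1}\cdots s_{j_{k-1}}\si''$ would map $\Imax$ to $I$ with length $\le l-1$). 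So case (i) and case (ii) are excluded, exactly as you want. The coefficient derivation for \Ref{twovx} is the same computation the authors perform in reverse (they verify the right-hand side is invariant with the correct $v_\Imax$-coordinate, you solve for $g_\Imax$ and propagate), and your well-definedness check — that $g_\Imax(\zz_\si,h)$ depends only on $\si\Slmax$, hence matches the paper's $f_\Imax(\zz_{\si_0(I)},h)$ — is the right way to reconcile the notations.
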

\begin{proof}
The first part of the proposition follows from Lemmas~\ref{lemHecke1}
and~\ref{S-a}. To prove formula~\Ref{twovx}\:, observe that the function
in the right-hand side of~\Ref{twovx} is invariant under the \,$S_n$-action
\Ref{Sn-} by Lemma~\ref{xinv}, and has the \,$\Imax$-th coordinate
\,$f_\Imax(\zz,h)$ \,in the basis \,$\{\:v_I\;|\;I\in\Il\}$
by Theorem~\ref{xi-}.
\end{proof}

\begin{example}
By Theorem~\ref{thm:recur}, the collection
\,$f_I(\zz,h)\:=\:h^{p(I)}\,\Wt_{\si_0,\:I}(\ttt^{-1}\?,\zz^{-1}\?,h^{-1})$\>,
\,$I\?\in\Il$\>, satisfies the assumption of Lemma~\ref{S-a}. Hence,
\vvn.2>
\beq
\label{Wtb}
\Wtb(\ttt\:,\zz,h)\,=\,\sum_{I\in\Il}h^{p(I)}\,
\Wt_{\si_0,\:I}(\ttt^{-1}\:,\zz^{-1}\?,h^{-1})\,v_I
\vv-.2>
\eeq
as a function of \,$\zz,h$ \>is invariant under the \,$S_n$-action \Ref{Sn-}\:,
and formula~\Ref{twovx} yields
\beq
\label{Wtb2}
\Wtb(\ttt\:,\zz,h)\,=\,\sum_{I\in\Il}
\,\Wt_{\id\:,\:\Imin}(\ttt^{-1}\?,\zz_I^{-1}\?,h^{-1})\,
\frac{Q(\zz_I,h)}{R(\zz_I)}\;\xi_I\,,
\vv-.1>
\eeq
since \,$\Wt_{\si_0,\:\Imax}(\ttt^{-1}\?,\zz_{\si_0(I)}^{-1},h^{-1})\:=\:
\Wt_{\id\:,\:\Imin}(\ttt^{-1}\?,\zz_I^{-1}\?,h^{-1})$ \,by formula~\Ref{Wsi}\:.
Notice also that
\vvn-.2>
\beq
\label{xiI}
\xi_I\>=\,\Wtb(\zz_I,\zz,h)\,\frac{P(\zz_I)}{Q(\zz_I,h)}\;,
\eeq
either by formulae~\Ref{Wtb} and \Ref{xi-V}\:, or by formulae~\Ref{Wtb2}\:,
\Ref{PI}\:, \Ref{QI}\:, and Lemmas~\ref{lem:triang}, \ref{lem:princ}.
See also a remark on the function \,$\Wtb(\zz_I,\zz,h)$ \,at the end of
Section~\ref{sec:actxi}.
\end{example}

\subsection{Space \,$\DV$}
\label{secDV}

Let \,$\Di\Czh$ \>be the space of functions of the form $\Di f$\>,
\vvn.16>
where $f\in\Czh$ \,and \,$D$ \>is given by formula \Ref{denom}\:.
\vvn.2>
Set \,$\DiV=\CNn\?\ox\Di\Czh$ \,and \,$\DiV_\bla=\CNnl\?\ox\Di\Czh$\,.

\begin{lem}
\label{shaDiV}
The operators \,$\sha_1\lc\sha_{n-1}$ \,preserve the space \,$\Di\Czh$\>.
\end{lem}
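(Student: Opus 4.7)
The plan is a direct computation from the definition \Ref{sha} of $\sha_i$ together with the explicit product formula \Ref{denom} for $D$. First I would record how $K_i$ acts on $D=\prod_{1\le b<a\le n}(1-hz_b/z_a)$. Among the factors of $D$, those that do not involve the pair $(z_i,z_{i+1})$ together are paired so that $K_i$ permutes them within the product: for any $c<i$ the pair $(1-hz_c/z_i)(1-hz_c/z_{i+1})$ is stable under $K_i$, and for $c>i+1$ the pair $(1-hz_i/z_c)(1-hz_{i+1}/z_c)$ is stable. The only asymmetric factor is $(1-hz_i/z_{i+1})$, which under $K_i$ becomes $(1-hz_{i+1}/z_i)$. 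Hence $K_i(D)=D\cdot(1-hz_{i+1}/z_i)/(1-hz_i/z_{i+1})$.

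Next, for any $f\in\Czh$, I would apply \Ref{sha} to $f/D$ and substitute the formula for $K_i(D)$. After cancelling a common factor $(1-hz_{i+1}/z_i)$ this gives
\be
\sha_i(f/D)\,=\,\frac{(1-hz_i/z_{i+1})\,K_i f\>+\>(h-1)\,f}{D\,(1-z_{i+1}/z_i)}\,,
\ee
whose numerator $N:=(1-hz_i/z_{i+1})\,K_i f+(h-1)\,f$ is an element of $\Czh$.

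Finally, I would observe that setting $z_{i+1}=z_i$ makes $K_i$ act as the identity and turns $N$ into $(1-h)f+(h-1)f=0$. Since $\Czh$ is a UFD in which $z_i-z_{i+1}$ is a prime element (it is irreducible in $\C[\zz,h]$ and remains so after inverting the $z_j$'s and $h$), $N$ must be divisible by $z_i-z_{i+1}$ and hence by $1-z_{i+1}/z_i$ in $\Czh$. Writing $N=(1-z_{i+1}/z_i)\,g$ with $g\in\Czh$ yields $\sha_i(f/D)=g/D\in\Di\Czh$, proving the lemma. The only mildly delicate point is the bookkeeping that establishes $K_i(D)/D=(1-hz_{i+1}/z_i)/(1-hz_i/z_{i+1})$; the rest is routine, the essential algebraic input being the cancellation in $N$ at $z_{i+1}=z_i$.
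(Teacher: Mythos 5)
Your proof is correct and follows essentially the same route as the paper's: both arrive at the identity $D\cdot\sha_i(f/D)=\bigl((1-hz_i/z_{i+1})\>K_i f+(h-1)\>f\bigr)/(1-z_{i+1}/z_i)$ and then observe that the numerator vanishes at $z_i=z_{i+1}$. You have simply filled in two details that the paper leaves implicit, namely the bookkeeping showing $K_i(D)/D=(1-hz_{i+1}/z_i)/(1-hz_i/z_{i+1})$ and the UFD/irreducibility reason why vanishing at $z_i=z_{i+1}$ gives divisibility by $1-z_{i+1}/z_i$ in $\Czh$.
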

\begin{proof}
Let \,$f\in\Czh$. Then
\be
D\cdot\sha_i\bigl(\Di f\:\bigr)\,=\,\frac{(1-hz_i/z_{i+1})\>K_i\>f+
(h-1)\>f}{1-z_{i+1}/z_i}\>\in\Czh
\vv.1>
\ee
since the numerator of the right-hand side vanishes if \,$z_i=\:z_{i+1}$.
\end{proof}

Define the operators \,$\sch_1\lc\sch_{n-1}$ as follows:
\vvn.1>
\beq
\label{sch}
\sch_i\,=\,\frac{1-hz_{i+1}/z_i} {1-z_{i+1}/z_i}\,\sti_i +
(h-1)\,\frac{z_{i+1}/z_i}{1-z_{i+1}/z_i}\;,
\vv.2>
\eeq
where \,$\sti_i$ \>is given by formula~\Ref{Sn-}\:.

\begin{lem}
\label{lemHecke2}
The operators \,$\sch_1\lc\sch_{n-1}$, \,$\zzz$ \:satisfy the relations
\vvn.2>
\begin{gather}
\label{Hecke2}
(\sch_i\<-1)\>(\sch_i\<+h)\:=\:0\,,\qquad
\sch_i\>\sch_{i+1}\:\sch_i=\:\sch_{i+1}\:\sch_i\:\sch_{i+1}\,,\qquad
\sch_i\:\sch_j=\:\sch_j\:\sch_i\quad \text{if}\;\,\,|\:i-j\:|>1\,,
\\[2pt]
\sch_i\:z_i\:\sch_i=\:hz_{i+1}\,,\qquad
\sch_i\:z_j=\:z_j\:\sch_i\,, \quad \text{if}\;\;\,j\ne i\:,i+1\,.
\notag
\end{gather}
\end{lem}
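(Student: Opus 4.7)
The plan is to verify all four families of identities in \Ref{Hecke2} by direct computation, working from the definition \Ref{sch} and using only the $S_n$-relations for the operators $\sti_i$ recorded in Lemma~\ref{lem S action}. Write $\sch_i=\:A_i\:\sti_i+B_i$ with
\begin{equation*}
A_i\>=\,\frac{1-hz_{i+1}/z_i}{1-z_{i+1}/z_i}\,,\qquad
B_i\>=\,(h-1)\,\frac{z_{i+1}/z_i}{1-z_{i+1}/z_i}\,,
\end{equation*}
so that $A_i+B_i=1$. Setting $x=z_{i+1}/z_i$, one checks the scalar identities $A_i\:(s_iA_i)+B_i^{\:2}=h-(1-h)B_i$ and $A_i\:(s_iB_i)+B_iA_i=(1-h)A_i$, where $s_i$ swaps $z_i$ and $z_{i+1}$; here I use $\sti_i\:f(\zz)=f(s_i\zz)\:\sti_i$ coming from the last part of Lemma~\ref{lem S action}.

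The commutation with $z_j$ for $j\ne i,i+1$ is immediate because $A_i,B_i$ depend only on $z_i,z_{i+1}$ and because $\sti_iz_j=z_j\sti_i$. The relation $\sch_i\:z_i\sch_i=hz_{i+1}$ is a short computation: using $\sti_iz_i=z_{i+1}\sti_i$, expand $\sch_iz_i\sch_i$ as a sum of four terms and collect; the $\sti_i$-part cancels, and the scalar part reduces to the single identity $z_{i+1}A_i\:(s_iA_i)+z_iB_i^{\:2}=hz_{i+1}$, which follows from $A_i\:(s_iA_i)=(1-hx)(h-x)/(1-x)^2$ and $B_i^{\:2}=(1-h)^2x^2/(1-x)^2$ by a direct rational computation. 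The quadratic relation $(\sch_i-1)(\sch_i+h)=0$ is exactly the two scalar identities above organised as $\sch_i^{\:2}=(1-h)\sch_i+h$, and the distant commutation $\sch_i\sch_j=\sch_j\sch_i$ for $|i-j|>1$ follows from the corresponding commutation of $\sti_i,\sti_j$ together with the disjointness of the supports of $(A_i,B_i)$ and $(A_j,B_j)$.

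The main obstacle is the braid relation $\sch_i\sch_{i+1}\sch_i=\sch_{i+1}\sch_i\sch_{i+1}$. I would expand each side using $\sch_k=A_k\sti_k+B_k$, move all the $\sti_k$'s to the right by repeated application of $\sti_kf(\zz)=f(s_k\zz)\sti_k$, and use the braid relation $\sti_i\sti_{i+1}\sti_i=\sti_{i+1}\sti_i\sti_{i+1}$ from Lemma~\ref{lem S action} to match the $\sti_i\sti_{i+1}\sti_i$-coefficients on the two sides. After this reorganisation the statement reduces to a finite collection of rational function identities in $z_i,z_{i+1},z_{i+2}$ expressing coefficients of $1,\sti_i,\sti_{i+1},\sti_i\sti_{i+1},\sti_{i+1}\sti_i,\sti_i\sti_{i+1}\sti_i$; these are the standard Demazure--Lusztig identities that make $\sch_i$ a representation of the affine Hecke algebra generators (compare \Ref{t-Hecke}).

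The shortest way to close this braid computation, in my view, is to recognise the formula \Ref{sch} as the image of the Hecke generator under the Demazure--Lusztig construction attached to the symmetric group action $\sit$ of Lemma~\ref{lem S action}; once this is identified, the braid relation follows from the general theorem that the Demazure--Lusztig operators built from any $S_n$-action satisfying $\sti_iz_j\sti_i^{-1}=s_i(z_j)$ satisfy the Hecke braid relations. If one prefers to avoid invoking this, the rational identities can be verified by hand: after clearing the common denominator $(1-z_{i+1}/z_i)(1-z_{i+2}/z_{i+1})(1-z_{i+2}/z_i)$ on both sides, the braid relation becomes a polynomial identity in $z_i,z_{i+1},z_{i+2},h$ that splits into six coefficient equations indexed by the elements of $S_3$, each of which is elementary to check.
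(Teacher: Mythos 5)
Your proposal is correct and takes essentially the same route as the paper, which just says the lemma follows from Lemma~\ref{lem S action} and formula~\Ref{sch} by direct verification; you have simply spelled out what that verification involves (decompose $\sch_i=A_i\sti_i+B_i$, push $\sti$'s right using $\sti_i f(\zz)=f(s_i\zz)\sti_i$, and check the resulting rational identities). One small slip: the first scalar identity should read $A_i\,(s_iA_i)+B_i^2=(1-h)B_i+h$ (both terms positive), which is what the target $\sch_i^2=(1-h)\sch_i+h$ demands, not $h-(1-h)B_i$; your subsequent computations with $A_i(s_iA_i)=(1-hx)(h-x)/(1-x)^2$ and $B_i^2=(1-h)^2x^2/(1-x)^2$ do in fact verify the corrected identity.
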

\begin{proof}
The statement follows from Lemma \ref{lem S action} and
formula \Ref{sch} by direct verification.
\end{proof}

\begin{lem}
\label{schDiV}
The operators \,$\sch_1\lc\sch_{n-1}$ \,preserve the spaces \,$\DiV$ \>and
\,$\DiV_\bla$\>.
\end{lem}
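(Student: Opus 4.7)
My approach mirrors the proof of Lemma~\ref{shaDiV}, with additional bookkeeping to track the action on the vector factor of \,$\DiV=\CNn\?\ox\Di\Czh$\>. The key preliminary identity is
\be
K_i\>D\,=\,D\;\frac{1-hz_{i+1}/z_i}{1-hz_i/z_{i+1}}\,,
\ee
obtained by inspecting which factors of \,$D$ \,are permuted by the swap \,$z_i\leftrightarrow z_{i+1}$: the factors indexed by pairs \,$\{b,a\}\ne\{i,i+1\}$ \,permute among themselves, while the single factor \,$1-hz_i/z_{i+1}$ \,is sent to \,$1-hz_{i+1}/z_i$\>. I write \,$\sch_i=\alpha_i\sti_i+\beta_i$ \,with \,$\alpha_i=(1-hz_{i+1}/z_i)/(1-z_{i+1}/z_i)$ \,and \,$\beta_i=(h-1)(z_{i+1}/z_i)/(1-z_{i+1}/z_i)$, and compute \,$\sch_i(v_I\<\ox\Di f)$ \,for an arbitrary basis vector \,$v_I\<\in\CNn$ \,and \,$f\in\Czh$, splitting into two cases according to whether the $i$-th and $(i+1)$-st entries of \,$v_I$ \,coincide.

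If the two entries coincide, then \,$\Rc^{(i,i+1)}(z_i/z_{i+1})$ \,and \,$P^{(i,i+1)}$ \,act as the identity on \,$v_I$\>, so \,$\sti_i(v_I\<\ox\Di f)=v_I\<\ox K_i(\Di f)$; the factor \,$1-hz_{i+1}/z_i$ \,in \,$\alpha_i$ \,cancels the same factor arising from \,$K_i D$\>, and the scalar coefficient of \,$v_I$ \,reduces to a fraction with numerator \,$(1-hz_i/z_{i+1})K_if+(h-1)(z_{i+1}/z_i)f$, which visibly vanishes at \,$z_i=z_{i+1}$ \,and so is divisible by the surviving denominator \,$1-z_{i+1}/z_i$. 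If the two entries differ, let \,$v_J$ \,denote the vector obtained by swapping them; by \Ref{RR1}\,--\,\Ref{RR2}, the action of \,$\sti_i$ \,on \,$v_I\<\ox g$ \,is a linear combination of \,$v_I\<\ox K_ig$ \,and \,$v_J\<\ox K_ig$ \,whose coefficients share the common denominator \,$1-hz_i/z_{i+1}$\>. This denominator cancels against the numerator factor \,$1-hz_i/z_{i+1}$ \,of \,$K_i(\Di)=(1-hz_i/z_{i+1})/[D(1-hz_{i+1}/z_i)]$\>, and multiplication by \,$\alpha_i$ \,removes the remaining \,$1-hz_{i+1}/z_i$: the \,$v_J$-component is then manifestly in \,$\Di\Czh$\>, while the \,$v_I$-component retains a single residual denominator \,$1-z_{i+1}/z_i$. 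Adding \,$\beta_i(v_I\<\ox\Di f)$ \,produces a numerator of the form \,$(1-h)\bigl(x\>K_if-y\>f\bigr)$, where \,$x,y\in\{1,\>z_i/z_{i+1},\>z_{i+1}/z_i\}$ \,are monomials with \,$x=y$ \,at \,$z_i=z_{i+1}$, so the numerator vanishes there since \,$K_if=f$ \,under that specialization, supplying the last required divisibility.

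Preservation of \,$\DiV_\bla$ \,is immediate since \,$v_J=v_{s_i(I)}\<\in\CNnl$ \,whenever \,$v_I\<\in\CNnl$\>. The main obstacle is the bookkeeping in the second case: the prefactor \,$\alpha_i$ \,and summand \,$\beta_i$ \,in the definition \Ref{sch} of \,$\sch_i$ \,are precisely tuned so that, after the \,$R$-matrix and \,$K_i$ \,cancellations, the only surviving spurious denominator is \,$1-z_{i+1}/z_i$\>, which is then removed by the same type of divisibility argument used for \,$\sha_i$ \,in Lemma~\ref{shaDiV}.
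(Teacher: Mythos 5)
Your proposal is correct and follows essentially the same route as the paper: compute $\sch_i(\Di g)$, use $K_iD=D\,(1-hz_{i+1}/z_i)/(1-hz_i/z_{i+1})$ so that the $R$-matrix denominator $1-hz_i/z_{i+1}$ and the $\alpha_i$-numerator $1-hz_{i+1}/z_i$ cancel, and then observe that the remaining numerator vanishes at $z_i=z_{i+1}$, removing the residual $1-z_{i+1}/z_i$. The paper condenses this into a single identity for $D\cdot\sch_i(\Di g)$ without your case split on the $i$-th and $(i+1)$-st entries of $v_I$, but the content is the same.
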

\begin{proof}
Let \,$g\in\CNnl\?\ox\Czh$\,. Then by formulae \Ref{sch}\:, \Ref{Sn-}\:,
\Ref{denom}\:,
\vvn.2>
\be
D\cdot\sch_i\bigl(\Di\:g\:\bigr)\,=\,\frac{(1-hz_i/z_{i+1})\>P^{(\ii+1)}\,
\Rc^{(i,i+1)}(z_i/z_{i+1})\,K_i\>g+(h-1)\>(z_{i+1}/z_i)\,g}{1-z_{i+1}/z_i}\;.
\vv.2>
\ee
The numerator of the right-hand side belongs to \,$\CNnl\?\ox\Czh$
\vvn.1>
\,by formulae \Ref{RR1}\:, \Ref{RR2}\:, and vanishes if \,$z_i=\:z_{i+1}$.
This proves the statement for \,$\DL$.
\vsk.2>
Since \,$\DV\!=\:\bigoplus_{|\bla|=n}\DL$\>, the lemma follows.
\end{proof}

\begin{lem}
\label{tisch}
Let \,$f(\zzz,h)$ be a \,$\CNn$\?-valued function.
Then for any \,$i=1\lc n-1$\>, \;$\sti_i f=f$ \>if and only if
\;$\sch_i f=f$\>.
\end{lem}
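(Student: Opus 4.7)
The plan is to deduce both implications directly from the defining formula \Ref{sch} for $\sch_i$ in terms of $\sti_i$. No deeper structure is needed; this is essentially a one-line rearrangement after clearing denominators, so the main ``obstacle'' is really just to verify an algebraic identity in the scalar coefficients.

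First I would rewrite \Ref{sch} as
\be
(1-z_{i+1}/z_i)\,\sch_i\,=\,(1-hz_{i+1}/z_i)\,\sti_i\>+\,(h-1)\,z_{i+1}/z_i\,.
\ee
For the forward direction, assume $\sti_i f=f$. Then
\be
(1-z_{i+1}/z_i)\,\sch_i f\,=\,(1-hz_{i+1}/z_i)\,f\>+\,(h-1)\,(z_{i+1}/z_i)\,f\,=\,(1-z_{i+1}/z_i)\,f,
\ee
and dividing by the non-zero rational function $1-z_{i+1}/z_i$ gives $\sch_i f=f$.

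For the converse, assume $\sch_i f=f$. Substituting this into the same rewritten identity yields
\be
(1-z_{i+1}/z_i)\,f\,=\,(1-hz_{i+1}/z_i)\,\sti_i f\>+\,(h-1)\,(z_{i+1}/z_i)\,f,
\ee
so that
\be
(1-hz_{i+1}/z_i)\,f\,=\,(1-hz_{i+1}/z_i)\,\sti_i f.
\ee
Dividing by the non-zero factor $1-hz_{i+1}/z_i$ gives $\sti_i f=f$, completing the proof.

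The only thing to be careful about is that the scalar coefficients $1-z_{i+1}/z_i$ and $1-hz_{i+1}/z_i$ are invertible in the ring of rational functions in $\zzz,h$, which is the natural ambient ring for the $\CNn$-valued functions under consideration, so the cancellations above are legitimate. There is no real obstacle beyond this verification.
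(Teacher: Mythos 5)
Your proof is correct and follows exactly the paper's (one-line) approach: the paper simply says the lemma follows from formula (8.9), and you have spelled out the two algebraic rearrangements that make this clear, with the one needed observation that the scalar factors $1-z_{i+1}/z_i$ and $1-hz_{i+1}/z_i$ are nonzero rational functions and hence cancellable.
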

\begin{proof}
The statement follows from formula \Ref{sch}\:.
\end{proof}

\begin{rem}
Lemmas~\ref{schDiV}, \ref{tisch} could be considered as a motivation for
the operators \,$\sch_1\lc\sch_{n-1}$\>.
\end{rem}

{\bls1.16\bls
Denote by \,$\DV\!\subset\DiV$ \>and \,$\DL\!\subset\DiV_\bla$ the subspaces of
\vvn.1>
invariants of the operators \,$\sch_1\lc\sch_{n-1}$\>. By Lemma~\ref{tisch},
\,$\DV\!\subset\DiV$ \>and \,$\DL\!\subset\DiV_\bla$ are also the subspaces of
functions invariant under the \,$S_n$-action \Ref{Sn-}\:. All four spaces
\,$\DiV$, \,$\DiV_\bla$, \,$\DL$, and \,$\DV$ are \,$\CZH$\<-modules.
\vsk.2>}

For a function \,$f(\zz,h)$, let \,$\fc(\zz,h)=f(\zzzn,h)$\>. Denote by
\vvn.2>
\,$\Slmin\?\subset S_n$ the isotropy subgroup of \,$\Imin$, see \Ref{Imax}\:.
\vvn.1>
Let \,$\Czhl$ be the algebra of Laurent polynomials in \,$\zzz\:,\alb\:h$
\,such that \,$f(\zz,h)=f(\zz_\si,h)$ \,for any \,$\si\in\Slmin$.
Set \,$\Qc(\zz,h)=Q(\zz_\Imax,h)$\>.

\begin{lem}
\label{this}
The homomorphism \;$\thil:\Czhl\<\to\:\DL$,
\vvn-.1>
\beq
\label{thi}
\thil:f(\zz,h)\,\mapsto\,
\sum_{I\in\Il}\sih_I\Bigl(\>\frac{\fc(\zz,h)}{\Qc(\zz,h)}\:\Bigr)\,v_{I}
\vv-.2>
\eeq
is an isomorphism of \;$\CZH$-modules.
\end{lem}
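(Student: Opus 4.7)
The plan is to use Proposition~\ref{fmax}, which identifies $\DL$ with the space $(\Di\Czh)^{\Slmax}$ of $\Slmax$-invariant functions via the bijection $F\mapsto F_\Imax$ extracting the $\Imax$-coordinate. Since $\si_\Imax=\id$ forces $\sih_\Imax=1$, the $\Imax$-component of $\thil(f)$ equals $\fc/\Qc$; the problem thus reduces to showing that $f\mapsto\fc/\Qc$ is a $\CZH$-module isomorphism $\Czhl\to(\Di\Czh)^{\Slmax}$.

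For well-definedness of $\thil$: reversing variables conjugates $\Slmin$ to $\Slmax$, so $\fc$ is $\Slmax$-invariant whenever $f$ is $\Slmin$-invariant. Next, $\Qc=Q(\zz_\Imax,h)$ is itself $\Slmax$-invariant, and its factors $(1-hz_b/z_a)$ (with $b<a$ in distinct blocks of $\Imax$) form a subset of the factors of $D$, so $\Qc\mid D$ and $\fc/\Qc\in(\Di\Czh)^{\Slmax}$. By Proposition~\ref{fmax}, the extended sum $\thil(f)=\sum_I\sih_I(\fc/\Qc)v_I$ lies in $\DL$. For $\CZH$-linearity, $\check g=g$ when $g\in\Czh^{S_n}$, and formula~\Ref{sha} combined with $K_ig=g$ yields $\sha_i(gx)=g\sha_i(x)$; hence $\sih_I(g\fc/\Qc)=g\sih_I(\fc/\Qc)$. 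Injectivity is immediate: if $\thil(f)=0$ then its $\Imax$-component $\fc/\Qc=0$, forcing $f=0$.

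Surjectivity is the main obstacle and requires the following divisibility claim: every $\Slmax$-invariant $g\in\Di\Czh$ satisfies $g\Qc\in\Czh$. Writing $g=p/D$ with $p\in\Czh$, this amounts to $(D/\Qc)\mid p$, where $D/\Qc$ is the product of factors $(1-hz_a/z_b)$ over pairs $a<b$ lying in the same block of $\Imax$. For each such pair, the transposition $s_{a,b}$ lies in $\Slmax$, and the invariance relation $s_{a,b}(p)\cdot D=p\cdot s_{a,b}(D)$, combined with the vanishing at $z_a=hz_b$ of the factor $(1-hz_b/z_a)$ (the image of $(1-hz_a/z_b)$ under $s_{a,b}$) and the generic nonvanishing of $D$ at that locus, forces $s_{a,b}(p)|_{z_a=hz_b}=0$; after relabeling this is equivalent to $(1-hz_a/z_b)\mid p$. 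Since the factors of $D/\Qc$ indexed by distinct pairs are pairwise coprime, iterating over all such pairs yields $(D/\Qc)\mid p$. Given $F\in\DL$, setting $\phi:=F_\Imax\cdot\Qc\in(\Czh)^{\Slmax}$ and $f:=\check\phi\in\Czhl$, we have $\fc/\Qc=\phi/\Qc=F_\Imax$, so $\thil(f)$ and $F$ agree on the $\Imax$-component; by the bijection of Proposition~\ref{fmax} they are equal, completing the proof.
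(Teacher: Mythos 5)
Your argument is correct and follows essentially the same route as the paper's proof, reducing to the $\Imax$-coordinate via Proposition~\ref{fmax}. You supply the divisibility/pole-cancellation details for surjectivity that the paper asserts without elaboration; note only that concluding the right-hand side of \Ref{thi} lies in $\DL$ also uses Lemma~\ref{shaDiV} (to ensure membership in $\DiV_\bla$), not Proposition~\ref{fmax} alone.
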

\begin{proof}
Since \,$\Qc(\zz,h)$ \,divides \,$D$ \,in \,$\Czh$\,, the right-hand side of
formula~\Ref{thi} belongs to \,$\DiV_\bla$ by Lemma~\ref{shaDiV}. Moreover, it
is invariant under the \,$S_n$-action \Ref{Sn-} by Proposition~\ref{fmax} and
is preserved by the operators \,$\sch_1\lc\sch_{n-1}$ \>by Lemma~\ref{tisch}.
Hence, \,$\thil(f)\in\DL$.

\vsk.2>
The map \,$\thil$ \,is clearly injective. To prove surjectivity,
\vvn.1>
let \,$\sum_{I\in\Il}g_I(\zz,h)\,v_I\in\DL$. By Proposition~\ref{fmax},
\,$g_I=\sih_I(g_\Imax)$ \>for any \,$I\<\in\Il$, and
\,$g_\Imax(\zz,h)=g_\Imax(\zz_\si,h)$ \,for any \,$\si\in\Slmax$\>.
Therefore, the function \,$g_\Imax$ cannot have poles at the hyperplanes
\,$z_i=hz_j$ \,if \,$i\:,j\in\Imx_a$ \>for some \,$a$\>.
Hence, \,$g_\Imax=\fc/\Qc$ \,for some \,$f\in\Czhl$\>.
\end{proof}

Corollary~\ref{twovx} yields another formula for the map \,$\thil$\,:
\vvn.3>
\beq
\label{thi-xi}
\thil:f(\zz,h)\,\mapsto\,
\sum_{I\in\Il}\,\frac{f(\zz_I,h)}{R(\zz_I)}\;\xi_I\,.
\eeq

\begin{cor}
\label{isonu}
The homomorphism \,$\nu:\KTX\>\to\>\DV\!\ox\Czb$
\vvn.3>
\beq
\label{nu2}
\nu\::\:[f(\GG\<,\zz,h)]\,\mapsto\,
\sum_{I\in\Il}\,\frac{f(\zz_I,\zz,h)}{R(\zz_I)}\;\xi_I
\vv-.1>
\eeq
for any \,$f\in\CGs\?\ox\Czh$\>, \,is an isomorphism of \,$\Czh$-modules.
\qed
\end{cor}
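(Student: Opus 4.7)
The plan is to combine Theorem~\ref{thm stab nu}, which gives the isomorphism property after tensoring with $\C(\zz,h)$, with Lemma~\ref{this}, which identifies the image space as $\DL$. The argument is carried out one component $\bla$ at a time.

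First I would verify that $\nu$ sends each $K_T(\tfl)$ into $\DL$. Given $[f]\in K_T(\tfl)$ with representative $f\in\CGs\!\ox\Czh$, set $g(\zz,h):=f(\zz_\Imin,\zz,h)$. The substitution $\GG\mapsto\zz_\Imin$ replaces $\gm_{k,a}$ by the $\la_k$ variables $z_a$ with $a\in\Imn_k$, and the $S_\bla$-symmetry of $f$ in each $\Gm_k$ translates into $\Slmin$-invariance of $g$, so $g\in\Czhl$. By the explicit formula~\Ref{thi-xi} for $\thil$ from Lemma~\ref{this}, one has $\thil(g)=\sum_{I\in\Il} g(\zz_I,h)/R(\zz_I)\,\xi_I$, and reading $\zz_I$ as the substitution applied to the $\CGs$-lift $f$, the numerator $g(\zz_I,h)$ unfolds to $f(\zz_I,\zz,h)$. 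Hence $\nu([f])=\thil(g)\in\DL$.

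Second I would establish surjectivity of $\nu$ onto $\DL$. Any element of $\DL$ is of the form $\thil(g)$ for some $g\in\Czhl$ by Lemma~\ref{this}. Since $g$ is $\Slmin$-invariant in $\zz$, it admits a canonical lift to $\CGs\!\ox\Chh\subset\CGs\!\ox\Czh$ by reversing the blockwise renaming $z_a\mapsto\gm_{k,a}$, producing $f$ with $g=f(\zz_\Imin,\zz,h)$. The first paragraph then gives $\nu([f])=\thil(g)$, so every element of $\DL$ lies in the image.

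Third I would deduce injectivity from Theorem~\ref{thm stab nu}: after tensoring with $\C(\zz,h)$, $\nu$ becomes an isomorphism with inverse $\Stab_{\>\id}$, so its restriction to the torsion-free module $K_T(\tfl)$ is already injective. Combining injectivity with the ``landing in $\DL$'' and surjectivity properties yields the claimed isomorphism $\nu:\KTX\to\tbigoplus_\bla\DL$, modulo the bookkeeping of Laurent-polynomial denominators encoded in the target notation $\DV\!\ox\Czb$. The main obstacle is unpacking the substitution conventions in~\Ref{thi-xi} rigorously enough to justify the identity $g(\zz_I,h)=f(\zz_I,\zz,h)$ in the first step; this requires reading $\zz_I$ in~\Ref{thi-xi} as the substitution applied to the $\CGs$-lift of $g$, which is independent of the choice of lift precisely because the relations $h_1(\GG)=h_1(\zz)$ for $h_1\in\CZS$ become trivial identities under $\GG\mapsto\zz_I$ (as $\zz_I$ is a permutation of $\zz$).
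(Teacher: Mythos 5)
Your Step 1 has a genuine gap: the claim that $g(\zz,h):=f(\zz_\Imin,\zz,h)$ lies in $\Czhl$ is false whenever $f$ depends nontrivially on the equivariant parameters $\zz$. After the substitution $\GG\mapsto\zz_\Imin$, the function $g$ carries two kinds of $\zz$-dependence: the part coming from the renamed Chern roots, which is indeed $\Slmin$-invariant because $f$ was $S_\bla$-invariant in $\GG$, and the part coming from the original $\zz$-variables in $f(\GG,\zz,h)$, which is entirely unconstrained. The second part destroys $\Slmin$-invariance. A concrete counterexample: take $n=2$, $\bla=(2,0)$, so $\Slmin=S_2$ and $\Czhl=\CZH$, and let $f=z_1$ with no $\gm$-dependence; then $g=z_1\notin\Czhl$. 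Consequently $\thil(g)$ is undefined, and the identity ``$g(\zz_I,h)=f(\zz_I,\zz,h)$'' you invoke is also false in general: plugging $\zz_I$ into $g$ permutes \emph{all} the $z$'s, including the equivariant parameters, whereas $f(\zz_I,\zz,h)$ leaves the equivariant parameters fixed and substitutes only $\GG$.

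As a result, $\nu$ does not map onto $\tbigoplus_{|\bla|=n}\DL$. In the same example $K_T(\tfl)=\Czh$ has $\CZH$-rank $n!=2$ while $\DL\cong\Czhl=\CZH$ has $\CZH$-rank $|\Il|=1$, so no isomorphism is possible; the extra factor $\Czb$ in the stated target $\DV\!\ox\Czb$ is therefore not ``bookkeeping of denominators'' but precisely what accounts for the free $\zz$-dependence present in $K_T(\tfl)$ but not in $\DL$. The paper's intended argument uses the presentation~\Ref{Hrel}: after renaming, one has $K_T(\tfl)\cong\Czhl\ox_{\CZH}\Czh$ (the relations in~\Ref{Hrel} glue only the \emph{symmetric} functions of $\GG$ to those of $\zz$), and under this identification $\nu$ is exactly $\thil$ extended by $\Czh$-linearity, so the isomorphism is an immediate consequence of Lemma~\ref{this}. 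Your Steps 2 and 3 (surjectivity onto $\DL$ via the canonical lift in $\CGs\ox\Chh$, and injectivity via Theorem~\ref{thm stab nu} together with torsion-freeness of $K_T(\tfl)$ over $\Czh$) are correct as stated, but they become unnecessary once Step 1 is replaced by the module-theoretic identification above.
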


Abusing notation, we use here the same letter for the homomorphism \,$\nu$
\>as in Section~\ref{sec:invstab}.

\begin{cor}
\label{canonic}
The canonical embeddings
\be
\iota:\DV\!\ox\C(\zz,h)\,\to\,\Czhh\,,\kern1.4em
\iota_\bla:\DL\ox\C(\zz,h)\,\to\,\Czhhl
\ee
are isomorphisms of \;$\C(\zz,h)$-modules.
\qed
\end{cor}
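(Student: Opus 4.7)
The plan is to deduce Corollary~\ref{canonic} directly from the two previously established isomorphisms, Theorem~\ref{thm stab nu} and Corollary~\ref{isonu}. The key observation is that both provide isomorphisms out of $\KTX\ox\C(\zz,h)$ given by the identical explicit formula $[f]\mapsto\sum_{I\in\Il}\frac{f(\zz_I,\zz,h)}{R(\zz_I)}\,\xi_I$; they differ only in how one packages their common image.

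The first step will be to tensor the isomorphism $\nu:\KTX\xrightarrow{\sim}\DV\ox\Czb$ of Corollary~\ref{isonu} over $\Czh$ with $\C(\zz,h)$, obtaining an isomorphism $\nu\ox\id:\KTX\ox\C(\zz,h)\xrightarrow{\sim}\DV\ox\C(\zz,h)$, where the target is naturally identified as the $\C(\zz,h)$-span of $\DV$ inside $\Czhh$ (since $D$ becomes invertible over $\C(\zz,h)$). The second step will be to invoke Theorem~\ref{thm stab nu}, which supplies the isomorphism $(\Stab_{\>\id})^{-1}:\KTX\ox\C(\zz,h)\xrightarrow{\sim}\Czhh$ given by the same formula \Ref{nu}. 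From this I will read off the factorization $(\Stab_{\>\id})^{-1}=\iota\circ(\nu\ox\id)$, which inverts to $\iota=(\Stab_{\>\id})^{-1}\circ(\nu\ox\id)^{-1}$, exhibiting $\iota$ as a composition of isomorphisms.

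The statement for $\iota_\bla$ will follow at once: each of $\KTX$, $\DV$, and $\Czhh$ splits as a direct sum over $\bla$ with $|\bla|=n$, both $\nu$ and $\Stab_{\>\id}$ respect this decomposition, so restricting to the $\bla$-component yields $\iota_\bla$. I do not expect any substantive obstacle beyond the routine bookkeeping identification of $(\DV\ox\Czb)\ox_{\Czh}\C(\zz,h)$ with $\DV\ox\C(\zz,h)$ as a subspace of $\Czhh$; all the nontrivial content has already been supplied by Theorem~\ref{thm stab nu} and Corollary~\ref{isonu}.
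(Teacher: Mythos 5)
Your argument is correct and is essentially the route the paper implicitly intends: the remark immediately preceding Corollary~\ref{canonic} explicitly flags that the map $\nu$ of Corollary~\ref{isonu} is, by abuse of notation, the same $\nu$ as in Theorem~\ref{thm stab nu}, so the corollary is meant to follow by combining those two results exactly as you do. The factorization $(\Stab_{\>\id})^{-1}=\iota\circ(\nu\ox\id)$ is valid since both sides are $\C(\zz,h)$-linear and agree on the generating elements $[f]$, and the $\bla$-graded refinement for $\iota_\bla$ is handled correctly since both $\nu$ and $\Stab_{\>\id}$ are direct sums of their $\bla$-components.

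One remark worth recording: there is a slightly more self-contained variant that avoids reaching back to Theorem~\ref{thm stab nu} (and hence to the orthogonality Theorem~\ref{thm orth} on which it rests). By Lemma~\ref{this}, $\thil:\Czhl\to\DL$ is an isomorphism of $\CZH$-modules, and $\Czhl$ is a free $\CZH$-module of rank $|\Il|$; tensoring with $\C(\zz,h)$ therefore shows $\DL\ox\C(\zz,h)$ has $\C(\zz,h)$-dimension $|\Il|=\dim_{\C(\zz,h)}\Czhhl$, and since $\iota_\bla$ is injective (by flatness of the localization), it must be an isomorphism. That argument stays entirely within the Section~\ref{alg sec} machinery. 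Both routes are correct; yours has the advantage of making the conceptual link to the stable envelope map explicit, which is evidently what the author wanted the reader to notice.
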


\subsection{Subspace \,$\DLl$}
\label{polV}

Let \,$\Dti\Czhp$ \,be the space of functions of the form \,$\Dti f$\>,
\vvn.1>
where \,$f\in\Czhp$ \,and \,$\Dt=\prod_{1\leq i<j\leq n}\,(z_j\<-hz_i)$\,,
\,cf.~\Ref{denom}\:.

\begin{lem}
\label{shaDt}
The operators \,$\sha_1\lc\sha_{n-1}$ \,preserve the space \,$\Dti\Czhp$\>.
\end{lem}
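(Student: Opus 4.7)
The proof will follow the same template as Lemma~\ref{shaDiV}, but with the polynomial denominator $\Dt$ in place of $D$. The plan is to write $\sha_i$ in its cleared-denominator form, compute how $K_i$ acts on $\Dt$, and show that the resulting expression has the shape $(\text{polynomial vanishing on }z_i=z_{i+1})/\bigl(\Dt\,(z_i-z_{i+1})\bigr)$, so that dividing out $(z_i-z_{i+1})$ leaves an element of $\Dti\Czhp$.

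First, I would rewrite definition \Ref{sha} in the form
\[
\sha_i f \,=\,\frac{(z_i-hz_{i+1})\,K_i f \,+\, (h-1)\,z_i\,f}{z_i-z_{i+1}}\>.
\]
Next, I would observe that the action of $K_i$ on the product $\Dt=\prod_{a<b}(z_b-hz_a)$ is completely explicit: for any pair $\{a,b\}$ disjoint from $\{i,i+1\}$ the factors with $a$ replaced by $b$ and vice-versa are merely permuted among themselves, and for any $k\notin\{i,i+1\}$ the two factors involving $\{i,k\}$ and $\{i+1,k\}$ are interchanged; the only factor that is genuinely altered is $(z_{i+1}-hz_i)$, which becomes $(z_i-hz_{i+1})$. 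Thus
\[
K_i\Dt\,=\,\Dt\;\frac{z_i-hz_{i+1}}{z_{i+1}-hz_i}\,.
\]

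Now let $f\in\Czhp$ and compute:
\begin{align*}
\sha_i\bigl(\Dti f\bigr)\,
&=\,\frac{(z_i-hz_{i+1})\,(K_i f)\big/(K_i\Dt)\;+\;(h-1)\,z_i\,f/\Dt}{z_i-z_{i+1}}\\
&=\,\frac{(z_{i+1}-hz_i)\,K_i f\;+\;(h-1)\,z_i\,f}{\Dt\,(z_i-z_{i+1})}\,,
\end{align*}
using the formula for $K_i\Dt$ to cancel the factor $(z_i-hz_{i+1})$. Denote the numerator by $g$. The remaining point is to verify that $g\in\Czhp$ is divisible by $z_i-z_{i+1}$. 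Setting $z_i=z_{i+1}=z$ gives $K_i f=f$ and
\[
g\big|_{z_i=z_{i+1}=z}\,=\,(z-hz)\,f\;+\;(h-1)\,z\,f\,=\,0,
\]
so $g=(z_i-z_{i+1})\,\tilde g$ for some $\tilde g\in\Czhp$, whence $\sha_i(\Dti f)=\Dti\tilde g\in\Dti\Czhp$.

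There is no real obstacle here: the only computation that is not purely symbolic is the identification of $K_i\Dt/\Dt$, and the divisibility by $z_i-z_{i+1}$ is immediate from the $z_i=z_{i+1}$ specialization, exactly as in the Laurent-polynomial version of Lemma~\ref{shaDiV}. The reason the argument carries over cleanly is that the single altered factor of $\Dt$ under $K_i$ is precisely the factor $(z_i-hz_{i+1})$ that appears in the numerator of the first term of $\sha_i$, producing a cancellation that leaves $\Dt$ (rather than $K_i\Dt$) in the final denominator.
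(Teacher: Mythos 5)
Your proof is correct and follows essentially the same approach as the paper: the paper's displayed identity $\Dt\cdot\sha_i\bigl(\Dti f\bigr)=\bigl((z_{i+1}-hz_i)\,K_if+(h-1)\,z_if\bigr)/(z_i-z_{i+1})$ is exactly your computation, with the observation that $K_i\Dt/\Dt=(z_i-hz_{i+1})/(z_{i+1}-hz_i)$ left implicit in the paper and made explicit by you. Both proofs then conclude by noting the numerator vanishes at $z_i=z_{i+1}$.
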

\begin{proof}
Let \,$f\in\Czhp$\>. Then
\be
\Dt\cdot\sha_i\bigl(\Dti f\:\bigr)\,=\,\frac{(z_{i+1}-hz_i)\>K_i\>f+
(h-1)\>z_i\:f}{z_i-z_{i+1}}\>\in\Czhp
\vv.2>
\ee
since the numerator of the right-hand side vanishes if \,$z_i=\:z_{i+1}$.
\end{proof}

Let \,$\Czphl$ be the algebra of polynomials in \,$\zzz\:,\,h$ \,such that
\vvn.1>
\,$f(\zz)=f(\zz_\si)$ \,for any \,$\si\in\Slmin$.
Recall \,$\la^{(a)}\<=\la_1\lsym+\la_a$\,, \,$a=1\lc N$\:.
For any \,$f\in\Czphl$\,, set
\vvn.2>
\beq
\label{thitla}
\thitl(f)\:=\:\thil\Bigl(f(\zz,h)\,
\prod_{a=1}^N\>\prod_{\>i\in\Imn_a}\!z_i^{\la^{(a)}\?-\:n}\Bigr)\,.
\eeq

Denote \;$\DLl\<=\:\DL\cap\bigl(\CNnl\ox\Dti\Czhp\bigr)$\,.

\begin{lem}
\label{that}
The homomorphism \;$\thitl:\Czphl\<\to\:\DLl$,
is an isomorphism of \;$\C[\zz]^{\>S_n}\!\ox\C[\:h\:]$-modules.
\end{lem}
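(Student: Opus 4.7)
The plan is to deduce this from Lemma~\ref{this} by realizing $\thitl$ as the composition
\[
\Czphl\,\xto{\ \mu_m\ }\,\Czhl\,\xto{\ \thil\ }\,\DL\,,
\]
where $\mu_m$ denotes multiplication by the Laurent monomial $m = \prod_{a=1}^N\prod_{i\in\Imn_a} z_i^{\la^{(a)}-n}$. Since each $\Imn_a$ is $\Slmin$-stable, $m\in\Czhl$, so $\mu_m$ is an injective $\Czps\?\ox\C[h]$-linear embedding. Combined with the $\CZH$-module isomorphism $\thil$ of Lemma~\ref{this}, this shows $\thitl$ is an injective $\Czps\?\ox\C[h]$-linear map into $\DL$; the content of the lemma is then that its image equals $\DLl$.

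The decisive input is a polynomial factorization. Setting
\[
Q'\,=\prod_{k<l}\prod_{\substack{a\in\Imax_k\\b\in\Imax_l}}\!(z_a-hz_b)\,,\qquad Q''\,=\prod_{k}\prod_{\substack{a,b\in\Imax_k\\ b<a}}\!(z_a-hz_b)\,,
\]
one verifies $\check Q = \check m\cdot Q'$ and $\Dt = Q'\cdot Q''$. The first identity uses that $a>b$ whenever $a\in\Imax_k$, $b\in\Imax_l$ with $k<l$, which is immediate from the definition of $\Imax$; both factorizations are then straightforward comparisons of products of linear factors. Crucially, $Q''$ and $\check m$ are coprime in the UFD $\Czhp$, since no factor $z_a-hz_b$ is divisible by any $z_i$.

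For well-definedness, given $f\in\Czphl$, formula~\Ref{thi} becomes $\thitl(f) = \sum_{I\in\Il}\sih_I(\check f/Q')\,v_I$. The argument $\check f/Q' = \check f\cdot Q''/\Dt$ lies in $\Dti\Czhp$, and by Lemma~\ref{shaDt} every $\sih_I$ preserves $\Dti\Czhp$; hence $\thitl(f)\in\DLl$.

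For surjectivity, the main step, let $g\in\DLl$ and use Lemma~\ref{this} to write $g = \thil(\tilde g)$ for a unique $\tilde g\in\Czhl$. Since $\sih_{\Imax}$ is the identity operator, the $\Imax$-coordinate of $g$ equals $\check{\tilde g}/\check Q$; by the $\DLl$ hypothesis it also equals $p/\Dt$ for some $p\in\Czhp$. Substituting the factorizations gives $Q''\check{\tilde g} = p\,\check m$. Coprimality forces $Q''\mid p$ in $\Czhp$, hence $\check{\tilde g} = q\,\check m$ for some $q\in\Czhp$. Reversing variables, $\tilde g = m\cdot\check q$; the $S_\bla$-invariance of $\tilde g$ and of $m$ places $\check q$ in $\Czphl$, so $g = \thitl(\check q)$. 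The principal obstacle is this coprimality--divisibility step, which explains how the denominator $\Dt$ of $\DLl$-elements is accounted for by the denominator $\check Q$ of $\thil$ after peeling off the monomial $\check m$.
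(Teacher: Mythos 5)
Your proof is correct and follows essentially the same route as the paper's: both reduce surjectivity to showing that the $\Imax$-coordinate of an element of $\DLl$, times the polynomial monomial $\check m^{-1}$, is itself a polynomial because the relevant denominator $Q''$ (which, up to the reversal $\zz\mapsto\zzzn$, is the paper's $\prod_a\prod_{\ij\in\Imn_a,\,i<j}(z_i-hz_j)$) has no irreducible factor $z_i$ or $h$ --- you phrase this as coprimality in the UFD $\Czhp$, the paper as regularity of $\ft$ at $z_k=0$ and $h=0$. One small imprecision: $\check m$ has nonpositive exponents and hence lies in $\Czhl$ but not in $\Czhp$, so the coprimality statement should be about $Q''$ and the polynomial monomial $\check m^{-1}$ (equivalently, $Q''$ is coprime to $z_1\cdots z_n\>h$), which is precisely what your justification ``no factor $z_a-hz_b$ is divisible by any $z_i$'' establishes.
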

\begin{proof}
Let \,$f\<\in\Czphl$ and \,$g=f\cdot
\prod_{a=1}^N\:\prod_{\>i\in\Imn_a}z_i^{\la^{(a)}\?-\:n}$.
\vvn.16>
Then \,$\thitl(f)=\thil(g)\alb\in\DL$ \:by Lemma~\ref{this}.
Also, \,$\gc/\Qc\in\Dti\Czhp$\>, \,so \,$\thitl(f)\in\CNnl\ox\Dti\Czhp$
\vvn.1>
\,by Lemma~\ref{shaDt}. Thus \,$\thitl(f)\in\DLl$, and
the map \,$\thitl$ is injective.

\vsk.2>
To prove surjectivity of \,$\thitl$, let
\,$\gt(\zz,h)=\Dti\sum_{I\in\Il}g_I(\zz,h)\,v_I\in\DLl$.
By Proposition~\ref{this}, \,$g_\Imax=\Dt\fc/\Qc$ \,for some \,$f\in\Czhl$\>.
Hence, the function
\be
\ft(\zz,h)\,=\,f(\zz,h)\,\prod_{a=1}^N\>\prod_{\>i\in\Imn_a}\!z_i^{n-\la^{(a)}}
\<=\,\gc_\Imax(\zz,h)\,
\prod_{a=1}^N\,\prod_{\satop{\ij\in\Imn_a}{i<j}}\!\<(z_i-hz_j)^{-1}\>
\vv-.2>
\ee
is regular at \,$z_k=0$ \,for all \,$k=1\lc n$\>, and at \,$h=0$\,.
Therefore, \,$\ft\in\Czphl$ \,and \,$\gt=\thitl(\ft)$.
\end{proof}

\subsection{Grading}
\label{sec:grad}
Introduce the degrees of the variables \,$\zzz$ \,and \,$h$ \,by the rule
\beq
\label{degs}
\deg\:z_1\lsym=\:\deg\:z_n=1\,,\qquad\deg\:h=0\,.
\eeq
This defines the grading on the space \,$\Czhr$\>. This grading induces
gradings on tensor products of \,$\Czhr$ \,with other vector spaces and
subspaces of those tensor products. In particular, \,$\DV$ and \,$\DL$ are
graded \,$\Czh$\:-modules, and \,$\DLl$ is a graded \,$\Czhp$\:-module.

\begin{lem}
\label{lem:thigr}
The maps \;$\thil:\Czhl\<\to\:\DL$ \>and
\;$\thitl:\Czphl\<\to\:\DLl$ \>are isomorphisms of graded spaces.
\qed
\end{lem}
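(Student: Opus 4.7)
The plan is to deduce the lemma from Lemmas~\ref{this} and~\ref{that}, which already give bijectivity of $\thil$ and $\thitl$ on the underlying modules. What remains is to verify that both maps respect the grading \Ref{degs} with $\deg z_i = 1$ and $\deg h = 0$. As a preliminary observation, $\DL \subset \DiV_\bla$ and $\DLl \subset \CNnl \ox \Dti\Czhp$ are graded subspaces: each is cut out by the conditions $\sch_i f = f$, and inspection of \Ref{sch} together with \Ref{Sn-} and \Ref{RR1}\,--\,\Ref{RR2} shows that $\sch_i$ has $\zz$-degree zero, since the matrix entries of $\Rc^{(i,i+1)}(z_i/z_{i+1})$ are rational functions of $z_i/z_{i+1}$ and the permutation operators $K_i$ and $P^{(i,i+1)}$ are degree-preserving.

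For $\thil$ I would examine each factor in the defining formula \Ref{thi}, namely $\thil(f) = \sum_{I\in\Il}\sih_I\bigl(\fc/\Qc\bigr)\,v_I$. The bar operation $f \mapsto \fc$ merely permutes the $\zz$-variables, hence preserves degree. The denominator $\Qc(\zz,h)$ is a product of factors $1 - hz_b/z_a$, each of $\zz$-degree $0$, so division by $\Qc$ preserves degree. Each operator $\sha_j$ entering $\sih_I$ is, by \Ref{sha}, a combination of $K_j$ (a permutation of $z_j, z_{j+1}$) and rational functions in $z_{j+1}/z_j$ and $h$, all of degree $0$; hence every $\sih_I$ has degree $0$. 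Since the basis vectors $v_I$ carry degree $0$, the map $\thil$ is degree-preserving, which together with Lemma~\ref{this} yields the desired graded isomorphism.

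For $\thitl$ I would use its defining expression \Ref{thitla}: $\thitl(f) = \thil\bigl(f \cdot \prod_{a=1}^N\prod_{i \in \Imn_a}\< z_i^{\la^{(a)}-n}\bigr)$. Multiplication by this fixed monomial shifts the $\zz$-degree by the constant $d_0 = \sum_{a=1}^N \la_a(\la^{(a)}-n)$, and $\thil$ preserves degree, so $\thitl$ is a degree-$d_0$ homomorphism of graded spaces. Combined with the bijectivity from Lemma~\ref{that} this gives the claim. I expect no real obstacle here: the argument amounts to a direct bookkeeping of degrees, and all the needed facts about $\sha_j$, $\sch_i$, $\Qc$, and $\Dt$ are read off immediately from their formulas.
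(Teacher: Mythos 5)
The paper states this lemma with a bare \,\texttt{\textbackslash qed}\,, supplying no proof, so there is nothing for you to match against; the task is simply to supply the routine degree count, and your argument does this correctly. The structure is right: bijectivity is already given by Lemmas~\ref{this} and~\ref{that}, and what remains is to check that the maps are homogeneous with respect to the grading \Ref{degs}. Your verification that $\sch_i$, $\sha_j$, the bar operation $f\mapsto\fc$, $\Qc$, and $v_I$ are all of degree zero is correct, so $\thil$ is indeed degree-preserving; and $\thitl$ differs from $\thil$ by multiplication by the monomial $\prod_{a=1}^N\prod_{i\in\Imn_a}z_i^{\la^{(a)}-n}$, giving a homogeneous isomorphism of degree $d_0=\sum_a\la_a(\la^{(a)}-n)=-\sum_{a<b}\la_a\la_b$. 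You are right to flag that this shift is generally nonzero (e.g.\ $d_0=-\binom n2$ for the full flag, $d_0=-k(n-k)$ for the Grassmannian); the lemma's phrase ``isomorphisms of graded spaces'' must therefore be read as ``homogeneous isomorphisms'' rather than ``degree-zero isomorphisms,'' and this reading is consistent with how the lemma is used in the proof of Theorem~\ref{thmthiq}, where only the composition $(\thitl)^{-1}\>\thtql$ needs to be of degree zero.

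Two small points you could make explicit. First, to conclude that $\DLl$ is a graded subspace you implicitly also need $\CNnl\ox\Dti\Czhp$ to be a graded subspace of $\DiV_\bla$; this holds because $\Dt=\prod_{i<j}(z_j-hz_i)$ is homogeneous of degree $\binom n2$, so $\Dti\Czhp$ is graded in $\Czhr$, and because $\Dti\Czhp\subset\Di\Czh$ (since $D=\Dt\big/\prod_a z_a^{a-1}$). Second, a very minor slip: in your first paragraph you write that the matrix entries of $\Rc^{(i,i+1)}(z_i/z_{i+1})$ are rational in $z_i/z_{i+1}$ — they are, and that already gives degree zero — but note that what enters $\sch_i$ via \Ref{sch} is $\sti_i$, which also contains the permutation $K_i$; you state this correctly, I only flag it because the point of the whole calculation is that every such ingredient is genuinely of degree zero. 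None of this changes the conclusion: the proof is correct.
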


For the variables \,$\gm_{\ij}$\>, set \;$\deg\:\gm_{\ij}=1$ \,for all
\,$i\:,\<j$\>. This defines the grading on the algebra \,$\KTX$\>,
see~\Ref{KTX}\:, \Ref{Hrel}\:, making it into a graded algebra.

\begin{lem}
\label{lem:nugr}
The map \;$\nu:\KTX\>\to\>\DV\!\ox\Czb$ is an isomorphism of graded spaces.
\qed
\end{lem}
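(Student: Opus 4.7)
The lemma asks for more than Corollary~\ref{isonu} only in that it requires $\nu$ to preserve the grading introduced in Section~\ref{sec:grad}. The plan is to retain the isomorphism content of Corollary~\ref{isonu} and to check degree preservation directly from the defining formula~\Ref{nu2}.

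First I would verify that the two gradings are well-defined. On $\KTX$ the assignments $\deg\gm_{k,a}=\deg z_a=1$, $\deg h=0$ are consistent with the ideal of relations~\Ref{Hrel}, since for any homogeneous $f\in\CZS$ both sides of $f(\GG)=f(\zz)$ have the same total degree. On $\DV$ the grading is the restriction of the natural grading on $\CNn\ox\Di\Czh$ in which each standard basis vector $v_I$ is of degree $0$.

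Next I would analyze the three ingredients of $\nu([f])=\sum_{I\in\Il} f(\zz_I,\zz,h)\,R(\zz_I)^{-1}\,\xi_I$. The localization substitution $\Loc_I:\gm_{k,a}\mapsto z_a$ sends degree-$1$ variables to degree-$1$ variables, so $f(\zz_I,\zz,h)$ is homogeneous of the same degree $d$ as $f(\GG,\zz,h)$. The denominator $R(\zz_I)$ from~\Ref{RI} is a product of factors $1-z_b/z_a$, each of degree $0$, hence $R(\zz_I)^{-1}$ is of degree $0$. Finally, each coefficient $X_{I,J}$ in $\xi_I=\sum_{J\leqid I}X_{I,J}\,v_J$ is homogeneous of degree $0$: this is visible from the explicit formula~\Ref{xi-V} once one notes that $\Wt_{\si_0,J}$ is of total degree $0$ in the grading $\deg t^{(k)}_a=\deg z_a=1$, $\deg h=0$, because every factor appearing in~\Ref{UI} and in $E(\ttt,h)$ has the form $1-(h)\,y/x$ with $\deg y=\deg x$; the factors $P(\zz_I)$ and $Q(\zz_I,h)$ are products of such degree-$0$ pieces as well, and inverting all variables preserves homogeneity. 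Alternatively, one can argue inductively via the recursion $\xi_{s_i(I)}=\sti_i\,\xi_I$ of Theorem~\ref{xi-} together with the observation that $\sti_i=P^{(i,i+1)}\Rc^{(i,i+1)}(z_i/z_{i+1})K_i$ preserves degree, since the entries of $\Rc(z,h)$ displayed in~\Ref{RR2} are rational functions of $z,h$ of degree $0$.

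Combining these three observations, for homogeneous $f$ of degree $d$ every summand of $\nu([f])$ is of degree $d$, so $\nu$ is graded; together with Corollary~\ref{isonu} this gives the desired isomorphism of graded spaces. I do not anticipate any genuine obstacle — the argument amounts to careful bookkeeping of degrees, the only non-trivial point being the degree-$0$ homogeneity of the weight functions, which is however transparent from the multiplicative form~\Ref{UI}.
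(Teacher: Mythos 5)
Your proof is correct, and it takes the only route there is: the paper places the $\qed$ immediately after the statement, treating the lemma as an immediate consequence of Corollary~\ref{isonu} together with the grading conventions of Section~\ref{sec:grad}, so there is no written argument to compare against. Your verification supplies exactly the bookkeeping the authors deemed routine, and the crucial observation is the one you isolate: every factor appearing in $R(\zz_I)$, in $E(\ttt,h)$, in $P(\zz_I)$ and $Q(\zz_I,h)$, in the weight functions \Ref{UI}, and in the entries \Ref{RR2} of the trigonometric $R$-matrix is of the form $1-(h)\,y/x$ with $\deg y=\deg x$, hence homogeneous of degree zero; the localization $\gm_{k,a}\mapsto z_a$ sends degree-one variables to degree-one variables; and $\xi_\Imin=v_\Imin$ has degree zero, so either the closed formula~\Ref{xi-V} or the recursion $\xi_{s_i(I)}=\sti_i\,\xi_I$ with the degree-preserving operators $\sti_i$ gives $\deg\xi_I=0$ for all $I$. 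This makes $\nu$ degree-preserving, and since it is already known to be an isomorphism of $\Czh$-modules, it is an isomorphism of graded spaces.
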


\section{Quantum loop algebra}
\label{Uq}
\subsection{Quantum loop algebra $\Uqn$}
\label{Quantum loop algebra Uqn}

Let \,$q,\,u$ \>be parameters. Let \,$\C(u,q)$ \>be the algebra of rational
functions of \,$q,\,u$\>.
\vvn.2>
The {\it Cherednik-Drinfeld-Jimbo $R$-matrix}
$R(u,q)\in\End(\C^N\ox\C^N)\ox\C(u,q)$ is defined by the conditions:
\begin{enumerate}
\item[$\bullet$] For $i=1\lc N$,
\beq
\label{Rq1}
R(u,q) : v_i\ox v_i \mapsto (q\:u-q^{-1})\,v_i\ox v_i ;
\eeq

\item[$\bullet$] For $1\leq i<j\leq N$, on the two-dimensional subspace with
ordered basis $v_i\ox v_j$, $v_j\ox v_i$, the $R$-matrix $R(u,q)$
is given by the matrix
\beq
\label{Rq2}
\left( \begin{array}{cccc}
u-1 & u\>(q-q^{-1}) \\ q-q^{-1} & u-1 \end{array} \right),
\eeq
\end{enumerate}
cf.~\Ref{RR1}\:, \Ref{RR2}\:.

\vsk.2>
{\it The quantum loop algebra\/} $\Uqn$ is a unital associative algebra over
$\C(q)$ with generators $\Lh_{\ij,+0}, \Lh_{\ji,-0}$, \,$1\le j\le i\le N$,
and $\Lh_{\ij,\:\pm s}$, \,$i,j=1\lc N$, \,$s\in\Z_{>0}$, \,subject to relations
\Ref{UqL}\:, \Ref{UqR}\:, see~\cite{RS}, \cite{DF}. For convenience, set
$\Lh_{\ij,+0}=\Lh_{\ji,-0}=0$ \,for \,$1\le i< j\le N$.

\vsk.2>
Introduce the generating series \,$\Lh_{\ij,\pm}(u)=
\Lh_{\ij,\pm0}+\sum_{s=1}^\infty\Lh_{\ij,\:\pm s}\>u^{\pm s}$,
\vv.2>
and consider them as entries of $N{\times}N$ matrices
\,$\Lh_\pm(u)=(\Lh_{\ij,\pm}(u))_{i,j=1}^N$.
The relations in \,$\Uqn$ have the form
\vvn.2>
\begin{gather}
\label{UqL}
\Lh_{\ii,+0}\>\Lh_{\ii,-0}\>=\,\Lh_{\ii,-0}\>\Lh_{\ii,+0}\>=\,0\,,
\qquad i=1\lc N\:,
\\[6pt]
\label{UqR}
R^{(1,2)}(u/v,q)\>\Lh^{(1)}_\al(u)\>\Lh^{(2)}_\bt(v)\,=\,
\Lh^{(2)}_\bt(v)\>\Lh^{(1)}_\al(u)\>R^{(1,2)}(u/v,q)\,,
\end{gather}
where \,$(\al,\bt)=(+,+), (+,-), (-,-)$\,.

\subsection{Algebra \,$\Uen$}
\label{Uh}
\vsk.2>
Let $h=q^{-2}$. Let \,$\C(h)$ \>be the algebra of rational functions of
\vvn.16>
\,$h$\>. The algebra \,$\Uen$ is the unital associative algebra over \,$\C(h)$
\vvn.16>
generated by the following elements of \,$\Uqn$\,:
\vvn-.1>
\beq
\label{Lijs}
\Lt_{\ij,s}\>=\>\Lh_{1,1,+0}\dots\Lh_{i-1,\:i-1,+0}\,\;\Lh_{\ij,s}\;
\Lh_{1,1,+0}\dots\Lh_{\jj,+0}
\vv-.1>
\eeq
and
\vvn-.1>
\beq
\label{Lii}
\Lt_{\ii,+0}^{-1}=\bigl(\:\Lh_{1,1,-0}\dots\Lh_{\ii,-0}\bigr)^2
\eeq
for all possible \,$i,j,s$. Notice that \>$\Lt_{\ij,+0}=\Lt_{\ji,-0}=0$
\,for \,$1\le i<j\le N$. Consider the generating series
\vvn-.3>
\be
\Lt_{\ij,\pm}(u)\,=\,
\Lt_{\ij,\pm0}+\sum_{s=1}^\infty\,\Lt_{\ij,\:\pm s}\>u^{\pm s}
\vv-.1>
\ee
and the matrices \,$\Lt_\pm(u)=\bigl(\Lt_{\ij,\pm}(u)\bigr)_{i,j=1}^N$.
The relations in \,$\Uen$ have the form
\vvn.2>
\begin{gather}
\label{UeL1}
\Lt_{\ii,+0}\>\Lt_{\ii,+0}^{-1}\>=
\,\Lt_{\ii,+0}^{-1}\>\Lt_{\ii,+0}\>=\,1\,,\qquad i=1\lc N\:,
\\[4pt]
\label{UeL}
\Lt_{1,1,-0}\>=\>1,\qquad
\Lt_{\ii,+0}\>=\>\Lt_{i+1,\:i+1,-0}\,,\quad i=1\lc N\?-1\,,
\\[4pt]
\label{UeR}
\Rc^{(1,2)}(v/u,h)\>\Lt^{(1)}_\al(u)\>\Lt^{(2)}_\bt(v)\,=\,
\Lt^{(2)}_\bt(v)\>\Lt^{(1)}_\al(u)\>\Rc^{(1,2)}(v/u,h)\,,
\end{gather}
where \,$(\al,\bt)=(+,+), (+,-), (-,-)$\,, \,and \,$\Rc(z,h)$ is defined
by \Ref{RR1}\:, \Ref{RR2}\:.

\vsk.2>
Denote by \,$\Ueh\subset\Uen$ \,the subalgebra generated over \,$\C(h)$
by the elements
\vvn.2>
\beq
\label{Ueh}
\Lt_{1,1,\:+0}\>\lc\>\Lt_{N\?,\:N\?,\:+0}\>,
\;\Lt_{1,1,\:+0}^{-1}\>\lc\>\Lt_{N\?,\:N\?,\:+0}^{-1}\>.
\vv.2>
\eeq
The subalgebra \,$\Ueh$ \,is commutative. The elements
\,$\Lt_{N\?,\:N\?,\:+0}$ \>and \>$\Lt_{N\?,\:N\?,\:+0}^{-1}$ \,are central.

\vsk.2>
The algebra $\Uen$ is a Hopf algebra with the coproduct
\vvn.1>
\;$\Dl:\Uen\to\Uen\ox\Uen$ \,given by
\vvn-.7>
\beq
\label{Dl}
\Dl\>:\>\Lt_{\ij,\:\pm}(u)\,\mapsto\,
\sum_{k=1}^N\,\Lt_{\kj,\:\pm}(u)\ox\Lt_{\ik,\:\pm}(u)\,,\qquad
i,j=1\lc N\:.\kern-2em
\eeq

The algebra \,$\Uen$ \,is graded by the rule:
\;$\deg\:\Lt_{\ij,\:\pm s}=\:\pm\:s$ \;for all \,$i\:,\<j=1\lc N$ and
\,$s\in\Z_{\ge0}$\>, \>and \;${\deg\:h=0}$\>.

\subsection{Quantum minors}
\label{minors}
For \;$p=1\lc N$, \;$\ib=\{1\leq i_1\lsym<i_p\leq N\}$,
\;$\jb=\{1\leq j_1\lsym<j_p\leq N\}$, \,define quantum minors
\vvn.3>
\be
M_{\ijb,\:\pm}(u)\,=\,\sum_{\si\in S_p}(-1)^\si\,
\Lt_{i_1\<,\:j_{\si(1)},\:\pm}(u)\>\Lt_{i_2\<,\:j_{\si(2)},\:\pm}(uh)\>\dots\,
\Lt_{i_p\<,\:j_{\si(p)},\:\pm}(uh^{p-1})\,.
\ee

\begin{lem}
\label{lempi}
For any permutation \,$\pi\<\in S_p$\>, we have
\vvn.2>
\be
M_{\ijb,\:\pm}(u)\,=\,(-1)^\pi\sum_{\si\in S_p}(-1)^\si\,
\Lt_{i_{\pi(1)}\<,\:j_{\si(1)},\:\pm}(u)\>
\Lt_{i_{\pi(2)}\<,\:j_{\si(2)},\:\pm}(uh)\>\dots\,
\Lt_{i_{\pi(p)}\<,\:j_{\si(p)},\:\pm}(uh^{p-1})\,.
\vv-.2>
\ee
\end{lem}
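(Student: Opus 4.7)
The plan is to prove the lemma by reducing to the case $\pi=s_k$ of an adjacent transposition, and then deriving that case from the RTT relation \Ref{UeR} specialized to the distinguished point $v/u=h$. Throughout, write $T^\pi_\si=\Lt_{i_{\pi(1)},\:j_{\si(1)},\:\pm}(u)\>\Lt_{i_{\pi(2)},\:j_{\si(2)},\:\pm}(uh)\cdots\Lt_{i_{\pi(p)},\:j_{\si(p)},\:\pm}(uh^{p-1})$, so that $M_{\ijb,\:\pm}(u)=\sum_\si(-1)^\si T^e_\si$ and the lemma reads $\sum_\si(-1)^\si T^\pi_\si=(-1)^\pi\sum_\si(-1)^\si T^e_\si$.

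First, since $(-1)^\pi$ is multiplicative and every permutation factors as a product of adjacent transpositions, an easy induction on the length $\ell(\pi)$ reduces the statement to the case $\pi=s_k$ for some $k\in\{1\lc p-1\}$. For the inductive step, one applies the $s_k$-case to the two factors at positions $k,k+1$ in the product, treating the remaining factors on either side as context that is unaffected.

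The heart of the argument is the two-factor identity in $\Uen$
\begin{equation*}
\Lt_{e_1,\>f_1,\>\pm}(u)\:\Lt_{e_2,\>f_2,\>\pm}(uh)-\Lt_{e_1,\>f_2,\>\pm}(u)\:\Lt_{e_2,\>f_1,\>\pm}(uh)\,=\,-\bigl[\Lt_{e_2,\>f_1,\>\pm}(u)\:\Lt_{e_1,\>f_2,\>\pm}(uh)-\Lt_{e_2,\>f_2,\>\pm}(u)\:\Lt_{e_1,\>f_1,\>\pm}(uh)\bigr],
\end{equation*}
valid for all $e_1,e_2,f_1,f_2\in\{1\lc N\}$. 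It asserts that the column-antisymmetrized product of two adjacent $\Lt$-factors at spectral parameters $u,\>uh$ flips sign when its two row indices are swapped --- which is precisely the lemma for $p=2,\>\pi=s_1$. To derive it, I would specialize \Ref{UeR} to $v=uh$ and observe, by direct inspection of \Ref{RR1}--\Ref{RR2}, that $\Rc(h,h)$ kills every classical antisymmetric vector $v_i\ox v_j-v_j\ox v_i$, so that $\Rc(h,h)\cdot A=0$, where $A=(1-P)/2$ is the classical antisymmetrizer on $\C^N\<\ox\C^N$. Multiplying RTT on the right by $A$ then gives $\Rc(h,h)\,\Lt^{(1)}(u)\,\Lt^{(2)}(uh)\,A=\Lt^{(2)}(uh)\,\Lt^{(1)}(u)\,\Rc(h,h)\,A=0$, so the image of $\Lt^{(1)}(u)\,\Lt^{(2)}(uh)\,A$ lies in $\ker\Rc(h,h)$, which coincides with the image of $A$. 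Since $A$ is idempotent, this yields the operator identity $\Lt^{(1)}(u)\,\Lt^{(2)}(uh)\,A=A\,\Lt^{(1)}(u)\,\Lt^{(2)}(uh)\,A$; extracting matrix coefficients in the bases $v_{e_1}\ox v_{e_2}$ and $v_{f_1}\ox v_{f_2}$ then produces the displayed scalar identity after a short computation.

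To deduce the $\pi=s_k$ case for general $p$, I would partition $S_p$ into cosets of $\langle s_k\rangle=\{e,s_k\}$ and for each pair $\{\si_0,\si_0 s_k\}$ compute
\begin{equation*}
(-1)^{\si_0}T^{s_k}_{\si_0}+(-1)^{\si_0 s_k}T^{s_k}_{\si_0 s_k}\>=\>(-1)^{\si_0}\bigl(T^{s_k}_{\si_0}-T^{s_k}_{\si_0 s_k}\bigr).
\end{equation*}
In each such pair the factors at positions other than $k,k+1$ are common, and the difference of the two middle-factor products is exactly the left-hand side of the two-factor identity with the substitutions $u\mapsto uh^{k-1}$, $e_1=i_{k+1}$, $e_2=i_k$, $f_1=j_{\si_0(k)}$, $f_2=j_{\si_0(k+1)}$. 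By that identity it equals $-1$ times the analogous middle-factor difference for $T^e$, and summing over all cosets yields $\sum_\si(-1)^\si T^{s_k}_\si=-\sum_\si(-1)^\si T^e_\si$, as required. The main obstacle is the first step --- extracting the scalar two-factor identity from the operator-valued RTT relation --- since one must correctly identify $\ker\Rc(h,h)$ and carefully track the index bookkeeping when passing from operators to matrix entries. After that, both the induction on $\ell(\pi)$ and the coset-pairing argument are purely combinatorial.
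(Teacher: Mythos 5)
Your argument is correct and is exactly the degeneration/fusion proof the paper invokes by citing [MTV1, formulae (4.9), (4.10)]. Specializing \Ref{UeR} at $v/u=h$ and using that $\Rc(h,h)$ has the antisymmetric subspace (the image of the idempotent $A=(1-P)/2$) as its kernel yields $\Lt^{(1)}(u)\,\Lt^{(2)}(uh)\,A = A\,\Lt^{(1)}(u)\,\Lt^{(2)}(uh)\,A$, whose matrix entries are the two-factor row-antisymmetry, and the general case follows by the length induction and $\langle s_k\rangle$-coset pairing you describe; the paper supplies only the citation, so you have filled in precisely the standard argument it points to.
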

\begin{proof}
The statement follows from commutation relations \Ref{UeR}\:, see, for example,
\cite[formulae (4.9), (4.10)\:]{MTV1}
\end{proof}
\begin{rem}
Though \cite{MTV1} deals with the case of the Yangian \>$\Yn$\>, the proofs
given there rely upon general properties of \>$R$-matrices and can be easily
tuned for the case of the algebra \,$\Uen$ under consideration.
\end{rem}

We also have
\vvn-.3>
\beq
\label{DlM}
\Dl\::\:M_{\ijb,\:\pm}(u)\,\mapsto\!
\sum_{\kb\>=\<\>\{1\le\>k_1\lsym<\>k_p\le N\}}\!\!
M_{\ikb,\:\pm}(u)\ox M_{\kjb,\:\pm}(u)\,,
\vv.2>
\eeq
see, for example, \cite[Proposition~1.11]{NT1} or \cite[Lemma 4.3]{MTV1}.

\vsk.3>
Introduce the series \,$A_{1,\:\pm}(u)\lc A_{N\?,\:\pm}(u)$,
\,$E_{1,\:\pm}(u)\lc E_{N-1,\:\pm}(u)$,
\,\rlap{$F_{1,\:\pm}(u)\lc F_{N-1,\:\pm}(u)$}\\
\,as follows: given \,$p$\,,
take \,$\ib=\{1\lc p\}$\,, \;$\jb=\{1\lc p-1,p+1\>\}$\,, \;and set
\vvn.1>
\begin{gather}
\label{A}
A_{p,\:\pm}(u)\,=\,M_{\iib,\:\pm}(u)\;
\Lt_{1,1,\:\pm0}^{-1}\>\dots\>\Lt_{\pp,\:\pm0}^{-1}\>=\,
1+\sum_{s=1}^\infty\>\Bin_{p,\:\pm s}\,u^{\pm s}\:,
\\[4pt]
\label{Ep}
E_{p,\:\pm}(u)\,=\,(1-h)^{-1}\>
M_{\jib,\:\pm}(u)\>\bigl(M_{\iib,\:\pm}(u)\bigr)^{-1}\,,
\\[4pt]
\label{Fp}
F_{p,\:\pm}(u)\,=\,(1-h)^{-1}\>
\bigl(M_{\iib,\:\pm}(u)\bigr)^{-1}M_{\ijb,\:\pm}(u)\,.
\\[-15pt]
\notag
\end{gather}
The coefficients of these series together with \,$\Ueh$ generate the algebra
\,$\Uen$\,. In what follows we will describe the action of \,$\Uen$ \,by using
the series \,$A_{p,\:\pm}(u)\:,\,E_{p,\:\pm}(u)\:,\, F_{p,\:\pm}(u)$\,.

\vsk.2>
By formula \Ref{DlM}\:, the series \,$A_{N\?,\pm}(u)$ \,are group\:-like:
\vvn.2>
\beq
\label{DlA}
\Dl\::\:A_{N\?,\pm}(u)\,\mapsto\,A_{N\?,\pm}(u)\ox A_{N\?,\pm}(u)\,.
\eeq

\vsk.2>
Let \,$\Bci\!\subset\Uen$ \,be the unital subalgebra generated by \,$\Ueh$
\vvn.06>
and the elements \,$\Bin_{p,\:\pm s}$ \,for \,$p=1\lc N$, \,$s\in\Z_{>0}$\>,
\vvn.06>
\,see \Ref{A}\>. The subalgebra \,$\Bci\<$ is called
the {\it\GZi/ subalgebra\/} of \,$\Uen$\>. For any \,$\Bci\<$-module \,$V$
\vvn.16>
we denote by \,$\Bci(V)$ \>the image of \,$\Bci$ in \,$\End(V)$ \>and
call \,$\Bci(V)$ \>the {\it\GZi/ algebra of\/} $V$.

\begin{thm}[\cite{KS}]
\label{BGZ}
The subalgebra \,$\Bci\<$ is commutative.
The elements \,$\Bin_{N\?,\:\pm s}$\>, \,$s\in\Z_{>0}$\>, are central.
\qed
\end{thm}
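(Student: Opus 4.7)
The plan is to follow the standard proof for commutativity of principal-minor (Gelfand-Zetlin) subalgebras in RTT-type quantum algebras, adapted from the Yangian argument of \cite{MTV1} to the trigonometric setting of \,$\Uen$\>.

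The key technical step is to recast the principal quantum minor \,$M_{\iib,\pm}(u)$ \,with \,$\ib=\{1\lc p\}$ \,as a ``fused'' trace. Let \,$A_p\in\End((\C^N)^{\ox p})$ \,be the \,$q$-antisymmetrizer, which is the image of the longest element of \,$S_p$ \,obtained by composing the \,$R$-matrices \,$\Rc^{(i,j)}(h^{j-i},h)$ \,at the poles where they degenerate to a rank-one projector (up to scalar); this produces the one-dimensional antisymmetric subspace of \,$(\C^N)^{\ox p}$\>. Using Lemma~\ref{lempi} and the standard ``fusion'' computation with \Ref{UeR}\:, one shows
\be
M_{\iib,\pm}(u)\;\cdot\<\bigl(\text{scalar factor}\bigr)\,=\,
\tr_{\>1\lc p}\bigl(A_p\>\Lt^{(1)}_\pm(u)\>\Lt^{(2)}_\pm(uh)\:\dots\>\Lt^{(p)}_\pm(uh^{p-1})\bigr)\,.
\ee

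Next, iterating \Ref{UeR} in auxiliary spaces \,$1\lc p$ \,and \,$p+1\lc p+q$ \,yields a ``fused'' RLL relation
\be
\Rc_{[\:p\:],\:[\>q\:]}(u,v)\;\Lt^{[\:p\:]}_\al(u)\;\Lt^{[\>q\:]}_\bt(v)\,=\,
\Lt^{[\>q\:]}_\bt(v)\;\Lt^{[\:p\:]}_\al(u)\;\Rc_{[\:p\:],\:[\>q\:]}(u,v)\,,
\ee
where \,$\Lt^{[\:p\:]}_\al(u)=\Lt^{(1)}_\al(u)\:\dots\:\Lt^{(p)}_\al(uh^{p-1})$ \,and \,$\Rc_{[\:p\:],\:[\>q\:]}(u,v)$ \,is the corresponding ordered product of \,$R$-matrices. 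Sandwiching both sides with the antisymmetrizers \,$A_p\ox A_q$ \,in the auxiliary spaces, the factor \,$\Rc_{[\:p\:],\:[\>q\:]}(u,v)$ \,reduces to a scalar on the image of \,$A_p\<\ox\<A_q$ \,(this is where the trigonometric Yang-Baxter identity and the degeneration at the fusion points do the work), and taking partial traces yields the commutation relation
\vvn-.3>
\be
M_{\iib,\al}(u)\,M_{\jjb,\bt}(v)\,=\,M_{\jjb,\bt}(v)\,M_{\iib,\al}(u)
\vv-.1>
\ee
for \,$\ib=\{1\lc p\}$\>, \,$\jb=\{1\lc q\}$ \,and any choice of \,$\al,\bt\in\{+,-\}$\>. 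Combined with the obvious commutation of \,$\Ueh$ \,with itself (which is immediate from \Ref{UeL}\:, \Ref{UeR} at \,$u=v$\,) and with the \,$M_{\iib,\pm}(u)$ (a special case of the above for \,$q=1$\>, \,$\jb\subset\ib$\>)\:, this establishes the commutativity of \,$\Bci$\,.

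For the second statement (centrality of \,$\Bin_{N\?,\pm s}$\>)\:, one takes \,$p=N$, so the antisymmetrizer \,$A_N$ projects onto the one-dimensional sign representation of \,$S_N$\>. Repeating the fused RLL argument but now with one auxiliary space unfused against the fully antisymmetrized \,$N$-fold one, the \,$\Rc_{[N],\:1}(u,v)$ \,factor becomes a scalar on the image of \,$A_N$, and one obtains \,$M_{\{1\lc N\},\al}(u)\,\Lt_{\ij,\bt}(v)=\Lt_{\ij,\bt}(v)\,M_{\{1\lc N\},\al}(u)$ for all \,$i,j,\al,\bt$\>; dividing by the group-like \,$\prod_i\Lt_{\ii,\pm0}$ \,preserves centrality, giving the conclusion. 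The main obstacle is a clean bookkeeping of the fusion scalars and the verification that \,$\Rc_{[\:p\:],\:[\>q\:]}(u,v)$ \,acts diagonally (by a scalar) on \,$(A_p\ox A_q)((\C^N)^{\ox(p+q)})$; this is where one must invoke the trigonometric Yang-Baxter equation \Ref{YBE} and the explicit poles of \,$\Rc(z,h)$\,. Everything else is routine algebra following the Yangian template of \cite{MTV1}\>, tuned to \,$\Uen$ \,as noted in the Remark after Lemma~\ref{lempi}.
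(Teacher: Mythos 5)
The paper does not prove this statement: it is cited to Kulish and Sklyanin \cite{KS} and stated with no argument (the \qed comes immediately). So there is no paper proof to compare against; I assess your proposal on its own.

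There is a genuine gap in the commutativity part. Your fused\:-trace formula
\be
M_{\iib,\pm}(u)\cdot(\text{scalar})\,=\,
\tr_{\>1\lc p}\bigl(A_p\>\Lt^{(1)}_\pm(u)\>\Lt^{(2)}_\pm(uh)\:\dots\>
\Lt^{(p)}_\pm(uh^{p-1})\bigr)
\ee
with \,$\ib=\{1\lc p\}$ \,is false for \,$p<N$\>. Since \,$A_p$ \,projects onto \,$\Lambda^p\C^N$\>, the trace of \,$A_p\>\Lt^{(1)}(u)\cdots\Lt^{(p)}(uh^{p-1})$ \,over the full auxiliary space is (up to normalization) \,$\sum_{\jb}M_{\jjb,\pm}(u)$\>, the sum over \emph{all} \,$p$-element subsets \,$\jb$\>, not the single principal minor \,$M_{\{1\lc p\},\{1\lc p\},\pm}(u)$ \,that generates \,$\Bci$\>. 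Hence the trace of the fused RLL relation proves commutativity of the family \,$\{\sum_\jb M_{\jjb,\pm}(u)\}_p$\>, which is the Bethe subalgebra \,$\Bck$ \,with \,$q_1=\dots=q_N=1$\>, not the Gelfand\:-Zetlin subalgebra \,$\Bci$\>. Likewise, the claim that \,$\Rc_{[\:p\:],\:[\>q\:]}(u,v)$ \,acts by a scalar on the image of \,$A_p\ox A_q$ \,is too strong: that image is \,$\Lambda^p\C^N\ox\Lambda^q\C^N$\>, a multidimensional space on which the fused \,$R$-matrix is a nontrivial intertwiner.

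The repair is to replace the trace by the diagonal matrix element at the extremal vector \,$e_{\{1\lc p\}}\ox e_{\{1\lc q\}}$\>, where \,$e_S$ \,denotes the basis wedge of \,$\Lambda^{|S|}\C^N$\>. A short weight count shows (say for \,$p\le q$) that the only decomposable wedge \,$e_A\ox e_B$ \,with the same weight as \,$e_{\{1\lc p\}}\ox e_{\{1\lc q\}}$ \,has \,$A=\{1\lc p\}$\>, \,$B=\{1\lc q\}$\>; since the fused \,$R$-matrix preserves weight, it acts by a scalar on this one-dimensional subspace, and evaluating the fused RLL relation there gives
\,$M_{\{1\lc p\},\{1\lc p\},\al}(u)\,M_{\{1\lc q\},\{1\lc q\},\bt}(v)=
M_{\{1\lc q\},\{1\lc q\},\bt}(v)\,M_{\{1\lc p\},\{1\lc p\},\al}(u)$\>. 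Your centrality argument for \,$\Bin_{N\?,\pm s}$ \,survives as written, because for \,$p=N$ \,the space \,$\Lambda^N\C^N$ \,is one-dimensional, so the trace/matrix-element distinction and the scalar-on-the-whole-image distinction both collapse; but for \,$p<N$ \,the gap is real, and as written the argument does not establish commutativity of \,$\Bci$\>.
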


\subsection {Bethe algebra \,$\Bck$}
For \,$q=(\kkk)\in(\C^\times)^N$ and \,$p=1\lc N$, \,define
\beq
\label{Beq}
\Bk_{p,\:\pm}(u)\,=\!\sum_{\ib\>=\:\{1\leq i_1\lsym<\>i_p\leq N\}}\!\!
q_{i_1}\!\dots q_{i_p}\>M_{\iib,\:\pm}(u)
\,\prod_{r=1}^p\,\Lt_{i_r\<,\:i_r\<,\:-0}^{-1}
\,=\,\sum_{s=0}^\infty\>\Bk_{p,\:\pm s}\,u^{\pm s}\,.
\eeq
In particular,
\vvn-.3>
\be
\Bk_{p,+0}\>=\,e_p(\qt_1\lc\qt_N)\,,\qquad
\Bk_{p,-0}\>=\,e_p(q_1\lc q_N)\,,
\vv.2>
\ee
where \,$e_p$ \>is the \:$p\>$-th elementary symmetric function,
and \,$\qt_{i,\:\pm}=\:q_i\>\Lt_{\ii,\:+0}\Lt_{\ii,\:-0}^{-1}$\>.

\vsk.2>
Let \,$\Bck\<\subset\Uen$ \,be the unital subalgebra generated \,by \,$\Ueh$
\>and the elements \,$\Bk_{p,\:\pm s}$\>, \,$p=1\lc N$, \,$s\in\Z_{>0}$\,.
It is easy to see that the subalgebra \,$\Bck\<$ does not change
\vvn.06>
if all $q_1\lc q_N$ are multiplied simultaneously by the same number.
\vvn.06>
The algebra \,$\Bck\<$ is called the {\it Bethe subalgebra\/} of \,$\Uen$.
\vvn.16>
For any \,$\Bck\<$-module \,$V$ we denote by \,$\Bck(V)$ \>the image of
\,$\Bck$ in \,$\End(V)$ \>and call \,$\Bck(V)$ \>the {\it Bethe algebra of\/}
$V$.

\begin{thm}[\cite{KS}]
\label{BY}
The subalgebra \,$\Bck\<$ is commutative.
\qed
\end{thm}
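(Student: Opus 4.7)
The plan is the classical transfer-matrix / fusion argument for RLL-type algebras. First, interpret each series $\Bk_{p,\pm}(u)$ as a twisted transfer matrix on the exterior auxiliary space $\Lambda^p\C^N$. The quantum minors $M_{\ijb,\pm}(u)$ are the matrix coefficients of the fused $L$-operator $\mathbf L^{(p)}_\pm(u)\in\End(\Lambda^p\C^N)\ox\Uen$ built from $\Lt_\pm(u)$ by the standard fusion procedure (Lemma~\ref{lempi} justifies the antisymmetrization of the indices). Absorbing the normalization $\prod_{r=1}^p\Lt_{i_r,i_r,-0}^{-1}$ appearing in \Ref{Beq} into a modified $L$-operator $\tilde\Lt_\pm(u)=\Lt_\pm(u)\>D_-^{-1}$ with $D_-=\diag(\Lt_{11,-0}\lc\Lt_{NN,-0})$, and letting $\tilde{\mathbf L}^{(p)}_\pm(u)$ be the corresponding fusion, one identifies
\be
\Bk_{p,\pm}(u)\,=\,\tr_{\Lambda^p\C^N}\bigl(Q^{(p)}\,\tilde{\mathbf L}^{(p)}_\pm(u)\bigr),
\ee
with $Q=\diag(\kkk)\in\End(\C^N)$ and $Q^{(p)}=Q^{\ox p}|_{\Lambda^p\C^N}$.

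Second, by the standard fusion procedure (composing R-matrices at appropriately shifted spectral parameters so that the image factors through $\Lambda^p\C^N\ox\Lambda^r\C^N$), relation \Ref{UeR} yields the fused RLL identity
\be
\mathbf R^{(1,2)}_{p,r}(v/u)\,\tilde{\mathbf L}^{(1),(p)}_\al(u)\,\tilde{\mathbf L}^{(2),(r)}_\bt(v)\,=\,\tilde{\mathbf L}^{(2),(r)}_\bt(v)\,\tilde{\mathbf L}^{(1),(p)}_\al(u)\,\mathbf R^{(1,2)}_{p,r}(v/u)
\ee
in $\End(\Lambda^p\C^N\ox\Lambda^r\C^N)\ox\Uen$, where $\mathbf R_{p,r}$ is the fused (generically invertible) R-matrix. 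Inspection of \Ref{RR1}\,--\,\Ref{RR2} shows that $\Rc(w,h)$ preserves the weight grading of $\C^N\ox\C^N$, hence $Q\ox Q$ commutes with $\Rc(w,h)$; by fusion this extends to $[Q^{(p)}\ox Q^{(r)},\mathbf R_{p,r}(v/u)]=0$.

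Third, rewrite the fused RLL as $\tilde{\mathbf L}_1\tilde{\mathbf L}_2=\mathbf R_{p,r}^{-1}\tilde{\mathbf L}_2\tilde{\mathbf L}_1\mathbf R_{p,r}$, apply $\tr_{\Lambda^p\C^N\ox\Lambda^r\C^N}\bigl((Q^{(p)}\ox Q^{(r)})\cdot\bigr)$ to both sides, and use cyclicity of trace together with $[Q^{(p)}\ox Q^{(r)},\mathbf R_{p,r}]=0$. This yields $[\Bk_{p,\al}(u),\Bk_{r,\bt}(v)]=0$ for all $p,r,\al,\bt$ and all $u,v$, which is the desired commutativity of $\Bck$. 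Commutativity of $\Bck$ with $\Ueh$ follows as a special case: the generators \Ref{Ueh} arise as leading coefficients of $\Bk_{N,\pm}(u)$ via the group-like identity \Ref{DlA} together with \Ref{UeL}.

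The main technical obstacle will be verifying that the modified $L$-operator $\tilde\Lt_\pm(u)=\Lt_\pm(u)\>D_-^{-1}$ still satisfies an RLL relation of the form needed for fusion, so that the normalization factor $\prod\Lt_{i_r,i_r,-0}^{-1}$ in \Ref{Beq} can be transparently absorbed and carried through the trace argument. This reduces to the commutativity of the entries of $D_-$ among themselves (all lie in the commutative subalgebra $\Ueh$) and their diagonal compatibility with $\Rc$, which follows from \Ref{UeL} and the specialized form of \Ref{UeR} at extreme spectral parameter values.
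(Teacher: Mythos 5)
The paper gives no proof of this theorem at all: Theorem~\ref{BY} is simply attributed to Kulish and Sklyanin in \cite{KS}, and the surrounding text offers no argument. So there is nothing to ``compare against'' inside the paper --- what you have written is a reconstruction of the argument from the cited literature, and your fusion / twisted-transfer-matrix plan is indeed the standard route taken there (and in \cite{M}, also cited by the paper).

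That said, the plan has two places where the sketch glosses over genuine difficulties, one of which you flag and one of which you do not. \textbf{(a)} The modified operator $\tilde\Lt_\pm(u)=\Lt_\pm(u)\>D_-^{-1}$ does \emph{not} transparently absorb the normalization. If you expand the diagonal quantum minor of $\tilde\Lt$, each term is of the form $\Lt_{i_1,i_{\si(1)},\pm}(u)\>\Lt_{i_{\si(1)},i_{\si(1)},-0}^{-1}\,\Lt_{i_2,i_{\si(2)},\pm}(uh)\>\Lt_{i_{\si(2)},i_{\si(2)},-0}^{-1}\cdots$\,, and to reassemble this as $M_{\iib,\pm}(u)\prod_r\Lt_{i_r,i_r,-0}^{-1}$ you must commute each $\Lt_{kk,-0}^{-1}$ past subsequent factors $\Lt_{i_s,j_s,\pm}(uh^{s-1})$. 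From the RLL relations \Ref{UeR} at extreme spectral parameter this commutation is \emph{not} free: whenever one of the indices $i_s,j_s$ coincides with $k$ one picks up nontrivial powers of $h$ (this is visible already in the $z\to\infty$ limit of \Ref{RR2}, which is lower triangular, not scalar). Those $h$-powers do cancel in the end --- this is exactly why the extra shifts $u,uh,\ldots,uh^{p-1}$ appear in the definition of the minor --- but the cancellation must be verified, and it is the real content of the quantum-minor identities (cf.~\cite[Sect.~4]{MTV1}). Your phrase ``can be transparently absorbed and carried through'' undersells this step. \textbf{(b)} Your final sentence, claiming that commutativity with $\Ueh$ ``follows as a special case'' because the generators \Ref{Ueh} arise as leading coefficients of $\Bk_{N,\pm}(u)$, is not correct. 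The leading coefficients are $\Bk_{p,+0}=e_p(\qt_1\lc\qt_N)$ with $\qt_i=q_i\Lt_{\ii,+0}\Lt_{\ii,-0}^{-1}$; these are symmetric functions of the products $\Lt_{\ii,+0}\Lt_{\ii,-0}^{-1}$ and do \emph{not} produce the individual generators $\Lt_{\ii,+0}$ of $\Ueh$. The clean argument for commutativity of $\Bk_{p,\pm s}$ with $\Ueh$ is the weight argument: each diagonal minor $M_{\iib,\pm}(u)$ is a sum of products of $\Lt$-entries whose row and column index multisets coincide, hence the total weight shift is zero, and the $\hg$-weight-grading relations implied by \Ref{UeR} (at $v\to\infty$ or $v\to 0$) force $[\Lt_{\ii,\pm0},M_{\jjb,\pm}(v)]=0$. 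This is immediate once stated, but it is a separate check, not a consequence of your trace identity.
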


The elements \,$\Bk_{p,\:\pm s}$ depend polynomially on \,$q_1\lc q_N$\>.
\vvn.1>
Suppose \,$q_1=1$ \>and \,$q_{i+1}/q_i\to 0$ \,for all \,$i=1\lc N-1$.
In this limit,
\vvn.3>
\beq
\label{Blim}
\Bk_{p,+}(u)\,=\,q_1\dots q_p\>\bigl(A_{p,+}(u)\>\Lt_{\pp,\:+0}+o(1)\bigr)\,,
\quad\Bk_{p,-}(u)\,=\,q_1\dots q_p\>\bigl(A_{p,-}(u)+o(1)\bigr)\,.
\kern-.6em
\eeq

\subsection{Difference operators}
\label{sec:diff}
Let \,$\tau$ \,be the multiplicative shift operator acting on functions
of \,$u$ \,as follows: \,$\tau f(u)=f(hu)$\>.
For \,$r=(r_1\lc r_N)\in(\C^\times)^N$ and \,$i,j=1\lc N$, set
\,$X_{\ij,\:\pm}=\:\dl_{\ij}-r_i\>\Lt_{\ij,\:\pm}(u)\>\tau$\>.
Define the difference operators
\vvn.1>
\be
\De_\pm\>=\:\sum_{\si\in S_N\!}(-1)^\si\>
X_{1,\:\si(1),\:\pm}\>X_{2,\:\si(2),\:\pm}\dots X_{N\?,\:\si(N),\:\pm}\,.
\vv-.4>
\ee
Then
\vvn-.8>
\beq
\label{Depm}
\De_\pm\>=\,1+\sum_{p=1}^N\,\sum_{\ib\>=\:\{1\leq i_1\lsym<\>i_p\leq N\}}\!\!
r_{i_1}\!\dots r_{i_p}\>M_{\iib,\:\pm}(u)\,(\<-\:\tau)^p\>,
\vv-.6>
\eeq
cf.~formula~\Ref{Beq}

\subsection{More subalgebras of \,$\Uen$}
\label{pmalg}
Let \,$\Ue_\pm,\,\Bci_\pm,\,\Bck_\pm$ \,be the subalgebras of \,$\Uen$
generated by \,$\Ueh$ and the following elements, respectively:
\begin{enumerate}
\item[$\Ue_\pm$:]
$\;\Lt_{\ij,\:\pm s}$ \,for \,$i,j=1\lc N$, \,$s\in\Z_{>0}$\,;
\vsk.2>
\item[$\Bci_\pm$:]
$\;\Bin_{p,\:\pm s}$ \,for \,$p=1\lc N$\>, \,$s\in\Z_{>0}$\,;
\vsk.2>
\item[$\Bck_\pm$:]
$\;\Bk_{p,\:\pm s}$ \,for \,$p=1\lc N$\>, \,$s\in\Z_{>0}$\,;
\end{enumerate}
\noindent
\vsk.3>
Let \,$\Zc_\pm$ be the subalgebras of \,$\Uen$ \>generated over \,$\C(h)$
by \>$\Lt_{N\?,N\?,+0}\,,\,\Lt_{N\?,N\?,+0}^{-1}$ \>and the respective elements
\,$\Bin_{N\?,\:\pm s}$\>, \,$s\in\Z_{>0}$. Let \,$\Zc\subset\Uen$ \>be
the subalgebra generated by \,$\Zc_+$ and \,$\Zc_-$\>. Recall that \,$\Zc$
\>lies in the center of \,$\Uen$.

\vsk.2>
Denote by \,$\Uen^\hg$ the commutant of \,$\Ueh$ in \,$\Uen$\>.
Recall that \,$\Bci\<,\:\Bck\<\subset\Uen^\hg$.

\section{Space \,$\DV\!\ox\C(h)$ \,as a module over \,$\Uen$}
\label{Yaction}
\subsection{Action of \,$\Uen$}
\label{tyactions}
Recall the \,$R$-matrix \,$\Rc(u)$ defined in
Section~\ref{Trigonometric $R$-matrix}. Set
\vvn.2>
\beq
\label{Lpm}
L(u)\,{}=\,\Rc^{(0,n)}(z_n/u) \dots\Rc^{(0,1)}(z_1/u)\;
\prod_{i=1}^n\,\frac{1-hz_i/u}{1-z_i/u}\,,
\eeq
where the factors of \,$(\C^N)^{\ox(n+1)}\<=\:\C^N\!\ox\CNn$ are labeled by
\vvn.1>
\,$0,1\lc n$ from left to right. We think of \>$L(u)$ \>as
an $N\!\times\!N$-matrix with \,$\End\bigl(\CNn\bigr)$\:-valued entries
\,$L_{\ij}(u)$, depending on $u, \zzz,h$.
\vsk.2>
Expand \,$L_{\ij}(u)$ into Laurent series at \,$u=0$ \,and \,$u=\infty$\>:
\beq
\label{L coeffs}
L_{\ij}(u)\,=\,L_{\ij,+0}\:+\sum_{s=1}^\infty\,L_{\ij,\:s}\,u^s\>,\qquad
L_{\ij}(u)\,=\,L_{\ij,-0}\:+\sum_{s=1}^\infty\,L_{\ij,-s}\,u^{-s}\>.
\eeq
Then \>$L_{\ij,+s}\in\End\bigl(\CNn\bigr)\ox\Czhm$ \>and \>$L_{\ij,-s}\in\End\bigl(\CNn\bigr)\ox\Czhp$
\,for \>$s\in\Z_{\ge0}$\>, \>and the degree in \>$h$ \>of each \>$L_{\ij,s}$
\>is at most \>$n$\>.

\begin{prop}
\label{pho}
The assignment \,$\pho\::\:\Lt_{\ij,\:s}\:\mapsto\:L_{\ij,\:s}$ \>for all
\>$i,j,s$, \>defines a homomorphism of graded algebras
\;$\Uen\to\:\End\bigl(\CNn\bigr)\ox\Czhr$\>.
\end{prop}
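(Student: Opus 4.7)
The plan is to verify the three families of defining relations \Ref{UeL1}, \Ref{UeL}, \Ref{UeR} of $\Uen$ on the images $L_{\ij,\pm s}$, together with invertibility of $L_{ii,+0}$ and the grading condition. The cornerstone is to establish the $RLL$ identity
\[
\Rc^{(a,b)}(v/u,h)\,L^{(a)}(u)\,L^{(b)}(v) \,=\, L^{(b)}(v)\,L^{(a)}(u)\,\Rc^{(a,b)}(v/u,h)
\]
at the level of the $\End\bigl(\CNn\bigr)$-valued rational function $L(u)$, where $a,b$ are two distinct auxiliary copies of $\C^N$. The scalar prefactor in \Ref{Lpm} is central and appears symmetrically on both sides, so only the bare monodromy $T^{(a)}(u)=\Rc^{(a,n)}(z_n/u)\dots\Rc^{(a,1)}(z_1/u)$ is relevant. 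Since $\Rc^{(a,k)}$ and $\Rc^{(b,l)}$ commute whenever $k\ne l$, one reorganises $T^{(a)}(u)\,T^{(b)}(v)$ so that the pairs $\Rc^{(a,k)}(z_k/u)\,\Rc^{(b,k)}(z_k/v)$ for $k=n,\dots,1$ appear consecutively. A single application of the Yang--Baxter equation \Ref{YBE} with $(z_1,z_2,z_3)\mapsto(u,v,z_k)$ moves $\Rc^{(a,b)}(v/u,h)$ past each such pair, after which a final reordering gives $T^{(b)}(v)\,T^{(a)}(u)$. Expanding the resulting rational identity as a Laurent series in $u,v$ at $\{0,\infty\}$ yields \Ref{UeR} in the three cases $(\alpha,\beta)\in\{(+,+),(+,-),(-,-)\}$.

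For the constant-term relations I compute the limits $u\to 0$ and $u\to\infty$ directly from \Ref{RR1}--\Ref{RR2}. At $u=\infty$ the scalar prefactor tends to $1$ and each factor tends to $\Rc(0,h)$: on $v_a\ox v_b$ this preserves the auxiliary index when $a\le b$, whereas for $a>b$ it gives the mixing $v_a\ox v_b\mapsto(1-h)\,v_b\ox v_a+h\,v_a\ox v_b$. Hence the auxiliary index can only strictly decrease across $\Rc(0,h)$, so once it has dropped below $i+1$ it never returns, and a short induction on the number of factors shows that the $v_{i+1}$-component of $L(\infty)(v_{i+1}\ox w)$ for $w=v_{j_1}\ox\dots\ox v_{j_n}$ equals $h^{\#\{k : j_k\le i\}}\,w$. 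At $u=0$ the scalar tends to $h^n$ and each factor tends to $\Rc(\infty,h)$, triangular in the opposite direction and contributing a factor $h^{-1}$ for each $j_k>i$; the $h^n$ absorbs the excess $h^{-1}$'s and yields the same diagonal operator. This proves $L_{ii,+0}=L_{i+1,i+1,-0}$ and, at $i=0$, $L_{1,1,-0}=1$, establishing \Ref{UeL}. Since $L_{ii,+0}$ is diagonal on the standard basis of $\CNn$ with eigenvalues invertible in $\Czhr$, its inverse exists in $\End\bigl(\CNn\bigr)\ox\Czhr$, giving \Ref{UeL1}.

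Finally, that $\pho$ respects the grading follows by inspection of \Ref{Lpm}: expanding each matrix entry of $\Rc(z_a/u,h)$ and each scalar factor $(1-hz_a/u)/(1-z_a/u)$ as a geometric series in $u^{\pm 1}$ shows that the coefficient of $u^{\pm s}$ is homogeneous in the $z_a$ of the degree matching $\deg\Lt_{\ij,\pm s}=\pm s$. The main technical obstacle is the constant-term computation of the second paragraph: verifying the strict triangularity assertion that once the auxiliary index leaves its initial value under $\Rc(0,h)$ or $\Rc(\infty,h)$ it cannot return requires careful combinatorial bookkeeping using the $2\times 2$ block form of the limiting $R$-matrices. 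Beyond that, every step reduces to either an application of the Yang--Baxter equation \Ref{YBE} or a direct Laurent-series limit.
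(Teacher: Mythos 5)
Your proposal is correct and fleshes out the paper's one-line proof (which merely cites \Ref{RR1}, \Ref{RR2}, and the Yang--Baxter equation \Ref{YBE}) into a complete argument: the $RLL$ relation \Ref{UeR} via \Ref{YBE} with the central scalar prefactor pulled out, the $u\to 0,\infty$ triangularity of $\Rc$ for \Ref{UeL1}--\Ref{UeL}, and homogeneity of the Laurent coefficients for the grading. One small sign remark on the last point: since $L_{\ij,-s}\in\End\bigl(\CNn\bigr)\ox\Czhp$ is a polynomial of $z$-degree $s$ and $L_{\ij,+s}\in\End\bigl(\CNn\bigr)\ox\Czhm$ has $z$-degree $-s$, the image of $\Lt_{\ij,\pm s}$ actually has degree $\mp s$ under \Ref{degs}, so the phrase ``matching $\deg\Lt_{\ij,\pm s}=\pm s$'' (which echoes the paper's own declaration) should read $\mp s$ for $\pho$ to be degree-preserving; the substantive point, homogeneity of each coefficient, is unaffected.
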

\begin{proof}
The claim follows from formulae \Ref{RR1}\:, \Ref{RR2} and the Yang-Baxter
equation \Ref{YBE}\:.
\end{proof}

Notice that
\,$\pho(\Ue_+)\subset\End\bigl(\CNn\bigr)\ox\C[\zz^{-1}\:]\ox\C(h)$ \,and
\,$\pho(\Ue_-)\subset\End\bigl(\CNn\bigr)\ox\C[\zz]\ox\C(h)$\>.

\vsk.2>
We will indicate for a while the dependence of the homomorphism \>$\pho$ \>on
\>$n$ \>by denoting it \>$\pho_n$\>. Notice that \>$\pho_n$ is the composition
of a tensor power of \>$\pho_1$ with the iteration of the coproduct \Ref{Dl}\>:
\vvn-.2>
\beq
\label{phon}
\pho_n\>=\,\pho_1^{\ox n}\?\circ\Dl^{(n)}\>.
\vv.2>
\eeq
This observation will be explored in several proofs in this section.

\begin{lem}
All operators \;$\pho(\Bin_{N\?,\:s})$ \>are scalar, and
\vvn.1>
\begin{gather}
\label{BiN+}
1+\sum_{s=1}^\infty\>\pho(\Bin_{N\?,\:s})\,u^s\:=\,
\prod_{i=1}^n\,\frac{1-h^{-1}u/\<z_i}{1-u/\<z_i}\,,
\\[4pt]
\label{BiN-}
1+\sum_{s=1}^\infty\>\pho(\Bin_{N\?,-s})\,u^{-s}\:=\,
\prod_{i=1}^n\,\frac{1-hz_i/u}{1-z_i/u}\,,
\\[-14pt]
\notag
\end{gather}
where the products in the right-hand sides are expanded at \>$u=0$ \>and
\>$u=\infty$, respectively.
\end{lem}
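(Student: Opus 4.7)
The plan is to use the Hopf-algebra structure of \,$\Uen$ \,to reduce the claim from \,$n$ \,sites to a single site via the coproduct, then evaluate the one-site quantum determinant directly on a basis vector.

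\textbf{Reduction via the coproduct.} I first verify that \,$A_{N,\pm}(u)$ \,is group-like. Taking \,$\ib = \jb = \{1\lc N\}$ \,in \Ref{DlM}\:, the only admissible \,$\kb$ \,of size \,$N$ \,is \,$\kb = \{1\lc N\}$, so \,$\Dl\:M_{\ib\ib,\pm}(u) = M_{\ib\ib,\pm}(u)\ox M_{\ib\ib,\pm}(u)$. Evaluating \Ref{Dl} at the constant-in-$u$ term and using the vanishings \,$\Lt_{kj,+0}=0$ \,for \,$k>j$ \,and \,$\Lt_{kj,-0}=0$ \,for \,$k<j$, one finds \,$\Dl\:\Lt_{p,p,\pm0} = \Lt_{p,p,\pm0}\ox\Lt_{p,p,\pm0}$ for every \,$p$; combined with commutativity of \,$\Ueh$, this gives \,$\Dl\:A_{N,\pm}(u) = A_{N,\pm}(u)\ox A_{N,\pm}(u)$. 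Formula \Ref{phon} then yields \,$\pho_n(A_{N,\pm}(u)) = \bigotimes_{i=1}^n \pho_1(A_{N,\pm}(u))\big|_{z_1\mapsto z_i}$, so everything reduces to the \,$n=1$ \,case with the scalar-valued right-hand sides in \Ref{BiN+}\:, \Ref{BiN-}\:.

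\textbf{Single-site computation.} For \,$n=1$, \,$L(u) = \Rc(z_1/u)(1-hz_1/u)/(1-z_1/u)$. A direct calculation from \Ref{RR1}\:--\:\Ref{RR2} shows \,$L_{k,1}(u)\,v_1 = 0$ \,for \,$k\ne 1$ \,and \,$L_{k,k}(u)\,v_1 = h\,v_1$ \,for \,$k>1$ \,(independently of \,$u$), while \,$L_{1,1}(u)\,v_1 = (1-hz_1/u)/(1-z_1/u)\,v_1$. In the antisymmetrization
\be
M_{\ib\ib,+}(u)\,v_1 \,=\, \sum_{\si\in S_N}(-1)^\si\,L_{1,\si(1)}(u)\,L_{2,\si(2)}(uh)\,\cdots\,L_{N,\si(N)}(uh^{N-1})\,v_1\,,
\ee
the only permutation avoiding a zero factor is \,$\si=\id$, giving \,$M_{\ib\ib,+}(u)\,v_1 = h^{N-1}(1-hz_1/u)/(1-z_1/u)\,v_1$. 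Since \,$\Lt_{p,p,+0}\,v_1 = h\,v_1$ for every \,$p$, the normalization in \Ref{A} yields \,$\pho_1(A_{N,+}(u))\,v_1 = h^{-1}(1-hz_1/u)/(1-z_1/u)\,v_1 = (1-h^{-1}u/z_1)/(1-u/z_1)\,v_1$ after the algebraic simplification. The \,$-$ case is parallel, evaluated on \,$v_N$ \,and expanded at \,$u=\infty$.

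\textbf{Scalarity and the main obstacle.} It remains to show that \,$\pho_1(A_{N,\pm}(u))$ \,acts by the \emph{same} scalar on every basis vector \,$v_j$. Two routes are available: (a) repeat the single-vector computation on each \,$v_j$ \,after using Lemma~\ref{lempi} to reorder the rows of the quantum minor so that once again only one permutation contributes; or (b) invoke centrality: since \,$\Bin_{N,\pm s}$ \,lies in the center \,$\Zc$ \,of \,$\Uen$, its image is central in \,$\pho_1(\Uen)$, which acts irreducibly on \,$\C^N$ \,(the standard evaluation representation of quantum affine \,$\gln$), so that Schur's lemma forces the scalarity. The principal obstacle is precisely this step: the coproduct reduction and the one-basis-vector computation are formal, whereas upgrading to uniform action on all of \,$\C^N$ either requires an external irreducibility input or careful combinatorial bookkeeping of the cancellations in the \,$h$-shifted antisymmetrization.
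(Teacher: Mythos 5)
Your route (a) is precisely the paper's proof: reduce to $n=1$ via the group-like property $\Ref{DlA}$ and the factorization $\Ref{phon}$, then show the quantum minor collapses to a single permutation on each basis vector by invoking Lemma~\ref{lempi} to cycle the distinguished row to the front. Your single-site computation on $v_1$ is correct, and the normalization bookkeeping (cancelling $h^{N-1}$ against $h^{-N}$, respectively $h^{N-1}$ against $h^{-(N-1)}$ in the $-$ case) matches. One small imprecision: you write that the $-$ case is ``evaluated on $v_N$,'' but $v_1$ is the clean vector for both signs, since $L_{kl}(u)v_1$ vanishes unless $k=l$ or $k=1$; for $v_N$ the rightmost factor $L_{N,\sigma(N)}$ is never forced, and it is only after permuting row $N$ to the front via Lemma~\ref{lempi} that the collapse to a single $\sigma$ occurs --- which is exactly what the paper's prescription $\pi(1)=i$ accomplishes uniformly for all $i$. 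Your route (b), centrality plus irreducibility of the evaluation module $\C^N$ and Schur's lemma, is a valid shortcut that sidesteps the combinatorial bookkeeping for $v_j$, $j>1$, at the cost of an external irreducibility input (the paper invokes such irreducibility from~\cite{AK} only later, in Appendix~5, and avoids it here by the explicit row-permutation computation).
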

\begin{proof}
We will prove formula \Ref{BiN-}\:. The proof of formula \Ref{BiN+} is similar.
\vsk.2>
Let \>$n=1$\>. Let \>$v_1\lc v_N$ be the standard basis of \>$\C^N$.
To show that
\vvn.1>
\be
\pho\bigl(A_{N\?,-}(u)\bigr)\,v_i\>=\,
\frac{1-hz_1/u}{1-z_1/u}\,\,v_i\>,
\ee
we compute the quantum minor in $A_{N\?,-}(u)$ using Lemma \ref{lempi} and
taking the permutation \,$\pi$ \>such that \>$\pi(1)=i$\>.
\vsk.2>
Formula \Ref{BiN-} for general \>$n$ \>follows from formulae \Ref{phon} and
\Ref{DlA}\:, and the result for \>$n=1$\>.
\end{proof}

\begin{cor}
\label{Zpm}
We have \;$\pho(\Zc_+)\:=\>\Czms\?\ox\C(h)$ \,and
\;$\pho(\Zc_-)\:=\>\Czps\?\ox\C(h)$\>.
\end{cor}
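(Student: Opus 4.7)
The plan is to reduce to the preceding lemma by taking logarithms of the explicit generating functions given there. First I would compute
\[
\log\Bigl(1+\sum_{s\geq 1}\pho(\Bin_{N\<,-s})\,u^{-s}\Bigr)\,=\,
\sum_{k\geq 1}\,\frac{1-h^k}{k}\,p_k(\zz)\,u^{-k}
\]
with $p_k(\zz)=z_1^k\lsym+z_n^k$, and the analogous identity for the expansion at $u=0$, with $\zz$ replaced by $\zz^{-1}$ and $h$ by $h^{-1}$. Equating coefficients yields
\[
\pho(\Bin_{N\<,-s})\,=\,\frac{1-h^s}{s}\,p_s(\zz)\:+\:P_s\bigl(p_1(\zz)\lc p_{s-1}(\zz)\bigr)
\]
for some $P_s\in\C(h)[x_1\lc x_{s-1}]$, and symmetrically for $\pho(\Bin_{N\<,s})$ in terms of the $p_k(\zz^{-1})$.

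This immediately yields the easy inclusions $\pho(\Zc_+)\subseteq\Czms\ox\C(h)$ and $\pho(\Zc_-)\subseteq\Czps\ox\C(h)$. For the reverse inclusions I would invert the triangular relations inductively in $s$: since $(1-h^s)/s\in\C(h)^\times$ for every $s\geq 1$, the $\C(h)$-subalgebra generated by the $\pho(\Bin_{N\<,-s})$ contains each power sum $p_s(\zz)$, hence all of $\Czps$; the $+$ case is identical, yielding all of $\Czms$.

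It remains to verify that the extra generators $\Lt_{N\<,N\<,+0}^{\pm 1}$ of $\Zc_\pm$ contribute only scalars. I would compute $\pho(\Lt_{N\<,N\<,+0})=L_{N\<,N}(0)$ directly from \Ref{Lpm}: as $u\to 0$ the scalar prefactor $\prod_i(1-hz_i/u)/(1-z_i/u)$ tends to $h^n$, and inspection of \Ref{RR2} in the limit $z\to\infty$ shows that each $\Rc^{(0,k)}(z_k/u)$ fixes $v_N\ox v_{i_k}$ for every $i_k$, because the matrix entries sending $v_b\ox v_a$ to $v_a\ox v_b$ vanish in this limit. Therefore $\pho(\Lt_{N\<,N\<,+0})=h^n\cdot\id\in\C(h)$, and likewise its inverse is $h^{-n}\cdot\id$, so these generators add nothing new. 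The main obstacle is this last bookkeeping step, which requires tracking the $z\to\infty$ asymptotics of the trigonometric $R$-matrix through the product in \Ref{Lpm}; the symmetric-function argument that constitutes the bulk of the proof is routine.
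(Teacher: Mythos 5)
Your proof is correct and takes essentially the same approach as the paper's: expand the explicit generating function from the preceding lemma, take logarithms, and invert the triangular relations to recover all power sums using the invertibility of $(1-h^s)/s$ in $\C(h)$. The one place you do extra work is the verification that $\pho(\Lt_{N\<,N\<,+0}^{\pm 1})=h^{\pm n}\cdot\id$: tracking the $z\to\infty$ asymptotics of the $R$-matrix through \Ref{Lpm} does give this, but it is already immediate from formula \Ref{Ltl}, which records $\Lt_{\ii,+0}\,v_I=h^{\la_1\lsym+\la_i}\,v_I$ for all $i$ (take $i=N$). The paper's proof leaves the contribution of these two generators implicit, so your remark is a reasonable bit of bookkeeping even if the route you chose for it is longer than necessary.
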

\begin{proof}
The images \,$\pho(\Zc_\mp)$ \>contain the power sums of \>$\zzz$ \>or
\>$z_1^{-1}\lc z_n^{-1}$, respectively. For instance, formula \Ref{BiN-} yields
\vvn.2>
\be
\log\:\Bigl(1+\sum_{s=1}^\infty\>\pho(\Bin_{p,-s})\,u^{-s}\Bigr)\>=\,
\sum_{s=1}^\infty\,\frac{h^s\?-1}s\;(z_1^s\<\lsym+z_n^s)\,u^{-s}\>.
\vv-1.1>
\ee
\end{proof}
Denote \;$y\:=z_1\dots z_n$\>. Notice that \;$y\in\pho(\Zc_-)$ \>and
\;$y^{-1}\!\in\pho(\Zc_+)$\,. Corollary~\ref{Zpm} implies
\vvn.2>
\beq
\label{Zpmy}
\pho(\Zc)\,=\,\pho(\Zc_+)\ox\C[\:y\:]\,=\,\pho(\Zc_-)\ox\C[\:y^{-1}\:]\,.
\vv.1>
\eeq

\begin{cor}
\label{Upm}
We have
\vvn.2>
\begin{align}
\label{Upmy}
\pho\bigl(\Uen\bigr)\, &{}=\,
\pho(\Ue_+)\ox\C[\:y\:]\,=\,\pho(\Ue_+)\ox\pho(\Zc_-)\,,
\\[4pt]
\notag
&{}=\,\pho(\Ue_-)\ox\C[\:y^{-1}\:]\,=\,\pho(\Ue_-)\ox\pho(\Zc_+)\,,
\end{align}
\vv-.8>
\begin{align}
\label{Biy}
\pho(\Bci)\,&{}=\,\pho(\Bci_+)\ox\C[\:y\:]\,=\,\pho(\Bci_+)\ox\pho(\Zc_-)\,,
\\[4pt]
\notag
&{}=\,\pho(\Bci_-)\ox\C[\:y^{-1}\:]\,=\,\pho(\Bci_-)\ox\pho(\Zc_+)\,,
\end{align}
\vv-.8>
\begin{align}
\label{Bky}
\pho(\Bck)\,&{}=\,\pho(\Bck_+)\ox\C[\:y\:]\,=\,\pho(\Bck_+)\ox\pho(\Zc_-)\,,
\\[4pt]
\notag
&{}=\,\pho(\Bck_-)\ox\C[\:y^{-1}\:]\,=\,\pho(\Bck_-)\ox\pho(\Zc_+)\,.
\end{align}
\end{cor}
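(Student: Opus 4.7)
All three rows \Ref{Upmy}, \Ref{Biy}, \Ref{Bky} have the same structure, so I describe the strategy for \Ref{Upmy} in detail; the arguments for $\pho(\Bci)$ and $\pho(\Bck)$ are parallel. Within each row, the two middle equalities that swap $\C[\:y\:]$ and $\pho(\Zc_-)$ (respectively $\C[\:y^{-1}\:]$ and $\pho(\Zc_+)$) follow at once from the identity \Ref{Zpmy}\:, because $\Zc_\pm$ is generated by the top quantum-minor coefficients $\Bin_{N\?,\:\pm s}$ and therefore sits inside each of $\Ue_\pm$, $\Bci_\pm$, $\Bck_\pm$. Concretely, $\pho(\Ue_+)\cdot\pho(\Zc_-)\supseteq\pho(\Zc_+)\cdot\pho(\Zc_-)=\pho(\Zc)\supseteq\C[\:y\:]$, yielding $\pho(\Ue_+)\cdot\pho(\Zc_-)\supseteq\pho(\Ue_+)\cdot\C[\:y\:]$; the reverse inclusion is immediate from $\C[\:y\:]\subset\pho(\Zc_-)$.

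For the main equality $\pho(\Uen)=\pho(\Ue_+)\cdot\pho(\Zc_-)$ it suffices to prove $\pho(\Ue_-)\subseteq\pho(\Ue_+)\cdot\pho(\Zc_-)$ (and its mirror), since the centrality of $\pho(\Zc_-)$ in $\pho(\Uen)$ then makes $\pho(\Ue_+)\cdot\pho(\Zc_-)$ closed under multiplication and hence a subalgebra containing both $\pho(\Ue_\pm)$. I would set $Q(u):=L(u)\prod_{i=1}^n(u-z_i)$ and note that, writing $\widetilde R(z,h):=(1-hz)\>\Rc(z,h)$, whose entries are polynomials of degree at most $1$ in $z$, formula \Ref{Lpm} gives $Q(u)=u^n\>\widetilde R^{(0,n)}(z_n/u)\dots\widetilde R^{(0,1)}(z_1/u)$. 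Thus $Q(u)$ is a matrix polynomial in $u$ of degree at most $n$, with coefficients in $\End\bigl(\CNn\bigr)\ox\C[\zz]\ox\C(h)$.

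Expanding $L(u)$ at $u=\infty$ and using $\prod(u-z_i)=\sum_k(-1)^k e_k(\zz)u^{n-k}$ shows that each coefficient $Q_j$ of $Q$ is a polynomial expression in the $L_{\ij,-s}$ and the elementary symmetric $e_k(\zz)$; by Corollary~\ref{Zpm} this places $Q_j$ in $\pho(\Ue_-)\cdot\pho(\Zc_-)$. Expanding $L(u)$ at $u=0$ and using $\prod(u-z_i)=(-1)^n y\sum_k(-1)^k e_k(\zz^{-1})u^k$ shows the same $Q_j$ lies in $y\cdot\pho(\Ue_+)\cdot\pho(\Zc_+)$; since $y\in\pho(\Zc_-)$ and $y\cdot e_k(\zz^{-1})=e_{n-k}(\zz)\in\pho(\Zc_-)$, this forces $Q_j\in\pho(\Ue_+)\cdot\pho(\Zc_-)$. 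Inverting through $L(u)=Q(u)/\prod(u-z_i)$ and expanding $1/\prod(u-z_i)$ at $u=\infty$ with the complete symmetric polynomials $h_k(\zz)\in\pho(\Zc_-)$ then writes each $L_{\ij,-s}$ as a polynomial combination of the $Q_j$ and $h_k(\zz)$, putting it in $\pho(\Ue_+)\cdot\pho(\Zc_-)$ as required; the mirror inclusion $\pho(\Ue_+)\subseteq\pho(\Ue_-)\cdot\pho(\Zc_+)$ is obtained by expanding at $u=0$ instead.

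For \Ref{Biy} and \Ref{Bky} I would apply the same two-expansion trick with $L(u)$ replaced by the operator-valued rational function $\pho(M_{\iib,\pm}(u))$, respectively by the $q$-weighted sum of quantum minors defining $\pho(\Bk_{p,\pm}(u))$, and with $\prod_i(u-z_i)$ replaced by the minimal common denominator of the resulting rational function of $u$. The main technical obstacle is verifying that the antisymmetrization built into the quantum minor cancels the would-be poles at $u=z_kh^{1-r}$ with $r>1$, so that this common denominator remains a product of linear factors $u-z_k$; granted this pole cancellation, the polynomial argument goes through verbatim and identifies the polynomial coefficients simultaneously with elements of $\pho(\Bci_-)\cdot\pho(\Zc_-)$ and of $\pho(\Bci_+)\cdot\pho(\Zc_-)$ (and likewise for $\Bck$).
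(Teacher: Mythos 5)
Your argument for \Ref{Upmy} is the paper's argument, up to normalization: you work with $Q(u)=L(u)\prod_i(u-z_i)$, the paper with $T_\pm(u)=L(u)\prod_i(1\mp u^{\pm1}/z_i^{\pm1})$, and these differ by units belonging to $\pho(\Zc_\mp)$, so the content is identical (reduction to one inclusion via centrality of $\pho(\Zc_-)$, two-expansion comparison at $u=0$ and $u=\infty$, inversion through $h_k(\zz)$).

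For \Ref{Biy} and \Ref{Bky} the paper offers only ``follow by the same reasoning from the definition of the subalgebras involved''; you have correctly pinpointed the nontrivial step hiding in that sentence. The quantum minors $M_{\iib,\pm}(u)$ use shifted spectral parameters $uh^{r-1}$, so one must first show that under $\pho$ the apparent poles at $u=z_kh^{1-r}$, $r>1$, cancel, making $\pho(A_{p,\pm}(u))\prod_i(1\mp u^{\pm1}/z_i^{\pm1})$ polynomial before the two-expansion trick can be run. That cancellation is in fact true — one way to see it is from the diagonalization of $\pho(A_{p,\pm}(u))$ in the $u$-independent eigenbasis $\xi_I$ given by formula~\Ref{Axi}, whose eigenvalues have poles only at $u=z_i$ (Theorem~\ref{AEFxi} appears later in the paper but does not rely on the present corollary) — but you leave it asserted rather than proved, so your proof of \Ref{Biy} and \Ref{Bky} is incomplete at precisely the point where the paper is silent. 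Supplying a proof or citation for this pole cancellation, together with the observation that $\pho(A_{p,+}(u))$ and $\pho(A_{p,-}(u))$ agree as rational functions up to the unit $\pho(\Lt_{1,1,+0}\cdots\Lt_{p,p,+0})^{-1}\in\pho(\Ueh)$, would complete the argument.
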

\begin{proof}
\baselineskip=1.2\baselineskip
The product \,$T_+(u)=L(u)\,\prod_{i=1}^n(1-u/\<z_i)$ \,is a polynomial
in \>$u$\>. By Corollary~\ref{Zpm}, the coefficients of \,$T_+(u)$
\,belong to \,$\pho(\Ue_+)$ \,and together with \,$\pho(\Zc_+)$ \,generate
\,$\pho(\Ue_+)$\>. Similarly, \,$T_-(u)=L(u)\,\prod_{i=1}^n(1-z_i/u)$
\,is a polynomial in \>$u^{-1}$ and the coefficients of \,$T_-(u)$
\,together with \,$\pho(\Zc_-)$ \,generate \,$\pho(\Ue_-)$. Since
\,$(-u)^n\>T_-(u)=y\>T_+(u)$ \,and taking into account~\Ref{Zpmy}\,, the first
equalities in relation~\Ref{Upmy} follow. The second equalities in~\Ref{Upmy}
hold because \;$y\in\pho(\Zc_-)$ \>and \;$y^{-1}\!\in\pho(\Zc_+)$\,.
\vsk.2>
Relations \Ref{Biy} and \Ref{Bky} follow by the same reasoning from
the definition of the sub\-algebras involved.
\end{proof}

\vsk.2>
The homomorphism \,$\pho:\Uen\to\:\End\bigl(\CNn\bigr)\ox\Czhr$ defines an action of
the algebra \,$\Uen$ on \,$\CNn$-valued functions of \>$\zzz$ \>and \>$h$\>.
In what follows when acting by $X\?\in\Uen$ \>on a \,$\CNn$-valued function
$f(\zz,h)$\>, we will write \,$Xf$ \>instead of \,$\pho(X)\>f$\>.
Clearly, for any $i=1\lc N$ and \,$I\?\in\Il$ \,we have
\vvn.2>
\beq
\label{Ltl}
\Lt_{\ii,+0}\>v_I\>=\,h^{\la_1\lsym+\la_i}\:v_I\,,\qquad
\Lt_{\ii,-0}\>v_I\>=\,h^{\la_1\lsym+\la_{i-1}}\:v_I\,.
\vv.2>
\eeq
Therefore, for any \>$X\<\in\Uen^\hg$ the operator \,$\pho(X)$ preserves
the subspaces \,$\CNnl$. We denote by
\vvn-.2>
\beq
\label{phola}
\phol\<:\Uen^\hg\<\to\:\End\bigl(\CNnl\bigr)\ox\Czhr
\vv.2>
\eeq
the corresponding homomorphism.

\begin{lem}
\label{lem ac comS}
The \,$\Uen$-action commutes with the \>$S_n\?$-action defined by \>\Ref{Sn-}\:,
that is, \;$\sti_i\>\pho(X)=\pho(X)\>\sti_i$ \,for any \,$i=1\lc n-1$ \>and
$X\<\in\Uen$\>.
\end{lem}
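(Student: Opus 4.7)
The plan is to reduce the lemma to showing that $\sti_i$ commutes with the monodromy matrix $L(u)$ of~\Ref{Lpm}, and then to verify this by a three-space Yang-Baxter manipulation. By Proposition~\ref{pho}, the image $\pho(\Uen)$ is generated as an algebra by the coefficients $L_{ij,\pm s}$, so it suffices to establish
\be
\sti_i\>L(u)\,=\,L(u)\>\sti_i
\ee
as operator identities on $\C^N\ox\CNn$-valued functions of $\zz,h$, where $\sti_i$ acts trivially on the auxiliary zeroth tensor factor.

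First I would expand $\sti_i=P^{(i,i+1)}\Rc^{(i,i+1)}(z_i/z_{i+1})K_i$ using~\Ref{Sn-} and push $K_i$ through $L(u)$; this interchanges the arguments of the two factors $\Rc^{(0,i)}(z_i/u)$ and $\Rc^{(0,i+1)}(z_{i+1}/u)$, while the symmetric scalar prefactor $\prod_{j=1}^n(1-hz_j/u)/(1-z_j/u)$ is unchanged. Since $P^{(i,i+1)}$ and $\Rc^{(i,i+1)}(z_i/z_{i+1})$ act only on the tensor factors indexed by $i$ and $i+1$, they commute with every $\Rc^{(0,j)}(z_j/u)$ for $j\ne i,i+1$. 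The desired commutation therefore reduces to a single local identity on $\C^N\ox\C^N\<\ox\C^N$ (the auxiliary factor together with the $i$-th and $(i+1)$-st quantum factors):
\be
P^{(i,i+1)}\Rc^{(i,i+1)}(z_i/z_{i+1})\Rc^{(0,i+1)}(z_i/u)\Rc^{(0,i)}(z_{i+1}/u)\,=\,\Rc^{(0,i+1)}(z_{i+1}/u)\Rc^{(0,i)}(z_i/u)P^{(i,i+1)}\Rc^{(i,i+1)}(z_i/z_{i+1}).
\ee
To verify this I would push the $P^{(i,i+1)}$ on the right-hand side across to the leftmost position using $P^{(i,i+1)}\Rc^{(0,i+1)}(x)P^{(i,i+1)}=\Rc^{(0,i)}(x)$ and its swap; after cancelling the common leading factor $P^{(i,i+1)}$, what remains is a standard instance of the Yang-Baxter equation~\Ref{YBE} applied to the three spaces $0,i,i+1$ with parameters $u,z_i,z_{i+1}$.

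The main obstacle will be the careful bookkeeping of $R$-matrix conventions: matching the argument $z_i/z_{i+1}$ appearing in~\Ref{Sn-} against the $z_q/z_p$ form (with $p<q$) used in~\Ref{YBE}, keeping track of how $\Rc^{(a,b)}$ and $\Rc^{(b,a)}$ differ by conjugation with $P^{(a,b)}$, and choosing the correct relabelling of the three abstract spaces in~\Ref{YBE}. Once these conventions are aligned, the local identity drops out of the YBE, and the commutation $\sti_i\>\pho(X)=\pho(X)\>\sti_i$ extends from the generators $L_{ij,\pm s}$ to all $X\in\Uen$.
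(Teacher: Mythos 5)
Your proposal is correct and follows essentially the same route as the paper's one-line proof, which simply invokes the Yang--Baxter equation~\Ref{YBE}; you have supplied the routine reduction (via the explicit form~\Ref{Lpm} of $L(u)$, commutation with the distant factors $\Rc^{(0,j)}$, and conjugation by $P^{(i,i+1)}$) that the paper leaves implicit. The relabelling of the three spaces as $1=0$, $2=i$, $3=i+1$ with $u,z_{i+1},z_i$ playing the roles of $z_1,z_2,z_3$ in~\Ref{YBE} indeed makes the arguments consistent, so the local identity drops out as you claim.
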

\begin{proof}
The statement follows from the Yang-Baxter equation \Ref{YBE}\:.
\end{proof}

\begin{cor}
\label{Usch}
The \,$\Uen$-action commutes with the operators \,$\sch_1\lc\sch_{n-1}$
\>given by \Ref{sch}\>.
\qed
\end{cor}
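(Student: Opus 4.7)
The plan is to reduce the claim to Lemma~\ref{lem ac comS} together with the elementary observation that $\pho(X)$ acts on $\CNn$-valued functions of $\zz, h$ via an element of $\End(\CNn)\ox\Czhr$, and therefore commutes with multiplication by any scalar rational function of $\zz$ and $h$.

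First I would rewrite the defining formula~\Ref{sch} as $\sch_i = \alpha_i\>\sti_i + \beta_i$, where
\begin{equation*}
\alpha_i \,=\, \frac{1-hz_{i+1}/z_i}{1-z_{i+1}/z_i}\,,\qquad \beta_i \,=\, (h-1)\,\frac{z_{i+1}/z_i}{1-z_{i+1}/z_i}
\end{equation*}
are viewed as multiplication operators lying in $\C(\zz,h)$. For any $X\in\Uen$, Proposition~\ref{pho} yields $\pho(X)\in\End(\CNn)\ox\Czhr$, so $\pho(X)$ commutes with multiplication by $\alpha_i$ and by $\beta_i$. Combining this with the identity $\pho(X)\>\sti_i = \sti_i\>\pho(X)$ supplied by Lemma~\ref{lem ac comS}, I would conclude
\begin{equation*}
\pho(X)\>\sch_i \,=\, \alpha_i\>\pho(X)\>\sti_i + \beta_i\>\pho(X) \,=\, \alpha_i\>\sti_i\>\pho(X) + \beta_i\>\pho(X) \,=\, \sch_i\>\pho(X)\,.
\end{equation*}

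There is essentially no obstacle: the whole nontrivial content is packaged in Lemma~\ref{lem ac comS}, which itself rests on the Yang-Baxter equation~\Ref{YBE}. The only subtle point worth flagging is that although the individual factor $K_i$ appearing inside $\sti_i$ does not commute with multiplication by $\alpha_i$ or $\beta_i$, one never needs to handle $K_i$ in isolation: $\sti_i$ is used as a single block, and the scalar coefficients are freely moved past the $\Czhr$-linear operator $\pho(X)$.
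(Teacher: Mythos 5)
Your proof is correct and is exactly the intended argument; the paper marks this corollary with \qed because it follows immediately from Lemma~\ref{lem ac comS} together with the observation that $\sch_i=\alpha_i\>\sti_i+\beta_i$ with $\alpha_i,\beta_i$ scalar functions, so that multiplication by them commutes with any $\pho(X)\in\End\bigl(\CNn\bigr)\ox\Czhr$. Your flag about treating $\sti_i$ as a single block (rather than trying to commute past $K_i$ alone) correctly identifies the one point where a careless reader might stumble.
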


\begin{cor}
\label{DVmod}
The homomorphism \,$\pho:\Uen\to\:\End\bigl(\CNn\bigr)\ox\Czhr$
makes the spaces \,$\DV\!\ox\C(h)$ \>and \,$\DV\!\ox\Czb\ox\C(h)$
\>into graded \>$\Uen$-modules.
\qed
\end{cor}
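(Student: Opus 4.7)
The plan is to assemble the statement from two preceding results, with no serious new work required: Proposition~\ref{pho}, which already establishes that $\pho\colon\Uen\to\End\bigl(\CNn\bigr)\ox\Czhr$ is a homomorphism of graded algebras, and Corollary~\ref{Usch}, which says $\pho(\Uen)$ commutes with each operator $\sch_i$. Given these, it suffices to check that $\pho(\Uen)$ preserves the two subspaces $\DV\!\ox\C(h)$ and $\DV\!\ox\Czb\ox\C(h)$, and that the grading is compatible.

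First I would verify that the action preserves $\DiV\!\ox\C(h)=\CNn\ox\Di\Czh\ox\C(h)$. By Proposition~\ref{pho}, for each generator $\Lt_{\ij,\pm s}$ the image $L_{\ij,\pm s}$ lies in $\End\bigl(\CNn\bigr)\ox\Czh$, so $\pho(X)$ for any $X\in\Uen$ has coefficients in $\Czh\ox\C(h)$. Since multiplication by such coefficients preserves $\Di\Czh\ox\C(h)$ (the factor $\Di$ is a constant from the action's point of view), $\pho(X)$ sends $\DiV\!\ox\C(h)$ into itself. Now $\DV\!\subset\DiV$ is the subspace of joint invariants of $\sch_1,\dots,\sch_{n-1}$; if $g\in\DV\!\ox\C(h)$ and $X\in\Uen$, then by Corollary~\ref{Usch}
\begin{equation*}
\sch_i\>\pho(X)\>g\,=\,\pho(X)\>\sch_i\>g\,=\,\pho(X)\>g\,,
\end{equation*}
so $\pho(X)\>g\in\DV\!\ox\C(h)$. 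This handles the first module.

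For the second, $\DV\!\ox\Czb\ox\C(h)$, the action of $\pho(\Uen)$ is manifestly $\Czb\ox\C(h)$-linear in the coefficient ring, so it extends scalar-linearly from $\DV\!\ox\C(h)$ and preserves the enlarged submodule by the same reasoning. Finally, the grading on $\DV$ and on its tensor products with $\C(h)$ and $\Czb\ox\C(h)$ is the one inherited from $\Czhr$ via $\deg z_i=1$, $\deg h=0$ as in Section~\ref{sec:grad}, and compatibility of $\pho$ with this grading is exactly the content of Proposition~\ref{pho}.

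There is no real obstacle here; the only step that might look delicate is checking that the $\Di$ denominator does not cause trouble, but this is automatic because the action has Laurent polynomial coefficients in $\zz$ and therefore commutes with multiplication by $D$. Thus the corollary reduces to an assembly of Proposition~\ref{pho} and Corollary~\ref{Usch}, which is why the original statement is marked with $\qed$.
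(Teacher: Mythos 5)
Your argument is correct and is exactly the intended route for this $\qed$'d corollary: combine Proposition~\ref{pho} (the graded homomorphism into $\End\bigl(\CNn\bigr)\ox\Czhr$), the observation that multiplication by $\Czhr$-coefficients preserves $\DiV\ox\C(h)$ and $\DiV\ox\Czb\ox\C(h)$, and Corollary~\ref{Usch} to pass to the $\sch_i$-invariant subspaces $\DV\ox\C(h)$ and $\DV\ox\Czb\ox\C(h)$. Nothing is missing.
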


\subsection{Action of \,\:$\Bci\?$ on the vectors \,$\xi_I$\>}
\label{sec:actxi}

By formulae~\Ref{Ltl}\:, for any $i=1\lc N$ and \,$I\?\in\Il$ \,we have
\beq
\label{Ltxi}
\Lt_{\ii,+0}\,\xi_I\>=\,h^{\la_1\lsym+\la_i}\>\xi_I\,,\qquad
\Lt_{\ii,-0}\, \xi_I\>=\,h^{\la_1\lsym+\la_{i-1}}\>\xi_I\,.
\vv.2>
\eeq

\begin{thm}
\label{AEFxi}
We have
\beq
\label{Axi}
A_{p,+}(u)\,\xi_I\,=\,\xi_I\;
\prod_{a=1}^p\,\prod_{i\in I_a}\,\frac{1-h^{-1}u/\<z_i}{1-u/\<z_i}\;,\qquad
A_{p,-}(u)\,\xi_I\,=\,\xi_I\;
\prod_{a=1}^p\,\prod_{i\in I_a}\,\frac{1-hz_i/u}{1-z_i/u}
\vv-.3>
\eeq
\begin{gather}
\label{Exip}
E_{p,+}(u)\,\xi_I\,=\,-
\sum_{i\in I_{p+1}\!}\,\frac{\xi_{I^{\ip}}}{1-u/\<z_i}\,
\prod_{\satop{j\in I_{p+1}\!}{j\ne i}}\!\frac{1-hz_j/\<z_i}{1-z_j/\<z_i}\;,
\\[2pt]
\label{Exim}
E_{p,-}(u)\,\xi_I\,=
\sum_{i\in I_{p+1}\!}\,\xi_{I^{\ip}}\,\frac{z_i/u}{1-z_i/u}\,
\prod_{\satop{j\in I_{p+1}\!}{j\ne i}}\!\frac{1-hz_j/\<z_i}{1-z_j/\<z_i}\;,
\end{gather}
\vvn-.8>
\begin{gather}
\label{Fxip}
F_{p,+}(u)\,\xi_I\,=\,-
\sum_{i\in I_p\!}\;\xi_{I^{\ipi}}\,\frac{u/\<z_i}{1-u/\<z_i}\;
\prod_{\satop{j\in I_p\!}{j\ne i}}\,\frac{1-hz_i/\<z_j}{1-z_i/\<z_j}\;,
\\[2pt]
\label{Fxim}
F_{p,-}(u)\,\xi_I\,=
\sum_{i\in I_p\!}\;\frac{\xi_{I^{\ipi}}}{1-z_i/u}\;
\prod_{\satop{j\in I_p\!}{j\ne i}}\,\frac{1-hz_i/\<z_j}{1-z_i/\<z_j}\;,
\end{gather}
where the sequences \,$I^{\ip}$ and \,$I^{\ipi}$ are defined as
follows: \,$I^{\ip}_a\!=I^{\ipi}_a\!=I_a$ \,for \,$a\ne p,p+1$,
\,and \,$I^{\ip}_p\!=I_p\cup\iset$\>, \,$I^{\ip}_{p+1}\!=I_{p+1}-\iset$\>,
\,$I^{\ipi}_p\!=I_p-\iset$\>, \,$I^{\ipi}_{p+1}\!=I_{p+1}\cup\iset$\>.
\end{thm}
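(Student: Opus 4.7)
The plan is to prove the theorem in two stages: first verify the formulas at the base case $I=\Imin$, and then propagate to arbitrary $I\in\Il$ using the commutation of the $\Uen$-action with the $S_n$-action established in Lemma \ref{lem ac comS}, together with the defining property $\xi_{s_i(I)}=\sti_i\>\xi_I$ from Theorem \ref{xi-}.

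For the base case, $\xi_\Imin=v_\Imin$ is the pure tensor $v_{k_1}\<\lox v_{k_n}$ with $k_j=a$ when $j\in\Imn_a$. The factorization $\pho_n=\pho_1^{\ox n}\circ\Dl^{(n)}$ from \Ref{phon}, combined with the coproduct formula \Ref{DlM} for quantum minors, reduces the computation of $A_{p,\pm}$, $E_{p,\pm}$, $F_{p,\pm}$ on $\xi_\Imin$ to single-site actions. On the one-site module $\C^N$, the action of $L(u)$ on a basis vector $v_k$ is read off directly from the $R$-matrix formulas \Ref{RR1}\:,\Ref{RR2}\:, which gives $L_{ij}(u)\>v_k$ an explicit triangular structure. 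The desired evaluations then follow by a straightforward (though tedious) bookkeeping of products of factors extracted from $\Dl^{(n)}v_\Imin$. Formulas \Ref{Axi} drop out from the diagonal quantum minor $M_{\iib,\pm}$ with $\iib=\{1\lc p\}$, while \Ref{Exip}--\Ref{Fxim} are obtained from off-diagonal minors $M_{\ijb,\pm}$ with $|\jb\<\minus\ib|=1$.

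For the propagation step, one applies $\sti_j$ to both sides of each claimed formula. By Lemma \ref{lem ac comS}, $\sti_j$ commutes with the $\Uen$-action, so on the left-hand side we get $X\>\xi_{s_j(I)}$ for $X\in\{A_{p,\pm}(u),E_{p,\pm}(u),F_{p,\pm}(u)\}$. On the right-hand side, $\sti_j$ acts on scalar coefficients through $K_j$ (swapping $z_j$ and $z_{j+1}$) and on $\xi_J$ by $\xi_J\mapsto\xi_{s_j(J)}$ (for $J\in\{I,I^{\ip},I^{\ipi}\}$). The two effects combine to produce the formula for $s_j(I)$ after a relabeling of the summation index (e.g.\ $i\mapsto s_j(i)$ in the sums over $i\in I_{p+1}$ or $i\in I_p$). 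Since every $I\in\Il$ is of the form $\si(\Imin)$ for some $\si\in S_n$, this establishes the formulas everywhere.

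The main obstacle is the base case, especially formulas \Ref{Exip}--\Ref{Fxim}. In computing $E_{p,\pm}(u)\>v_\Imin$ and $F_{p,\pm}(u)\>v_\Imin$ via the iterated coproduct, one must identify precisely which tensor factors support the single ``index-changing'' contribution and verify that the accumulated products of scalar factors from the other sites rearrange into the displayed form $\prod_{j\ne i}(1-hz_j/z_i)/(1-z_j/z_i)$. A secondary subtlety is checking, in the propagation step, that the combined action of $K_j$ on the scalar coefficients and the reindexing $i\mapsto s_j(i)$ produces exactly the right-hand side attached to $s_j(I)$; this reduces to a direct combinatorial verification case by case according to whether $j,j+1$ lie in the same block of $I$ or in different blocks above/below $p$.
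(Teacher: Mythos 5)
Your overall architecture matches the paper's: reduce to $I=\Imin$ using $\xi_{s_i(I)}=\sti_i\>\xi_I$ together with Lemma~\ref{lem ac comS}, then handle the base case via the coproduct \Ref{phon}/\Ref{DlM}. Your propagation step is sound (applying $\sti_j$ to both sides, noting $\sti_j(\xi_J f)=(K_jf)\>\xi_{s_j(J)}$ for scalar $f$, reindexing the sum), and your treatment of \Ref{Axi} — coproduct plus the $n=1$ computation from the $R$-matrix, which is what Lemma~\ref{lempi} accomplishes — is essentially the paper's.

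However, your base-case argument for \Ref{Exip}--\Ref{Fxim} has a genuine gap. ``Bookkeeping'' $E_{p,\pm}(u)\>v_\Imin$ or $F_{p,\pm}(u)\>v_\Imin$ through the iterated coproduct produces a linear combination of pure tensors $v_J$, but the claimed right-hand side is a combination of $\xi_{I^{\min,\prime i}}$, and for $i\ne i_{\min}$ these are \emph{not} pure tensors — they are nontrivial mixtures $\sum_{K\le_{\id}J}X_{J,K}\>v_K$. Matching a raw $v$-expansion against the claimed $\xi$-expansion requires inverting the full change of basis, which is not ``straightforward bookkeeping.'' The paper sidesteps this with a two-step argument you do not have: (a) first it shows, from \Ref{xi-v}, \Ref{DlM}, \Ref{Fp}, \Ref{Axi}, that $F_{p,-}(u)\,\xi_\Imin$ lies in the span of the $\xi_{I^{\min,\prime i}}$ with $i\in\Imn_p$, so one only needs the unknown scalars $c_i$; (b) it pins down a \emph{single} coefficient $c_{i_{\min}}$ by comparing the leading term of the $v$-expansion with $X_{I^{\min,\prime i_{\min}},\:I^{\min,\prime i_{\min}}}$ via the triangularity \Ref{xi-v}, and then obtains every other $c_i$ from $c_{i_{\min}}$ by exploiting that the transposition $s_{i,\:i_{\min}}$ fixes $\Imin$ (i.e.\ the $\Slmin$-symmetry). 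Without this ``one coefficient plus symmetry'' trick, your plan as stated would force you to expand all $\xi_{I^{\min,\prime i}}$ in the $v$-basis, a much heavier computation that your proposal does not actually set up.
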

\begin{proof}
First observe that by formula \Ref{xi-si} and Lemma \ref{lem ac comS}, it
suffices to prove formulae~\Ref{Axi}\,--\,\Ref{Fxim} only for \,$I=\Imin$.
In this case, formula \Ref{Axi} for \,$n>1$ follows from the coproduct
formula~\Ref{DlM} and the \,$n=1$ \,case of \Ref{Axi}\:. The proof of \Ref{Axi}
for \,$n=1$ \,is straightforward by using Lemma~\ref{lempi}.

\vsk.2>
The proofs of formulae \Ref{Exip}\,--\,\Ref{Fxim} are similar to each other.
As an example, we prove formula \Ref{Fxim}\:. To verify \Ref{Fxim} for
\,$I=\Imin$, observe that by formulae \Ref{xi-v}\:, \Ref{DlM}\:, \Ref{Fp}\:,
\Ref{Axi}\:, we have
\,$F_{p,-}(u)\,\xi_\Imin=\sum_{\>i\in\Imn_p\!}\>c_i\,\xi_\Iminp$\>.
The smallest element of \,$\Imn_p$ \>equals
\,$i_{\min}=\la_1\<\lsym+\la_{p-1}\<+1$.
The coefficient \,$c_{i_{\min}}\?$ can be calculated due to the triangularity
property \Ref{xi-v}\:, and does have the required form. The coefficient \,$c_i$
for other \,$i\in I^{\:\min}_p$ can be obtained from \,$c_{i_{\min}}\?$ by
permuting \,$z_i$ \>and \,$z_{i_{\min}}\?$ because \,$\Imin$ is invariant under
the transposition of \,$i$ and \,$i_{\min}$\>. Thus all the coefficients
\,$c_i$ are as required, which proves formula \Ref{Fxim}\:.
\end{proof}

\begin{rem}
Notice that the right-hand sides of formulae~\Ref{Exip} and \Ref{Exim} coincide
as rational functions. This function is expanded at \,$u=0$ \,in \Ref{Exip}
and \,at $u=\infty$ \,in \Ref{Exim}\:. Similarly, the right-hand sides of
\Ref{Fxip} and \Ref{Fxim} are the same rational function that is expanded
at \,$u=0$ \,in \Ref{Fxip} and at \,$u=\infty$ \,in \Ref{Fxim}\:.
\end{rem}

\begin{rem}
The \,$\CNn$\<-valued function \,$\Wtb(\ttt\:,\zz,h)$ \,in formula~\Ref{Wtb}
is known as the off-shell Bethe vector. The values of that function at
\;$\ttt=\zz_I$, \,$I\in\Il$\>, give the eigenvectors \,$\xi_I$ \>of the Bethe
algebra $\Bci$, see formula~\Ref{xiI}\:.
\end{rem}

\subsection{Isomorphism \,${\psil:\Czhlr\to\phol(\Bci)}$\,}
\label{sec:psi}

Let \,$\Czl$ \<be the algebra of\\[1pt]
Laurent polynomials such that
\vvn.16>
\,$f(\zz)=f(\zz_\si)$ \,for any \,$\si\in\Slmin$. For \,${g\in\Czhlr}$\>,
define \,$\psil(g)\in\End\bigl(\CNnl\bigr)\ox\C(\zz,h)$ \>by the rule
\vvn.1>
\beq
\label{psi(g)}
\psil(g)\::\>\xi_I\,\mapsto\,g(\zz_I,h)\,\xi_I\,,\qquad I\in\Il\,,\kern-2em
\eeq
see Lemma~\ref{basis}. The map
\vvn.1>
\beq
\label{psila}
\psil:\:\Czhlr\,\to\,\End\bigl(\CNnl\bigr)\ox\C(\zz,h)\,.
\vv.1>
\eeq
is clearly a monomorphism of graded algebras.

\begin{lem}
\label{psiS}
For any \,$f\in\CZSr$\>, we have \,$\psil(f)\:=\:\id\ox f$\>.
\qed
\end{lem}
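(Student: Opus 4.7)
The plan is to unfold the definition \Ref{psi(g)} of $\psil$ and exploit the full $S_n$-symmetry of $f$. The central observation will be that, for any $f \in \CZSr$, the scalar $f(\zz_I, h)$ does not depend on $I \in \Il$ and in fact equals $f(\zz, h)$.

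To see this, I would interpret the substitution $\zz_I$ as follows. Under the natural identification of $\Czhlr = \C[\zz^{\pm 1}]^{S_\bla}\!\ox\C(h)$ with $\CGlhr$ (grouping the $\zz$-variables into blocks $\Gm_k = \{z_{\la^{(k-1)}+1},\ldots, z_{\la^{(k)}}\}$ according to the partition $\Imin$), an element $g \in \Czhlr$ is written as $g(\Gm_1,\ldots,\Gm_N,h)$, and $g(\zz_I,h)$ is the localization substitution $\Gm_k \mapsto \{z_a : a \in I_k\}$, exactly as in \Ref{LocI}.

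For $f \in \CZSr$, the full $S_n$-invariance forces $f$ to depend on $\Gm_1,\ldots,\Gm_N$ only through their union. Since $\bigsqcup_{k=1}^N I_k = \{1,\ldots,n\}$ for every $I \in \Il$, after the substitution this union is always $\{z_1,\ldots,z_n\}$. Hence $f(\zz_I,h) = f(\zz,h)$ for all $I$. Then \Ref{psi(g)} gives $\psil(f)\,\xi_I = f(\zz,h)\,\xi_I$ on every basis vector, and since $\{\xi_I : I \in \Il\}$ is a $\C(\zz,h)$-basis by Corollary~\ref{basis}, the operator $\psil(f)$ is scalar multiplication by $f(\zz,h)$, which is precisely $\id \ox f$ inside $\End\bigl(\CNnl\bigr)\ox\C(\zz,h)$.

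There is no real obstacle here; the argument is a direct unpacking of definitions combined with the symmetry of $f$. The only point worth being careful about is reconciling the two descriptions of $\Czhlr$ (as Laurent polynomials in $\zz$ versus symmetric functions in the grouped variables $\GG$) and the meaning of the substitution $\zz_I$ under this identification, but this is entirely formal.
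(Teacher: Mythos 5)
Your argument is correct and is precisely the direct unfolding of the definition that the authors considered too immediate to write out (the lemma is stated with \qed and no proof). The key observation---that for $S_n$-invariant $f$ the substitution $\zz_I$ merely permutes the set $\{z_1,\ldots,z_n\}$, so $f(\zz_I,h)=f(\zz,h)$ for every $I$---combined with the fact that $\{\xi_I\}_{I\in\Il}$ is a basis (Corollary~\ref{basis}) is exactly what one needs, and your reconciliation of the two descriptions of $\Czhlr$ (blocked $\zz$-variables versus $\GG$-variables) is the only formal point requiring care.
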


\begin{thm}
\label{thm B-infty alg}
We have \;$\psil\bigl(\Czhlr\bigr)\>=\>\phol(\Bci)$\>.
\end{thm}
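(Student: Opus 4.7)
The plan is to prove the two inclusions $\phol(\Bci)\subseteq\psil(\Czhlr)$ and $\psil(\Czhlr)\subseteq\phol(\Bci)$ separately. Both arguments use the basis $\{\xi_I\}_{I\in\Il}$ of \,$\DL$ \>from Corollary~\ref{basis}, on which every element of $\phol(\Bci)$ acts diagonally --- this follows from commutativity of $\Bci$ (Theorem~\ref{BGZ}) together with Theorem~\ref{AEFxi}, which provides the explicit diagonal action of the generators of $\Bci$ on $\{\xi_I\}$.

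For the inclusion $\phol(\Bci)\subseteq\psil(\Czhlr)$: the algebra $\Bci$ is generated over $\C(h)$ by $\Ueh$ and the coefficients $\Bin_{p,\pm s}$ of $A_{p,\pm}(u)$, $p=1\lc N$, $s\ge1$. By \Ref{Ltxi}, each generator of $\Ueh$ acts on $\DL$ as a scalar in $\C[h^{\pm1}]\subset\Czhlr$, and hence equals $\psil$ of that scalar. By \Ref{Axi}, the series $A_{p,\pm}(u)$ is diagonal on $\{\xi_I\}$ with eigenvalue on $\xi_I$ equal to the $\zz\mapsto\zz_I$ evaluation of
$$\phi_{p,-}(u,\zz,h)=\prod_{i=1}^{\la^{(p)}}\frac{1-hz_i/u}{1-z_i/u}\,,\qquad \phi_{p,+}(u,\zz,h)=\prod_{i=1}^{\la^{(p)}}\frac{1-h^{-1}u/z_i}{1-u/z_i}\,,$$
each of which is $S_{\la^{(p)}}$-symmetric, hence $\Slmin$-symmetric, in $\zz$. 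Expanding in $u^{\mp1}$ yields coefficients $c_{p,\pm s}\in\Czhlr$ with $\phol(\Bin_{p,\pm s})=\psil(c_{p,\pm s})$, so the image of $\Bci$ is contained in $\psil(\Czhlr)$.

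For the reverse inclusion: $\Czhlr$ is generated over $\C(h)$ by the block power sums $p_s^{(k)}(\zz)=\sum_{i\in\Imn_k}z_i^s$, $k=1\lc N$, $s\in\Z\setminus\{0\}$; the negative power sums recover the inverse monomials $\prod_{i\in\Imn_k}z_i^{-1}$ via Newton's identities applied to $\{z_i^{-1}\}$. Since $\psil$ is an algebra homomorphism, it suffices to show each $\psil(p_s^{(k)})\in\phol(\Bci)$. The operators $\{\phol(\Bin_{p,-s})\}_{s\ge 0}$ are pairwise commuting and simultaneously diagonal on $\{\xi_I\}$, so $\log\phol(A_{p,-}(u))=\sum_{s\ge1}L_{p,s}u^{-s}$ is a well-defined formal series whose coefficients $L_{p,s}$ are polynomials (with rational $\C$-coefficients) in $\phol(\Bin_{p,-1})\lc\phol(\Bin_{p,-s})$ and hence lie in $\phol(\Bci)$. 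Computing eigenvalues from $\log\phi_{p,-}=\sum_{s\ge1}\tfrac{1-h^s}{s}\bigl(\sum_{i=1}^{\la^{(p)}}z_i^s\bigr)u^{-s}$ gives $L_{p,s}=\tfrac{1-h^s}{s}\,\psil(f_{p,s})$ with $f_{p,s}=\sum_{i=1}^{\la^{(p)}}z_i^s$; since $(1-h^s)/s$ is a unit in $\C(h)$, $\psil(f_{p,s})\in\phol(\Bci)$ and so $\psil(p_s^{(k)})=\psil(f_{k,s})-\psil(f_{k-1,s})\in\phol(\Bci)$. The parallel computation with $A_{p,+}(u)$ delivers the negative block power sums.

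The only delicate point is the rigorous meaning of the formal logarithm. It is unproblematic because the $\phol(\Bin_{p,-s})$ are commuting diagonal operators on the basis $\{\xi_I\}$, so the series $\log\phol(A_{p,-}(u))$ is defined diagonal entry by diagonal entry, and Newton's identities supply the explicit polynomial expressions for $L_{p,s}$. Alternatively one may bypass logarithms by extracting block power sums directly from the $\Bin_{p,\pm s}$ via the relation between $\phi_{p,\pm}$ and elementary symmetric functions of $\{z_i\}_{i\le\la^{(p)}}$, though the logarithm route is notationally lighter.
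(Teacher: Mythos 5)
Your proof is correct and follows essentially the same route as the paper's: both rely on the diagonal action of $A_{p,\pm}(u)$ on the basis $\{\xi_I\}$ from formula \Ref{Axi}, together with the logarithm trick from the proof of Corollary~\ref{Zpm} to extract the block power sums $z_1^s+\dots+z_{\la^{(p)}}^s$, $s\in\Z$, which simultaneously generate $\Czhlr$ and $\phol(\Bci)$. Your write-up is simply a fully unpacked version of the paper's one-sentence argument, making both inclusions explicit (and, as a minor bonus, correcting the sign to $\tfrac{1-h^s}{s}$ in the logarithm expansion, which appears as $\tfrac{h^s-1}{s}$ in the paper's display in the proof of Corollary~\ref{Zpm}).
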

\begin{proof}
By formulae \Ref{Axi}\:, \;$\phol(\Bci)$ \,is generated over \,$\C(h)$
\>by the images of the power sums \,$z_1^s\lsym+z_{\la^{(p)}}^s$\>, where
\,$\la^{(p)}\?=\:\la_1\<\lsym+\la_p$\,, for all \,$p=1\lc N$ and \,$s\in\Z$\>,
cf.~the proof of Corollary~\ref{Zpm}.
\end{proof}

\begin{cor}
\label{corpsi}
We have \;$\psil\bigl(\Czhlr\bigr)\subset\End\bigl(\CNnl\bigr)\ox\Czhr$\>.
\qed
\end{cor}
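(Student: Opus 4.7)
The proof is essentially immediate from what has already been established, and I would organize it as a short concatenation of two facts.

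The plan is to combine Theorem~\ref{thm B-infty alg} with the underlying regularity in $\zz$ of the homomorphism $\pho$. Theorem~\ref{thm B-infty alg} gives the identification $\psil(\Czhlr) = \phol(\Bci)$, so it suffices to show $\phol(\Bci) \subset \End(\CNnl) \ox \Czhr$. For this, I would go back to Proposition~\ref{pho}, which states that $\pho: \Uen \to \End(\CNn) \ox \Czhr$ takes values in the algebra with $\zz$-coefficients Laurent polynomial (not merely rational). Explicitly, $L_{\ij,+s} \in \End(\CNn) \ox \Czhm$ and $L_{\ij,-s} \in \End(\CNn) \ox \Czhp$, and the coefficients of $A_{p,\pm}(u)$, $E_{p,\pm}(u)$, $F_{p,\pm}(u)$ generating $\Bci$ together with $\Ueh$ are thus sent by $\pho$ into $\End(\CNn) \ox \Czhr$.

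Since $\Bci \subset \Uen^\hg$ by Theorem~\ref{BGZ} and the remark at the end of Section~\ref{pmalg}, the restriction $\phol$ defined in~\Ref{phola} is well-defined on $\Bci$ and inherits the target space $\End(\CNnl) \ox \Czhr$. Therefore $\phol(\Bci) \subset \End(\CNnl) \ox \Czhr$, and combining with Theorem~\ref{thm B-infty alg} yields
\[
\psil(\Czhlr) \,=\, \phol(\Bci) \,\subset\, \End(\CNnl) \ox \Czhr,
\]
which is the claim.

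There is no real obstacle here: the substance is already contained in Proposition~\ref{pho} (Laurent-polynomial regularity in $\zz$ of the $L$-operator entries) and in Theorem~\ref{thm B-infty alg} (identification of images). The corollary is simply the combination. The only thing worth noting explicitly, to reassure the reader, is that although $\psil$ is defined spectrally on the basis $\{\xi_I\}$ with eigenvalues $g(\zz_I,h)$ that are a priori rational in $\zz$, the coincidence with $\phol(\Bci)$ forces the matrix entries in any $\C(\zz,h)$-basis of $\CNnl$ to be Laurent polynomial in $\zz$.
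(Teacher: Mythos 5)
Your proof is correct and is exactly the paper's intended (unwritten) argument: the corollary is marked \(\square\) because it follows immediately from Theorem~\ref{thm B-infty alg} together with the fact that \(\phol\) was already constructed in \Ref{phola} to have target \(\End\bigl(\CNnl\bigr)\ox\Czhr\), which in turn rests on Proposition~\ref{pho}. Your closing remark correctly isolates the one nontrivial point — that the spectral definition of \(\psil\) only a priori gives rational dependence in \(\zz\), and it is the identification with \(\phol(\Bci)\) that forces Laurent-polynomial regularity.
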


For any \,${g\in\Czhl}$\>, the operator \,$\psil(g)$ preserves the space
$\DL$, see Lemma~\ref{this} and formula~\Ref{thi-xi}\:. The restriction
of \,$\psil(g)$ \>to \,$\DL$ \>can be also presented as follows:
\vvn.2>
\beq
\psil(g)\,=\,\thil\circ m(g)\circ\thil^{-1}\:,
\vv.2>
\eeq
where \,$m(g)$ \>is the operator of multiplication by \,$g$ \,on \,$\Czhl$\>,
see~\Ref{thi}\:.

\vsk.2>
Let \,$\Ac$ \,be a commutative algebra. The algebra \,$\Ac$ \,considered
as an \,$\Ac$-module with any element of \,$\Ac$ \,acting by multiplication
on itself is called the {\it regular representation\/} of $\Ac$.

\begin{thm}
\label{regrep}
The \;$\CZSr$-module isomorphism
\vvn.3>
\beq
\label{thib}
\thil:\Czhlr\:\to\,\DL\?\ox\C(h)
\vv.2>
\eeq
\,and the \,${\CZSr}$-algebra isomorphism
\;${\psil\<:\Czhlr\:\to\:\phol(\Bci)}$
\vvn.16>
\>identify the \,$\phol(\Bci)$-module \,$\DL\?\ox\C(h)$ \,with
the regular representation of \,\,$\Czhlr$\>.
\end{thm}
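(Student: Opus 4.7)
The plan is to reduce the theorem to the direct identity $\psil(f)\circ\thil = \thil\circ m(f)$ for all $f\in\Czhlr$, where $m(f)$ denotes multiplication by $f$. Once this identity is established, the three facts that (i) $\thil:\Czhlr\to\DL\ox\C(h)$ is a $\CZSr$-module isomorphism (by Lemma~\ref{this} after tensoring with $\C(h)$), (ii) $\psil:\Czhlr\to\phol(\Bci)$ is a $\CZSr$-algebra isomorphism (by Theorem~\ref{thm B-infty alg}), and (iii) every element of $\phol(\Bci)$ has the form $\psil(f)$ for some $f\in\Czhlr$, together imply that the $\phol(\Bci)$-action on $\DL\ox\C(h)$, transported along $\thil^{-1}$, is nothing but the multiplication action of $\Czhlr$ on itself, i.e.~the regular representation.

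The key step is the verification of $\psil(f)\thil(g)=\thil(fg)$. This is a one-line computation using two explicit formulas already available in the paper: formula~\Ref{thi-xi}, which expresses $\thil(g)$ in the basis $\{\xi_I\}_{I\in\Il}$ of $\DL\ox\C(h)$ as
\[
\thil(g)\,=\,\sum_{I\in\Il}\,\frac{g(\zz_I,h)}{R(\zz_I)}\;\xi_I\,,
\]
and the defining formula~\Ref{psi(g)}, which says $\psil(f)$ acts diagonally on this basis by $\xi_I\mapsto f(\zz_I,h)\,\xi_I$. Combining the two,
\[
\psil(f)\,\thil(g)\,=\,\sum_{I\in\Il}\,\frac{f(\zz_I,h)\,g(\zz_I,h)}{R(\zz_I)}\;\xi_I\,=\,\thil(fg)\,,
\]
as desired. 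That $\{\xi_I\}_{I\in\Il}$ is a genuine $\C(\zz,h)$-basis (so the diagonal description of $\psil(f)$ is unambiguous and the substitution $f\mapsto f(\zz_I,h)$ is well-defined) is exactly Corollary~\ref{basis}.

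There is essentially no obstacle to carrying this out: all the substantive work has already been done. The potentially subtle point is merely to verify that $\psil(f)$, initially defined on the basis $\{\xi_I\}$ of $\Czhhl$, indeed preserves the subspace $\DL\ox\C(h)$ so that the composition $\psil(f)\circ\thil$ lands inside $\DL\ox\C(h)$; this is the content of the remark following Theorem~\ref{thm B-infty alg} (together with Lemma~\ref{this} and formula~\Ref{thi-xi}), which shows that the restriction of $\psil(g)$ to $\DL$ coincides with $\thil\circ m(g)\circ\thil^{-1}$, and hence preserves $\DL\ox\C(h)$. Once this is noted, the proof is complete: the diagram
\[
\begin{CD}
\Czhlr @>{m(f)}>> \Czhlr \\
@V{\thil}VV @VV{\thil}V \\
\DL\ox\C(h) @>>{\psil(f)}> \DL\ox\C(h)
\end{CD}
\]
commutes for every $f\in\Czhlr$, and since $\psil$ is an algebra isomorphism onto $\phol(\Bci)$, this establishes the theorem.
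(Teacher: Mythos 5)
Your proof is correct and follows the same approach the paper intends. The paper does not give a separate proof of Theorem~\ref{regrep}; rather, the displayed identity $\psil(g)=\thil\circ m(g)\circ\thil^{-1}$ immediately preceding the statement (which you re-derive by the one-line computation from formulae~\Ref{thi-xi} and~\Ref{psi(g)}) is exactly the content of the theorem, and the theorem itself is stated as a consequence of that identity together with Lemma~\ref{this} and Theorem~\ref{thm B-infty alg}.
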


The isomorphism \,$\thil$ in~\Ref{thib} is the natural extension of
the isomorphism~\Ref{thi} denoted by the same letter.

\section{Space \,$\KTX\ox\C(h)$ \,as a module over \,$\Uen$}
\label{sec Equi}
\subsection{Action of \,$\Uen$ on \,$\KTX\ox\C(h)$\>}
\label{uenaction}
Recall
\vvn.2>
\be
K_T(\tfl)\,=\,\CGs\?\ox\Czh\;\big/\bigl\bra\:f(\GG)=f(\zz)\,,
\,\,f\in\C[\zz^{\pm1}]^{\>S_n}\bigr\ket
\vv-.3>
\ee
and
\vvn-.4>
\be
\KTX\,=\tbigoplus_{|\bla|=n}\<K_T(\tfl)\,.
\vv-.2>
\ee
Define the graded algebra monomorphism
\vvn.2>
\be
\mul:K_T(\tfl)\ox\C(h)\:\to\:\End\bigl(\CNnl\bigr)\ox\Czhr
\vvgood
\vv-.1>
\ee
by the rule
\vvn-.5>
\be
\mul\bigl([f(\GG\<,\zz,h)])\::\>\xi_I\,\mapsto\,f(\zz_I,\zz,h)\,\xi_I\,,
\qquad I\in\Il\,,\kern-2em
\vv.3>
\ee
for any \,$f\in\CGs\?\ox\Czhr$\>, cf.~\Ref{psi(g)}\:. Let
\vvn.2>
\be
\mu\::\KTX\ox\C(h)\:\to\:\End\bigl(\CNn\bigr)\ox\Czhr
\vv.1>
\ee
be the direct sum of the monomorphisms \,$\mul$\>.

\begin{lem}
\label{muS}
For any \,$f\in\CZSr$\>, we have
\,$\mu\bigl([\:1\ox f\:]\bigr)\:=\:\id\ox f$\>.
\qed
\end{lem}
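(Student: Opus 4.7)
The plan is to unwind the definition of $\mul$ directly on the basis $\{\xi_I\}$, exploiting the fact that an element of $\CZSr$ has no $\GG$-dependence, so the substitution $\GG\mapsto\zz_I$ that defines $\mul$ acts trivially on it. Since $\mu$ is the direct sum of the $\mul$ over $|\bla|=n$, it suffices to check the identity $\mul([1\ox f])=\id\ox f$ on each summand \,$K_T(\tfl)\ox\C(h)$\>.

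More precisely, fix $\bla$ and take $f\in\CZSr$. A representative of the class $[1\ox f]\in K_T(\tfl)\ox\C(h)$ in \,$\CGs\ox\Czhr$ \,is simply the element \,$1\ox f(\zz,h)$, which contains no $\GG$ variables. By the definition
\be
\mul\bigl([g(\GG,\zz,h)]\bigr)\::\>\xi_I\,\mapsto\,g(\zz_I,\zz,h)\,\xi_I\,,
\qquad I\in\Il\,,
\ee
applied to $g=1\ox f$, the substitution $\{\gm_{k,a}\}\mapsto\{z_a\mid a\in I_k\}$ acts as the identity, so
\be
\mul\bigl([1\ox f]\bigr)\>\xi_I\,=\,f(\zz,h)\,\xi_I
\ee
for every $I\in\Il$. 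Viewing $\xi_I$ as a $\CNnl$-valued function of $\zz,h$, the right-hand side is exactly the action of $\id\ox f$ on $\xi_I$ (pure scalar multiplication by $f(\zz,h)$, independent of $I$).

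By Corollary~\ref{basis}, the vectors $\{\xi_I\mid I\in\Il\}$ form a $\C(\zz,h)$-basis of \,$\CNnl\ox\C(\zz,h)$, so the two operators $\mul([1\ox f])$ and \,$\id\ox f$ \,agree on this basis and hence coincide as elements of \,$\End\bigl(\CNnl\bigr)\ox\Czhr$. Summing over \,$|\bla|=n$ \,yields \,$\mu([1\ox f])=\id\ox f$. There is no substantive obstacle: the lemma is a formal compatibility check between the definition of $\mu$ and the tensor-product structure of $K_T(\tfl)\ox\C(h)$, completely parallel to Lemma~\ref{psiS} for the map $\psil$. One minor point worth flagging is that the substitution $\GG\mapsto\zz_I$ respects the defining relations \,$g(\GG)=g(\zz)$ \,for $g\in\CZS$ (since $\zz_I$ is a permutation of $\zz$ and $g$ is $S_n$-symmetric), so $\mul$ is well-defined on the quotient; this is needed to evaluate $\mul$ on the class $[1\ox f]$ unambiguously.
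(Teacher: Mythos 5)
Your proof is correct and takes exactly the approach the paper intends (the lemma is marked with \qed as an immediate consequence of the definition of $\mul$, since a representative $1\ox f$ has no $\GG$-dependence, so the substitution $\GG\mapsto\zz_I$ acts trivially). Your remark about well-definedness of $\mul$ on the quotient and the appeal to Corollary~\ref{basis} are both appropriate.
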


\begin{thm}
\label{thmu}
We have the induced isomorphisms of graded algebras
\be
\mul:K_T(\tfl)\ox\C(h)\,\to\,\phol(\Bci)\ox\Czb\,,
\vv-.1>
\ee
\be
\mu\>:\KTX\ox\C(h)\,\to\,\pho(\Bci)\ox\Czb\,,\kern-.66em
\vv.1>
\ee
where the homomorphisms \,$\pho$ \:and \,$\phol\<$ are defined by
Lemma~\ref{pho} and formula~\Ref{phola}\:.
\end{thm}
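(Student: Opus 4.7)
The plan is to deduce Theorem~\ref{thmu} from the isomorphism $\psil:\Czhlr\xrightarrow{\sim}\phol(\Bci)$ of Theorem~\ref{thm B-infty alg} combined with the presentation $K_T(\tfl)=\CGs\?\ox\Czh\big/\bra f(\GG)=f(\zz),\,f\in\CZS\ket$. Well-definedness of $\mul$ is immediate: for $f\in\CZS$ and any $I\in\Il$, the substitution $\gm_{k,a}\mapsto z_a$ $(a\in I_k)$ sends $f(\GG)$ to $f(z_1\lc z_n)=f(\zz)$ since $\bigsqcup_k I_k=\{1\lc n\}$ and $f$ is fully symmetric. Injectivity follows from the injectivity of equivariant localization (Section~\ref{sec:loc}): the eigenvalue of $\mul([f])$ on $\xi_I$ equals $\Loc_I([f])=f(\zz_I,\zz,h)$, so vanishing of the operator forces $[f]=0$.

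For surjectivity I would decompose any representative as a finite sum $f(\GG,\zz,h)=\sum_k g_k(\GG,h)\,\al_k(\zz)$ with $g_k\in\CGs\?\ox\C[\:h^{\pm1}\:]$ and $\al_k\in\C[\zz^{\pm1}\:]$. The natural algebra isomorphism $\CGs\cong\C[\zz^{\pm1}\:]^{\>S_\bla}$ sending $\gm_{i,a}\mapsto z_{\la^{(i-1)}+a}$ identifies $g_k$ with an element $\gt_k\in\Czhl$, and a direct check shows $g_k(\zz_I,h)=\gt_k(\zz_I,h)$ for every $I\in\Il$, because both sides express evaluating the same symmetric polynomial on the blocks $\{z_a:a\in I_k\}$. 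Consequently
\begin{equation*}
\mul([f])\,=\,\sum_k\>\psil(\gt_k)\cdot\al_k(\zz)\,,
\end{equation*}
where $\al_k(\zz)$ acts as a scalar operator on $\CNnl$. After tensoring with $\C(h)$, Theorem~\ref{thm B-infty alg} gives $\psil(\Czhlr)=\phol(\Bci)$, so the image of $\mul$ is precisely the subalgebra of $\End\bigl(\CNnl\bigr)\ox\Czhr$ generated by $\phol(\Bci)$ and $\C[\zz^{\pm1}\:]$, which is the target $\phol(\Bci)\ox\C[\zz^{\pm1}\:]$. Grading is preserved since $\deg\gm_{i,a}=\deg z_a=1$ and $\deg h=0$.

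The global statement for $\mu$ follows by direct sum over $\bla$: since $\Uen^\hg$ preserves the weight decomposition $\CNn=\bigoplus_\bla\CNnl$ (Section~\ref{tyactions}), we have $\pho(\Bci)=\bigoplus_\bla\phol(\Bci)$, and $\mu=\bigoplus_\bla\mul$. The main delicate point will be interpreting the target tensor product: by Lemmas~\ref{muS} and~\ref{psiS}, the subalgebra $\CZSr$ embeds diagonally into both $\phol(\Bci)$ and the scalar image of $\C[\zz^{\pm1}\:]$, mirroring exactly the defining relation $f(\GG)=f(\zz)$ on the $K_T$ side. The target $\phol(\Bci)\ox\C[\zz^{\pm1}\:]$ must therefore be understood as a tensor product over $\CZSr$, and it is this matching of the shared subalgebra that promotes the $\C(h)$-linear bijection into an isomorphism of algebras.
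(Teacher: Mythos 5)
Your argument is correct and follows the same route the paper (implicitly) takes: it reduces everything to Theorem~\ref{thm B-infty alg}, i.e.\ $\psil(\Czhlr)=\phol(\Bci)$, by identifying $\mul$ with $\psil$ on the $\CGs$-part and with scalar multiplication on the $\Czb$-part. The paper's own proof consists of the single sentence ``The statement follows from Theorem~\ref{thm B-infty alg}'', so you are supplying precisely the details that are left implicit: well-definedness via full symmetry of $f\in\CZS$, injectivity via equivariant localization, surjectivity via the decomposition $f=\sum_k g_k\,\al_k$ together with the identification $\CGs\cong\Czl$, and the observation that the codomain's tensor product must be read over the common subalgebra $\CZSr$ (equivalently, as the subalgebra $\phol(\Bci)\cdot\Czb$ of $\End(\CNnl)\ox\Czhr$).

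One step deserves tightening. You write that $\pho(\Bci)=\bigoplus_\bla\phol(\Bci)$ because $\Uen^\hg$ preserves the weight decomposition; but block-diagonality only yields the injection $\pho(\Bci)\hookrightarrow\prod_\bla\phol(\Bci)$. To get the equality (so that $\mu=\bigoplus_\bla\mul$ really maps onto $\pho(\Bci)\ox\Czb$) one must also know that the projectors onto the summands $\CNnl$ lie in $\pho(\Bci)$. This is true after tensoring with $\C(h)$: the elements $\Lt_{1,1,+0}\lc\Lt_{N,N,+0}$ belong to $\Ueh\subset\Bci$ and act on $\CNnl$ by the scalars $h^{\la_1\lsym+\la_i}$ (formula~\Ref{Ltl}), which separate the different $\bla$ with $|\bla|=n$ over $\C(h)$; hence suitable $\C(h)$-polynomials in the $\pho(\Lt_{\ii,+0})$ produce the required idempotents. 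With that remark inserted, the passage from $\mul$ to $\mu$ is complete.
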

\begin{proof}
The statement follows from Theorem~\ref{thm B-infty alg}.
\end{proof}

Consider the graded \,$\Czhr$-module isomorphism
\vvn.3>
\beq
\label{nu3}
\nu\::\KTX\ox\C(h)\,\to\,\DV\!\ox\Czhr\,,
\vv.2>
\eeq
that is the natural extension of the isomorphism~\Ref{nu2} denoted by the same
letter.

\begin{thm}
\label{thnu}
The \;$\Czhr$-module isomorphism \;$\nu:\<\KTX\ox\C(h)\to\DV\!\ox\Czhr$
\,and the \,$\Czhr$-algebra isomorphism
\vvn.1>
\;$\mu:\<\KTX\:\to\:\pho(\Bci)\ox\Czb$
\,identify the \,$\pho(\Bci)\ox\Czb$-module \,$\DV\!\ox\Czhr$ \,with
the regular representation of \,$\KTX\ox\C(h)$\>.
\qed
\end{thm}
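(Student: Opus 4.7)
The plan is to transplant the argument of Theorem \ref{regrep} from the non-equivariant, $S_n$-invariant setting to the equivariant setting, using the fact that both $\nu$ and $\mu$ are defined by the same localization data. Concretely, for any class $[f(\GG,\zz,h)]\in \KTX\ox\C(h)$, the operator $\mu([f])$ acts diagonally in the basis $\{\xi_I\}_{I\in\Il}$ by the eigenvalue $f(\zz_I,\zz,h)$, while by definition
\[
\nu([f])\,=\,\sum_{I\in\Il}\,\frac{f(\zz_I,\zz,h)}{R(\zz_I)}\,\xi_I.
\]

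The central step is to verify the intertwining identity
\[
\mu([g])\cdot\nu([f])\,=\,\nu([gf])
\]
for all $[f],[g]\in\KTX\ox\C(h)$. This is a one-line computation using the eigenvector property of $\xi_I$ with respect to $\mu$:
\[
\mu([g])\cdot\sum_{I\in\Il}\frac{f(\zz_I,\zz,h)}{R(\zz_I)}\,\xi_I
\,=\,\sum_{I\in\Il}\frac{g(\zz_I,\zz,h)\,f(\zz_I,\zz,h)}{R(\zz_I)}\,\xi_I
\,=\,\nu([gf]).
\]
Writing an arbitrary element $X\in\pho(\Bci)\ox\Czb$ as $X=\mu([g])$ via the isomorphism $\mu$ of Theorem \ref{thmu}, this gives $\nu^{-1}\bigl(X\cdot\nu([f])\bigr)=[g]\cdot[f]$, which is exactly left multiplication by $[g]=\mu^{-1}(X)$ in $\KTX\ox\C(h)$. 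Combined with the facts that $\nu$ is a $\Czhr$-module isomorphism (the natural extension of Corollary \ref{isonu}) and $\mu$ is a $\Czhr$-algebra isomorphism onto $\pho(\Bci)\ox\Czb$ (Theorem \ref{thmu}), this identifies the $\pho(\Bci)\ox\Czb$-module $\DV\ox\Czhr$ with the regular representation of $\KTX\ox\C(h)$.

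There is no genuine obstacle here: all the substantive work has already been done in establishing Theorems \ref{thmu} and \ref{thm B-infty alg}, in constructing the diagonalizing basis $\xi_I$ (Theorem \ref{xi-}), and in the orthogonality relation that underlies the formula for $\nu$. The one point requiring a little care is to confirm that the $\Czhr$-algebra structure on $\pho(\Bci)\ox\Czb$ used on the left-hand side is compatible with the module action on $\DV\ox\Czhr$, which follows from Corollary \ref{Zpm} identifying the central factor with $\Czb$ acting through multiplication in the second tensor factor. Everything else is an immediate translation of the non-equivariant statement in Theorem \ref{regrep}.
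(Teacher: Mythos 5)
Your proof is correct and matches the approach the paper implicitly takes (the theorem is stated with a \qed, signaling it follows immediately from the definitions of $\mu$ and $\nu$ together with Theorem \ref{thmu}, just as Theorem \ref{regrep} follows from the identity $\psil(g)=\thil\circ m(g)\circ\thil^{-1}$). Your central intertwining computation $\mu([g])\cdot\nu([f])=\nu([gf])$ is exactly the equivariant analogue of that identity. One small imprecision in your last paragraph: Corollary \ref{Zpm} identifies $\pho(\Zc_\pm)$ with $\Czms\ox\C(h)$ and $\Czps\ox\C(h)$, not with $\Czb$; the $\Czb$ tensor factor in $\pho(\Bci)\ox\Czb$ is an external one acting by scalar multiplication in the second tensor factor of $\DV\ox\Czhr$, and this compatibility is automatic from the definitions rather than from Corollary \ref{Zpm}. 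This does not affect the validity of the argument.
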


\vsk.2>
Recall that the space \,$\DV\!\ox\Czhr$ \,is a \,$\Uen$\:-module,
see Corollary~\ref{DVmod}. Thus the isomorphism \Ref{nu3} makes
\,$\KTX\ox\C(h)$ \>into a \,$\Uen$\:-module. We denote
the \,$\Uen$\:-module structure on \,$\KTX\ox\C(h)$ by \,$\rho$\>.

\vsk.3>
Let \,$K_\bla=K_T(\tfl)\ox\C(h)$\>. For any $i=1\lc N$ and \,$f\in K_\bla$\>,
\vvn.3>
\beq
\label{Ltlf}
\Lt_{\ii,+0}\,f\,=\,h^{\la_1\lsym+\la_i}\>f\,,\qquad
\Lt_{\ii,-0}\,f\,=\,h^{\la_1\lsym+\la_{i-1}}f\,,
\vv.1>
\eeq
see~\Ref{Ltxi}\:. Thus \,$K_\bla$ are eigenspaces
for the action of the subalgebra \,$\Ueh\subset\Uen$\>.

\vsk.2>
The operators
\vvn.1>
\;$\rho\bigl(A_{p,\pm}(u)\bigr)$, \,$p=1\lc N$, preserve the subspaces
\,$K_\bla$ and act as follows:
\vvn-.3>
\begin{gather}
\label{Arho}
\rho\bigl(A_{p,+}(u)\bigr)\>:\>[f(\GG\<,\zz,h)]\;\mapsto\,
\>\Big[f(\GG\<,\zz,h)\;\prod_{a=1}^p\,\prod_{i=1}^{\la_a}\,
\frac{1-h^{-1}u/\gm_{\ai}}{1-u/\gm_{\ai}}\Big]\,,
\\[4pt]
\rho\bigl(A_{p,-}(u)\bigr)\>:\>[f(\GG\<,\zz,h)]\;\mapsto\,
\>\Big[f(\GG\<,\zz,h)\;\prod_{a=1}^p\,\prod_{i=1}^{\la_a}\,
\frac{1-h\gm_{\ai}/u}{1-\gm_{\ai}/u}\Big]\,,
\notag
\\[-15pt]
\notag
\end{gather}
for any \,$f\in\CGs\?\ox\Czhr$, see \Ref{Axi} and \Ref{nu2}\:.

\goodbreak
\vsk.3>
For \,$p=1\lc n-1$\>, let \,$\al_p=(0\lc0,1,\<-\>1,0\lc0)$,
with \,$p-1$ \>first zeros.

\begin{thm}
\label{rho}
We have
\vvn-.8>
\begin{gather*}
\rho\bigl(E_{p,\:\pm}(u)\bigr)\::\>K_{\bla\:-\al_p}\,\mapsto\,K_\bla\,,
\\[7pt]
\rho\bigl(E_{p,+}(u)\bigr)\::\>[f]\;\mapsto\,\biggl[\,-
\sum_{i=1}^{\la_p}\;\frac{f(\GG^{\ipi}\!,\zz,h)}{1-u/\gm_{\pci}}\;\,
\prod_{\satop{j=1}{j\ne i}}^{\la_p}\,\frac1{1-\gm_{\pci}/\gm_{\pcj}}\;
\prod_{k=1}^{\la_{p+1}\!}\,(1-h\gm_{p+1,k}/\gm_{\pci})\,\biggr]\,,
\\[4pt]
\rho\bigl(E_{p,-}(u)\bigr)\::\>[f]\;\mapsto\,\biggl[\;
\sum_{i=1}^{\la_p}\,f(\GG^{\ipi}\!,\zz,h)\;
\frac{\gm_{\pci}/u}{1-\gm_{\pci}/u}\;
\prod_{\satop{j=1}{j\ne i}}^{\la_p}\,\frac1{1-\gm_{\pci}/\gm_{\pcj}}\;
\prod_{k=1}^{\la_{p+1}\!}\,(1-h\gm_{p+1,k}/\gm_{\pci})\,\biggr]\,,
\end{gather*}
\vv-.5>
\begin{gather*}
\rho\bigl(F_{p,\:\pm}(u)\bigr)\::\>K_{\bla\:+\al_p}\,\mapsto\,K_\bla\,,
\\[7pt]
\begin{aligned}
\rho\bigl(F_{p,+}(u)\bigr)\::\>[f]\;\mapsto\,\biggl[\,-
\sum_{i=1}^{\la_{p+1}\!}{}&\,f(\GG^{\ip}\!,\zz,h)\;
\frac{u/\gm_{\poi}}{1-u/\gm_{\poi}}\,\times{}
\\
& \!{}\times{}\>
\prod_{\satop{j=1}{j\ne i}}^{\la_{p+1}\!}\,\frac1{1-\gm_{\poj}/\gm_{\poi}}\;
\prod_{k=1}^{\la_p}\,(1-h\gm_{\poi}/\gm_{p,k})\,\biggr]\,,
\end{aligned}
\\[4pt]
\rho\bigl(F_{p,-}(u)\bigr)\::\>[f]\;\mapsto\,\biggl[\;
\sum_{i=1}^{\la_{p+1}\!}\;\frac{f(\GG^{\ip}\!,\zz,h)}{1-\gm_{\poi}/u}\,\;
\prod_{\satop{j=1}{j\ne i}}^{\la_{p+1}\!}\,\frac1{1-\gm_{\poj}/\gm_{\poi}}\;
\prod_{k=1}^{\la_p}\,(1-h\gm_{\poi}/\gm_{p,k})\,\biggr]\,,
\\[-16pt]
\end{gather*}
where
\vvn-.5>
\begin{gather*}
\GG^{\ipi}=\,(\Gm_1\:\lsym;\Gm_{p-1}\:;\Gm_p-\{\gm_{\pci}\};
\Gm_{p+1}\cup\{\gm_{\pci}\};\Gm_{p+2}\:\lsym;\Gm_N)\,,
\\[8pt]
\GG^{\ip}=\,(\Gm_1\:\lsym;\Gm_{p-1}\:;\Gm_p\cup\{\gm_{\poi}\};
\Gm_{p+1}-\{\gm_{\poi}\};\Gm_{p+2}\:\lsym;\Gm_N)\,.
\\[-14pt]
\end{gather*}
\end{thm}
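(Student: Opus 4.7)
The plan is to transport the formulas of Theorem~\ref{AEFxi} from the $\xi_I$ basis of $\DV\?\ox\C(h)$ to the $K$-theory side via the $\Czhr$-module isomorphism $\nu$ of~\Ref{nu3}, under which $[f(\GG,\zz,h)]$ corresponds to $\sum_{J\in\Il}\<f(\zz_J,\zz,h)\>R(\zz_J)^{-1}\>\xi_J$. We spell out the argument for $\rho(E_{p,+}(u))$; the remaining cases $E_{p,-}$, $F_{p,\pm}$ are entirely parallel, differing only in whether a rational function is expanded at $u=0$ or $u=\infty$ (cf.\ the first remark after Theorem~\ref{AEFxi}) and in whether an index is transferred from block $p{+}1$ down to $p$ or the other way.

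Fix $[f]\in K_{\bla-\al_p}\!\ox\C(h)$ and apply $E_{p,+}(u)$ to $\nu([f])$ via Theorem~\ref{AEFxi}. Reindex the resulting double sum over pairs $(J,i)$ with $J\in\I_{\bla-\al_p}$, $i\in J_{p+1}$, by $K:=J^{\ip}\in\Il$ together with $i\in K_p$; then $J=K^{\ipi}$ and $J_{p+1}\minus\{i\}=K_{p+1}$. The outcome is
\[
E_{p,+}(u)\,\nu([f])\,=\,-\sum_{K\in\Il}\,\sum_{i\in K_p}
\frac{f(\zz_{K^{\ipi}},\zz,h)}{R(\zz_{K^{\ipi}})\,(1-u/z_i)}\,
\prod_{j\in K_{p+1}}\frac{1-hz_j/z_i}{1-z_j/z_i}\;\xi_K\,.
\]
On the other side, denote by $F_+$ the $\GG$-rational function prescribed by Theorem~\ref{rho}. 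A direct check shows $F_+$ is $S_\bla$-symmetric, hence represents a class in $K_\bla\?\ox\C(h)$. To compute $\Loc_K(F_+)$ we exploit the built\:-in $S_{\la_p}$-symmetry of the sum over $i$: substituting $\gm_{p,i}\mapsto z_a$ for any $a\in K_p$ (with the remaining $\gm_{p,j}$ mapped to the other $z_b$, $b\in K_p\minus\{a\}$, in any order, by the $S_{\la_p-1}$-invariance of each summand) converts the sum over $i\in\{1\lc\la_p\}$ into a sum over $a\in K_p$. Dividing each term by $R(\zz_K)$ and summing against $\xi_K$ produces $\nu([F_+])$.

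Equating the coefficients of $\xi_K$ in $\nu([F_+])$ and in $E_{p,+}(u)\,\nu([f])$ term by term in $i\in K_p$ reduces the theorem to the single combinatorial identity
\[
\frac{R(\zz_K)}{R(\zz_{K^{\ipi}})}\,=\,\prod_{c\in K_{p+1}}(1-z_c/z_i)\,
\prod_{\satop{b\in K_p}{b\ne i}}\frac1{1-z_i/z_b}\,,
\]
which is immediate from definition~\Ref{RI}: almost every factor of the two products defining $R(\zz_K)$ and $R(\zz_{K^{\ipi}})$ cancels, leaving only the factors that record the interaction of the transferred element $z_i$ with the blocks $K_p$ and $K_{p+1}$. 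The main obstacle, and the only nontrivial point, is the careful handling of this reindexing and the $S_\bla$-symmetrization argument justifying the computation of $\Loc_K(F_+)$; once it is in place, the analogous ratio formula for moving an element from $K_{p+1}$ up to $K_p$ handles $F_{p,\pm}$, and the $\pm$ dichotomy is settled by the expansion-point remark cited above.
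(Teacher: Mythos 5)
Your proposal is correct and takes essentially the same approach as the paper: the paper's proof is exactly the one-line observation that the statement follows by transporting the $\xi_I$-basis formulas of Theorem~\ref{AEFxi} through the $\nu$ isomorphism of Corollary~\ref{isonu}, which is precisely the reindexing and coefficient-matching computation you carry out in detail. The key identity for the ratio $R(\zz_K)/R(\zz_{K^{\ipi}})$ is correct, and the passage between the $\gm$-formula and its $\Loc_K$-localizations is handled as intended.
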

\begin{proof}
The statement follows from Corollary~\ref{isonu} and Theorem~\ref{AEFxi}.
\end{proof}

\begin{rem}
Notice that the expressions for \,$\rho\bigl(E_{p,\:\pm}(u)\bigr)$
\,coincide as rational functions, and the same is true for
\,$\rho\bigl(F_{p,\:\pm}(u)\bigr)$\>. These functions are expanded at \,$u=0$
\,for \,$\rho\bigl(E_{p,+}(u)\bigr)\>,\alb\,\rho\bigl(F_{p,+}(u)\bigr)$\>,
and at \,$u=\infty$ \,for
\,$\rho\bigl(E_{p,-}(u)\bigr)\>,\alb\,\rho\bigl(F_{p,-}(u)\bigr)$\>.
\end{rem}

\begin{rem}
The \,$\Uen$\:-action \,$\rho$ \>on \,$\KTX$ defined via the isomorphism
\vvn.1>
\Ref{nu3}\:, that is, with the help of the map \;$\Stab_{\>\id}$ introduced
\vvn.1>
in Section~\ref{sec:stab} is related to the \,$\Uqn$\:-action introduced
in \cite{GV,Vas1,Vas2} in terms of the convolution operators acting on
$\KTX$.
\vsk.2>
More precisely, let
\vvn-.8>
\be
Q_\bla(\GG\<,h)\,=\!\prod_{1\le a<b\le n}\,\prod_{i=1}^{\la_a}\,
\prod_{j=1}^{\la_b}\,(1-h\gm_{\:\bj}/\gm_{\ai})\,.
\vv-.1>
\ee
Denote by \,$\Kt_\bla$ \,the ideal in \,$K_\bla$ \,generated by
\,$[\:Q_\bla\:]$\,. Since \,$[\:Q_\bla\:]$ \,is not a zero divisor, the map
\,$\chi_\bla:K_\bla\<\to\Kt_\bla$\,, \,$[\:f\:]\:\mapsto\:[\:Q_\bla f\:]$
\,is an isomorphism of vector spaces.

\vsk.2>
Let \,$\Kt=\bigoplus_{|\bla|=n}\Kt_\bla$, and let \,$\chi:\KTX\to\Kt$ \,be
the direct sum of isomorphisms \,$\chi_\bla$\>. It is straightforward
to verify that \,$\Kt$ \,is a \,$\Uen$\:-submodule of \,$\KTX$\>.
This defines a new \,$\Uen$-action \,$\rho^+\?$ on \,$\KTX$ by the rule:
\vvn.1>
\,$\rho^+(X)=\chi^{-1}\rho(X)\,\chi$ \,for any \,$X\?\in\Uen$\>.
\vvn.1>
The $\Uen$-actions \,$\rho$ \,and \,$\rho^+\?$ on \,$\KTX$ are conjecturally
not isomorphic, but become isomorphic as actions on \,$\KTX\ox\C(\zz,h)$
\,since \,$[\:Q_\bla]$ \,is invertible in \,$K_\bla\ox\C(\zz,h)$\:.

\vsk.2>
Now the \,$\Uen$-action \,$\rho^+$ essentially coincides with
\vvn.08>
the \,$\Uqn$\:-action introduced in \cite{GV,Vas1,Vas2} in terms of
the convolution operators acting on $\KTX$, cf.~formulae in Theorem~\ref{rho}
and in \cite[page~287]{Vas2}.
\end{rem}

\subsection{Topological interpretation of the actions in Theorem \ref{rho}}
Consider the vectors
$\mub'=(\la_1\lc\la_p-1,\,1,\>\la_{p+1}\lc,\la_N)$ (if $\la_p>0$), and
$\mub''\!=(\la_1\lc\la_p,\,1,\>\la_{p+1}-1\lc\la_N)$ (if $\la_{p+1}>0$).
There are natural forgetful maps
\beq
\label{eqn:forgetfuls}
\F_{\bla\<\>-\al_p}\,\xlto{\pi'_1}\>\F_{\mub'}\xto{\pi'_2}\,\F_{\bla}\,,
\qquad\qquad\qquad
\F_{\bla\<\>+\al_p}\,\xlto{\pi''_1}\>\F_{\mub''}\xto{\pi''_2}\,\F_{\bla}\,.
\vv.4>
\eeq
The rank $\la_p-1$, $1$, $\la_{p+1}$ bundles over $\F_{\mub'}$ with fibers
$F_p/F_{p-1}$, $F_{p+1}/F_p$, $F_{p+2}/F_{p+1}$ will be respectively denoted by
$A',\>B',\>C'$. The rank $\la_p$, $1$, $\la_{p+1}-1$ bundles over $\F_{\mub''}$
with fibers $F_p/F_{p-1}$, $F_{p+1}/F_p$, $F_{p+2}/F_{p+1}$ will be
respectively denoted by $A'',\>B'',\>C''$.

\vsk.3>
For a $(\C^{\times})^n$-equivariant bundle $\xi$, let $e(\xi)$ be its equivariant K-theoretic
Euler class. We can make the extra \,$\C^\times$ (whose Chern root is $h$) act
on any bundle by fiberwise multiplication. The equivariant Euler class with this extra action will be denoted by $e_h(\xi)$.

\vsk.3>
Recall that an equivariant proper map $f:X\to Y$ induces the pullback $f^*:K_T(Y)\to K_T(X)$
and push-forward (also known as Gysin) $f_*:K_T(X)\to K_T(Y)$ maps.

\begin{thm}
\label{Varag}
The operators \,$\rho\bigl(E_{p,\pm}(u)\bigr)$, \,$\rho\bigl(F_{p,\pm}(u)\bigr)$,
are equal to the following topological operations
\vvn-.6>
\begin{gather*}
\rho\bigl(E_{p,+}(u)\bigr)\>:\>x\,\mapsto(-1)^{\lambda_p}\pi'_{2*}
\biggl(\<\pi_1^{'*}(x)\cdot\frac{1}{1-u/[B']}
\frac{e_{h}\bigl(\Hom(C',B')\bigr)[\Lambda^{\top}A']}{[B']^{\lambda_p-1}}\biggr)\,,
\\[6pt]
\rho\bigl(E_{p,-}(u)\bigr)\>:\>x\,\mapsto(-1)^{\lambda_p-1}\pi'_{2*}
\biggl(\<\pi_1^{'*}(x)\cdot\frac{[B']/u}{1-[B']/u}
\frac{e_{h}(\Hom\bigl(C',B')\bigr)[\Lambda^{\top}A']}{[B']^{\lambda_p-1}} \biggr)\,,
\end{gather*}
\vv-.3>
\begin{gather*}
\rho\bigl(F_{p,+}(u)\bigr)\>:\>x\,\mapsto(-1)^{\lambda_{p+1}}\pi''_{2*}
\biggl(\<\pi_1^{''*}(x)\cdot\frac{u/[B'']}{1-u/[B'']}
\frac{e_{h}\bigl(\Hom(B'',A'')\bigr)[B'']^{\lambda_{p+1}-1}}{[\Lambda^{\top}C'']}\biggr)\,,
\\[6pt]
\rho\bigl(F_{p,-}(u)\bigr)\>:\>x\,\mapsto(-1)^{\lambda_{p+1}-1}\pi''_{2*}
\biggl(\<\pi_1^{''*}(x)\cdot\frac{1}{1-[B'']/u}
\frac{e_{h}(\Hom\bigl(B'',A'')\bigr)[B'']^{\lambda_{p+1}-1}}{[\Lambda^{\top}C'']}\biggr)\,,
\end{gather*}
\end{thm}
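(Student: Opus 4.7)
The plan is to verify each of the four identities by equivariant localization at the $T$-fixed points $x_I\in\Fla$, $I\in\Il$. Since the restriction map $\Loc:K_T(\tfl)\hookrightarrow\bigoplus_{I\in\Il}K_T(x_I)$ from Section~\ref{sec:loc} is injective, it suffices to show that both sides of each identity have the same image under $\Loc_I$ for every~$I$. I would carry out the verification for $\rho(E_{p,+}(u))$ in full detail; the three remaining cases follow from the same template with minor modifications.

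On the algebraic side, Theorem~\ref{rho} presents $\rho(E_{p,+}(u))(x)|_{x_I}$ directly as a sum, indexed by $i\in I_p$, of explicit rational functions in $z_i$, $\{z_k\}_{k\in I_{p+1}}$, $\{z_a\}_{a\in I_p\setminus\{i\}}$, $h$, $u$, with $f$ evaluated at the $\GG^{ipi}|_{x_I}$ substitution that transfers the index $i$ from $I_p$ to $I_{p+1}$. On the topological side, I would apply the K-theoretic Atiyah-Bott localization formula to the push-forward $\pi'_{2*}$. The fiber $(\pi'_2)^{-1}(x_I)$ is the projectivization of the $\la_p$-dimensional $T$-module $F_p/F_{p-1}|_{x_I}=\on{span}(e_a:a\in I_p)$, whose $T$-fixed points are in bijection with $I_p$. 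For $i\in I_p$, let $x^{\mub'}_{J_i}\in\F_{\mub'}$ denote the corresponding fixed point; at this point $[B']=z_i$, the Chern roots of $A'$ are $\{z_a:a\in I_p\setminus\{i\}\}$, the Chern roots of $C'$ are $\{z_k:k\in I_{p+1}\}$, and the relative tangent space $T_{x^{\mub'}_{J_i}}(\text{fiber})=\Hom(B',A')$ has K-theoretic Euler class $\prod_{a\in I_p\setminus\{i\}}(1-z_a/z_i)$. Moreover, $\pi'_1(x^{\mub'}_{J_i})$ is the fixed point of $\F_{\bla-\al_p}$ labeled by $(I_1,\dots,I_{p-1},I_p\setminus\{i\},I_{p+1}\cup\{i\},I_{p+2},\dots,I_N)$, so $(\pi'_1)^*(x)|_{x^{\mub'}_{J_i}}$ equals $f(\GG^{ipi},\zz,h)|_{x_I}$ when $x=[f(\GG,\zz,h)]$.

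Substituting into the localization formula gives a sum over $i\in I_p$. The factor $[\Lambda^{\top}A']/[B']^{\la_p-1}$ specializes to $\prod_{a\in I_p\setminus\{i\}}(z_a/z_i)$, and together with the Euler-class denominator $\prod_a(1-z_a/z_i)$ reorganizes, up to a sign of $(-1)^{\la_p-1}$, into $\prod_{j\ne i}(1-\gm_{p,i}/\gm_{p,j})^{-1}$; this sign cancels against the $(-1)^{\la_p}$ prefactor, producing the overall $-$ of the algebraic formula. The factor $e_h(\Hom(C',B'))$ yields the product over $k\in I_{p+1}$, and $(1-u/[B'])^{-1}$ matches $(1-u/\gm_{p,i})^{-1}$. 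For $E_{p,-}$, the same computation applies with $(1-u/[B'])^{-1}$ replaced by $([B']/u)(1-[B']/u)^{-1}$, which is the expansion of the same rational function at $u=\infty$ rather than at $u=0$. The $F_{p,\pm}$ cases follow from the analogous argument for $\pi''_2:\F_{\mub''}\to\Fla$, whose relative fiber $\PP(F_{p+2}/F_{p+1}|_{x_I})$ is parametrized by the choice of an index in $I_{p+1}$, and for which the bundle $C''$ now plays the role previously played by $A'$.

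The main obstacle will be the careful bookkeeping of signs, reciprocals, and Chern-root orientations when converting the Atiyah-Bott contributions into the ratio-form expressions of Theorem~\ref{rho}. In particular, the factor $(-1)^{\la_p}$ (respectively $(-1)^{\la_{p+1}}$, $(-1)^{\la_p-1}$, $(-1)^{\la_{p+1}-1}$) in front of each topological formula encodes the swap between the products $\prod(z_a/z_i)$ arising from the normal-bundle Euler class and $\prod(1-\gm_{p,i}/\gm_{p,j})$ appearing in the algebraic denominator; one must commit at the outset to definite conventions for duality on line bundles, for the K-theoretic push-forward, and for the Euler class $e_h(\Hom(\cdot,\cdot))$ before the four verifications can be executed uniformly.
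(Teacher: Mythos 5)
Your proposal is exactly the paper's approach: the paper's entire proof is the single sentence that writing down equivariant localization formulae for the stated push-forward operations reproduces the formulae of Theorem~\ref{rho}, and your Atiyah--Bott computation over the fibers of $\pi'_2$ and $\pi''_2$ is the detailed execution of precisely that comparison. (One small slip: the relative tangent space of $\pi'_2$ should be $\Hom(A',B')$, not $\Hom(B',A')$, which is consistent with the Euler class $\prod_{a\in I_p\setminus\{i\}}(1-z_a/z_i)$ you then write.)
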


\begin{proof}
If we write down equivariant localization formulae for the given topological operations we obtain the formulae of Theorem \ref{rho}.
\end{proof}

\section{Bethe algebra \,$\Bck$ \<and discrete Wronskian}
\label{sec:Wr}

\subsection{Wronski map}
\label{sec:Wron}

Throughout this section we use the following grading of functions
in the variables \,$\gm_{\ij}\>,\,z_i$\>, and \,$h$\>:
\,$\deg\:\gm_{\ij}=\:\deg\:z_i=\:1$ \,for all \,$i=1\lc N$\:,
\,$j=1\lc\la_i$\,, and \,$\deg\:h=0$\,, cf.~Section~\ref{sec:grad}.

\vsk.2>
Let \,$q_1\lc q_N$ \>be distinct nonzero complex numbers.
Set
\vvn.1>
\beq
\label{W}
\Wk(u)\,=\,\det\>\biggl(q_i^{\:i-j}\,\prod_{k=1}^{\la_i}\,
(1-h^{i-j}\gm_{\ik}/u)\<\biggr)_{i,j=1}^N\,.
\eeq
The function \,$\Wk(u)$ \>is essentially a discrete Wronskian (multiplicative
Casorati determinant) of functions
\vvn-.4>
\beq
\label{giu}
g_i(u)\,=\,q_i^{-\log u/\log h}\>
\prod_{k=1}^{\la_i}\,\bigl(1-h^{i-1}\gm_{\ik}/u)\,.\qquad i=1\lc N\:,\kern-2em
\vv-.3>
\eeq
namely,
\vvn-.7>
\be
\Wk(u)\,=\,\det\>\bigl(\:g_i(uh^{j-1})\bigr)_{i,j=1}^N
\;\prod_{i=1}^N\,q_i^{\:i-1+\log u/\log h}\,.
\ee

\vsk.2>
Let \,$\CGl$ be the algebra of polynomials in the variables \,$\gm_{\ij}$
\>invariant under the permutations of the variables with the same first
\vvn.1>
subscript. Define the elements \,$\elq_0(\GG\<,h)\lc\elq_N(\GG\<,h)\in\CGlh$
via the coefficients of \,$\Wk(u)$\>:
\be
\Wk(u)\,=\,\sum_{p=0}^N\,(-1)^p\,\elq_p(\GG)\,u^{-p}\!
\prod_{1\le i<j\le N}\!(1-q_j/q_i)\,.
\vv-.4>
\ee
In particular,
\vvn-.2>
\be
\elq_0(\GG\<,h)\>=\>1\,,\qquad
\elq_N(\GG,h)\>=\>\gm_{1,1}\ldots\gm_{N\?,\la_N}\>.
\vv.3>
\ee
Define the {\it Wronski map\/}
\beq
\label{Wq}
\Wrq_\bla:\Czps\to\,\CGlh
\vv.3>
\eeq
to be the algebra homomorphism such that
\vvn.2>
\be
e_p(\zz)\,\mapsto\,\elq_p(\GG\<,h)\,,\qquad p=1\lc N\>,
\vv.1>
\ee
where \,$e_p(\zz)$ \>is the \,$p$\:-th elementary symmetric polynomial
of \,$\zzz$\>. The map \,$\Wrq_\bla$ \>makes the space \,$\CGlh$ \,into
the graded \,$\Czps$\<-module.

\begin{lem}
\label{injW}
The map \;$\Wrq_\bla$ \:is injective.
\end{lem}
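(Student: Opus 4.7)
The plan is to prove injectivity by specializing the parameter \,$h$ \>to \,$1$\>: if the composition
\;$\Czps\>\xto{\Wrq_\bla}\>\CGlh\>\xto{h\:=\>1}\>\CGl$
is already injective, then \,$\Wrq_\bla$ is a fortiori injective.

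First I would set \,$h=1$ \,in formula \Ref{W} \>for \,$\Wk(u)$. When \,$h=1$, the factor \,$\prod_{k=1}^{\la_i}(1-h^{i-j}\gm_{i,k}/u)$ in the \,$(i,j)$-entry of the defining matrix loses its dependence on \,$j$ \,and becomes \,$\prod_{k=1}^{\la_i}(1-\gm_{i,k}/u)$. Pulling this product out of row \,$i$ \>for each \,$i$, the discrete Wronskian collapses to
$$
\Wk(u)\bigr|_{h=1}\,=\,\prod_{i=1}^N\,\prod_{k=1}^{\la_i}\bigl(1-\gm_{i,k}/u\bigr)\;\det\bigl(q_i^{\:i-j}\bigr)_{i,j=1}^N\,.
$$

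Next I would evaluate the \,$q$-determinant. Factoring \,$q_i^{\:i-N}$ \>out of row \,$i$ \>and reversing the order of columns reduces it to the standard Vandermonde determinant in \,$q_1\lc q_N$, and one obtains
$$
\det\bigl(q_i^{\:i-j}\bigr)_{i,j=1}^N\,=\prod_{1\le i<j\le N}\<(1-q_j/q_i)\,.
$$
Substituting this back and comparing with the defining expansion \;$\Wk(u)=\sum_p(-1)^p\:\elq_p(\GG\<,h)\,u^{-p}\prod_{i<j}(1-q_j/q_i)$, the Vandermonde prefactors cancel and I obtain
$$
\elq_p(\GG\<,1)\,=\,e_p\bigl(\gm_{1,1}\lc\gm_{N\<,\la_N}\bigr)\,,
$$
the ordinary \,$p\>$-th elementary symmetric polynomial in the \,$n$ variables \,$\gm_{i,k}$.

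To finish, suppose \,$f\in\ker\Wrq_\bla$. Since \,$\Czps\<=\C[e_1(\zz)\lc e_n(\zz)]$ \>is a polynomial ring, write \,$f=P\bigl(e_1(\zz)\lc e_n(\zz)\bigr)$ \,for a unique polynomial \,$P$. The relation \,$\Wrq_\bla(f)=0$ \>specializes at \,$h=1$ \>to \,$P\bigl(e_1(\GG)\lc e_n(\GG)\bigr)=0$ \>in \,$\CGl\subset\C[\GG]$. Because the elementary symmetric polynomials in the \,$n$ variables \,$\gm_{i,k}$ \>are algebraically independent, \,$P=0$, and hence \,$f=0$. The only genuine computation is the Vandermonde identity above, so I do not anticipate any substantive obstacle; the content of the argument is the observation that \,$h=1$ \>collapses the discrete Wronskian to a product of linear forms times a nonzero $q$-Vandermonde, rendering injectivity transparent.
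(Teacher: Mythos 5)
Your proof is correct and takes the same approach as the paper: the paper's entire argument is the one-line observation that $\elq_p(\GG,1)=e_p(\gm_{1,1}\lc\gm_{N\?,\la_N})$, from which injectivity is immediate, and you have simply carried out the row-factoring and $q$-Vandermonde computation that justifies that observation.
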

\begin{proof}
Observe that \,$\elq_p(\GG,1)=e_p(\gm_{1,1}\lc\gm_{N\?,\la_N})$ \,for all
\,$p=0\lc N$. The statement follows.
\end{proof}

\vsk.2>
The Wronski map \Ref{Wq} induces the monomorphism \,$\Czpshr\to\CGlhr$ \,that
will be also denoted \,$\Wrq_\bla$\>.

\subsection{Polynomial isomorphisms}
\label{sec:psq}

Recall the multiplicative shift operator \,$\tau$ acting on functions of
\,$u$\>: \,$\tau f(u)=f(hu)$. Let
\vvn-.3>
\beq
\label{De}
\De\,=\,1+\sum_{p=1}^N\,\bk_p(u)\,(\<-\:\tau)^p\>.
\eeq
be the \,$N\?$-th order linear difference operator \,annihilating the functions
\,$g_1(u/h)\lc g_N(u/h)$\>. Its coefficients \,$\bk_1(u)\lc\bk_N(u)$ \,are
described below, see formula~\Ref{bkp}\:. We set \,$\bk_0(u)=1$.

\vsk.2>
Let \,$x$ \,be a complex variable. Set
\vvn-.3>
\beq
\label{Whk}
\Whk(u,x)\,=\,\det\>\biggl(q_i^{\:i-j}\,\prod_{k=1}^{\la_i}\,
(1-h^{i-j}\gm_{\ik}/u)\<\biggr)_{i,j=0}^N\,,
\vv-.4>
\eeq
where \,$q_0\<=x^{-1}$ and \,$\la_0=\:0$\>. Then
\vvn.1>
\beq
\label{bkp}
\frac{\Whk(u,x)}{\Wk(u)}\,=\,1+\sum_{p=1}^N\,\bk_p(u)\,(\<-\:x)^p\>,
\eeq
and the coefficients \,$\bk_p(u)$ \,have the following expansion
at \,$u=\infty$\>:
\beq
\label{bps-}
\bk_p(u)\,=\,e_p(\kkk)\>+\:\sum_{s=1}^\infty\,\bk_{p,-s}\:u^{-s}\:,
\vv.1>
\eeq
where \,$e_p(\kkk)$ \>is the \,$p$\:-th elementary symmetric polynomial.
Notice that
\vvn.3>
\beq
\label{bkN}
\bk_N(u)\,=\,q_1\ldots q_N\,\frac{\Wk(u/h)}{\Wk(u)}\;.
\vv.1>
\eeq

\begin{prop}
\label{bps}
The elements \;$\bk_{p,-s}$\>, \,$p=1\lc N$, \,$s\in\Z_{>0}$\>,
together with \,$\C(h)$ \,generate the algebra \,$\CGlhr$\>.
\end{prop}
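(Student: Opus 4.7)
The plan is to show by induction on $s\ge 1$ that every elementary symmetric function $e_s(\Gamma_i):=e_s(\gm_{i,1},\ldots,\gm_{i,\la_i})$ lies in the $\C(h)$-subalgebra $\Ac\subset\CGlhr$ generated by $\{\bk_{p,-t}:p=1,\ldots,N,\;t\ge 1\}$. Since these elements generate $\CGl$ as a $\C$-algebra, this will prove the proposition.

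The starting point is the identity $\bk_p(u)\,\Wk(u)=\det M'_p(u)$, where $M'_p(u)$ denotes the $N\times N$ minor of the matrix in~\eqref{Whk} with row $i=0$ and column $j=p$ removed. This is obtained by expanding the determinant in~\eqref{Whk} along the row $i=0$ and comparing with~\eqref{bkp}; it is consistent with $M'_0=\Wk$ (so $\bk_0=1$) and $\det M'_N=q_1\cdots q_N\,\Wk(u/h)$, recovering~\eqref{bkN}. Expanding both sides via Leibniz's rule and the identity $q_i^{i-j}h^{(i-j)m_i}=(q_ih^{m_i})^{i-j}$ produces
\[
\Wk(u)=\sum_{\bs m}(-1)^{|\bs m|}D_{\bs m}(q,h)\prod_i e_{m_i}(\Gamma_i)\,u^{-|\bs m|},\qquad
\det M'_p(u)=\sum_{\bs m}(-1)^{|\bs m|}D^p_{\bs m}(q,h)\prod_i e_{m_i}(\Gamma_i)\,u^{-|\bs m|},
\]
where $\bs m=(m_1,\ldots,m_N)$ runs over multi-indices with $0\le m_i\le\la_i$, and $D_{\bs m}$, $D^p_{\bs m}$ are the Vandermonde-like determinants $\det((q_ih^{m_i})^{i-j})$ with $j$ ranging over $\{1,\ldots,N\}$ or $\{0,\ldots,N\}\setminus\{p\}$. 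Matching coefficients of $u^{-s}$ in $\bk_p(u)\Wk(u)=\det M'_p(u)$ yields
\[
\bk_{p,-s}\,D_{\bs 0}(q)=(-1)^s\!\sum_{|\bs m|=s}\!\bigl(D^p_{\bs m}(q,h)-e_p(q)\,D_{\bs m}(q,h)\bigr)\prod_i e_{m_i}(\Gamma_i)+\Phi_{p,s},
\]
with $\Phi_{p,s}$ an explicit polynomial in $\{\bk_{p,-r}:r<s\}$ and the coefficients of $\Wk$, $\det M'_p$ of order $u^{-t}$ for $t<s$.

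For the induction step at stage $s$, assume $e_t(\Gamma_i)\in\Ac$ for all $t<s$ and all $i$. Split $\sum_{|\bs m|=s}$ into \emph{pure} indices $\bs m=s\,\epsilon_{i_0}$ (for those $i_0$ with $\la_{i_0}\ge s$, each contributing the single new generator $e_s(\Gamma_{i_0})$) and \emph{mixed} indices with at least two nonzero entries, whose products $\prod_i e_{m_i}(\Gamma_i)$ involve only generators of degree $<s$ and hence already lie in $\Ac$. Moving the mixed contributions to the left-hand side reduces the identity to a linear system in the unknowns $\{e_s(\Gamma_{i_0}):\la_{i_0}\ge s\}$ with $p$-th equation
\[
(\text{known element of }\Ac)=(-1)^s\!\!\sum_{i_0:\,\la_{i_0}\ge s}\!\bigl(D^p_{s\epsilon_{i_0}}(q,h)-e_p(q)\,D_{s\epsilon_{i_0}}(q,h)\bigr)\,e_s(\Gamma_{i_0}),\qquad p=1,\ldots,N.
\]

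The main obstacle is to establish that the $N\times k_s$ coefficient matrix $A^{(s)}_{p,i_0}=(-1)^s\bigl(D^p_{s\epsilon_{i_0}}-e_p(q)D_{s\epsilon_{i_0}}\bigr)$, with $k_s=\#\{i:\la_i\ge s\}$, has a non-degenerate $k_s\times k_s$ submatrix over $\C(h,q)$. Using the Vandermonde formula $\det((q_ih^{m_i})^{i-j})_{i,j=1}^N=\prod_i(q_ih^{m_i})^{i-N}\prod_{i<j}(q_ih^{m_i}-q_jh^{m_j})$ and its Schur-type analog for the column set $\{0,\ldots,N\}\setminus\{p\}$, each entry of $A^{(s)}$ is a Laurent polynomial in $h^s$ over $\C(q_1,\ldots,q_N)$. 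The matrix degenerates at $h=1$ (its rows collapse to a common multiple), so non-degeneracy over $\C(h)$ must be read off from higher-order terms; I expect that analyzing the leading behavior as $h^s\to\infty$ shows the top-degree coefficient of $\det A^{(s)}$ to factor as a product of $q_i$-Vandermondes, non-vanishing since the $q_i$ are distinct (in the spirit of Lemma~\ref{injW}). Granting this non-degeneracy, one inverts the system to place each $e_s(\Gamma_{i_0})$ in $\Ac$, closing the induction at stage $s$ and, running $s$ from $1$ to $\max_i\la_i$, proving the proposition.
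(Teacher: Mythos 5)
Your reduction to showing that each $e_s(\Gm_i)$ lies in the subalgebra $\Ac$ is the right target, and your starting identity $\bk_p(u)\,\Wk(u)=\det M'_p(u)$ together with the Leibniz expansions is correct. But the entire argument hinges on the non-degeneracy of the $N\times k_s$ coefficient matrix $A^{(s)}$, and you do not prove it: you write ``I expect that\dots'' and ``Granting this non-degeneracy\dots''. That is a genuine gap, and it sits at the only nontrivial point of your approach. The route you sketch for closing it (top-degree behaviour of a $k_s\times k_s$ minor as $h^s\to\infty$) is also delicate, because the columns of $A^{(s)}$ carry different powers of $h$: the column indexed by $i_0$ is proportional to $D_{s\epsilon_{i_0}}(1-h^s)$, and $D_{s\epsilon_{i_0}}$ grows like a nonzero constant times $h^{s(i_0-1)}$. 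The clean way to finish along your lines is the cofactor identity
\[
\sum_{p=0}^N(-x)^p\,D^p_{\bs m}\,=\,D_{\bs m}\,\prod_{j=1}^N\bigl(1-q_j\,h^{m_j}x\bigr)
\]
(expand the $(N{+}1)\times(N{+}1)$ determinant of type \Ref{Whk} along the row $i=0$), which shows that the generating function in $x$ of the $i_0$-th column of $A^{(s)}$ is $(-1)^s\,D_{s\epsilon_{i_0}}\,q_{i_0}(1-h^s)\,x\prod_{j\ne i_0}(1-q_jx)$. The polynomials $x\prod_{j\ne i_0}(1-q_jx)$ are linearly independent for distinct $q_j$ (evaluate at $x=q_{i_0}^{-1}$), and $D_{s\epsilon_{i_0}}(1-h^s)$ is nonzero in $\C(h)$, so the columns are independent and your system is solvable.

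You should also be aware that the paper's proof avoids the linear system altogether. It uses that the difference operator $\De$ of \Ref{De} annihilates each function $g_i(u/h)$ of \Ref{giu} \emph{separately}; this yields relation \Ref{sum0}, which involves only the single alphabet $\Gm_i$. Extracting the coefficient of $u^{-j}$ there expresses $e_j(\Gm_i)$, multiplied by the explicit scalar $(-1)^j\,h^{\:i\:\<j}\prod_{l=1}^N\bigl(1-q_l\,q_i^{-1}h^{-j}\bigr)\ne0$ in $\C(h)$, in terms of the $\bk_{p,-s}$ and the $e_m(\Gm_i)$ with $m<j$. Thus each new generator comes from one scalar equation with a visibly nonzero coefficient, and no rank argument is needed.
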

\begin{proof}
Recall the functions \,$g_1\lc g_N$, see~\Ref{giu}\:, and the difference
operator \Ref{De}\:. The equation \,$\De\:g_i(u/h)=\:0$ \,yields
\beq
\label{sum0}
\sum_{p=0}^N\,(\<-\:q_i)^{-p}\;\bk_p(u)\,
\prod_{k=1}^{\la_i}\,(1-h^{i-p}\gm_{\ik}/u)\,=\,0\,.
\eeq
Let \,$e_j(\Gm_{\<i})$ \,be the \,$j$\:-th elementary symmetric polynomial of
\,$\gm_{i,\:1}\lc\gm_{i,\la_i}$\>. Collecting the coefficient of \,$u^{-j}$
\>in \Ref{sum0}\:, we get that \,$e_j(\Gm_{\<i})\,=\,C_j$\,, where \,$C_j$
\,is expressed via the elements \,$\bk_{k,\:s}$ \,for \,$k\le j$\>, integer
powers of \,$h$\>, and \,$e_m(\Gm_{\<i})$ \>for \,$m<j$\>. The proposition
follows.
\end{proof}

Recall the subalgebras \,$\Bck_-$ and \,$\Zc_-$ defined in
Section~\ref{pmalg}, and the homomorphism \,$\phol$ \>given
by~\Ref{phola}\:. Corollary~\ref{Zpm} yields
\vvn.2>
\be
\pho_\la(\Zc_-)\>=\,\Czps\?\ox\C(h)\subset\Elzphr\,.
\ee

\begin{thm}
\label{psq}
The assignment \;$\bk_{p,-s}\mapsto\phol(\Bk_{p,-s})$\,, \,$p=1\lc N$,
\,$s\in\Z_{>0}$\>, \,defines an isomorphism
\vvn-.2>
\beq
\label{psql}
\psql:\:\CGlhr\,\to\,\phol(\Bck_-)
\vv.4>
\eeq
of graded \;$\Czpshr$-algebras. Here the algebra \;$\Czpshr$ \>acts
\vvn.07>
on \;$\CGlhr$ via the Wronski map \;$\Wrq_\bla$.
\end{thm}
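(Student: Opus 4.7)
The plan is to verify that the assignment $\bk_{p,-s}\mapsto\phol(\Bk_{p,-s})$ extends to a well-defined, graded, $\Czpshr$-linear algebra homomorphism, and then to establish bijectivity.

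\textbf{Well-definedness and surjectivity.} By Proposition~\ref{bps}, the source $\CGlhr$ is generated over $\C(h)$ by the elements $\bk_{p,-s}$. By the definition of $\Bck_-$ in Section~\ref{pmalg}, the target $\phol(\Bck_-)$ is generated over $\phol(\Ueh)$ by the $\phol(\Bk_{p,-s})$; and since $\phol(\Ueh)$ acts on $\CNnl$ by the scalars $h^{\la_1+\ldots+\la_i}$ (formula~\Ref{Ltl}), it is absorbed into $\C(h)$. To see that the map is well-defined as an algebra homomorphism, I would argue that the $\bk_{p,-s}$ are algebraically independent over $\C(h)$: iterating the triangular recursion in the proof of Proposition~\ref{bps}, which expresses every $e_j(\Gm_i)$ in terms of the $\bk_{k,-s}$ with $k\le j$, exhibits $\CGlhr$ as the polynomial ring $\C(h)[\bk_{p,-s}]$. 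Consequently, sending each $\bk_{p,-s}$ to any prescribed element of a commutative $\C(h)$-algebra gives a unique algebra map, and surjectivity follows. Grading is preserved since $\deg \bk_{p,-s}=s=\deg \phol(\Bk_{p,-s})$ under the gradings of Section~\ref{sec:Wron} and Section~\ref{Uh}.

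\textbf{$\Czpshr$-linearity.} The $\Czpshr$-action on $\CGlhr$ is through the Wronski map $\Wrq_\bla$, while on $\phol(\Bck_-)$ it comes from the identification $\pho(\Zc_-)=\Czpshr$ of Corollary~\ref{Zpm}. The compatibility $\psql\circ\Wrq_\bla=\phol|_{\Zc_-}$ would be verified via the identity $\Bk_{N,-}(u)=q_1\ldots q_N\,A_{N,-}(u)$ combined with the Wronskian relation~\Ref{bkN}, $\bk_N(u)=q_1\ldots q_N\,\Wk(u/h)/\Wk(u)$, and the scalar form $\phol(A_{N,-}(u))=\prod_{i=1}^n(1-hz_i/u)/(1-z_i/u)$. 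Matching coefficients of $u^{-s}$ identifies $\psql$ applied to the Wronskian image of the $s$-th power sum with $\phol(\Bin_{N,-s})$, and since the power sums $z_1^s+\ldots+z_n^s$ generate $\Czpshr$ over $\C(h)$, this suffices.

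\textbf{Injectivity.} A nonzero kernel element would be, by the first step, a nonzero polynomial expression in the $\bk_{p,-s}$. To rule it out, I would exploit the asymptotic regime $q_1=1$, $q_{i+1}/q_i\to 0$ of formula~\Ref{Blim}, under which $\Bk_{p,-}(u)=q_1\ldots q_p\bigl(A_{p,-}(u)+o(1)\bigr)$. In this limit, $\psql$ degenerates (up to an invertible rescaling) to the analogous map for the GZ subalgebra $\Bci_-$, which is an isomorphism onto $\phol(\Bci_-)$ by Theorem~\ref{thm B-infty alg}; the injectivity of that limit map, on the Wronskian-encoded data, is ultimately controlled by Lemma~\ref{injW}. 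Because $\psql$ depends algebraically on $q_1,\ldots,q_N$, injectivity at the degenerate limit lifts to injectivity for generic $q$-values by a flatness argument.

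The main obstacle is the injectivity step: although the vectors $\xi_I$ diagonalize the GZ algebra $\Bci$ (Theorem~\ref{AEFxi}), they generally fail to diagonalize the larger Bethe algebra $\Bck$, which prevents a direct eigenvalue comparison. The degeneration-plus-algebraic-continuity argument, routed through the limit~\Ref{Blim}, circumvents this by reducing the problem to the already-handled GZ case where the explicit eigenvalues from Theorem~\ref{AEFxi} and the injectivity of $\Wrq_\bla$ close the argument.
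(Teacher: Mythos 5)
Your well-definedness argument has a fatal flaw. You claim that "iterating the triangular recursion in the proof of Proposition~\ref{bps} ... exhibits $\CGlhr$ as the polynomial ring $\C(h)[\bk_{p,-s}]$," i.e.\ that the elements $\bk_{p,-s}$ are algebraically independent over $\C(h)$. This cannot be: $\CGlhr=\CGl\ox\C(h)$ is a polynomial ring over $\C(h)$ in the $n=|\bla|$ generators $e_j(\Gm_i)$, hence has transcendence degree $n$ over $\C(h)$, while the elements $\bk_{p,-s}$, $p=1\lc N$, $s\in\Z_{>0}$, are infinite in number. Proposition~\ref{bps} only asserts that they \emph{generate}; there are necessarily infinitely many algebraic relations among them. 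Consequently, assigning arbitrary values to the $\bk_{p,-s}$ does not automatically define an algebra map, and establishing that the prescribed images $\phol(\Bk_{p,-s})$ satisfy exactly the relations holding among the $\bk_{p,-s}$ is precisely the substance of the theorem. Your argument treats it as free, which is where the proposal breaks.

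This also undermines the injectivity step: since well-definedness is not secured, the map $\psql$ whose algebraic dependence on $q$ you invoke is not yet known to exist, so the degeneration-and-flatness argument has nothing to anchor to. Moreover, even granting well-definedness, the limit $q_{i+1}/q_i\to 0$ sits on the boundary of the parameter space $(\C^\times)^N$, and upgrading injectivity at such a degenerate boundary point to injectivity for the actual values of $q$ requires a semicontinuity argument that is not spelled out.

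The paper's proof is in fact deferred (it cites the analogous Bethe-ansatz proofs of \cite[Theorem~6.3]{MTV2} and \cite[Theorem~5.2, item~(i)]{MTV3}, stating that details will appear elsewhere). The mechanism in those proofs is quite different from yours: one compares, pointwise at generic $\zz=\bb$, $h=c$, the joint eigenvalues of the Bethe operators $\phol(\Bk_{p,-s})$ on Bethe vectors against the evaluation of the Laurent polynomials $\bk_{p,-s}$ at the Bethe roots, and completeness of Bethe vectors together with the identification of Bethe roots with fibers of the Wronski map forces the two algebras to have the same relations. That is the step your proposal is missing, and it cannot be replaced by an algebraic-independence claim that is false on its face.
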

\begin{proof}
The proof of Theorem~\ref{psq} is similar to the analogous proofs of
\cite[Theorem~6.3\:]{MTV2} and \cite[Theorem~5.2, item~(i)\:]{MTV3}\:.
The proof is based on the Bethe ansatz technique.
The details will be published elsewhere.
\end{proof}

Recall the spaces \,$\DiV_\bla\>,\,\DLl$ defined in Sections~\ref{secDV},
\ref{polV}, and the isomorphism \,$\thitl$\>, see Lemma~\ref{that}.
Keeping the same notation, we will think of \,$\thitl$ as the isomorphism
\,$\CGlhr\to\DLl\?\ox\C(h)$\>, where we identified \,$\CGl$ with \,$\Czpl$ by
\vvn.1>
the rule \,$f(\GG)\mapsto f(\zz_\Imin)$ for any $f\in\CGl$, and replaced
\,$\Chh$ \,by \,$\C(h)$\>.

\vsk.3>
Define the homomorphism
\vvn.1>
\begin{gather}
\label{thiq}
\thtql:\:\CGlhr\,\to\,\DiV_\bla\ox\C(h)\,,
\\[4pt]
\notag
f\,\mapsto\>\sum_{I\in\Il}\,\frac1{R(\zz_I)}\;
\prod_{a=1}^N\>\prod_{\>i\in I_a}z_i^{\la^{(a)}\?-\:n}\;
\psql(f)\,\xi_I\,,
\end{gather}
cf.~\Ref{thi-xi} and~\Ref{thitla}\:. The map \;$\thtql$ \>is graded.

\begin{lem}
We have \;$\thtql\bigl(\CGlhr\bigr)\subset\DLl\ox\C(h)$\,.
\end{lem}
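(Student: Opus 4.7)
The plan is to reduce the statement to the key identity $\thtql(f)=\psql(f)\,\thitl(1)$ and to the fact that $\phol(\Bck_-)$ preserves the subspace $\DLl\ox\C(h)$.

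First, I will establish the identity $\thtql(f)=\psql(f)\,\thitl(1)$. By definition~\Ref{thitla} applied to $f=1$, $\thitl(1)=\thil(G)$ with $G=\prod_{a=1}^N\prod_{i\in\Imn_a}z_i^{\la^{(a)}-n}$, and by formula~\Ref{thi-xi},
\[
\thitl(1)\,=\,\sum_{I\in\Il}\,\frac{G(\zz_I,h)}{R(\zz_I)}\,\xi_I.
\]
Since $G$ is $\Slmin$-invariant and the $\zz_I$-substitution permutes $\zz$ so as to send the blocks of $\Imn$ bijectively to those of $I$, one has $G(\zz_I,h)=\prod_{a=1}^N\prod_{i\in I_a}z_i^{\la^{(a)}-n}$. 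The operator $\psql(f)$, being an element of $\End(\CNnl)\ox\C[\zz]\ox\C(h)$, commutes with multiplication by scalar functions of $\zz$ and $h$, so pulling scalar coefficients outside the sum and comparing with the defining formula for $\thtql(f)$ yields the identity.

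Next, I will check that $\phol(\Bck_-)$ preserves $\DLl\ox\C(h)$. The subalgebra $\Bck_-$ lies in $\Ue_-\cap\Uen^\hg$: each generator $\Bk_{p,-s}$ with $s>0$ is a polynomial in the $\Lt_{ij,-s'}$ (with $s'\ge0$) and hence lies in $\Ue_-$, while $\Bck_-\subset\Uen^\hg$ is part of the Bethe-algebra construction (\ref{pmalg}). By Proposition~\ref{pho}, $\phol(\Bck_-)\subset\End(\CNnl)\ox\C[\zz]\ox\C(h)$. Because $\C[\zz]\cdot\Dti\Czhp=\Dti\Czhp$, the subspace $\CNnl\ox\Dti\Czhp\ox\C(h)$ is preserved by $\phol(\Bck_-)$. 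Preservation of $\DL\ox\C(h)$, the subspace of $\sch_i$-invariants, follows from Corollary~\ref{Usch}, which asserts that the $\Uen$-action commutes with $\sch_1,\ldots,\sch_{n-1}$. Intersecting the two invariant subspaces yields preservation of $\DLl\ox\C(h)$.

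Finally, since $\thitl(1)\in\DLl$ by Lemma~\ref{that}, combining the two steps gives $\thtql(f)=\psql(f)\,\thitl(1)\in\DLl\ox\C(h)$, as required. The argument is purely structural, relying on the $\C[\zz]$-polynomiality of the ``minus'' part of the $\Uen$-action (Proposition~\ref{pho}), the $S_n$-equivariance of this action (Corollary~\ref{Usch}), and the defining formulae of $\thil$ and $\thitl$. No step involves a delicate calculation; the only care needed is in unpacking the $\zz_I$-substitution convention in~\Ref{thi-xi} to identify $\thitl(1)$ with the ``reference'' vector on which $\psql(f)$ acts to produce $\thtql(f)$.
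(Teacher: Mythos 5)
Your proposal is correct and follows essentially the same route as the paper's proof: both rest on the identity $\thtql(f)=\psql(f)\,\thitl(1)$, the facts that $\psql(f)$ has polynomial dependence on $\zz$ and commutes with the $S_n$-action (hence preserves $\DLl\ox\C(h)$), and $\thitl(1)\in\DLl$ from Lemma~\ref{that}. You merely spell out in more detail the steps the paper compresses into citations of Theorem~\ref{psq} and Lemma~\ref{lem ac comS}.
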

\begin{proof}
Let \,$f\<\in\CGlhr$\>. Then \,$\thtql(f)=\psql(f)\,\thitl(1)$\>.
\vvn.16>
By Theorem~\ref{psq} and Lemma \ref{lem ac comS}, \;$\psql(f)\in\Elzphr$\>,
and \,$\psql(f)$ \>commutes with the \,$S_n$-action \Ref{Sn-}\:.
Since \,$\thitl(1)\in\DLl$, the statement follows.
\end{proof}

\begin{thm}
\label{thmthiq}
The map \;$\thtql\<:\CGlhr\to\DLl\?\ox\C(h)$ \>is
\vvn.1>
an isomorphism of graded \;$\Czpshr$-modules, where \;$\Czpshr$
\vvn.08>
acts on \;$\CGlhr$ via the Wronski map \;$\Wrq_\bla$.
\end{thm}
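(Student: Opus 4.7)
The plan is to exploit the factorization $\thtql(g) = \psql(g) \cdot \thitl(1)$ from the preceding lemma and reduce the theorem to three assertions: $\Czpshr$-linearity of $\thtql$, equality of the Hilbert series of source and target, and injectivity.

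First I would verify $\Czpshr$-linearity. By Theorem~\ref{psq}, $\psql$ is a $\Czpshr$-algebra isomorphism with $\Czpshr$ acting on $\CGlhr$ via $\Wrq_\bla$ and on $\phol(\Bck_-)$ via $\psil$; in particular $\psql(\Wrq_\bla(e)) = \psil(e)$ for $e\in\Czpshr$. By Lemma~\ref{psiS}, $\psil(e) = \id\ox e$ is just multiplication by $e$ and therefore commutes with $\psql(g)$. Combined with $(\id\ox e)\cdot\thitl(1) = \thitl(e)$, which follows from the $\Czps$-linearity of $\thitl$ (Lemma~\ref{that}), a short chain of equalities yields $\thtql(\Wrq_\bla(e)\cdot g) = e\cdot\thtql(g)$. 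Gradedness is immediate from the gradedness of $\psql$ and $\thitl$ (Lemma~\ref{lem:thigr}).

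Next I would compare Hilbert series. Via $\thitl$, the target $\DLl\ox\C(h)$ is isomorphic, as a graded $\C(h)$-space, to $\Czphlr \cong \C[\zz]^{S_\bla}\ox\C(h)$, while the source is $\CGlhr = \CGl\ox\C(h)$. Both $\C[\zz]^{S_\bla}$ and $\CGl$ are polynomial rings on $n$ variables of degree one quotiented by the same Young subgroup; they share the Hilbert series $\prod_{i=1}^N\prod_{j=1}^{\la_i}(1-t^j)^{-1}$. Hence in every graded degree $\thtql$ is a $\C(h)$-linear map between finite-dimensional $\C(h)$-vector spaces of equal dimension, so it suffices to prove injectivity.

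For injectivity, assume $\psql(g)\cdot\thitl(1) = 0$; I want $\psql(g) = 0$, whence $g = 0$ by Theorem~\ref{psq}. This amounts to the assertion that $\thitl(1)$ has zero annihilator in the commutative algebra $\phol(\Bck_-)$, equivalently that $\phol(\Bck_-)\cdot\thitl(1) \subset \DLl\ox\C(h)$ is a free $\phol(\Bck_-)$-module of rank one. This cyclicity statement is the main obstacle. I expect to establish it by Bethe ansatz techniques paralleling those of \cite{MTV2, MTV3} invoked for Theorem~\ref{psq}: for generic $\zz,h$ the joint eigenvectors of $\phol(\Bck_-)$ in $\DL\ox\C(h)$ are the Bethe vectors, which form a basis on which $\phol(\Bck_-)$ acts with simple spectrum, and $\thitl(1)$ has nonzero component along every joint eigenspace. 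The full technical details would be deferred to a separate publication, as is done for Theorem~\ref{psq}.
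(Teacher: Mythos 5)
Your skeleton — $\Czpshr$-linearity via Theorem~\ref{psq}, equality of the finite dimensions of matching graded components, and reduction to injectivity — coincides with the paper's, and the first two points are handled correctly (the paper obtains the dimension count marginally more directly by composing with $(\thitl)^{-1}$, so that one studies an injective graded endomorphism of the single space $\CGlhr$, but this is the same idea as your Hilbert-series comparison).

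The genuine gap is in the injectivity step. You correctly reduce injectivity to the claim that $\thitl(1)$ has zero annihilator in $\phol(\Bck_-)$, but you then propose to prove this cyclicity by completeness-of-Bethe-ansatz arguments and defer the details. That leaves the theorem unproved: given Theorem~\ref{psq}, the cyclicity of $\thitl(1)$ is essentially equivalent to the theorem itself (cf.\ Corollary~\ref{regrepq}), so nothing has been reduced to anything easier, and the ingredients you invoke (simplicity and completeness of the Bethe spectrum for generic parameters, nonvanishing of every component of $\thitl(1)$, plus a specialization argument back to the module over $\C[\zz]^{S_n}$) are each substantial. The paper closes this step with a short elementary argument that you should adopt: since $\phol(\Bck_-)$ is commutative and $\psql$ is an algebra homomorphism, $\ker\thtql=\{\,g : \psql(g)\,\thitl(1)=0\,\}$ is an \emph{ideal} of $\CGlhr$; for $0\ne e\in\Czpshr$ one has $\thtql(\Wrq_\bla(e))=e\cdot\thitl(1)=\thitl(e)\ne0$ because $\thitl$ is injective (Lemma~\ref{that}), so this ideal meets $\Wrq_\bla\bigl(\Czpshr\bigr)$ only in $0$; finally, since $\Wrq_\bla$ is injective (Lemma~\ref{injW}) and $\CGlhr$ is a domain whose fraction field is algebraic over that of $\Wrq_\bla\bigl(\Czpshr\bigr)$ (both have transcendence degree $n$ over $\C(h)$), every nonzero ideal of $\CGlhr$ contracts to a nonzero ideal of $\Wrq_\bla\bigl(\Czpshr\bigr)$ by the usual minimal-polynomial argument. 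Hence the kernel is zero, with no Bethe-ansatz input beyond Theorem~\ref{psq}.
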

\begin{proof}
Notice that the graded components of the spaces \,$\Czpshr$\>, \,$\CGlhr$\>,
\,$\DLl\?\ox\C(h)$ \,are finite-dimensional \,$\C(h)$-modules.

\vsk.1>
By Theorem~\ref{psq}\:, the map \,$\thtql$ is a homomorphism of graded
\vvn.1>
\;$\Czpshr$-modules. Its kernel is an ideal in \,${\CGlhr}$ \,that has zero
intersection with \,$\Wrq_\bla\bigl(\Czpshr\bigr)$\>, hence, it is the zero
ideal. Thus the composition \,$(\thitl)^{-1}\>\thtql:\CGlhr\to\CGlhr$ \,is
an injective graded \,$\C(h)$-endomorphism, that sends \,$1$ \>to \,$1$\>,
since \,$\thtql(1)=\thitl(1)$\>. Therefore, both \,$(\thitl)^{-1}\>\thtql$
\,and \,$\thtql$ \,are bijections.
\end{proof}

\begin{cor}
\label{regrepq}
The \;${\Czpshr}$-module
\vvn.1>
isomorphism \,$\thtql\<:{\CGlhr}\:\to\,\DLl\?\ox\C(h)$
\,and the \,$\Czpshr$-algebra isomorphism
\;$\psql\<:\CGlhr\:\to\:\phol(\Bck_-)$
\>identify the \,$\phol(\Bck_-)$-module \,$\DLl\?\ox\C(h)$ \,with
\vvn.1>
the regular representation of \,\,$\CGlhr$\>. Here \,$\Czpshr$ acts
on \;$\CGlhr$ via the Wronski map \;$\Wrq_\bla$.
\qed
\end{cor}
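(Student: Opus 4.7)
The plan is to deduce the corollary as a short formal consequence of Theorems~\ref{psq} and~\ref{thmthiq}, exhibiting $\thtql$ as the $\phol(\Bck_-)$-orbit map of the distinguished vector $\thitl(1)\in\DLl$. This is the same strategy used for the \GZi/ analog in Theorem~\ref{regrep}, and the extra input we need beyond the two theorems cited is essentially one algebraic identity.

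The first step is to record the identity
$$\thtql(f)\>=\,\psql(f)\cdot\thitl(1)\,,$$
which has already been derived in the proof of the lemma immediately preceding Theorem~\ref{thmthiq}. The conceptual content of this identity is that $\thtql(f)$ is what one obtains by applying the operator $\psql(f)\in\phol(\Bck_-)$ to the cyclic vector $\thitl(1)\in\DLl\ox\C(h)$.

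The second step is then the intertwining computation: for $f,g\in\CGlhr$,
$$\psql(g)\cdot\thtql(f)\>=\,\psql(g)\,\psql(f)\,\thitl(1)\>=\,\psql(gf)\,\thitl(1)\>=\,\thtql(gf)\,,$$
where the middle equality uses that $\psql$ is an algebra homomorphism by Theorem~\ref{psq}. This asserts exactly that $\thtql$ intertwines the regular action of $\CGlhr$ on itself with the action of $\phol(\Bck_-)$ on $\DLl\ox\C(h)$ transported through $\psql$. Bijectivity of $\thtql$ (Theorem~\ref{thmthiq}) and of $\psql$ (Theorem~\ref{psq}) then upgrades this intertwining to an identification of modules, which is the statement of the corollary. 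The $\Czpshr$-module compatibility is built into both theorems: $\psql$ is a $\Czpshr$-algebra isomorphism and $\thtql$ is a $\Czpshr$-module isomorphism, where $\Czpshr$ acts on $\CGlhr$ via the Wronski map $\Wrq_\bla$.

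There is no substantive obstacle in this concluding deduction; the real work has already been absorbed into Theorem~\ref{psq}, whose proof (deferred to a later paper) uses Bethe ansatz machinery to match the coefficients $\bk_{p,-s}$ of the difference operator~\Ref{De} with the images $\phol(\Bk_{p,-s})$ of the Bethe generators. Given that input together with Theorem~\ref{thmthiq}, the corollary reduces to the one-line calculation above.
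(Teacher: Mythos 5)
Your argument is correct and is exactly the deduction the paper intends (the corollary is stated with \,\qed, i.e.\ no written proof, and is meant to follow immediately from Theorem~\ref{psq}, Theorem~\ref{thmthiq}, and the identity $\thtql(f)=\psql(f)\,\thitl(1)$ recorded in the proof of the lemma just before Theorem~\ref{thmthiq}). The one-line intertwining computation $\psql(g)\,\thtql(f)=\psql(g)\,\psql(f)\,\thitl(1)=\psql(gf)\,\thitl(1)=\thtql(gf)$, combined with bijectivity of $\thtql$ and $\psql$, is precisely what is needed, and you have correctly identified that the $\Czpshr$-module structure matches because both maps are $\Czpshr$-linear (through the Wronski map on the source).
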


\subsection{K-theoretic isomorphisms}
\label{sec:muq}
Let \,$\Dlq=\prod_{\:1\le i<j\le N}\,(1-q_j/q_i)$\,. Define the algebra
\vvn-.5>
\beq
\label{Kql}
\Kcql\>=\,\C[\:\GG^{\pm1}]^{\>S_\bla}\?\ox\Czh\:\Big/\?\Bigl\bra \Wk(u)\>=
\,\Dlq\,\prod_{a=1}^n\,(1-z_a/u)\Bigr\ket\,.\kern-1em
\vv-.3>
\eeq
Let \;$y=z_1\ldots z_n$\>. Since in the algebra \,$\Kcql$\>,
\be
\vvn-.1>
y\,=\,\frac{\Dl(q_1h^{\la_1}\?\lc q_N\:h^{\la_N})}{\Dlq}\;
\prod_{i=1}^N\,\prod_{j=1}^{\la_i}\,\gm_{\ij}\,,
\vv-.7>
\ee
we have
\vvn-.1>
\beq
\label{Kqly}
\Kcql\ox\C(h)\>=\>\CGl\?\ox\C[\zz,y^{-1}\:]\ox\C(h)\Big/\?\Bigl\bra \Wk(u)\>=
\,\Dlq\,\prod_{a=1}^n\,(1-z_a/u)\Bigr\ket\,.\kern-1em
\eeq

\begin{example}
Let \,$N=n=2$ \,and \,$\bla=(1,1)$\>.
Then
\be
\Wk(u)\,=\,\det\:\left(
\begin{array}{cccc}
1-\gm_{1,1}/u & q_1^{-1}(1-h^{-1}\gm_{1,1}/u)
\\[3pt]
q_2\:(1-h\gm_{2,1}/u) & 1-\gm_{2,1}/u
\end{array}
\right)\,,
\vv-.2>
\ee
and the relations in \,$\Kcql$ \>are
\vvn.1>
\beq
\label{Hq11}
\gm_{1,1}\frac{q_1\?-h^{-1}q_2}{q_1\?-q_2} +
\gm_{2,1}\frac{q_1\?-hq_2}{q_1\?-q_2}\>=\,z_1+z_2\,,\kern3em
\gm_{1,1}\>\gm_{2,1}\,=\,z_1\:z_2\,.
\eeq
\end{example}

\vsk.2>
It is easy to see that the algebra \,$\Kcql$ does not change if all \,$\kkk$
are multiplied simultaneously by the same number. Notice that in the limit
$q_{i+1}/q_i\to 0$ \,for all \,$i=1\lc N-1$, the relations in \,$\Kcql$
turn into the relations in \,$K_T(\tfl)$, see \Ref{Hrel}\:.

\begin{thm}
\label{thmuq}
The isomorphism \,\Ref{psql} induces the isomorphism
\vvn.2>
\beq
\label{muql}
\muql:\:\Kcql\ox\C(h)\,\to\,\phol(\Bck)\ox\Czb
\vv.2>
\eeq
of graded \;$\Czhr$-algebras.
\end{thm}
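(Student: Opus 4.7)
The plan is to define $\muql$ as the unique $\Czhr$-algebra homomorphism that extends $\psql$ on the $\CGl$ generators and sends each $z_a$ to itself, viewed as the scalar operator $1\ox z_a\in\End(\CNnl)\ox\Czhr$. I then need to verify it is well-defined on the Wronski quotient, surjective onto $\phol(\Bck)\ox\Czb$, and injective, with grading preserved.

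Well-definedness reduces to checking that $\psql(\elq_p(\GG,h))=e_p(\zzz)$ as operators for each $p=1,\ldots,N$, because these equations are equivalent to the defining relation $\Wk(u)=\Dlq\prod_{a=1}^n(1-z_a/u)$ via the expansion in \Ref{W}. By Theorem~\ref{psq}, $\psql$ is a morphism of graded $\Czpshr$-algebras, with $\Czpshr$ acting on $\CGlhr$ through the Wronski map. By construction of that map, $\Wrq_\bla(e_p(\zz))=\elq_p(\GG,h)$, so $\psql(\elq_p(\GG,h))$ is the image of $e_p(\zz)$ under the natural action of $\Czpshr\cong\phol(\Zc_-)\subset\phol(\Bck_-)$ (Corollary~\ref{Zpm}), which is precisely scalar multiplication by $e_p(\zzz)$. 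Surjectivity is immediate: the image of $\muql$ contains $\psql(\CGlhr)=\phol(\Bck_-)$ and all of $\Czb$, and since $y=z_1\cdots z_n\in\Czb$ and $\pho(\Bck)=\pho(\Bck_-)\ox\C[y^{-1}]$ by Corollary~\ref{Upm}, the image exhausts $\phol(\Bck)\ox\Czb$.

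Injectivity is the main obstacle; the plan is to realize both sides as base changes of $\psql$ along the inclusion $\Czpshr\hookrightarrow\Czhr$. On the source side, the presentation \Ref{Kqly} identifies $\Kcql\ox\C(h)$ with the tensor product $\CGlhr\ox_{\Czpshr}\Czhr$, where $\Czpshr$ acts on $\CGlhr$ through $\Wrq_\bla$ and on $\Czhr$ through the standard inclusion. On the target side, Corollaries~\ref{Upm} and~\ref{Zpm}, together with $y\in\Czb$, identify $\phol(\Bck)\ox\Czb$ (viewed as the subalgebra of $\End(\CNnl)\ox\Czhr$ generated by $\phol(\Bck_-)$ and $\Czb$) with $\phol(\Bck_-)\ox_{\Czpshr}\Czhr$, since the common subalgebra $\phol(\Zc_-)=\Czpshr$ embeds in both factors by the same inclusion. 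Because $\psql$ is an isomorphism of $\Czpshr$-algebras by Theorem~\ref{psq}, base change along this scalar extension produces the desired isomorphism $\muql$, and the graded structure is preserved since $\psql$ is graded and the extension respects the grading convention \Ref{degs}.
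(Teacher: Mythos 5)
Your overall strategy is the same one the paper intends: identify the source via \Ref{Kqly} as the pushout $\CGlhr\ox_{\Czpshr}\Czhr$ along the Wronski map, apply the base change of the isomorphism $\psql$ from Theorem~\ref{psq}, and match the target using \Ref{Bky}. You fill in details the paper omits, and the well-definedness and surjectivity steps are correct (note that once the Wronski relation forces $\prod_{i,j}\gm_{\ij}$ to be a unit multiple of $y\in\Czb$, passing from $\C[\GG^{\pm1}]^{S_\bla}$ to $\CGl$ in \Ref{Kqly} is automatic, so extending $\psql$ only on polynomial $\GG$-generators suffices).

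The one step that is asserted rather than proved is the target-side identification. You claim that the subalgebra of $\End\bigl(\CNnl\bigr)\ox\Czhr$ generated by $\phol(\Bck_-)$ and $\id\ox\Czb$ \emph{equals} the pushout $\phol(\Bck_-)\ox_{\Czpshr}\Czhr$, and justify this only by noting that $\Czpshr=\pho_\la(\Zc_-)$ sits in both factors compatibly. That observation yields the canonical surjection from the pushout onto the subalgebra, but it says nothing about injectivity; a priori there could be additional relations between operators in $\phol(\Bck_-)$ and the adjoined scalars $z_a$ beyond those coming from $\Czpshr$. Without that injectivity your base-change diagram does not yet identify $\Kcql\ox\C(h)$ with the stated subalgebra.

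The gap can be closed with ingredients already in the paper, e.g.: Theorem~\ref{psq} together with the graded finite-dimensionality used in Theorem~\ref{thmthiq} make $\CGlhr$ a graded free $\Czpshr$-module of rank $|\Il|$, so $\CGlhr\ox_{\Czpshr}\Czhr$ is free of rank $|\Il|$ over $\Czhr$. On the other hand, after tensoring with $\C(\zz,h)$ the subalgebra $\phol(\Bck)\ox\Czb$ becomes the full diagonal (with respect to the eigenbasis $\{\xi_I\}$) of $\End\bigl(\CNnl\bigr)\ox\C(\zz,h)$, hence has rank $|\Il|$; it is torsion-free as a submodule of a free $\Czhr$-module. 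A $\Czhr$-linear surjection from a free module of rank $|\Il|$ onto a torsion-free module of the same rank has trivial kernel, giving the injectivity you need. The paper's one-line proof elides this just as your write-up does, so the two are at a comparable level of rigor, but the pushout identification should not be presented as self-evident.
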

\begin{proof}
The statement follows from formulae~\Ref{Kqly} and \Ref{Bky}\:.
\end{proof}

Expand the coefficients \,$\bk_1(u)\lc\bk_N(u)$ \,of the difference operator
\,$\De$\>, see~\Ref{De}\:, at \,$u=0$\>:
\beq
\label{bps+}
\bk_p(u)\,=\,\Bigl(e_p(q_1h^{\la_1}\?\lc q_N\:h^{\la_N})\>+\:
\sum_{s=1}^\infty\,\bk_{\ps}\:u^s\Bigr)\>(-u)^{-n}\,
\prod_{i=1}^N\,\prod_{j=1}^{\la_i}\,\gm_{\ij}\,,
\eeq
where \,$e_p(q_1\:h^{\la_1}\lc q_N\:h^{\la_N})$ \>is the \,$p$\:-th elementary
symmetric polynomial.

\begin{cor}
\label{rhoq}
We have \;$\muql\bigl([\:\bk_{p,\:\pm s}\:]\bigr)=\phol(\Bk_{p,\:\pm s})$
\,for all \,$p=1\lc N$, \,$s\in\Z_{>0}$\>.
\qed
\end{cor}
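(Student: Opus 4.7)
The corollary splits into the ``$-$'' and ``$+$'' cases. My plan is to derive the minus case directly from Theorems~\ref{psq} and~\ref{thmuq}, then use a rational-function-in-$u$ argument to bootstrap to the plus case.

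For the minus case, Theorem~\ref{psq} gives $\psql(\bk_{p,-s}) = \phol(\Bk_{p,-s})$ by the very definition of $\psql$, and Theorem~\ref{thmuq} asserts that $\muql$ is the isomorphism induced from $\psql$ after extending scalars to $\Czhr$ and imposing the defining Wronski relation of $\Kcql$. Since the elements $\bk_{p,-s}\in\CGlhr$ are independent of $\zz$, they descend unambiguously to classes in $\Kcql\otimes\C(h)$, so that $\muql([\bk_{p,-s}]) = \phol(\Bk_{p,-s})$ for all $p=1,\lc,N$ and $s>0$.

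For the plus case, I would view both sides as Laurent expansion coefficients at $u=0$ of common rational functions of $u$. On the representation side, the $L$-operator~\Ref{Lpm} is manifestly a rational function of $u$, so by Proposition~\ref{pho} and~\Ref{L coeffs} both $\phol(\Lt_{ij,+}(u))$ and $\phol(\Lt_{ij,-}(u))$ are the expansions at $u=0$ and at $u=\infty$, respectively, of the common rational function $L_{ij}(u)$; hence all quantum minors $M_{\ib,\ib,\pm}(u)$ and, via~\Ref{Beq}, the series $\phol(\Bk_{p,\pm}(u))$, are expansions of a single rational function $\phol(\Bk_p(u))$. On the $\Kcql$ side, the relation $\Wk(u)=\Dlq\prod_a(1-z_a/u)$ likewise makes $\bk_p(u)$ a rational function of $u$ in $\Kcql\otimes\C(h)\otimes\C(u)$, with expansions~\Ref{bps-} at $u=\infty$ and~\Ref{bps+} at $u=0$. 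Extending $\muql$ $\C(u)$-linearly, both $\muql([\bk_p(u)])$ and $\phol(\Bk_p(u))$ are rational functions of $u$ in the common ambient algebra; by the minus case already established, their Laurent expansions at $u=\infty$ agree term-by-term, so they are equal as rational functions. Expanding at $u=0$ and matching coefficients yields $\muql([\bk_{p,+s}]) = \phol(\Bk_{p,+s})$ for all $s>0$.

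The main obstacle I expect is rigorously establishing that $\phol(\Bk_{p,+}(u))$ and $\phol(\Bk_{p,-}(u))$ really are two expansions of one rational function $\phol(\Bk_p(u))$, rather than a priori independent formal series; this requires unpacking~\Ref{Beq},~\Ref{A}, and the quantum-minor definitions of Section~\ref{minors}, and tracking how they combine with the rational entries of $L(u)$ under $\phol$. A secondary bookkeeping point is to reconcile the leading factor $(-u)^{-n}\prod\gm_{\ij}$ in~\Ref{bps+} with the plus-expansion structure on the $\phol(\Bck)$ side; this should fall out by using the identity $\prod\gm_{\ij} = \elq_N(\GG,h) = e_N(\zz)$ that holds in $\Kcql$, together with the central-element identifications of Corollary~\ref{Upm} which express $y^{-1}$ in $\phol(\Zc_+)$.
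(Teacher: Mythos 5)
Your proposal is correct and is essentially the argument the paper intends: the corollary is left as immediate (\:\qed\:) from Theorem~\ref{thmuq}, which gives the minus case by the very construction of \,$\psql$ and \,$\muql$\>, combined with the fact—used repeatedly in the paper, e.g.\ in the remarks after Theorems~\ref{AEFxi} and~\ref{rho}—that the \,$+$ and \,$-$ series are the expansions at \,$u=0$ \,and \,$u=\infty$ \,of a single rational function of \,$u$\>, since \,$L(u)$ in \Ref{Lpm} is rational and the normalizing factor \,$\prod_{r}\Lt_{i_r,\:i_r,\:-0}^{-1}$ in \Ref{Beq} is the same for both signs. One small correction to your closing remark: in \,$\Kcql$ one has \,$\prod_{i,j}\gm_{\ij}=\frac{\Dlq}{\Dl(q_1h^{\la_1}\<\lc\,q_Nh^{\la_N})}\;z_1\cdots z_n$ \,(see the formula for \,$y$ following \Ref{Kql}\:), not \,$e_N(\zz)$ on the nose, but this discrepancy is a unit in \,$\Kcql\ox\C(h)$ and does not affect the matching of the \,$u=0$ \,expansions.
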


\begin{lem}
\label{bpsK}
The elements \;$[\:\bk_{p,\:\pm s}\:]$\>, \,$p=1\lc N$, \,$s\in\Z_{>0}$\>,
together with \,$\C(h)$ \,generate the algebra \,$\Kcql\ox\C(h)$\>.
\end{lem}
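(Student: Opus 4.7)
\smallskip

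The plan is to upgrade Proposition~\ref{bps} from a description of generators of $\CGlhr$ to a description of generators of $\Kcql\ox\C(h)$, where the extra content in the latter (essentially the element $y^{-1}$, via the presentation~\Ref{Kqly}) is captured by the $\bk_{p,s}$ with $s>0$.

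First I would invoke Proposition~\ref{bps} directly: the elements $\bk_{p,-s}$ for $p=1\lc N$, $s\in\Z_{>0}$, together with $\C(h)$, generate $\CGlhr$. Composing with the natural graded ring homomorphism $\CGlhr\to\Kcql\ox\C(h)$ coming from the first tensor factor of~\Ref{Kql}, this shows that the subalgebra of $\Kcql\ox\C(h)$ generated by the $[\bk_{p,\pm s}]$ and $\C(h)$ contains the image of $\CGlhr$.

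Second, I would capture $y^{-1}$ using the elements $\bk_{p,s}$ for $s>0$. For this I would use formula~\Ref{bkN}, $\bk_N(u)=q_1\dots q_N\,\Wk(u/h)/\Wk(u)$. Since $\Wk(u)$ has leading behavior $(-1)^n\Dlq\,\elq_N(\GG,h)\,u^{-n}$ as $u\to 0$, the Taylor expansion of $\bk_N(u)$ at $u=0$ has constant term $q_1\dots q_N h^n$ and higher coefficients that are rational functions of $\gm$'s and $h$ with $\elq_N(\GG,h)$ in the denominator. Since $y$ is a $\C(h)^\times$-multiple of $\elq_N(\GG,h)$ in $\Kcql$ (by the computation cited in the paragraph following~\Ref{Kql}), combining the Taylor coefficients $[\bk_{N,s}]$ with the $\CGlhr$-elements produced in the first step yields $y^{-1}$ in the generated subalgebra. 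Hence the generated subalgebra contains $\CGlhr[y^{-1}]$.

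Third, I would conclude using the presentation~\Ref{Kqly}: $\Kcql\ox\C(h)=\CGl\ox\C[\zz,y^{-1}]\ox\C(h)/\langle\,\Wk(u)=\Dlq\prod_a(1-z_a/u)\,\rangle$. The Wronski relations identify $e_p(\zz)$ with $\elq_p(\GG,h)\in\CGlhr$, so modulo these relations every elementary symmetric function of $\zz$ (and therefore every element of the scalar ring appearing in the presentation) is already in $\CGlhr[y^{-1}]$, and the claimed equality between the generated subalgebra and $\Kcql\ox\C(h)$ follows. The main obstacle in executing this plan is the last step: one must carefully verify, directly from the Wronski relations, that passing to the quotient $\Kcql\ox\C(h)$ collapses the apparent ``extra'' $\zz$-dependence of the scalar ring $\C[\zz,y^{-1}]\ox\C(h)$ into the subalgebra $\CGlhr[y^{-1}]$; this is the content that makes the presentation~\Ref{Kqly} actually usable for a generation statement, and it is what connects the purely $\gm$-theoretic Proposition~\ref{bps} to the mixed $\gm,\zz$-theoretic Lemma~\ref{bpsK}.
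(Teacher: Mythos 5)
Your skeleton matches the paper's proof: Proposition~\ref{bps} handles the $\GG$-part, formula~\Ref{bkN} supplies $y^{-1}$, and the presentation~\Ref{Kqly} finishes. However, the two steps you identify as delicate are exactly the ones that do not close as written.

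First, the extraction of $y^{-1}$ from the coefficients of $\bk_N(u)$ is under-justified. Knowing that each coefficient of the expansion at $u=0$ is a rational function with $\elq_N(\GG,h)$ in the denominator does not by itself put $1/\elq_N$ into the generated subalgebra; you would still have to isolate it from the numerators, and you give no mechanism for that. The efficient route (and the paper's) is to pass to the quotient \emph{first}: the defining relation gives $[\Wk(u)]=\Dlq\prod_a(1-z_a/u)$, hence
$[\:\bk_N(u)\:]=q_1\cdots q_N\:h^n\bigl[\prod_{a=1}^n(1-h^{-1}u/z_a)\big/(1-u/z_a)\bigr]$,
and the coefficients of this expansion at $u=0$ generate over $\C(h)$ all symmetric polynomials in $z_1^{-1}\lc z_n^{-1}$ (take the logarithm to get the power sums, exactly as in the proof of Corollary~\ref{Zpm}); in particular they give $y^{-1}=z_1^{-1}\cdots z_n^{-1}$.

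Second, the ``main obstacle'' you flag at the end cannot be overcome in the form you propose: the extra $\zz$-dependence does \emph{not} collapse. The Wronski relations only identify the \emph{symmetric} functions $e_p(\zz)$ with elements of $\CGlhr$; since $\C[\zz^{\pm1}]$ is free of rank $n!$ over $\C[\zz^{\pm1}]^{S_n}$, the quotient \Ref{Kqly} is free of rank $n!$ as a module over the image of $\CGlhr[\:y^{-1}]$, so the individual $z_a$ are genuinely not in the subalgebra generated by the $[\:\bk_{p,\pm s}]$ and $\C(h)$. The statement has to be understood as generation over the coefficient ring $\Czhr$ — this is the form in which it is actually used, namely to conclude that Corollary~\ref{rhoq} pins down the $\Czhr$-algebra isomorphism $\muql$, whose target $\phol(\Bck)\ox\Czb$ likewise carries $\Czb$ as part of the coefficients. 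With that reading, once the image of $\CGlhr$ and the element $y^{-1}$ are in hand, the relation $\Wk(u)=\Dlq\prod_a(1-z_a/u)$ together with \Ref{Kqly} finishes the argument at once, and there is nothing left to ``carefully verify''.
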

\begin{proof}
By formula~\Ref{bkN}\:, \,${[\:\bk_N(u)\:]\,=\,q_1\ldots q_N\:h^n
\bigl[\,\prod_{a=1}^n\,(1-h^{-1}u/z_a)\big/(1-u/z_a)\:\bigr]}$\>.
\vvn.2>
Hence, the ideal generated by the elements \,$\bk_{N,-s}$ \>for all
${s\in\Z_{>0}}$ and \,$\C(h)$ \,contains all symmetric polynomials in
\,$z_1^{-1}\?\lc z_n^{-1}$\:, in particular, \,$z_1^{-1}\?\ldots z_n^{-1}$\:.
Therefore, the statement follows from equality~\Ref{Kqly} and
Proposition~\ref{bps}.
\end{proof}

Lemma~\ref{bpsK} implies that statement of Corollary~\ref{rhoq} can serve as
a definition of the isomorphism \,$\muql$\>.

\begin{thm}
\label{thnuq}
The map
\vvn-.3>
\beq
\label{nuql}
\nuql:\:\Kcql\ox\C(h)\,\to\,\DiV_\bla\ox\C(h)\,,\qquad
f\,\mapsto\>\sum_{I\in\Il}\,\frac1{R(\zz_I)}\;\muql(f)\,\xi_I\,,
\vv-.1>
\eeq
cf.~\Ref{thiq}\:, \,is an isomorphism of graded \;$\CZSr$-modules.
\end{thm}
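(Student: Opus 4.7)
The strategy is to factor $\nuql$ through the algebra isomorphism $\muql$ of Theorem~\ref{thmuq}, then compare with the already-established isomorphism $\thtql$ of Theorem~\ref{thmthiq}. Since operators act linearly on vectors, we can rewrite
\[
\nuql(f) \>=\> \muql(f) \cdot v\,, \qquad v \>:=\> \sum_{I \in \Il}\, \frac{\xi_I}{R(\zz_I)}\,,
\]
so that $\nuql = \mathrm{ev}_v \circ \muql$ with $\mathrm{ev}_v(A) := A v$. The vector $v$ is the image of the identity class under the isomorphism $\nu$ of Corollary~\ref{isonu}, hence $v \in \DL \ox \Czhr \subset \DiV_\bla \ox \C(h)$; and since operators in $\phol(\Bck) \ox \Czb$ preserve the $\Di\Czh$\:-denominator structure of the target, the map $\nuql$ takes values in $\DiV_\bla \ox \C(h)$ as claimed.

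The $\CZSr$-linearity of $\nuql$ is immediate from the analog of Lemma~\ref{muS}: for $g \in \CZSr$ the operator $\muql([g])$ is scalar multiplication by $g$, so $\nuql(gf) = g \cdot \nuql(f)$. Gradedness follows from gradedness of $\muql$ (Theorem~\ref{thmuq}) combined with the degree assignments of Section~\ref{sec:grad}. For bijectivity, I compare with $\thtql$: by Corollary~\ref{rhoq} and Proposition~\ref{bps}, the restriction $\muql|_{\CGlhr}$ coincides with $\psql$, so the restriction of $\nuql$ to $\CGlhr \subset \Kcql \ox \C(h)$ differs from $\thtql$ only by the $I$-dependent monomial twist $\prod_a \prod_{i \in I_a} z_i^{\la^{(a)}-n}$ appearing in formula~\Ref{thitla}. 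Using the decomposition $\Kcql \ox \C(h) = \CGlhr \ox_{\Czpshr} \Czhr$ that comes from the Wronski relations \Ref{Kqly} together with an analogous target decomposition $\DiV_\bla \ox \C(h) \cong (\DLl \ox \C(h)) \ox_{\Czpshr} \Czhr$ (with the change of denominator from $\Dt^{-1}$ to $D^{-1}$ absorbed via the identity $\Dt = D \cdot \prod_j z_j^{j-1}$ together with the twist factor), the map $\nuql$ is identified with $\thtql$ extended by base change over $\Czpshr$; since $\thtql$ is an isomorphism, so is $\nuql$.

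The main obstacle is rigorously establishing this target decomposition $\DiV_\bla \ox \C(h) \cong (\DLl \ox \C(h)) \ox_{\Czpshr} \Czhr$ as graded $\Czhr$-modules, including compatibility of the monomial twist and the denominator change. Both sides are free of rank $|\Il|$ over $\Czhr$, which provides the rank check; the remaining work is to write down an explicit $\Czhr$-linear splitting that realizes the twist $\prod_a \prod_{i \in I_a} z_i^{\la^{(a)}-n}$ as the effect of passing from the $S_\bla$-invariant polynomial coefficients of $\DLl$ to the non-invariant Laurent coefficients of $\DiV_\bla$. Once this is done, the theorem follows by flat base change from Theorem~\ref{thmthiq}.
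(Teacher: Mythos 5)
This is essentially the paper's proof: there one sets $r=\bigl[\prod_{a=1}^N\prod_{i\in\Imn_a}z_i^{\,n-\la^{(a)}}\bigr]\in\Kcql$ (the class corresponding to your monomial twist), observes that $\nuql(f)=\nutql(fr)$ where $\nutql$ is the extension of $\thtql$ to $\Kcql\ox\C(h)$, and concludes because $r$ is invertible. The base-change step you single out as the remaining obstacle is precisely the extension of scalars already used for $\muql$ in Theorem~\ref{thmuq} via \Ref{Kqly} and \Ref{Bky}, and the paper does not elaborate on it beyond the word ``induces,'' so your identification of the twist and the reduction to Theorem~\ref{thmthiq} constitute the entire content of the argument.
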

\begin{proof}
Let \,$r=\bigl[\,\prod_{a=1}^N\>\prod_{\>i\in I_a}z_i^{n-\la^{(a)}}\>\bigr]
\in\Kcql$\,.
The isomorphism \,$\thtql\<:\CGlhr\to\DLl\?\ox\C(h)$\>,
\vvn.2>
see Theorem~\ref{thmthiq}, induces the isomorphism
\,$\nutql:\:\Kcql\ox\C(h)\,\to\,\DiV_\bla\ox\C(h)$ \,such that
\,$\nuql(f)=\nutql(fr)$\>. Since the element \,$r$ \,is invertible,
the statement follows.
\end{proof}

\begin{cor}
\label{regrepKq}
The \;${\Czhr}$-module
\vvn.1>
isomorphism \,$\nuql\<:\Kcql\ox\C(h)\to\DL\?\ox\Czb\ox\C(h)$
\vvn.16>
\,and the \,${\Czhr}$-algebra isomorphism
\;$\muql\<:\Kcql\ox\C(h)\:\to\:\phol(\Bck)\ox\Czb$
\vvn.1>
\>identify the \,$\phol(\Bck)\ox\Czb$-module \,$\DL\?\ox\Czb\ox\C(h)$ \,with
the regular representation of \,\,$\Kcql\ox\C(h)$\>.
\end{cor}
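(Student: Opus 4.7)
The strategy is to reduce the corollary to Theorems~\ref{thmuq} and~\ref{thnuq} by a short intertwining argument, resting on the single identity
\be
\nuql(f)\;=\;\muql(f)\cdot\nuql(1)\qquad\text{for all}\;\,f\in\Kcql\ox\C(h).
\ee
This follows at once from the defining formula~\Ref{nuql}: the operator $\muql(f)\in\End\bigl(\CNnl\bigr)\ox\Czhr$ is $\Czhr$-linear, so after extending scalars it commutes with the rational scalars $1/R(\zz_I)$ appearing in the sum, allowing $\muql(f)$ to be pulled outside.

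Combined with the multiplicativity of $\muql$ (Theorem~\ref{thmuq}), this yields the intertwining relation
\be
\muql(g)\cdot\nuql(f)\,=\,\muql(g)\,\muql(f)\cdot\nuql(1)\,=\,\muql(gf)\cdot\nuql(1)\,=\,\nuql(gf)
\ee
for all $f,g\in\Kcql\ox\C(h)$. Thus $\nuql$ intertwines the regular representation of $\Kcql\ox\C(h)$ on itself with the action on its image through $\muql$, that is, through $\phol(\Bck)\ox\Czb$. Since $\nuql$ is a bijection (Theorem~\ref{thnuq}) and $\muql$ is an algebra isomorphism (Theorem~\ref{thmuq}), the pair $(\nuql,\muql)$ realizes the image of $\nuql$ as the regular representation of $\Kcql\ox\C(h)$, as required.

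The only remaining task is to match the target description $\DL\ox\Czb\ox\C(h)$ of the corollary with the image $\DiV_\bla\ox\C(h)$ provided by Theorem~\ref{thnuq}. The cyclic vector $\nuql(1)=\sum_{I\in\Il}\xi_I/R(\zz_I)$ equals $\thil(1)$ by formula~\Ref{thi-xi} and therefore lies in $\DL$; meanwhile $\muql(f)\in\phol(\Bck)\ox\Czb$ is a $\Czb$-linear combination of elements of $\phol(\Bck)$, each of which preserves $\DL$ by Corollary~\ref{Usch}. Hence the image of $\nuql$ lies in $\DL\ox\Czb\ox\C(h)$, and the opposite inclusion is forced by surjectivity of $\nuql$ onto $\DiV_\bla\ox\C(h)$. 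I expect this module-bookkeeping, rather than any new algebraic input, to be the only mildly delicate point; the algebraic core of the corollary is purely formal once the diagonal/multiplicative behavior of $\muql$ on the $\xi_I$-basis is combined with Theorems~\ref{thmuq} and~\ref{thnuq}.
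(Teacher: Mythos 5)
Your proposal is correct and follows essentially the same route as the paper, whose entire proof is the one-line citation of Theorems~\ref{thmuq} and~\ref{thnuq}; your intertwining identity $\nuql(gf)=\muql(g)\,\nuql(f)$, read off from the defining formula $\nuql(f)=\sum_I\muql(f)\,\xi_I/R(\zz_I)$, is exactly the formal content being invoked there. Your extra remark matching the target space with $\DL\ox\Czb\ox\C(h)$ (via $\nuql(1)=\thil(1)\in\DL$ and the fact that $\phol(\Bck)$ preserves $\DL$) is a reasonable filling-in of a detail the paper leaves implicit.
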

\begin{proof}
The statement follows from Theorems~\ref{thmuq} and~\ref{thnuq}.
\end{proof}

Recall \;$\KTX=\bigoplus_{|\bla|=n}K_T(\tfl)$\>, see~\Ref{KTX}\:. Let
\vvn.2>
\be
\nul:K_T(\tfl)\ox\C(h)\,\to\,\DL\?\ox\Czhr
\vv.3>
\ee
be the graded \,$\Czhr$-module isomorphism obtained by the restriction
of the isomorphism~\Ref{nu3}\:. By Theorems~\ref{thnu} and~\ref{thnuq},
the composition \,$\btl=(\nuql)^{-1}\:\nul$\,,
\vvn.3>
\beq
\label{btl}
\btl:K_T(\tfl)\ox\C(h)\,\to\,\Kcql\ox\C(h)
\vv.3>
\eeq
is a graded \,$\Czhr$-module isomorphism.
Notice \;$\btl(1)=1$ \,since \,$\nuql(1)=\nul(1)$\,.

\vsk.2>
Recall the \,$\Uen$\:-action \,$\rho$ \,on \,$\KTX\ox\C(h)$,
see~formulae~\Ref{Arho} and Theorem~\ref{rho}. Let
\vvn-.3>
\be
\rhol:\Uen^\hg\:\to\,\End\bigl(K_T(\tfl)\bigr)\ox\C(h)
\vv.2>
\ee
be the induced \,$\C(h)$-algebra homomorphism. Recall the generators
\,$\bk_{p,\:\pm s}$ \:of the algebra \,$\Kcql$\>,
see~\Ref{bps-}\:, \Ref{bps+}\:.

\begin{lem}
\label{lemall}
The assignment \;$[\:\bk_{p,\:\pm s}\:]\mapsto\rhol(\Bk_{p,\:\pm s})$\,,
\vvn.1>
\,$p=1\lc N$, \,$s\in\Z_{>0}$\>, \,defines a graded \;$\CZSr$-algebra
isomorphism \;$\all\<:\Kcql\ox\C(h)\:\to\:\rhol(\Bck)$\>.
\end{lem}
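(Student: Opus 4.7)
The strategy is to realize $\all$ as conjugation by the isomorphism $\btl$ of the regular (multiplication) action of $\Kcql\ox\C(h)$ on itself. The key ingredients are already in place: Corollary~\ref{rhoq} (giving $\muql([\bk_{p,\pm s}]) = \phol(\Bk_{p,\pm s})$), the defining property $\rho(X) = \nu^{-1}\circ\pho(X)\circ\nu$ of the action $\rho$, and the explicit formula $\nuql(g) = \sum_I R(\zz_I)^{-1}\muql(g)\,\xi_I$.

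First I would verify that $\nuql$ intertwines multiplication by $f\in\Kcql\ox\C(h)$ with application of the operator $\muql(f)\in\End(\CNnl)\ox\Czhr$. This is immediate from the defining formula of $\nuql$ once one observes that $\muql(f)$ acts diagonally on the $\xi_I$-basis, see~\Ref{psi(g)}: indeed $\nuql(fg) = \sum_I R(\zz_I)^{-1}\muql(fg)\xi_I = \muql(f)\circ\nuql(g)$. Combined with the equality $\nul\circ\rhol(X) = \phol(X)\circ\nul$ (the definition of $\rho$, restricted to $K_T(\tfl)\ox\C(h)$ and $X\in\Uen^\hg$) and Corollary~\ref{rhoq}, conjugation by $\btl=\nuql^{-1}\circ\nul$ yields
\[
\btl\circ\rhol(\Bk_{p,\pm s})\circ\btl^{-1} \,=\, m\bigl([\bk_{p,\pm s}]\bigr),
\]
where $m(f)$ denotes multiplication by $f$ on $\Kcql\ox\C(h)$. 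Hence the map $f\mapsto \btl^{-1}\circ m(f)\circ\btl$ is an injective $\C(h)$-algebra homomorphism from $\Kcql\ox\C(h)$ into $\End\bigl(K_T(\tfl)\bigr)\ox\C(h)$ — injective because the regular representation of a unital algebra is faithful — whose restriction to the generators $[\bk_{p,\pm s}]$ agrees with the assignment in the lemma. This simultaneously establishes that the assignment is well-defined as an algebra homomorphism and identifies it with the conjugation map.

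For surjectivity onto $\rhol(\Bck)$, I would invoke Lemma~\ref{bpsK}: the elements $[\bk_{p,\pm s}]$ together with $\C(h)$ generate $\Kcql\ox\C(h)$, so the image of $\all$ equals the $\C(h)$-subalgebra generated by $\{\rhol(\Bk_{p,\pm s})\}_{p,s}$. On the other hand, $\Bck$ is generated over $\C(h)$ by $\Ueh$ and $\{\Bk_{p,\pm s}\}_{p,s}$, and by~\Ref{Ltlf} every element of $\rhol(\Ueh)$ acts on $K_T(\tfl)\ox\C(h)$ as a scalar in $\C[h^{\pm1}]\subset\C(h)$; thus $\rhol(\Bck)$ coincides with the same $\C(h)$-subalgebra. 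The remaining assertions — that $\all$ is graded and $\CZSr$-linear — come for free: $\btl$ is a graded $\Czhr$-module (hence $\CZSr$-module) isomorphism by Lemma~\ref{lem:nugr} and Theorem~\ref{thnuq}, and conjugation by a graded $\Czhr$-linear map preserves both the grading and the $\CZSr$-action.

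I do not expect a genuine obstacle here, since all the substantive work — constructing $\muql$, $\nuql$, $\btl$, and establishing Corollary~\ref{rhoq} and Lemma~\ref{bpsK} — has already been carried out in the preceding sections. The only point requiring mild care is the bookkeeping in the first paragraph, matching the operator $\muql(f)$ acting on $\xi_I$-expansions against the scalar factor $R(\zz_I)^{-1}$ in the definition of $\nuql$; everything thereafter is formal.
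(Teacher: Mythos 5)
Your proposal is correct and follows essentially the same route as the paper: the paper's proof likewise observes that $\rhol(X)=(\nul)^{-1}\phol(X)\,\nul$, invokes Corollary~\ref{rhoq} and Lemma~\ref{bpsK} to conclude $\all(f)=(\nul)^{-1}\muql(f)\,\nul=\btl^{-1}\circ m(f)\circ\btl$, and then cites Theorems~\ref{thnu} and~\ref{thmuq}. Your extra remarks — faithfulness of the regular representation for injectivity, and the fact that $\rhol(\Ueh)$ acts by scalars via~\Ref{Ltlf} so that the image is all of $\rhol(\Bck)$ — are correct elaborations of details the paper leaves implicit.
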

\begin{proof}
By the definition of \,$\rhol$\,, we have
\vvn.1>
\,$\rhol(X)=(\nul)^{-1}\>\rhol(X)\;\nul$ \,for any \,$X\?\in\Uen^\hg$\:.
Thus Corollary~\ref{rhoq} and Lemma~\ref{bpsK} imply that
\vvn.2>
\beq
\label{all}
\all(f)\,=\,(\nul)^{-1}\>\muql(f)\;\nul
\vv.2>
\eeq
for any \>$f\in\Kcql$\>, and Lemma~\ref{lemall} follows from
Theorems~\ref{thnu} and~\ref{thmuq}.
\end{proof}

\begin{cor}
\label{regKqK}
The \;${\Czhr}$-module
\vvn.16>
isomorphism \,$\btl^{-1}\<:\Kcql\ox\C(h)\to K_T(\tfl)\ox\C(h)$
\vvn.16>
\,and the \,${\Czhr}$-algebra isomorphism
\;$\all\<:\Kcql\ox\C(h)\:\to\:\rhol(\Bck)\ox\Czb$
\vvn.1>
\>identify the \,$\rhol(\Bck)\ox\Czb$-module \,$K_T(\tfl)\ox\C(h)$ \,with
the regular representation of \,\,$\Kcql\ox\C(h)$\>.
\qed
\end{cor}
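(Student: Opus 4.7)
The plan is to verify that the pair of isomorphisms $(\btl^{-1},\all)$ intertwines the two module structures, i.e., that for any $f,g\in\Kcql\ox\C(h)$,
\[
\btl^{-1}(f\cdot g)\,=\,\all(f)\bigl(\btl^{-1}(g)\bigr),
\]
where on the left $f\cdot g$ is multiplication in $\Kcql\ox\C(h)$ (the regular representation action), and on the right $\all(f)\in\rhol(\Bck)\ox\Czb$ acts on $K_T(\tfl)\ox\C(h)$. Once this identity is checked, the corollary follows, since $\btl^{-1}$ is already a $\Czhr$-module isomorphism (by the definition $\btl=(\nuql)^{-1}\,\nul$ and Theorems \ref{thnu}, \ref{thnuq}) and $\all$ is a $\Czhr$-algebra isomorphism (by Lemma \ref{lemall}).

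The strategy is to transport the already established Corollary~\ref{regrepKq} through the isomorphism $\nul\<:K_T(\tfl)\ox\C(h)\to\DL\?\ox\Czhr$. Both maps in the statement are defined in terms of $\nul$: by construction, $\btl^{-1}=(\nul)^{-1}\circ\nuql$, and by formula~\Ref{all}, $\all(f)=(\nul)^{-1}\circ\muql(f)\circ\nul$. Hence the compatibility we must prove on $K_T(\tfl)\ox\C(h)$ can be pulled back, via $\nul$, to the corresponding compatibility on $\DL\?\ox\Czb\?\ox\C(h)$, which is exactly what Corollary~\ref{regrepKq} asserts about the pair $(\nuql,\muql)$.

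Given these factorizations, the verification is a short diagram chase:
\[
\btl^{-1}(fg)\>=\>(\nul)^{-1}\nuql(fg)\>=\>(\nul)^{-1}\muql(f)\,\nuql(g)\>=\>\all(f)\,(\nul)^{-1}\nuql(g)\>=\>\all(f)\bigl(\btl^{-1}(g)\bigr),
\]
where the crucial middle equality uses precisely the regular-representation identification supplied by Corollary~\ref{regrepKq}. There is no genuine obstacle: the content of the result lies entirely in the prior identifications made by Lemma~\ref{lemall}, Theorem~\ref{thmuq}, and Theorem~\ref{thnuq}, and the present corollary merely repackages those identifications through the isomorphism $\nul$ coming from the stable envelope map $\Stab_{\>\id}$.
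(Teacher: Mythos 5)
Your proposal is correct and is essentially the argument the paper intends: the corollary is stated with no written proof precisely because it is the immediate diagram chase you give, combining the factorizations $\btl=(\nuql)^{-1}\:\nul$ and formula \Ref{all} with the regular-representation identification of Corollary~\ref{regrepKq}. Nothing is missing.
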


\subsection{New multiplication on \,$K_T(\tfl)\ox\C(h)$\>}
\label{sec:*}

Define a new commutative associative multiplication \,$\asq$ \:on
\,$K_T(\tfl)\ox\C(h)$\>, depending on the parameters \,$\kkk$\:, by the rule:
\vvn-.3>
\beq
\label{*}
\btl(f\asq\?g)\,=\,\btl(f)\,\btl(g)
\eeq
for any \,$f,g\in K_T(\tfl)\ox\C(h)$\>.

\begin{lem}
\label{allbtl}
For any \,${f\<\in K_T(\tfl)\ox\C(h)}$\>, the operator \;$f\asq$ \:coincides
with the operator \,$\all\bigl(\btl(f)\bigr)\in\rhol(\Bck)$\>. The map
\vvn.2>
\be
K_T(\tfl)\ox\C(h)\>\to\>\rhol(\Bck)\,,\qquad f\>\mapsto f\asq\,,
\vv.2>
\ee
is an isomorphism of graded \;$\Czhr$-modules.
\end{lem}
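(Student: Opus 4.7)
The plan is to reduce the statement to an unfolding of the identifications provided by Corollary \ref{regKqK}, using the defining property $\btl(f\asq g)=\btl(f)\>\btl(g)$ of the new product.

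First I would establish the key pointwise identity $f\asq g=\all(\btl(f))(g)$ for every $g\in K_T(\tfl)\ox\C(h)$. By the very definition of \,$\asq$\, in \Ref{*} we have
\[
f\asq g\,=\,\btl^{-1}\bigl(\btl(f)\,\btl(g)\bigr),
\]
so the task is to show that the operator on \,$K_T(\tfl)\ox\C(h)$\, given by
\,$g\mapsto\btl^{-1}(\btl(f)\,\btl(g))$\, is exactly \,$\all(\btl(f))$.
But this is precisely the statement of Corollary~\ref{regKqK}: the two isomorphisms \,$\btl^{-1}\!:\Kcql\ox\C(h)\to K_T(\tfl)\ox\C(h)$\, and \,$\all\<:\Kcql\ox\C(h)\to\rhol(\Bck)\ox\Czb$\, identify the \,$\rhol(\Bck)\ox\Czb$-module \,$K_T(\tfl)\ox\C(h)$\, with the regular representation of \,$\Kcql\ox\C(h)$; applied to \,$u=\btl(f)\in\Kcql\ox\C(h)$\, this says exactly that
\[
\all(\btl(f))(g)\,=\,\btl^{-1}\bigl(\btl(f)\cdot\btl(g)\bigr)\,=\,f\asq g,
\]
which is the first assertion.

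Next I would derive the isomorphism statement. The identity just proved shows that the map \,$f\mapsto f\asq$\, factors as \,$\all\circ\btl$\,, i.e. as the composition
\[
K_T(\tfl)\ox\C(h)\ \xrightarrow{\;\btl\;}\ \Kcql\ox\C(h)\ \xrightarrow{\;\all\;}\ \rhol(\Bck).
\]
Since \,$\btl$\, is a graded \,$\Czhr$-module isomorphism \Ref{btl} and \,$\all$\, is a graded \,$\CZSr$-algebra isomorphism (Lemma~\ref{lemall}), both are isomorphisms of graded \,$\Czhr$-modules once we equip \,$\rhol(\Bck)$\, with the module structure coming from the regular representation under the identification of Corollary~\ref{regKqK}. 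Hence the composition is a graded \,$\Czhr$-module isomorphism, establishing the second assertion.

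The only subtle point I expect is checking that \,$\all$\, is \,$\Czhr$-linear and not merely \,$\CZSr$-linear. This is not stated directly in Lemma~\ref{lemall}, but it follows from Corollary~\ref{regKqK}: under the identification with the regular representation, multiplication by any element of \,$\Czhr\subset\Kcql\ox\C(h)$\, corresponds on the one hand to multiplication in the endomorphism algebra by the scalar action of \,$\Czhr$\, on \,$K_T(\tfl)\ox\C(h)$\, (via the \,$\Czh$-module structure of \,$K_T(\tfl)$), and on the other hand to application of \,$\all$\, to that element of \,$\Czhr$\<. The equality of the two gives \,$\Czhr$-linearity of \,$\all$\, (extending the \,$\CZSr$-linearity of Lemma~\ref{lemall}), which in turn makes the composition \,$\all\circ\btl$\, a \,$\Czhr$-module isomorphism. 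Once this compatibility is in place, both claims of the lemma follow formally, and no further computation is needed.
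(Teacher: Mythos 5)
Your proof is correct and follows essentially the same route as the paper's: the paper establishes the identity $\all\bigl(\btl(f)\bigr)\,g = f\asq g$ by unwinding formula \Ref{all}, the relation $\nul=\nuql\circ\btl$, and Corollary~\ref{regrepKq}, which is precisely the content of Corollary~\ref{regKqK} that you invoke directly as a black box. Your closing remark about upgrading the $\CZSr$-linearity of $\all$ from Lemma~\ref{lemall} to $\Czhr$-linearity is a fair point which the paper handles only by asserting that both $\all$ and $\btl$ are graded $\Czhr$-module isomorphisms.
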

\begin{proof}
Using the equality \,$\nuql=\btl\,\nul$\>, formula~\Ref{all}\:, and
Corollary~\ref{regrepKq}, we have
\vvn.3>
\begin{align*}
\all\bigl(\btl(f)\bigr)\,g\> &{}=\>
(\nul)^{-1}\bigl(\muql\bigl(\btl(f)\bigr)\,\nul(g)\bigr)\>=\>
(\nul)^{-1}\bigl(\muql\bigl(\btl(f)\bigr)\,\nuql\bigl(\btl(g)\bigr)\bigr)
\\[4pt]
& {}=\>(\nul)^{-1}\bigl(\nuql\bigl(\btl(f)\>\btl(g)\bigr)\bigr)\>=\>
(\btl)^{-1}\bigl(\btl(f)\>\btl(g)\bigr)\>=\>f\asq g\,.
\\[-16pt]
\end{align*}
Since both \,$\all$ \>and \,$\btl$ \>are graded \,$\Czhr$-module isomorphisms,
the statement follows.
\end{proof}

\subsection{Conjecture on the quantum equivariant K-theory}
\label{sec conjectures}

The quantum deformation of the equivariant K-theory algebra was introduced
by Givental and Lee, motivated, in particular, by a study of the relationship
between Gromov-Witten theory and integrable systems, see~\cite{G,GL}.
The quantum multiplication on \,$K_T(\tfl)$ depends on new parameters
\,$q_2/q_1\lc q_N/q_{N-1}$ \,and tends to the ordinary multiplication on
\,$K_T(\tfl)$ as all of these ratios tend to zero.

\begin{conj}
\label{conj}
The multiplication \Ref{*} on \,$K_T(\tfl)\ox\C(h)$ coincides with the quantum
multiplication.
\end{conj}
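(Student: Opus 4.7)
The plan is to reduce the conjecture to a comparison of explicit presentations of the two algebras as quotients of a common polynomial ring. By Corollary~\ref{regKqK} and Lemma~\ref{allbtl}, the algebra $(K_T(\tfl)\ox\C(h),\asq)$ is isomorphic to $\Kcql\ox\C(h)$, which by~\Ref{Kql} is presented over $\Czh$ as the quotient of $\CGl\ox\C(h)$ by the coefficients of the discrete Wronski relation $\Wk(u)=\Dlq\prod_{a=1}^n(1-z_a/u)$. It therefore suffices to exhibit the same presentation for the Givental-Lee quantum K-theory of $\tfl$.

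The verification would proceed in four steps. First I would check the classical limit: as $q_{i+1}/q_i\to 0$ for all $i$, formula~\Ref{Blim} shows that $\Bk_{p,\pm}(u)$ degenerates to a multiple of $A_{p,\pm}(u)$, so by Theorem~\ref{regrep} the product $\asq$ recovers the ordinary multiplication on $K_T(\tfl)$, matching the small-$q$ limit of the Givental-Lee product by construction. Second, compute the quantum product by a natural generating set --- the Picard generators $[\det(F_i/F_{i-1})]$, equivalently the top elementary symmetric functions in each $\Gm_i$ --- on both sides. On the $\asq$-side, Theorem~\ref{rho} together with the expansion~\Ref{bps+} and the topological interpretation of Theorem~\ref{Varag} give explicit formulae in terms of the forgetful maps~\Ref{eqn:forgetfuls}. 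On the Givental-Lee side, multiplication by divisor classes is controlled by the K-theoretic divisor equation and can be extracted from the small $J$-function of $\Fla$. Third, invoke a reconstruction-type argument in the spirit of Lee-Pandharipande or Iritani-Milanov-Tonita to propagate the agreement from divisor classes to the whole algebra. Fourth, as an alternative to reconstruction, interpret the coefficients $\bk_p(u)$ of~\Ref{De} as the symbol of the $q$-difference operator annihilating a basis of Bethe-side $J$-functions, and identify this operator with the K-theoretic quantum connection of $\Fla$; the relation~\Ref{Kql} would then reappear as the quantum characteristic polynomial.

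The main obstacle is the passage from the enumerative-geometric definition of the Givental-Lee product --- via virtual structure sheaves on moduli spaces of stable maps --- to an algebraic presentation by generators and relations. This is the K-theoretic analog of the difficulty overcome in cohomology by Maulik-Okounkov \cite{MO} through the stable envelope formalism; the present paper already constructs the required K-theoretic stable envelope maps $\Stab_\si$, so the crux is to show that the Bethe-ansatz eigenvalue problem for $\Bck$ coincides with the critical-point equations of the K-theoretic Landau-Ginzburg mirror of $\Fla$, with the Chern roots $\gm_{i,j}$ playing the role of mirror integration variables (the correspondence alluded to in the introduction). A plausible route is to match the solution space of $\De$ with an Okounkov-Smirnov-type space of capped vertex functions, using the geometric $R$-matrix formula of Theorem~\ref{thm R general} to align wall-crossing behavior on the two sides; once the principal-part comparison is in place, the remaining scalar normalizations and the precise Chern-root/Bethe-root dictionary should be algorithmic, and the matching of presentations would follow from Lemma~\ref{bpsK} together with injectivity of the Wronski map (Lemma~\ref{injW}).
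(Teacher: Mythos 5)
The statement you are addressing is stated in the paper as Conjecture~\ref{conj}; the paper offers no proof of it, only the supporting structure assembled in Sections~\ref{sec:Wr} and~\ref{sec:*} (the identification of $\bigl(K_T(\tfl)\ox\C(h),\asq\bigr)$ with $\Kcql\ox\C(h)$ and with the Bethe algebra $\rhol(\Bck)$, together with the observation that the relations of $\Kcql$ degenerate to those of $K_T(\tfl)$ as $q_{i+1}/q_i\to0$). Your reduction to a comparison of presentations via Corollary~\ref{regKqK} and Lemma~\ref{allbtl} is exactly the framework the paper already sets up, and your classical-limit check in Step~1 reproduces what the paper notes after~\Ref{Kql}. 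So the first half of your proposal is correct but does not go beyond what is proved in the paper.

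The remaining steps constitute a research program, not a proof, and the gaps in them are genuine. First, you never compute a single Givental--Lee quantum product on $K_T(\tfl)$: that product is defined via virtual structure sheaves on moduli spaces of stable maps, and extracting even the multiplication by the classes $\det(F_i/F_{i-1})$ requires controlling the relevant $K$-theoretic Gromov--Witten invariants, which is not done here. Second, the reconstruction step is not routine in quantum $K$-theory: unlike in cohomology, the $K$-theoretic divisor equation does not simply express divisor insertions as derivatives in the Novikov variables, so agreement on divisor classes does not formally propagate to the whole algebra; one would have to verify the hypotheses of an Iritani--Milanov--Tonita-type reconstruction for $\tfl$, itself a substantial undertaking. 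Third, your alternative Step~4 --- identifying the difference operator $\De$ of~\Ref{De} with the $K$-theoretic quantum difference connection of $\Fla$ --- is essentially a restatement of the conjecture rather than an argument for it. You correctly isolate where the difficulty lies, and you acknowledge as much, but the proposal does not close that gap; the statement remains a conjecture both in the paper and in your write-up.
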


This conjecture is the K-theoretic analog of the main theorem in \cite{MO} that
describes the quantum multiplication in the equivariant cohomology of Nakajima
varieties. The case of the quantum multiplication in the equivariant
cohomologies of the varieties \,$\tfl$ was considered also in
\cite{GRTV,RTV,TV4}\:.

\vsk.2>
Lemmas~\ref{lemall} and~\ref{allbtl} mean that modulo Conjecture~\ref{conj},
the quantum equivariant K-theory algebra \;$QK_T(\tfl)\ox\C(h)$ \,is isomorphic to
the Bethe algebra \,$\rhol(\Bck)$ and is isomorphic to the algebra
\,$\Kcql\ox\C(h)$\>.

\subsection{Limit $h\to 0$}
\label{Limit h to 0}

Let \,$M^Q(u)$ be the \,$N\>{\times}\>N$ \>matrix with entries
\,$M^Q_{\ij}\<=0$ \,for \,$|\<\>i-j\<\>|>1$\>,
\begin{gather*}
M^Q_{\ii}(u)\,=\,\prod_{k=1}^{\la_i}\,(1-\gm_{\ik}/u)\,,\qquad i=1\lc N\>,
\\[6pt]
M^Q_{\ioi}(u)\,=\,Q_{i+1}/Q_i\,,
\ \quad M^Q_{\ii+1}(u)\,=\,(-u)^{-\la_i}\gm_{i,1}\<\dots\gm_{i,\:\la_i}\,,
\qquad i=1\lc N-1\,,
\\[-12pt]
\end{gather*}
depending on parameters \,$Q_1\lc Q_N$. Set \,$\Wt^Q(u)=\det\:M^Q(u)$\,.

\begin{lem}
Let \>$\la_i>0$ \>and \;$q_i=\:Q_i\>h^{\la_1\lsym+\la_{i-1}}$
for all \,$i=1\lc N$. Then
\vvn.2>
\be
\Wk(u)\,\to\,\Wt^Q(u)
\vv-.5>
\ee
as \,$h\to 0$.
\end{lem}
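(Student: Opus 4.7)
The plan is to write $W^q(u) = \det A(h)$ where $A(h)_{ij} = q_i^{i-j}\prod_{k=1}^{\la_i}(1-h^{i-j}\gm_{i,k}/u)$, then to show that conjugating $A(h)$ by a suitable diagonal matrix $D_h = \diag(h^{-a_1},\ldots,h^{-a_N})$ gives a matrix whose entries have finite $h\to 0$ limits, and that the limit matrix is tridiagonal and equal to a diagonal conjugate of $M^Q(u)$. Since conjugation preserves the determinant and determinants depend continuously on their entries, this yields $W^q(u) \to \det M^Q(u) = \tilde W^Q(u)$. The shifts are chosen to be $a_i = \sum_{k=1}^{i-1}\la^{(k)}$, precisely so that the $(i,i{+}1)$ and $(i{+}1,i)$ entries stabilize to nonzero limits.

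Substituting $q_i = Q_i h^{\la^{(i-1)}}$, I compute leading $h$-orders. For $i<j$ the factorization $1-h^{i-j}\gm_{i,k}/u = (-h^{i-j}\gm_{i,k}/u)(1-h^{j-i}u/\gm_{i,k})$ gives the leading behavior $A(h)_{ij} \sim Q_i^{i-j}(-u)^{-\la_i}\gm_{i,1}\cdots\gm_{i,\la_i}\,h^{(i-j)\la^{(i)}}$; for $i>j$ the product $\prod_k(1-h^{i-j}\gm_{i,k}/u)\to 1$, giving $A(h)_{ij} \sim Q_i^{i-j}h^{(i-j)\la^{(i-1)}}$. After multiplying by $h^{a_j-a_i}$, the exponent of $h$ in the conjugated $(i,j)$ entry becomes $\sum_{k=i}^{j-1}(\la^{(k)}-\la^{(i)})$ when $i<j$ and $\sum_{k=j}^{i-1}(\la^{(i-1)}-\la^{(k)})$ when $i>j$.

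The hypothesis $\la_i>0$ for all $i$ makes $\la^{(0)}<\la^{(1)}<\cdots<\la^{(N)}$ strictly increasing, so each summand above is non-negative and strictly positive unless $k=i$ (respectively $k=i-1$). Hence every entry with $|i-j|\geq 2$ vanishes in the limit, while the diagonal limit is $\prod_k(1-\gm_{i,k}/u) = M^Q_{ii}(u)$, the sub-diagonal limit is $Q_{i+1} = Q_i\cdot M^Q_{i+1,i}(u)$, and the super-diagonal limit is $Q_i^{-1}(-u)^{-\la_i}\gm_{i,1}\cdots\gm_{i,\la_i} = Q_i^{-1}M^Q_{i,i+1}(u)$. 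Setting $\tilde D = \diag(1,Q_1,Q_1Q_2,\ldots,\prod_{k=1}^{N-1}Q_k)$ identifies the limit matrix with $\tilde D\,M^Q(u)\,\tilde D^{-1}$, and passing to determinants gives $W^q(u)\to\tilde W^Q(u)$. The only real use of the hypothesis $\la_i>0$ is to ensure that the off-tridiagonal entries die in the limit, and this is the one step where the argument could actually break down; everything else is bookkeeping with exponents of $h$.
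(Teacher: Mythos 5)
Your argument is correct and complete: the diagonal conjugation with $a_i=\sum_{k=1}^{i-1}\la^{(k)}$ does make every entry of the conjugated matrix converge, the off-tridiagonal entries die precisely because $\la^{(0)}<\la^{(1)}<\cdots<\la^{(N)}$ is strictly increasing, and the limit matrix is indeed $\tilde D\,M^Q(u)\,\tilde D^{-1}$, so the determinants agree. The paper gives no details here — it only says the proof is similar to that of \cite[Theorem~7.3]{GRTV} — and your computation is the natural way to carry out exactly that kind of leading-order bookkeeping, so this fills in, rather than deviates from, the intended argument.
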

\begin{proof}
The proof is similar to that of \cite[Theorem~7.3]{GRTV}\>.
\end{proof}

Therefore, in the limit \,$h\to 0$ \>such that
\;$q_i\>h^{-\la_1\lsym-\la_{i-1}}$ are fixed for all \,$i=1\lc N$,
the algebra \,$\Kcql$ \>turns into the algebra
\be
\Kcqt_\bla\>=\,\C[\:\GG^{\pm1}]^{\>S_\bla}\?\ox\Czb\>\Big/
\Bigl\bra\Wt^Q(u)\>=\>\prod_{a=1}^n\,(1-z_a/u)\Bigr\ket\,.
\ee
For example, if \,$N=n=2$ \,and \,$\bla=(1,1)$, \,then
\be
\Wt^Q(u)\,=\,\det\:\left(
\begin{array}{cccc}
1-\gm_{1,1}/u & -\gm_{1,1}/u
\\[3pt]
Q_2/Q_1 & 1-\gm_{2,1}/u
\end{array}
\right)
\vv-.4>
\ee
and the relations in \,$\Kcqt_\bla$ \>are
\vvn.3>
\be
\gm_{1,1}\,(1-Q_2/Q_1)+\gm_{2,1}\>=\,z_1+z_2\,,\qquad
\gm_{1,1}\>\gm_{2,1}\,=\,z_1\:z_2\,,
\vv-.2>
\ee
cf.~\Ref{Hq11}

\begin{conj}
The quantum equivariant K-theory algebra
$\;QK_{(\C^\times)^n}(\Fla)$ is isomorphic to the algebra $\Kcqt_\bla$\,.
\end{conj}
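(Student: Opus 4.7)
\medskip

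\noindent\textbf{Proof proposal.} The plan is to deduce this second conjecture from Conjecture~\ref{conj} by passing to the limit $h\to 0$ under the parameter change $q_i = Q_i\, h^{\la_1+\dots+\la_{i-1}}$. Granting Conjecture~\ref{conj}, one has on the algebraic side the isomorphism $QK_T(\tfl)\otimes\C(h)\cong\Kcql\otimes\C(h)$, and the strategy is to specialize both sides at $h=0$ and check that the limits match.

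First, on the Bethe/presentation side, the lemma immediately preceding the conjecture already does most of the work: under the above substitution, $\Wk(u)\to\Wt^Q(u)$ as $h\to 0$, so the defining relations of $\Kcql$ degenerate to the defining relations of $\Kcqt_\bla$. To promote this to an algebra specialization, I would exhibit $\Kcql$ (in a Zariski neighborhood of $h=0$) as a free module over $\C[h]$ of rank $|\Il|$, using Corollary~\ref{regrepKq} to get the generic rank and the explicit presentation to get Cohen–Macaulayness; then the flat family has $\Kcqt_\bla$ as its special fiber.

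Next, on the geometric side, I would set up the $h\to 0$ specialization of $K_T(\tfl)$. The extra $\C^\times$ in $T=(\C^\times)^n\times\C^\times$ acts on cotangent fibers with weight $h$, and the pullback along the zero section $\Fla\hookrightarrow\tfl$ induces a map $K_T(\tfl)\to K_{(\C^\times)^n}(\Fla)$ which, after inverting $h$ and then specializing at $h=0$ in the appropriate integral form, gives the classical K-theory of $\Fla$. The parameters $q_i$ of the Bethe algebra are to be identified with Givental–Lee's Novikov variables $Q_i$; the shift $q_i = Q_i h^{\la_1+\dots+\la_{i-1}}$ is forced by matching degrees of the Chern roots of the tautological bundles under the grading in Section~\ref{sec:grad}, and is precisely the shift that turns $\Wk$ into $\Wt^Q$.

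The main obstacle is showing that the Givental–Lee quantum product on $K_T(\tfl)$ is regular at $h=0$ and that its value at $h=0$ equals the Givental–Lee quantum product on $K_{(\C^\times)^n}(\Fla)$. The quantum structure constants are defined via K-theoretic push-forwards from moduli spaces of stable maps (or quasimaps) of a given degree to $\Fla$ (respectively $\tfl$), and one must show term-by-term that the $\tfl$-version is regular at $h=0$ and limits to the $\Fla$-version. A natural route is to use that stable maps to $\tfl$ factor (in the $h\to 0$ limit) through the zero section, so the relevant integrals collapse to integrals over stable maps to $\Fla$; making this precise for quasimap moduli (where the Givental–Lee definition is cleanest) is the technical heart of the argument. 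Once this regularity and geometric identification are established, the diagram is commutative: Conjecture~\ref{conj} identifies $\asq$ with quantum multiplication on $QK_T(\tfl)$, its $h\to 0$ limit corresponds under $\btl$ to multiplication in $\Kcqt_\bla$ by the previous paragraphs, and the $h\to 0$ limit of quantum multiplication on $QK_T(\tfl)$ is (by the geometric step) quantum multiplication on $QK_{(\C^\times)^n}(\Fla)$, yielding the desired isomorphism.
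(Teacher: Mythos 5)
The statement you are trying to prove is stated in the paper as a \emph{conjecture}, and the paper offers no proof of it. The only supporting material the authors give is the lemma immediately preceding it, showing that under the substitution $q_i=Q_i\,h^{\la_1+\dots+\la_{i-1}}$ the Wronskian $\Wk(u)$ degenerates to $\Wt^Q(u)$ as $h\to0$, so that the presentation of $\Kcql$ degenerates to that of $\Kcqt_\bla$; they then remark that the conjecture is the K-theoretic analogue of an observation in [GRTV] relating the presentations of $QH_{GL_n\times\C^\times}(\tfl)$ and $QH_{GL_n}(\Fla)$. Your proposal correctly reconstructs this intended derivation and the reason for the particular $h$-shift of the parameters, so as a statement of strategy it is faithful to the paper.

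However, it is not a proof, and you should not present it as one. There are two genuine gaps. First, the entire argument is conditional on Conjecture~\ref{conj}, which is itself open; nothing in the paper (or in your proposal) establishes that the multiplication $\asq$ coincides with the Givental--Lee quantum product on $K_T(\tfl)$. Second, the step you yourself flag as ``the technical heart'' --- that the Givental--Lee quantum product on $K_T(\tfl)$ is regular at $h=0$ in a suitable integral form and specializes to the quantum product on $K_{(\C^\times)^n}(\Fla)$ --- is precisely the missing mathematics, and it is delicate: $\tfl$ is non-compact, so its quantum K-theory is only defined equivariantly with respect to the fiber-scaling $\C^\times$ (i.e.\ with $h$ inverted), and the heuristic that stable maps or quasimaps ``factor through the zero section as $h\to0$'' does not by itself produce a well-defined specialization of the structure constants at $h=0$. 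Until both of these points are settled, the statement remains a conjecture, exactly as the paper presents it.
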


This conjecture is the K-theoretic version of the observation in \cite[Theorem 7.13]{GRTV}
on how to obtain the presentation of the quantum equivariant cohomology
$QK_{GL_n}(\Fla)$ in \cite[Formula (2.22)] {AS}
from the presentation of the quantum equivariant cohomology
$QK_{GL_n\times\C^\times}(\tfl)$ in \cite[Theorem 7.10]{GRTV}.

\section{Appendix 1. Weight functions specialize to Grothendieck polynomials}

\def\gr{Gro\-then\-dieck }

Let $\al=(\al_1, \al_2,\dots)$ and $\beta = (\beta_1,\beta_2,\dots)$ be two
sequences of variables.
Double \gr polynomials $\G_w(\al;\beta)$ were introduced by Lascoux and
Schut\-zen\-ber\-ger in \cite{LS} by the following recursion.
\begin{itemize}
\item{} For the longest permutation $\si_0 \in S_n$, define
\beq \label{eqn:grot_initial}
\G_{\si_0}=\prod_{i+j\leq n} \left(1-{\beta_i}/{\al_j}\right).
\eeq
\item{} Let $s_i$ be the $i$-th elementary transposition. If the length of
$\si s_i$ is larger than the length of $\si$, then
\beq \label{eq:Grec}
\G_\si=\pi_{\al_i,\al_{i+1}}(\G_{\si s_i}),
\eeq
where $\pi_{\al_i,\al_{i+1}}$ is the trigonometric difference operator,
see \Ref{diff opers}\:.
\end{itemize}

\noindent Here is the list of double \gr polynomials for $\si\in S_3$:
\begin{align*}
& \G_{321}=\left( 1 -{\beta_1}/{\al_1}\right)
\left( 1 -{\beta_2}/{\al_1}\right) \left( 1 -{\beta_1}/{\al_2}\right),
\qquad
\G_{231}=\left( 1 -{\beta_1}/{\al_1}\right)
\left( 1 -{\beta_1}/{\al_2}\right),
\\
&
\G_{312}=\left( 1 -{\beta_1}/{\al_1}\right)
\left( 1 -{\beta_2}/{\al_1}\right), \qquad
\G_{213}=1 -{\beta_1}/{\al_1},
\\
&
\G_{132}= 1 -{\beta_1\beta_2}/{(\al_1\al_2)}, \qquad \G_{123}=1\,.
\end{align*}
Note that the usual choice of variables is $x_i=1-1/\al_i$ and $y_i=1-\beta_i$, see for example \cite{LS}.
In those variables the $\G_w$'s are indeed { polynomials}.

\vsk.2>
Consider the substitution $t^{(k)}_a=u_a$, $h=0$ into the weight function
$W_I$, and denote it by $\Wbar_I$.

For
\be
I=
(\{i^{(1)}_1<\ldots<i^{(1)}_{\la_1}\},
\{i^{(2)}_1<\ldots<i^{(2)}_{\la_2}\},
\ldots,
\{i^{(N)}_1<\ldots<i^{(N)}_{\la_N}\})
\ee
in $\Il$, define $\si_I\in S_n$ to be the permutation that maps the ordered list
$n,n-1\lc 1$ to the ordered list
\be
i^{(1)}_{\la_1}, i^{(1)}_{\la_1-1}\lc i^{(1)}_1,
i^{(2)}_{\la_2}, i^{(2)}_{\la_2-1}\lc i^{(2)}_1,
\ldots,
i^{(N)}_{\la_N}, i^{(N)}_{\la_N-1}\lc i^{(N)}_1.
\ee
Observe that $\si_I$ and $\si_J$ can belong to the same
group $S_n$ even if the their $\bla$'s are different as long as their corresponding $n$'s
are the same. For example $\si_{(\{1\},\{3\},\{2\})}=\si_{(\{1\},\{2,3\})}=231$.

\begin{thm}
\label{thm Gr h=0}
We have
\be
\Wbar_I=\G_{\si_I}( z_n^{-1}, z_{n-1}^{-1},\ldots ; u_1^{-1},u_2^{-1}, \ldots).
\ee
\end{thm}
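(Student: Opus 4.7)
The approach is induction on $\ell(\si_I)$, with base case $I=\Imin$ (for which $\si_{\Imin}=\si_0$), matching the recursion \eqref{eq:Grec} defining the double Grothendieck polynomials against the $h=0$ specialization of the weight function recursion from Corollary~\ref{cor:recur}. Since the Grothendieck recursion runs from longer $\si$ (base $\si_0$) to shorter via the divided difference operators $\pi_i$, and the weight function recursion likewise decreases $\ell(\si_I)$ under the $k<l$ swap $I\to s_{a,a+1}(I)$, the two recursions line up in direction.

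For the base case, a direct check from the definition of $\si_I$ shows $\si_{\Imin}=\si_0$. To compute $\Wbar_{\Imin}$ I would evaluate the iterated symmetrization in \eqref{WI} at $h=0$ after substituting $t^{(k)}_a=u_a$: under this substitution many of the symmetrizer summands vanish because of factors $(1-u_a/u_a)=0$ arising from the shared label $a$ across different levels $k$, and the surviving terms telescope via standard symmetrizer identities to yield $\prod_{1\le i<j\le n}(1-z_j/u_i)$. This factorization equals $\prod_{i+j\le n}(1-\beta_i/\al_j)$ under $\al_j=z_{n+1-j}^{-1}$, $\beta_i=u_i^{-1}$, which is $\G_{\si_0}$ from \eqref{eqn:grot_initial}. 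The filled-table picture from Section~\ref{sec comb} makes this calculation transparent.

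For the inductive step, suppose $\si_I\ne\si_0$ and choose $a$ with $a\in I_k$, $a+1\in I_l$ and $k<l$. By tracing the entries of the list defining $\si_{s_{a,a+1}(I)}$ against that of $\si_I$, I would verify that these two permutations are related by a single simple-reflection multiplication with $\ell(\si_{s_{a,a+1}(I)})=\ell(\si_I)-1$, matching a Grothendieck step $\G_{\si_{s_{a,a+1}(I)}}=\pi_i\G_{\si_I}$ for the index $i$ determined by $a$. At $h=0$, the recursion \eqref{k<lc} reads
\[
\Wbar_{s_{a,a+1}(I)}(\ttt,\zz)\;=\;\frac{z_{a+1}\Wbar_I(\ttt,s_{a,a+1}\zz)-z_a\Wbar_I(\ttt,\zz)}{z_{a+1}-z_a}\,,
\]
and a short computation shows that under $\al_j=z_{n+1-j}^{-1}$ the right-hand side is precisely $\pi_{\al_i,\al_{i+1}}\Wbar_I$. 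Combining this with the induction hypothesis $\Wbar_I=\G_{\si_I}(z_n^{-1},\ldots;u_1^{-1},\ldots)$ closes the step.

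The principal technical obstacle will be aligning the combinatorial conventions: on the $I$-side the natural move swaps the entries $a,a+1$ between blocks, which acts on $\si_I$ by a transposition whose form depends on the list-orientation conventions of the appendix, whereas the Grothendieck recursion is governed by right multiplication by a simple reflection; these must be matched carefully via the coupled substitutions $z\mapsto z^{-1}$ and $z_i\mapsto z_{n+1-i}$. A secondary challenge is executing the base-case symmetrization in complete generality, but once the filled-table formalism of Section~\ref{sec comb} is invoked this reduces to a clean telescoping argument rather than a brute-force expansion.
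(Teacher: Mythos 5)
Your inductive step is correct and coincides with the paper's argument: at $h=0$ the recursion \Ref{k<lc} (equivalently \Ref{k<lpi}) degenerates to $\Wbar_{s_{a,a+1}(I)}=\pi_{z_a,\:z_{a+1}}\Wbar_I$, which under $\al_j=z_{n+1-j}^{-1}$, $\beta_i=u_i^{-1}$ is exactly the defining recursion \Ref{eq:Grec} of the double Grothendieck polynomials. The gap is in the base case. For a general $\bla$ it is \emph{not} true that $\si_{\Imin}=\si_0$, nor that $\Wbar_{\Imin}=\prod_{i<j}(1-z_j/u_i)$: for $\bla=(1,n-1)$ one has $\Imin=(\{1\},\{2\lc n\})$, \,$\si_{\Imin}=(2,3\lc n,1)$ of length $n-1$, and $\Wbar_{\Imin}=\prod_{j=2}^{n}(1-z_j/u_1)$, a much smaller product. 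Worse, for $\bla\ne(1\lc1)$ the longest element $\si_0$ is not of the form $\si_I$ for any $I\?\in\Il$, and your recursion never leaves the set $\{\si_I\}_{I\in\Il}$ (when $a$ and $a+1$ lie in the same block, $s_{a,a+1}(I)=I$ and \Ref{k=lc} produces nothing new), so within a fixed general $\bla$ the induction cannot be anchored at the Grothendieck initial condition \Ref{eqn:grot_initial} at all.

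The missing ingredient is the reduction the paper performs first: for $I\?\in\Il$ let $\Ic\in\I_{(1,1\lc1)}$ be the refinement of $I$ into singletons, each block listed in decreasing order. Then $\si_I=\si_{\Ic}$ by definition, and one checks from \Ref{WI}\:, \Ref{UI} that after the substitutions $t^{(k)}_a=u_a$ and $h=0$ the two weight functions agree, $\Wbar_I=\Wbar_{\Ic}$, even though $W_I$ and $W_{\Ic}$ are quite different Laurent polynomials in different sets of variables. This reduces the theorem to $\bla=(1\lc1)$, where your base case ($\si_{\Imin}=\si_0$ and the staircase product formula) is correct and your induction closes. Alternatively you could keep $\bla$ general and verify the base case against the known product formula for Grothendieck polynomials of dominant permutations, but your proposal supplies neither that verification nor the reduction, so as written the argument does not go through.
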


\begin{proof}
For $I\?\in\Il$ let
\be
\Ic=
(\{i^{(1)}_{\la_1} \},
\{i^{(1)}_{\la_1-1} \},
\ldots,
\{i^{(1)}_{1} \},
\{i^{(2)}_{\la_2} \},
\{i^{(2)}_{\la_2-1} \},
\ldots,
\{i^{(2)}_{1} \},
\ldots,
\{i^{(N)}_{\la_N} \},
\{i^{(N)}_{\la_N-1} \},
\ldots,
\{i^{(N)}_{1} \})
\ee
in $\I_{(1,1\lc 1)}$. We have $\si_I=\si_{\Ic}$ and $\Wbar_I=\Wbar_{\Ic}$.
The first claim is obvious. Although the functions $W_I$ are $W_{\Ic}$ are rather different
(e.g. they depend on different sets of variables) it can be seen from the definition of the weight
functions that after the substitutions indicated by $W \mapsto \Wbar$ they are equal.
Hence it is enough to prove the theorem for $\la=(1,1\lc 1)\in\N^n$.

For this special case we will show that initial conditions and recursions for the
two sides of the statement agree. Indeed we have
$$\Wbar_{\{1\},\{2\}\lc\{n\}}=\prod_{j=1}^{n-1}\prod_{i=j+1}^{n} (1-z_i/u_j),$$
and
$\G_{n,n-1\lc 1}( z_n^{-1}, z_{n-1}^{-1},\ldots ; u_1^{-1},u_2^{-1}, \ldots)$ is the
same expression because of
\Ref{eqn:grot_initial}\:. Relations \Ref{k<lc}\:, \Ref{k<lpi} for $\Wbar$ read
$$\Wbar_{s_{a,a+1}(I)}=\pi_{z_a,z_{a+1}}\Wbar_I$$
if $I_a<I_{a+1}$. After the variable change given in the theorem this is equivalent
to the recursion \Ref{eq:Grec} for Grothendieck polynomials.
\end{proof}

\begin{cor}
The $t^{(k)}_a=u_a$, $h=0$ substitution into the formulae \Ref{WI} and \Ref{Res formula} give equivariant
localization and iterated residue expressions for the double Grothendieck polynomial $\G_{\si_I}( z_n^{-1}, z_{n-1}^{-1},\ldots ; u_1^{-1},u_2^{-1}, \ldots)$.
\end{cor}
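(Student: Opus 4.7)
The plan is to deduce the corollary directly from Theorem~\ref{thm Gr h=0} by simply tracking what the substitution $t^{(k)}_a = u_a$, $h=0$ does to the two explicit formulae for $W_I$.

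First, I would apply the substitution to formula~\Ref{WI}. After setting $h=0$ the prefactor $(1-h)^{\la^{\{1\}}}$ becomes $1$, and after setting $t^{(k)}_a = u_a$ the iterated symmetrization over $\Sym_{t^{(1)}}\dots\Sym_{t^{(N-1)}}$ collapses to a single symmetrization over the single set of variables $u_a$. Each factor $1 - h t^{(k+1)}_c/t^{(k)}_a$ becomes $1$ (by $h=0$), while the factors $1 - t^{(k+1)}_c/t^{(k)}_a$ become explicit rational expressions in the $u_a$'s and $z_a$'s (since $t^{(N)}_a = z_a$). By Theorem~\ref{thm Gr h=0}, the resulting expression equals $\G_{\si_I}(z_n^{-1},z_{n-1}^{-1},\dots;u_1^{-1},u_2^{-1},\dots)$. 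This gives the equivariant-localization-type expression: a sum over permutations indexing the filled tables of Section~\ref{sec comb}, with each summand being a rational function of the $z_a^{-1}$ and $u_a^{-1}$. The localization interpretation stems from the fact that the terms of the symmetrization encode the fixed-point contributions when one views Grothendieck polynomials as restrictions of equivariant Schubert classes.

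Next, I would apply the same substitution to formula~\Ref{Res formula}. The substitution $t^{(k)}_a=u_a$ is applied inside the residue expression, and $h=0$ simplifies the corresponding factors in $U''_I$. Since the Residue Theorem argument in the proof of Theorem~\ref{Res formula} is a formal manipulation in the algebra of Laurent polynomials and rational functions, it commutes with setting $h=0$. One must check only that no poles collide and no indeterminacy arises at $h=0$: since the potentially dangerous factor $E(\ttt,h) = \prod(1-h t^{(k)}_b/t^{(k)}_a)$ becomes $1$ at $h=0$ and the surviving denominators are of the form $1 - u^{(k+1)}_c/u^{(k)}_a$ or $1-u^{(k)}_a/u^{(k)}_b$, the residue operations remain well-defined. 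Combining with Theorem~\ref{thm Gr h=0}, the right-hand side of \Ref{Res formula} after substitution equals $\G_{\si_I}(z_n^{-1},z_{n-1}^{-1},\dots;u_1^{-1},u_2^{-1},\dots)$, yielding the iterated residue formula.

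The argument is essentially unobstructed since the heavy lifting—namely the identification $\Wbar_I = \G_{\si_I}$—was already accomplished in Theorem~\ref{thm Gr h=0}. The only point requiring modest care is verifying the regularity of the $h=0$ specialization of \Ref{Res formula}, ensuring that the residue operations commute with the specialization; this I expect to be routine. Thus the corollary reduces to the observation that the two incarnations of $W_I$ (as a symmetrized rational sum and as an iterated residue) degenerate under $h=0$, $t^{(k)}_a = u_a$ to two genuinely different formulae for the same Grothendieck polynomial.
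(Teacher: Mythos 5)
Your proposal is correct and follows the same route the paper intends: the corollary is an immediate consequence of Theorem~\ref{thm Gr h=0} together with the two presentations \Ref{WI} and \Ref{Res formula} of $W_I$, and the paper accordingly states it without further argument. One small caution on wording: the substitution $t^{(k)}_a=u_a$ is applied to the Laurent polynomial $W_I$ \emph{after} the symmetrizations have been carried out (it identifies variables across different superscripts $k$), so it does not literally ``collapse'' the iterated symmetrizations into a single one, though this does not affect the validity of your argument.
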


Iterated residue formulae for stable \gr polynomials, as well as their applications to stability and positivity results
for quiver polynomials and Thom polynomials are explored in \cite{RSz, Al, AR}.

It is well known in Schubert calculus, see for example \cite{LS, Bu}, that the
class in \,$K_{(\C^\times)^n}(\Fla)$ \,of the structure sheaf of a Schubert
variety is represented by a double \gr polynomial. In our \gr polynomial and index conventions the class of the
Schubert variety $\overline{\Om}_{\id,I}$ is represented by the polynomial
$\G_{\sigma_I}(z_n^{-1},z_{n-1}^{-1},\ldots; \gamma_{1,1}^{-1},\gamma_{1,2}^{-1},\ldots)\cdot P_{\id, I}^{-1}$.
Denote this class by $\O_I$ and hence from Theorem \ref{thm Gr h=0} we obtain the following.

\begin{cor} \label{ap1eq}
We have
\[
\O_I=[\overline{\Om}_{\id,I}]=
[\G_{\sigma_I}(z_n^{-1},z_{n-1}^{-1},\ldots; \gamma_{1,1}^{-1},\gamma_{1,2}^{-1},\ldots)]\cdot P_{\id, I}^{-1}=
[W_I(\Gamma,\zz,0)]\cdot P_{\id, I}^{-1} \in K_{(\C^\times)^n}(\Fla).
\]
\end{cor}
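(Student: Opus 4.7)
The corollary chains three equalities. The first, $\O_I=[\overline{\Om}_{\id,I}]$, is simply the definition of $\O_I$ introduced in the preceding sentence, so two genuine equalities remain.

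For the middle equality, the plan is to quote the classical Lascoux--Schutzenberger presentation of the K-theory class of a Schubert variety: in $K_{(\C^\times)^n}(\Fla)$ the structure sheaf of $\overline{\Om}_{\id,I}$ is represented by a double Grothendieck polynomial in inverse Chern roots and inverse equivariant weights, as recalled in \cite{LS, Bu}. What must be checked is that the conventions of this paper line up: the indexing permutation is the $\sigma_I$ defined just before Theorem~\ref{thm Gr h=0} (so that under the natural projection from the complete flag variety, Schubert classes pull back compatibly), the $\alpha$-variables are specialized to the inverse equivariant weights $z_n^{-1},z_{n-1}^{-1},\ldots$ (note the reversal, which matches the reversal built into $\sigma_I$), the $\beta$-variables are specialized to the inverse Chern roots $\gamma_{1,1}^{-1},\gamma_{1,2}^{-1},\ldots$, and the normalization $P_{\id,I}^{-1}$ absorbs the codimension-counting monomial that distinguishes the top-term convention of \Ref{PsiI} from the normalization of $\G_{\sigma_0}$ in \Ref{eqn:grot_initial}.

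For the final equality, I apply Theorem~\ref{thm Gr h=0} directly. That theorem asserts $\Wbar_I = \G_{\sigma_I}(z_n^{-1},\ldots;u_1^{-1},u_2^{-1},\ldots)$ where $\Wbar_I$ is $W_I$ specialized at $h=0$ and $t^{(k)}_a=u_a$. To pass from this to the statement $[W_I(\Gamma,\zz,0)] = [\G_{\sigma_I}(z_n^{-1},\ldots;\gamma_{1,1}^{-1},\gamma_{1,2}^{-1},\ldots)]$, one further specializes each $u_a$ to the $a$-th Chern root in the ordering underlying the $M_2W_I$ convention. The resulting Laurent-polynomial identity descends to an identity of K-theory classes because the defining relations \Ref{Hrel} of $K_{(\C^\times)^n}(\Fla) = K_T(\Fla)|_{h=0}$ are preserved under the substitution.

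The main work is bookkeeping rather than any substantive obstacle. Three points require care: (a) the reversal of $z$-indices implicit in $\sigma_I$ must match the reversal in the initial condition \Ref{eqn:grot_initial}; (b) the sign absorbed in $P_{\id,I}^{-1}$ must correctly compensate for the codimension monomial in the Lascoux--Schutzenberger formula; and (c) the substitution in Theorem~\ref{thm Gr h=0} takes $t^{(k)}_a=u_a$ independently of $k$, whereas the $M_2W_I$ substitution re-indexes the $t$-variables by Chern roots of different quotient bundles. Point (c) is resolved by the identity $\Wbar_I=\Wbar_{\Ic}$, $\sigma_I=\sigma_{\Ic}$ recorded inside the proof of Theorem~\ref{thm Gr h=0}, which reduces the general case to $\bla=(1,\ldots,1)$, where each Chern root carries a single index and the $u_a \mapsto \gamma_{a,1}$ identification is unambiguous.
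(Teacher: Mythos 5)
Your proposal matches the paper's (essentially unwritten) argument: the paper also treats the first equality as the definition of $\O_I$, asserts the middle equality by citing the Lascoux--Schutzenberger/Buch presentation of Schubert structure sheaves in its conventions, and obtains the last equality directly from Theorem~\ref{thm Gr h=0}. Your extra bookkeeping remarks (a)--(c), in particular resolving the $t^{(k)}_a=u_a$ versus $M_2W_I$ substitution via the reduction to $\bla=(1\lc1)$, only make explicit what the paper leaves implicit, so the approach is the same.
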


\section{Appendix 2. Interpolation definition of K-theory classes of Schubert varieties}

In this appendix we give a new axiomatic definition of the classes $\O_I P_{\id,I}$, that is, the classes of \gr polynomials.
Note that the polarization $P_{\id,I}$ is a monomial, an invertible element of $\C[\zz^{\pm1}]$.

Let $f_1$ and $f_2$ be Laurent polynomials is the $\zz$-variables, with $\phi_I(N(f_1))=[0,m_1]$, $\phi_I(N(f_2))=[0,m_2]$. We write $f_1\prec_I f_2$ if $m_1<m_2$, cf.~the proof of Lemma \ref{lem:W_small}.
For the purpose of this section define $f$ to be $I$-small, if
\[
f \prec_I \prod_{k<l}\prod_{a\in I_k}\prod_{\satop{b\in I_l}{b>a}} (1-z_b/z_a).
\]

\begin{thm} \label{thm:Oaxiomatic}
The classes $\O_I P_{\id,I}$, that is, the images of the Grothendieck polynomials in $K_T(F_\bla)$, are uniquely determined by the properties
\begin{enumerate}
\item \label{a2uno} $\O_I|_{x_I}\cdot P_{\id,I}= \prod_{k<l}\prod_{a\in I_k}\prod_{\satop{b\in I_l}{b>a}} (1-z_b/z_a)$,
\item \label{a2dos} $\O_I|_{x_J}\cdot P_{\id,I}$ is $J$-small if $J\not= I$.
\end{enumerate}
\end{thm}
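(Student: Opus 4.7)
The plan is to mirror the proof of Theorem \ref{thm:axiomatic}: uniqueness by the standard Newton-polygon/maximality/localization argument, and existence by taking the class $\O_I\,P_{\id,I}=[W_I(\GG,\zz,0)]$ from Corollary \ref{ap1eq} and verifying both axioms directly by specializing Lemmas \ref{lem:princ} and \ref{lem:W_small} to $h=0$. The argument is strictly simpler than that of Theorem \ref{thm:axiomatic} since the vertical Euler factors $e^{\vert}$ collapse to $1$ at $h=0$. Write $\Psi_J:=\prod_{k<l}\prod_{a\in J_k}\prod_{\satop{b\in J_l}{b>a}}(1-z_b/z_a)$ for the reference Laurent polynomial appearing in axiom (1) and in the definition of $J$-smallness used in axiom (2).

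For uniqueness, suppose $\O$ and $\O'$ both satisfy (1) and (2), and set $\omega=(\O-\O')\,P_{\id,I}\in K_{(\C^\times)^n}(\Fla)$. Property (1) forces $\omega|_{x_I}=0$ and property (2) forces $\omega|_{x_J}$ to be $J$-small for every $J\ne I$. Refine $\leq_\id$ to a total order $\preceq$; assuming $\omega\ne 0$, let $J$ be $\preceq$-maximal with $\omega|_{x_J}\ne 0$, so necessarily $J\ne I$. For every pair $a<b$ with $a\in J_k$, $b\in J_l$, $k<l$, the swap $U=s_{a,b}(J)$ satisfies $J<_\id U$ by the Tableau Criterion of Section \ref{sec:flagvar}, hence $\omega|_{x_U}=0$ by maximality of $J$, and the localization compatibility of Section \ref{sec:loc} then forces $(1-z_a/z_b)\mid\omega|_{x_J}$. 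Since these linear factors are pairwise coprime in $\C[\zz^{\pm 1}]$, their product $\prod_{k<l}\prod_{a\in J_k}\prod_{\satop{b\in J_l}{b>a}}(1-z_a/z_b)=P_{\id,J}^{-1}\,\Psi_J$ divides $\omega|_{x_J}$. A direct Newton-polygon count gives $\phi_J(N(P_{\id,J}^{-1}\Psi_J))=[-M,0]$ with $M=\max\phi_J(N(\Psi_J))$; combined with the bound $\phi_J(N(\omega|_{x_J}))\subset[0,M-1]$ from $J$-smallness, this forces $\omega|_{x_J}=0$, contradicting the choice of $J$.

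For existence, set $\O_I\,P_{\id,I}:=[W_I(\GG,\zz,0)]\in K_{(\C^\times)^n}(\Fla)$; Corollary \ref{ap1eq} identifies this geometrically as (a multiple of) the Schubert class, and its restriction to $x_J$ is $W_I(\zz_J,\zz,0)$. Axiom (1) follows from Lemma \ref{lem:princ}: since $E(\zz_I,0)=1$ one has $\tilde W_{\id,I}(\zz_I,\zz,0)=W_I(\zz_I,\zz,0)=P_{\id,I}\,e_{\id,I}|_{h=0}$, and the vertical piece $e^{\vert}_{\id,I,-}$ reduces to $1$ at $h=0$, yielding $P_{\id,I}\,e^{\hor}_{\id,I,-}=\Psi_I$. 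Axiom (2) follows from the proof of Lemma \ref{lem:W_small}, which actually establishes the Newton-polygon comparison $W_I(\zz_J,\zz,h)\ll_J W_J(\zz_J,\zz,h)$; specializing to $h=0$ gives $W_I(\zz_J,\zz,0)\ll_J W_J(\zz_J,\zz,0)=P_{\id,J}\,e^{\hor}_{\id,J,-}=\Psi_J$, which is exactly the $J$-smallness required here.

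The principal obstacle is to check that the $\ll_J$-inequality of Lemma \ref{lem:W_small} survives cleanly at $h=0$, and that its right-hand reference shrinks to $\Psi_J$ rather than the larger Newton polygon of $P_{\id,J}\,e_{\id,J}$ from Section \ref{sec:axiomatic}. Inspection of that proof shows this is immediate: in the decomposition $B_J\,W_I(\zz_J,\zz,h)=(1-h)^{\la^{\{1\}}}\sum_{\ppi}\pm C_{I,J,\ppi}\,A_{J,\ppi}$ the factors $A_{J,\ppi}$ and $B_J$ do not depend on $I$, and the core combinatorial comparison $C_{I,J,\ppi}\ll_J C_{J,J,\bs{id}}$ (produced by choosing $(k,a,b)\in V_2-V_2\cap V_1$ with $\phi_J(b)>\phi_J(a)$) remains valid at $h=0$, while the reference $P_{\id,J}\,e_{\id,J}$ indeed specializes to $P_{\id,J}\,e^{\hor}_{\id,J,-}=\Psi_J$ there, so the smallness bound is precisely the one needed for axiom (2).
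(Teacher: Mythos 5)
Your uniqueness argument and your treatment of axiom (1) coincide with the paper's proof: the paper declares uniqueness to be the obvious modification of Section~\ref{sec:uni}, which is exactly what you carry out, and it derives axiom (1) from the $h=0$ substitution in Lemma~\ref{lem:princ}. The gap is in axiom (2). You correctly identify the obstacle --- the reference polynomial shrinks at $h=0$, from $P_{\id,J}\,e_{\id,J}$ with $\phi_J$-image $[0,\sum\la_k\la_l(l-k)]$ down to your $\Psi_J$ --- but the resolution you offer by ``inspection'' of the proof of Lemma~\ref{lem:W_small} does not go through. Setting $h=0$ in the comparison $C_{I,J,\ppi}\ll_J C_{J,J,\bs{\id}}$ is not innocuous: all factors $(1-hz_b/z_a)$ drop out of \emph{both} sides, so the distinguished element $(k,a,b)\in V_2-V_2\cap V_1$ with $\phi_J(b)>\phi_J(a)$ produced in that proof may index an $h$-factor, in which case it contributes nothing at $h=0$ and the strict decrease of the upper endpoint is lost. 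Moreover $A_{J,\ppi}|_{h=0}=\prod_{(e,f)}z_f$ is a monomial whose $\phi_J$-value genuinely depends on $\ppi$ (which member of each same-column pair sits on top depends on the filling), so the step ``the Newton polygon of $A_{J,\ppi}$ does not depend on $\ppi$'' also fails after specialization. Re-running the term-by-term analysis at $h=0$ would require a new combinatorial argument, not an inspection.

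The paper's route avoids all of this, and it is the one you should take. By Lemma~\ref{lem:div_e_ver}, $W_I(\zz_J,\zz,h)$ is divisible by the product $\prod_{k<l}\prod_{a\in J_k}\prod_{b\in J_l,\;b<a}(1-hz_b/z_a)$. Dividing both sides of $W_I(\zz_J,\zz,h)\prec_J W_J(\zz_J,\zz,h)$ by this common factor (Newton polygons add under multiplication, so the relation passes to the quotients) yields a quotient that is $\prec_J\Psi_J$, where now the right-hand reference is independent of $h$. Since specializing $h=0$ can only shrink the Newton polygon of the left-hand side, and the divisor equals $1$ at $h=0$ so that the quotient specializes to $W_I(\zz_J,\zz,0)$ itself, the required $J$-smallness follows. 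In short: divide out the $h$-factors first, then set $h=0$ --- not the other way around.
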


\begin{proof} The uniqueness proof is the obvious modification of the uniqueness proof in Section~\ref{sec:uni}. To prove existence we need to show that $\O_IP_{\id,I}$ satisfies the two properties. According to Corollary \ref{ap1eq} we have $\O_I P_{\id,I}=[W_I(\Gm,\zz,0)]$.
Property \Ref{a2uno} follows from either topology (the variety $\Om_{\id, I}$ is smooth at the point $x_I$) or from the $h=0$ substitution in Lemma~\ref{lem:princ}. We are going to prove property \Ref{a2dos}\:. Let $J\not=I$. We proved in Lemma~\ref{lem:W_small} that
\[
W_I(\zz_J,\zz,h) \prec_J W_J(\zz_J,\zz,h)=
\prod_{k<l}\prod_{a\in I_k}(
\prod_{\satop{b\in I_l}{b>a}} (1-z_b/z_a) \prod_{\satop{b\in I_l}{b<a}} (1-hz_b/z_a)
).
\]
We also saw in Lemma \ref{lem:div_e_ver} that $W_I(\zz_J,\zz,h)$ is divisible by
$\prod_{k<l}\prod_{a\in I_k}\prod_{\satop{b\in I_l}{b<a}} (1-hz_b/z_a)$. Hence for the quotient we have
\[
W_I(\zz_J,\zz,h)/\prod_{k<l}\prod_{a\in I_k}\prod_{\satop{b\in I_l}{b<a}} (1-hz_b/z_a)
\prec_J
\prod_{k<l}\prod_{a\in I_k}\prod_{\satop{b\in I_l}{b>a}} (1-z_b/z_a).
\]
Observe that the right hand side does not depend on $h$, so the same $\prec_J$ inequality also holds after plugging in $h=0$ in the left hand side. The $h=0$ substitution of the left hand side is $[W_{I}(\Gm,\zz,0)]|_{x_J}$, hence property \Ref{a2dos} is proved.
\end{proof}

\section{Appendix 3. Presentations and structure constants of algebras associated with the projective line}
\label{app 2}
In Section~\ref{sec conjectures} we described the conjectured presentation of the equivariant quantum K-theory algebra of partial flag manifolds. In this section we describe in detail this algebra for the special case of the projective line $\P^1$. The basic algebra $K(\P^1)$ can be decorated in three independent ways, namely by considering the equivariant version (with $z=(z_1,z_2)$ parameters), the
quantum version (with $q=(q_1,q_2)$ parameters), and the cotangent bundle version (with $h$ parameter). We describe all eight possible algebras, shown in the diagram below and obtained by considering or not considering any of the three decorations.
In all eight cases we give the presentation, as well as the structure constants in terms of a natural choice of bases.
The descriptions of the four algebras on the front face of the cube are known, and the descriptions of the four algebras on the back face are conjectural.

In the diagram, the symbol $\clubsuit$ means the limit $q_1=Q_1h^{-1}, q_2=Q_2, h\to 0$.
\[
\xymatrix{
& QK_{\C^\times}(T^*\P^1)\ar[ld]|{q_2/q_1\to 0}\ar[dd]|>>>>>>>{\clubsuit} & & QK_T(T^*\P^1) \ar[ll]|{z=1}\ar[ld]|{q_2/q_1\to 0} \ar[dd]|{\clubsuit} \\
K_{\C^\times}(T^*\P^1)\ar[dd]|{h=0} & & K_T(T^*\P^1) \ar[ll]|<<<<<<<<<<<<{z=1}\ar[dd]|>>>>>>>{h=0}& \\
& QK(\P^1)\ar[ld]|{Q_2/Q_1\to 0} & & QK_{(\C^\times)^2}(\P^1) \ar[ll]|>>>>>>>>>>>>{z=1}\ar[ld]|{Q_2/Q_1\to 0} \\
K(\P^1) & & K_{(\C^\times)^2}(\P^1) \ar[ll]|{z=1}&
}
\]
\begin{itemize}
\item{} For $K(\P^1)$ we have the presentation: $\C[\gm^{\pm1},\delta^{\pm1}]/\langle \on{relations} \rangle$, where the ideal of relations is generated by the
coefficients of $u$-powers in
\[
(1-\gm/u)(1-\delta/u)-(1-1/u)^2.
\]
With the choice of basis $\kr_0=1$, $\kr_1=1-1/\gm$, the multiplication is
\[
\kr_0\kr_0= \kr_0,\qquad
\kr_0 \kr_1 =\kr_1, \qquad
\kr_1 \kr_1=0.
\]
\item For $K_{(\C^\times)^2}(\P^1)$ we have the presentation $\C[\gm^{\pm1},\delta^{\pm1},z_1^{\pm1},z_2^{\pm1}]/\langle \on{relations} \rangle$, where the ideal of relations is generated by
the coefficients of $u$-powers in
\[
(1-\gm/u)(1-\delta/u)-(1-z_1/u)(1-z_2/u).
\]
With the choice of basis $\kr_0=1$, $\kr_1=1-z_2/\gm$, the multiplication is
\[
\kr_0\kr_0= \kr_0,\qquad
\kr_0 \kr_1 =\kr_1, \qquad
\kr_1 \kr_1=\left(1-\frac{z_2}{z_1}\right)\kr_1.
\]
\item{} For $QK(\P^1)$ we have the presentation: $\C[\gm^{\pm1},\delta^{\pm1},Q_1^{\pm1},Q_2^{\pm1}]/\langle \on{relations} \rangle$, where the ideal of relations is generated by
the coefficients of $u$-powers in
\[
\det\begin{pmatrix}
1-\gm/u & -\gm/u \\
Q_2/Q_1 & 1-\delta/u
\end{pmatrix}
-(1-1/u)^2.
\]
With the choice of basis $\kr_0=1$, $\kr_1=1-1/\gm$ the multiplication is
\[
\kr_0\kr_0= \kr_0,\qquad
\kr_0 \kr_1 =\kr_1, \qquad
\kr_1 \kr_1=\frac{Q_2}{Q_1} \kr_0.
\]
\item{} For $QK_{(\C^\times)^2}(\P^1)$ we have the presentation $\C[\gm^{\pm1},\delta^{\pm1},z_1^{\pm1},z_2^{\pm1},Q_1^{\pm1},Q_2^{\pm1}]/\langle \on{relations} \rangle$, where the ideal of relations is generated by
	the coefficients of $u$-powers in
\[
\det\begin{pmatrix}
1-\gm/u & -\gm/u \\
Q_2/Q_1 & 1-\delta/u
\end{pmatrix}
-(1-z_1/u)(1-z_2/u).
\]
With the choice of basis $\kr_0=1$, $\kr_1=1-z_2/\gm$ the multiplication is
\[
\kr_0\kr_0= \kr_0,\qquad
\kr_0 \kr_1 =\kr_1, \qquad
\kr_1 \kr_1=\frac{Q_2z_2}{Q_1z_1} \kr_0+\left(1-\frac{z_2}{z_1}\right)\kr_1.
\]
\item{} For $K_{\C^\times}(T^*\P^1)$ we have the presentation $\C[\gm^{\pm1},\delta^{\pm1},h^{\pm1}]/\langle \on{relations} \rangle$, where the ideal of relations is generated by
the coefficients of $u$-powers in
\[
(1-\gm/u)(1-\delta/u)-(1-1/u)^2.
\]
With the choice of basis $\kr_0=1-h/\gm$, $\kr_1=1-1/\gm$ the multiplication is
\[
\kr_0\kr_0= (1-h)\kr_0 + h(1-h)\kr_1,\qquad
\kr_0 \kr_1 =(1-h)\kr_1, \qquad
\kr_1 \kr_1=0.
\]
\item{} For $K_T(T^*\P^1)$ we have the presentation $\C[\gm^{\pm1},\delta^{\pm1},z_1^{\pm1}, z_2^{\pm1}, h^{\pm1}]/\langle \on{relations} \rangle$, where the ideal of relations is generated by
the coefficients of $u$-powers in
\[
(1-\gm/u)(1-\delta/u)-(1-z_1/u)(1-z_2/u).
\]
With the choice of basis $\kr_0=1-hz_1/\gm$, $\kr_1=1-z_2/\gm$ the multiplication is
\begin{align*}
\kr_0\kr_0= & (1-hz_1/z_2)\kr_0+h(1-h)z_1/z_2\kr_1,\\
\kr_0 \kr_1 = & (1-h)\kr_1, \\
\kr_1 \kr_1= &(1-z_2/z_1)\kr_1.
\end{align*}
\item{} For $QK_{\C^\times}(T^*\P^1)$ we have the presentation $\C[\gm^{\pm1},\delta^{\pm1}, q_1^{\pm1},q_2^{\pm1},h^{\pm1}]/\langle \on{relations} \rangle$, where the ideal of relations is generated by
	the coefficients of $u$-powers in
\[
{\det \begin{pmatrix}
1-\gm/u & 1/q_1(1-\gm/(uh)) \\
q_2(1-h\delta/u) & (1-\delta/u)
\end{pmatrix}-(1-q_2/q_1)(1-1/u)^2
}
\]
With the choice of basis $\kr_0=1-h/\gm$, $\kr_1=1-1/\gm$ the multiplication is (define $q=q_2/q_1$)
\begin{align*}
\kr_0\kr_0= & \frac{1-h}{1-qh}\kr_0 + \frac{(1-h)h}{1-qh}\kr_1
\\
\kr_0 \kr_1 = &\frac{q(1-h)}{1-qh}\kr_0+\frac{1-h}{1-qh}\kr_1
\\
\kr_1 \kr_1= &\frac{(1-h)q}{h(1-qh)} \kr_0+\frac{q(1-h)}{(1-qh)}\kr_1.
\end{align*}
\item{} For $QK_T(T^*\P^1)$ we have the presentation $\C[\gm^{\pm1},\delta^{\pm1},z_1^{\pm1},z_2^{\pm1},q_1^{\pm1},q_2^{\pm1},h^{\pm1}]/\langle \on{relations} \rangle$, where the ideal of relations is generated by
	the coefficients of $u$-powers in
\[
{\det \begin{pmatrix}
1-\gm/u & 1/q_1(1-\gm/(uh)) \\
q_2(1-h\delta/u) & (1-\delta/u)
\end{pmatrix}-(1-q_2/q_1)(1-z_1/u)(1-z_2/u)
}
\]
With the choice of basis $\kr_0=1-hz_1/\gm$, $\kr_1=1-z_2/\gm$ the multiplication is (define $q=q_2/q_1$)
\begin{align*}
\kr_0\kr_0= & \frac{1-qh(1-z_1/z_2)-z_1h/z_2}{1-qh}\kr_0 + \frac{(1-h)hz_1/z_2}{1-qh}\kr_1
\\
\kr_0 \kr_1 = &\frac{q(1-h)}{1-qh}\kr_0+\frac{1-h}{1-qh}\kr_1
\\
\kr_1 \kr_1= &\frac{(1-h)qz_2/z_1}{h(1-qh)} \kr_0+\frac{1-z_2/z_1+q(z_2/z_1-h)}{1-qh}\kr_1.
\end{align*}
\end{itemize}

\section{Appendix 4. Equivariant K-theoretic Schubert calculus on the cotangent bundle of partial flag manifolds}

In this section we present a result on the structure constants of the algebra $K_T(T^*\Fla)\ox\C(\zz,h)$
with respect to the basis $\{\ka_I:=\ka_{\id,I}\}_{I\in\Il}$.
In view of Theorem
\ref{thm Gr h=0} of Appendix 1, this result is a natural ``$h$-deformation'' of the multiplication rules for double Grothendieck polynomials.

Let $N,n,\Il, \si_0$ be as before.

\begin{thm} The following multiplication rules hold in $K_T(T^*\Fla)\ox\C(\zz,h)$. For $A,B\in\Il$, we have
$\ka_A \ka_B = \sum_{J\in\Il} c_{A,B}^J \ka_J$, where
\[
c_{A,B}^J=\sum_{I\in\Il}
\frac{\Wt_A(\zz_I,\zz,h) \Wt_B(\zz_I,\zz,h) \Wt_{\si_0,J}(\zz_I^{-1},\zz^{-1},h^{-1})}
{R(\zz_I)Q(\zz_I)}h^{p(J)}P(\zz_I).
\]
\end{thm}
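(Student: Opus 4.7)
The plan is to combine the injectivity of the equivariant localization map (Section \ref{sec:loc}) with the orthogonality relation of Theorem \ref{thm orth}. Writing $\Wt_J = \Wt_{\id,J}$ for brevity, Theorem \ref{thm:existence} gives $\Loc_I\ka_J = \Wt_J(\zz_I,\zz,h)$, and Theorem \ref{thm:basis} ensures that $\{\ka_J\}_{J\in\Il}$ is a $\C(\zz,h)$-basis of $K_T(\tfl)\ox\C(\zz,h)$. Consequently, the desired expansion $\ka_A\ka_B = \sum_{J\in\Il} c_{A,B}^J\,\ka_J$ exists and is unique with coefficients in $\C(\zz,h)$, and it is equivalent to the scalar identities
$$
\Wt_A(\zz_I,\zz,h)\,\Wt_B(\zz_I,\zz,h) \,=\, \sum_{J\in\Il} c_{A,B}^J\,\Wt_J(\zz_I,\zz,h), \qquad I\in\Il.
$$

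The main step is then to invert this system by means of Theorem \ref{thm orth}. I would multiply the $I$-th identity by
$$
\frac{h^{p(J)}\,P(\zz_I)\,\Wt_{\si_0,J}(\zz_I^{-1},\zz^{-1},h^{-1})}{R(\zz_I)\,Q(\zz_I,h)}
$$
and sum over $I\in\Il$. On the right-hand side the orthogonality relation
$$
\sum_{I\in\Il} h^{p(J)}\,P(\zz_I)\,\frac{\Wt_{\id,K}(\zz_I,\zz,h)\,\Wt_{\si_0,J}(\zz_I^{-1},\zz^{-1},h^{-1})}{R(\zz_I)\,Q(\zz_I,h)} \,=\,\dl_{J,K}
$$
of Theorem \ref{thm orth} collapses the double sum $\sum_I\sum_K$ to the single term $c_{A,B}^J$. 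The left-hand side becomes the claimed formula, completing the proof.

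There is no real obstacle to this strategy: the argument is in essence the statement that the matrix $\bigl(\Wt_J(\zz_I,\zz,h)\bigr)_{I,J\in\Il}$ expressing the classes $\ka_J$ in the fixed-point basis is invertible, with inverse given by orthogonality. The only thing to double-check is that one uses the two flavors of weight functions consistently, namely $\Wt_{\id,\bullet}$ on the ``primal'' side (since $\ka_J = [\Wt_{\id,J}]$) and $\Wt_{\si_0,\bullet}$ on the ``dual'' side (as dictated by the statement of Theorem \ref{thm orth}), and that all finite sums involved take values in the appropriate ring $\C(\zz,h)$, which is automatic from Theorem \ref{thm:basis}.
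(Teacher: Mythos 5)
Your proposal is correct and follows essentially the same route as the paper: restrict the product to the fixed points via $\Loc_I$, then pair the resulting scalar identities against $h^{p(J)}P(\zz_I)\,\Wt_{\si_0,J}(\zz_I^{-1},\zz^{-1},h^{-1})/(R(\zz_I)Q(\zz_I,h))$ and sum over $I$, so that the orthogonality relation of Theorem \ref{thm orth} collapses the sum to $c_{A,B}^J$. The only (harmless) addition is your explicit appeal to Theorem \ref{thm:basis} to guarantee that the expansion of $\ka_A\ka_B$ in the basis $\{\ka_J\}$ exists and is unique, which the paper leaves implicit.
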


\begin{proof}
Consider the $z_I$-substitution into $\ka_A \ka_B = \sum_{J\in\Il} c_{A,B}^J \ka_J$. We obtain \[\Wt_A(\zz_I,\zz,h)\Wt_B(\zz_I,\zz,h)=\sum_J c_{A,B}^J \Wt_J(\zz_I,\zz,h).\]
Choose a $K\in\Il$, and multiply both sides by
\[
\frac{\Wt_{\si_0,K}(\zz_I^{-1},\zz^{-1},h^{-1})}
{R(\zz_I)Q(\zz_I)}h^{p(K)}P(\zz_I).
\]
We obtain
\bea
&&
\frac{\Wt_A(\zz_I,\zz,h)\Wt_B(\zz_I,\zz,h)\Wt_{\si_0,K}(\zz_I^{-1},\zz^{-1},h^{-1})}
{R(\zz_I)Q(\zz_I)}h^{p(K)}P(\zz_I)=\hspace{1in}
\\
&&
\hspace{1in}
= \sum_J c_{A,B}^J
\frac{\Wt_J(\zz_I,\zz,h)\Wt_{\si_0,K}(\zz_I^{-1},\zz^{-1},h^{-1})}
{R(\zz_I)Q(\zz_I)}h^{p(K)}P(\zz_I).
\eea
Add this equation for all $I\?\in\Il$, then rearrangement gives
\bea
&&
\sum_I
\frac{\Wt_A(\zz_I,\zz,h)\Wt_B(\zz_I,\zz,h)\Wt_{\si_0,K}(\zz_I^{-1},\zz^{-1},h^{-1})}
{R(\zz_I)Q(\zz_I)}h^{p(K)}P(\zz_I)
= \hspace{1in}
\\
&&
\hspace{1in}
= \sum_J c_{A,B}^J \sum_I
\frac{\Wt_J(\zz_I,\zz,h)\Wt_{\si_0,K}(\zz_I^{-1},\zz^{-1},h^{-1})}
{R(\zz_I)Q(\zz_I)}h^{p(K)}P(\zz_I).
\eea
According to the orthogonality Theorem \ref{thm orth} the last sum $\sum_I(...)$ is $\delta_{J,K}$.
Hence the right-hand side is equal to $c_{A,B}^K$. This proves the theorem.
\end{proof}

\begin{example}
For $\lambda=(1,1)$ one recovers the multiplication table for $K_T(T^*\P^1)$ in the basis $\kr_1=\ka_{\{1\},\{2\}}$ and $\kr_0=\ka_{\{2\},\{1\}}$ of Section~\ref{app 2}. Here are some sample products for $\lambda=(2,2)$:
\begin{eqnarray*}
\ka_{\{1,2\},\{3,4\}}\ka_{\{1,4\},\{2,3\}} &=&
(1-h)(1-z_3/z_1)(1-z_4/z_1)(1-hz_3/z_2)\ka_{\{1,2\},\{3,4\}}, \hspace{.5in} \\
\ka_{\{1,3\},\{2,4\}}\ka_{\{1,4\},\{2,3\}} &=&
\end{eqnarray*}
\begin{eqnarray*}
&& \left( (1-h)^2(1-z_4/z_2)(z_2/z_1+h(1-z_2/z_1-z_3/z_1+z_2/z_3-z_2^2/(z_1z_3)))\right)
\ka_{\{1,2\},\{3,4\}} +\\
&& (1-h)(1-z_2/z_1)(1-z_4/z_1)(1-hz_2/z_3)
\ka_{\{1,3\},\{2,4\}}.
\end{eqnarray*}
The $h=0$ substitution in these formulae gives the multiplication of the equivariant K-theory classes of structure sheaves of Schubert varieties (up to the polarization) in the Grassmannian of 2-planes in $\C^4$.
\end{example}

\section{Appendix 5. Bethe algebra of the \XXZ/ model}
\label{app 5}

Let \,$\Upn$ \,be the subalgebra of \,$\Uen$ \,generated over \,$\Chh$
\vvn.1>
\,by elements~\Ref{Lijs}, \Ref{Lii}. Given a complex number \,$c\ne 0,1$\>,
set \,$\Uenc=\Upn/\bra\:h=c\:\ket$. Let \,$\Bck_c$ \,be the subalgebra of
\,$\Uenc$ generated by the images of elements \Ref{Ueh} and the elements
\,$\Bk_{p,\:\pm s}$\>, see~\Ref{Beq}\:, under the canonical projection
\,$\Upn\to\Uenc$\>.

\vsk.2>
By Proposition \ref{pho}, there is a \,$\Chh$-algebra homomorphism
\vvn.1>
\be
\pho\::\Upn\,\to\,\End\bigl(\CNn\bigr)\ox\Czh\>.
\ee
Given \,$\bb\in(\C^\times)^n$ \:and \,$c\ne 0,1$\>,
the evaluation at \,$\zz=\bb$\>, \,$h=c$ \,induces an algebra homomorphism
\vvn-.4>
\be
\pho_{\:\bb,\:c}:\Uenc\to\:\End\bigl(\CNn\bigr)
\ee
The homomorphism
\,$\pho_{\:\bb,\:c}$ \,makes \,$\CNn$ into a \,$\Uenc$-module denoted
\,$\CNn(\bb,c)$ \,and called the tensor product of vector representations
with evaluation parameters \,$\bb$\>.
The algebra \,$\Bck_{\bb,\:c}=\pho_{\:\bb,\:c}(\Bck_c)$ \,is called
the Bethe algebra of the associated \XXZ/ model on \,$\CNn$\>.

\begin{rem}
Usually the \XXZ/ model on \,$\CNn$ \>is defined by considering \,$\CNn$
as a module over the quantum loop algebra \,$\Uqn$\>.
% see Section~\ref{Quantum loop algebra Uqn}.
Under this definition, the Bethe subalgebra of the \XXZ/ model,
coincides with \,$\Bck_{\bb,\:q^{-2}}$.
\end{rem}

Recall the space \,$\DV\?$, see Section~\ref{secDV}. Let \,$\bb=(b_1\lc b_n)$
and \,$c$ \,be such that \,$b_i\ne c\:b_j$ \>for all \,$i,j=1\lc n$\>.
The evaluation at \,$\zz=\bb$\>, \,$h=c$ \,defines a homomorphism of
\,$\Uenc$-modules
\vvn-.1>
\be
\iota_{\:\bb,\:c}:\DV/\bra\:h=c\:\ket\,\to\,\CNn(\bb,c)\,.
\vv.2>
\ee

\begin{prop}
\label{DVsur}
Let \;$b_i\ne c\:b_j$ \>for all \,$i,j=1\lc n$\>.
Then \;$\iota_{\:\bb,\:c}$ \>is an epimorphism.
\end{prop}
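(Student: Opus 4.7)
The plan is to exploit that $\iota_{\bb,c}$ is a $\Uenc$-module homomorphism, so its image is automatically a $\Uenc$-submodule of $\CNn(\bb,c)$; surjectivity will then follow once we exhibit a $\Uenc$-cyclic vector of $\CNn(\bb,c)$ that already lies in the image. The natural candidate is $v_1^{\ox n}$: for $\bla=(n,0,\ldots,0)$ the set $\Il$ consists of the single partition $\Imin=(\{1,\ldots,n\},\Empty,\ldots,\Empty)$ with $v_\Imin=v_1^{\ox n}$, and Theorem~\ref{xi-} gives $\xi_\Imin=v_\Imin$. Thus $v_1^{\ox n}$ is literally the constant function sitting in the summand of $\DV$ indexed by this $\bla$, it descends to $\DV/\bra h=c\ket$, and $\iota_{\bb,c}(v_1^{\ox n})=v_1^{\ox n}\in\CNn(\bb,c)$.

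It remains to prove that $v_1^{\ox n}$ is $\Uenc$-cyclic in $\CNn(\bb,c)$. Under $\pho$ specialized at $\zz=\bb$, $h=c$ (Proposition~\ref{pho}), the $\Uenc$-action on $\CNn(\bb,c)$ coincides, up to the invertible scalar prefactor $\prod_i(1-cb_i/u)/(1-b_i/u)$ appearing in $L(u)$ (which does not alter the image subalgebra in $\End(\CNn)$, as multiplication by an invertible scalar power series leaves the $\C$-span of Laurent coefficients unchanged), with the standard ordered-tensor-product action on the $n$-fold tensor product of vector evaluation modules $V(b_1)\ox\cdots\ox V(b_n)$. The hypothesis $b_i\ne c\,b_j$ for all $i,j$ translates, after identifying $h=q^{-2}$, into $b_i/b_j\notin\{q^{-2},q^2\}$ for all distinct $i,j$, which is the standard Chari-Pressley cyclicity criterion for tensor products of fundamental evaluation modules of the quantum loop algebra, guaranteeing that the tensor of highest weight vectors $v_1^{\ox n}$ generates the whole module. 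Consequently $\Uenc\cdot v_1^{\ox n}=\CNn(\bb,c)$, and the image of $\iota_{\bb,c}$, being a $\Uenc$-submodule containing $v_1^{\ox n}$, exhausts $\CNn(\bb,c)$.

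The main obstacle is transcribing this cyclicity cleanly into the present $\Uen$-framework, since the standard references are phrased for the Drinfeld-Jimbo form $\Uqn$. Bridging the two requires matching the algebra presentations and coproducts, and verifying that the specialization of $L(u)$ in \Ref{Lpm} at $(\bb,c)$ equals, up to the scalar prefactor above, the conventional monodromy matrix for the ordered tensor product of evaluation modules. Should a direct citation prove inconvenient, one can instead supply a self-contained algebraic Bethe ansatz induction on $\bla$: evaluating the $F$-series of Theorem~\ref{AEFxi} at suitable spectral parameters pulls each $v_I$ out of $v_1^{\ox n}$, with the hypothesis $b_i\ne c\,b_j$ ensuring non-vanishing of the requisite denominators at every step.
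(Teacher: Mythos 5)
Your argument is correct and follows the same basic strategy as the paper: the image of $\iota_{\:\bb,\:c}$ is a $\Uenc$-submodule, so one only has to show the target has no proper submodule containing what is already in the image. The difference is in the key external input. The paper observes merely that the image is nonzero and then quotes the irreducibility criterion of Akasaka--Kashiwara \cite{AK} (see also \cite{NT2}) for tensor products of evaluation modules, which under the two-sided hypothesis $b_i\ne c\,b_j$ for all $i,j$ makes $\CNn(\bb,c)$ irreducible, so any nonzero submodule is everything. You instead exhibit the explicit vector $v_1^{\ox n}$ in the image (correctly: for $\bla=(n,0\lc 0)$ one has $\xi_{\Imin}=v_{\Imin}=v_1^{\ox n}$, a genuine element of $\DV$ since $\Rc$ acts as the identity on $v_1\ox v_1$) and then invoke cyclicity of the tensor product of highest weight vectors. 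This is a slightly heavier route: it requires matching the $L$-operator action \Ref{Lpm} (with its scalar prefactor, which you correctly note is harmless) against the Drinfeld--Jimbo conventions in which the Chari--Pressley criterion is stated, and attention to which one-sided ordering condition the cyclicity theorem actually needs --- whereas irreducibility, which the two-sided hypothesis delivers, sidesteps both issues. Your fallback suggestion of a direct Bethe-ansatz-style generation argument using the $F$-series of Theorem~\ref{AEFxi} is also viable and is close in spirit to the proof of \cite[Proposition~10.4]{MTV3} that the paper points to.
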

\begin{proof}
The proof is similar to the proof of \cite[Proposition~10.4]{MTV3}\:.
The image of \,$\iota_{\:\bb,\:c}$ \,is not zero, while by~\cite{AK}\:,
the \,$\Uenc$-module \,$\CNn(\bb,c)$ is irreducible if \;$b_i\ne c\:b_j$
\>for all \,$i,j=1\lc n$\>, see also~\cite[Theorem~3.4]{NT2}\:.
\end{proof}

The elements of \,$\Bck_{\bb,\:c}$ \:preserve each of the subspaces
\,$\CNnl$\>. Let \,$\Bck_\bcl\?\subset\End\bigl(\CNnl\bigr)$ \,be
the subalgebra induced by the action of \,$\Bck_{\bb,\:c}$ on \,$\CNnl$.

\vsk.2>
Given \,$c$ and \,$\bla$, let \,$\Kcqb\?=\Kcql/\bra\:\zz=\bb\,, \;h=c\:\ket$\>.
\vvn.08>
The algebra \,$\Kcqb$ \,is the algebra of functions on the fiber of the Wronski
map.

\begin{cor}
\label{XXZ thm}
Let \,$\bb=(b_1\lc b_n)$ and \,$c$ \:be such that \,$b_i\ne c\:b_j$
for all \,$i,j=1\lc n$\>. Then isomorphisms\/~\Ref{muql}\:, \Ref{nuql}\:,
and the evaluation at \,$\zz=\bb$, \,$h=c$ \,induce an algebra isomorphism
\;$\muq_{\:\bcl}\<:\Kcqb\?\to\Bck_\bcl$ \,and an isomorphism of vector spaces
\;$\nuq_\bcl\<:\Kcqb\?\to\CNnl$\>. The isomorphisms \,$\muq_{\:\bcl}$ and
\,$\nuq_{\bcl}$ identify the regular representation of \,$\Kcqb$ and
the \,$\Bck_\bcl\<$-module \;$\CNnl$\>.
\qed
\end{cor}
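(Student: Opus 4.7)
The plan is to derive the corollary by specializing the graded isomorphisms $\muql$ and $\nuql$ of Theorems~\ref{thmuq} and~\ref{thnuq} at the point $\zz=\bb$, $h=c$, and to combine the result with the surjectivity assertion of Proposition~\ref{DVsur}.

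First I would verify that the formulae defining $\muql$ and $\nuql$ survive the specialization $\zz=\bb$, $h=c$. The denominators appearing in both maps are built from factors of the form $(1-z_b/z_a)$ and $(1-hz_b/z_a)$, all of which divide the Laurent polynomial $D$ of~\Ref{denom}. The hypothesis $b_i\ne c\,b_j$ guarantees $D(\bb,c)\ne 0$, so the evaluations are meaningful. Consequently $\muql$ specializes to an algebra homomorphism $\muq_\bcl\colon\Kcqb\to\Bck_\bcl$, and $\nuql$, composed with $\iota_{\bb,c}$, specializes to a linear map $\nuq_\bcl\colon\Kcqb\to\CNnl$.

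Next I would promote these homomorphisms to isomorphisms. Proposition~\ref{DVsur} asserts that $\iota_{\bb,c}$ is an epimorphism onto $\CNn(\bb,c)$; restricting to the weight-$\bla$ subspace, it surjects onto $\CNnl$. Since $\nuql$ is an isomorphism over $\C(\zz,h)$, its specialization at $(\bb,c)$ is surjective onto the fiber. Composing, I conclude that $\nuq_\bcl$ is surjective. Combined with the dimension equality $\dim\Kcqb=|\Il|=\dim\CNnl$, this promotes $\nuq_\bcl$ to a vector-space isomorphism. The algebra map $\muq_\bcl$ then becomes bijective as well, because $\phol(\Bck)$ acts faithfully on $\DL$ in the regular representation by Corollary~\ref{regrepKq}.

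The main obstacle will be the dimension count $\dim\Kcqb=|\Il|$. A priori, quotients of a polynomial algebra can lose dimension under specialization of parameters, so one must verify that the fiber of the discrete Wronski map $\Wrq_\bla$ over the image of $\bb$ is reduced of the expected cardinality. Under the genericity hypothesis $b_i\ne c\,b_j$ this should hold, by the discrete analogue of the classical fact that the Wronski map is \'etale on a generic stratum; see the Bethe-ansatz arguments underlying Theorem~\ref{psq}. Once the dimension count is in hand, the identification of the $\Bck_\bcl$-module $\CNnl$ with the regular representation of $\Kcqb$ follows directly from Corollary~\ref{regrepKq} by specialization.
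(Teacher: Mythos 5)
Your overall architecture matches the paper's intended (and unwritten) argument: specialize the generic isomorphisms $\muql$, $\nuql$ of Theorems~\ref{thmuq}, \ref{thnuq} at $\zz=\bb$, $h=c$ (legitimate because $b_i\ne c\:b_j$ keeps $D$ and the $R(\zz_I)$, $Q(\zz_I,h)$ factors nonzero), get surjectivity of $\nuq_\bcl$ from Proposition~\ref{DVsur}, and close with a dimension count plus Corollary~\ref{regKqK}/\ref{regrepKq} for the regular-representation statement. That is exactly why the paper places Proposition~\ref{DVsur} immediately before the corollary and marks the corollary \textsl{qed}.

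However, your resolution of what you correctly identify as the crux --- the bound $\dim_\C\Kcqb\le|\Il|$ --- is off. You propose to verify that the fiber of the discrete Wronski map over $\bb$ is \emph{reduced} of the expected cardinality, invoking an \'etaleness-on-a-generic-stratum principle. Reducedness is neither available nor needed here: the hypothesis $b_i\ne c\:b_j$ does not exclude $b_i=b_j$ for $i\ne j$, and for such $\bb$ the fiber algebra $\Kcqb$ genuinely has nilpotents (the Bethe algebra has nontrivial Jordan blocks), yet the corollary still holds, since the regular representation of a non-reduced finite-dimensional algebra makes perfect sense. The correct mechanism is flatness, not \'etaleness: Theorem~\ref{thnuq} (equivalently Theorem~\ref{thmthiq} together with Lemmas~\ref{this} and \ref{that}) identifies $\Kcql\ox\C(h)$ with $\DL\ox\Czb\ox\C(h)$ as a module over the symmetric Laurent polynomials in $\zz$, and $\DL\cong\Czhl$ is \emph{free} of rank $[S_n:S_\bla]=|\Il|$ over $\CZH$. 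Freeness forces \emph{every} fiber of the Wronski map, reduced or not, to have vector-space dimension exactly $|\Il|$, with no genericity assumption beyond what is already imposed. With the dimension count obtained this way, the rest of your argument (surjectivity $\Rightarrow$ bijectivity of $\nuq_\bcl$, faithfulness of the regular representation $\Rightarrow$ bijectivity of $\muq_\bcl$) goes through as you describe.
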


\begin{cor}
\label{maxcomm}
The algebra \,$\Bck_{\bb,\:c}$ is a maximal commutative subalgebra of\/
\,$\End\bigl(\CNn\bigr)$\>.
\qed
\end{cor}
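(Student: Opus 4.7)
The plan is to leverage Corollary~\ref{XXZ thm}, which identifies $\Bck_\bcl$ with the image of $\Kcqb$ acting on itself by multiplication (the regular representation). A standard fact about any commutative algebra $\Ac$ is that its regular representation realizes $\Ac$ as a maximal commutative subalgebra of $\End(\Ac)$: any operator $X$ commuting with all multiplications $m_a : x\mapsto ax$ satisfies $X(a)=X(a\cdot 1)=a\cdot X(1)=m_{X(1)}(a)$, hence $X$ is itself multiplication by an element of $\Ac$. Applying this to $\Ac=\Kcqb$ and transporting via the isomorphisms $\muq_\bcl$ and $\nuq_\bcl$, one obtains that $\Bck_\bcl$ is maximal commutative in $\End(\CNnl)$ for each $\bla$ with $|\bla|=n$.

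Next I would assemble these weight-by-weight statements. The weight decomposition $\CNn=\bigoplus_{|\bla|=n}\CNnl$ is preserved by $\Bck_{\bb,c}$, because this algebra contains the images of the generators \Ref{Ueh} of $\Ueh$, and by formulae \Ref{Ltl} the element $\Lt_{i,i,+0}$ acts on $\CNnl$ by the scalar $c^{\la_1+\dots+\la_i}$. The tuple $(c^{\la^{(1)}},\dots,c^{\la^{(N)}})$ determines $\bla$ uniquely (since $c\ne 0,1$), so the subspaces $\CNnl$ are the common eigenspaces of the operators $\pho_{\:\bb,c}(\Lt_{i,i,+0})$ with distinct weight characters. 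By Lagrange-type interpolation in these commuting diagonal operators, the projectors $P_\bla:\CNn\to\CNnl$ lie in $\Bck_{\bb,c}$, which therefore splits as the block sum
\[
\Bck_{\bb,c}\,=\,\bigoplus_{|\bla|=n}\Bck_\bcl\,.
\]

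Finally, suppose $X\in\End(\CNn)$ commutes with every element of $\Bck_{\bb,c}$. Since $X$ commutes with each $P_\bla$, it preserves $\CNnl$; its restriction $X_\bla\in\End(\CNnl)$ commutes with $\Bck_\bcl$, and by the first step belongs to $\Bck_\bcl$. Hence $X=\bigoplus_\bla X_\bla\in\Bck_{\bb,c}$, establishing the maximality. The main conceptual step is the first one, where one needs the identification of Corollary~\ref{XXZ thm} (which itself rests on the genericity hypothesis $b_i\ne c\:b_j$ that feeds into Proposition~\ref{DVsur}); the subsequent gluing across weight subspaces is then routine, with the only small verification being that $\Ueh$ genuinely separates the weight components, which is immediate from \Ref{Ltl} and $c\ne 0,1$.
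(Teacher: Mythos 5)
Your route is the one the paper intends: Corollary~\ref{XXZ thm} identifies \,$\Bck_\bcl$ acting on \,$\CNnl$ with the regular representation of the commutative unital algebra \,$\Kcqb$, the regular representation of such an algebra is maximal commutative in the endomorphisms of its underlying space, and one then assembles the blocks over \,$\bla$. Granted that the projectors \,$P_\bla$ lie in \,$\Bck_{\bb,\:c}$, the rest of your assembly (the commutant preserves each \,$\CNnl$, its restriction lands in \,$\Bck_\bcl$ by block-wise maximality, and \,$\bigoplus_\bla\Bck_\bcl\subseteq\Bck_{\bb,\:c}$ because the restriction maps are surjective by definition of \,$\Bck_\bcl$) is airtight.

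The one genuine gap is the parenthetical ``since $c\ne0,1$'' in your justification that the Cartan characters separate the weight subspaces. The tuple \,$(c^{\la^{(1)}}\!\lc c^{\la^{(N)}})$ determines \,$\bla$ only if \,$c^{\:a}\ne c^{\:b}$ for all \,$0\le a<b\le n$, i.e.\ only if \,$c$ is not a root of unity of order at most \,$n$; the standing hypotheses \,$c\ne0,1$ and \,$b_i\ne c\:b_j$ do not exclude this. For instance, with \,$N=n=2$ and \,$c=-1$, the operator \,$\pho_{\:\bb,\:c}(\Lt_{1,1,+0})$ acts by \,$c^{\la^{(1)}}=1$ on both \,$\bla=(2,0)$ and \,$\bla=(0,2)$, and \,$\Lt_{2,2,+0}$ acts by \,$c^{\:n}=1$ everywhere, so the image of \,$\Ueh$ does not distinguish the two one-dimensional blocks and your interpolation does not produce their projectors. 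To close the gap one must show that the full Bethe algebra, not just its Cartan part, separates the weight subspaces: in the example above the eigenvalues of \,$\Bk_{1,-}(u)$ on \,$v_1\ox v_1$ and on \,$v_2\ox v_2$ are \,$q_1\prod_{i}\frac{1-c\:b_i/u}{1-b_i/u}+q_2$ \,and \,$q_1+q_2\prod_{i}\frac{1-c\:b_i/u}{1-b_i/u}$\>, which differ because \,$q_1\ne q_2$ and \,$c\ne1$, and a similar computation with \,$\Bk_{1,\pm}(u)$ handles the general case. Alternatively one can simply add the hypothesis that \,$c$ is not a root of unity of order \,$\le n$. The paper is silent on this point as well, but as written your derivation of \,$P_\bla\in\Bck_{\bb,\:c}$ is incomplete.
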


%!!!

\end{document}